\newcommand{\ca}{\curvearrowright}
\newcommand{\la}{\langle}
\newcommand{\ra}{\rangle}
\newcommand{\El}{\mathcal{L}}
\newcommand{\subscript}[2]{$#1 _ #2$}
\begin{document}
	\newtheorem{Lemma}{Lemma}
	\theoremstyle{plain}
	\newtheorem{theorem}{Theorem~}[section]
	\newtheorem{main}{Main Theorem~}
	\newtheorem{lemma}[theorem]{Lemma~}
	\newtheorem{assumption}[theorem]{Assumption~}
	\newtheorem{proposition}[theorem]{Proposition~}
	\newtheorem{corollary}[theorem]{Corollary~}
	\newtheorem{definition}[theorem]{Definition~}
	\newtheorem{defi}[theorem]{Definition~}
	\newtheorem{notation}[theorem]{Notation~}
	\newtheorem{example}[theorem]{Example~}
	\newtheorem{remark}{Remark~}
	\newtheorem*{cor}{Corollary~}
	\newtheorem*{question}{Question}
	\newtheorem*{claim}{Claim}
	\newtheorem*{conjecture}{Conjecture~}
	\newtheorem*{fact}{Fact~}
	\renewcommand{\proofname}{\bf Proof}
    \newcommand{\email}{Email: }

\title{Subgroups of Lacunary Hyperbolic Groups and Free Products}
\author{Krishnendu Khan}
\date{} 
\maketitle

\begin{abstract}
A finitely generated group is lacunary hyperbolic if one of its asymptotic cones is an $\mathbb{R}$-tree. In this article we give a necessary and sufficient condition on lacunary hyperbolic groups in order to be stable under free product by giving a dynamical characterization of lacunary hyperbolic groups. Also we studied limits of elementary subgroups as subgroups of lacunary hperbolic groups and characterized them. Given any countable collection of increasing union of elementary groups we show that there exists a lacunary hyperbolic group whose set of all maximal subgroups is the given collection. As a consequence we construct a finitely generated divisible group. First such example was constructed by V. Guba in \cite{Gu86}. In section \color{blue}\ref{ripssection}\color{black}\ we show that given any finitely generated group $Q$ and a non elementary hyperbolic group $H$, there exists a short exact sequence $1\rightarrow N\rightarrow G\rightarrow Q\rightarrow 1$, where $G$ is a lacunary hyperbolic group and $N$ is a non elementary quotient of $H$. Our method allows to recover \cite[Theorem 3]{AS14}.	In section \color{blue}\ref{maximalvonneumann}\color{black}, we extend the class of groups $\mathcal{R}ip_{\mathcal{T}}(Q)$ considered in \cite{CDK19} and hence give more new examples of property $(T)$ von Neumann algebras which have maximal von Neumann subalgebras without property $(T)$. 
\end{abstract}	
\thispagestyle{empty}
\tableofcontents

\section{Introduction}
    The interplay between general group theory and combinatiorial investigation of specific groups having some geometric structure has been exploited by Gromov by introducing and developing the notion of hyperbolic groups in his celebrated article \cite{Gr87}. In this influencing work Gromov proposed wide range research program. \par   
 Von Neumann introduced the notion of amenable groups in \cite{vN29} around 1929 and conjectured that a group $G$ is non amenable if and only if $G$ contains a free subgroup of rank $2$ after showing that no amenable group contains a free subgroup of rank $2$. This conjecture is shown to be false by A. Olshanskii in in \cite{Ol79}. Later in \cite{Ol93}, A. Olshanskii developed small cancellation theory over hyperbolic groups and showed that one can construct non abelian groups without free subgroups by taking quotient of non elementary hyperbolic groups, which can be realized as a direct limit of hyperbolic groups. One can infact get non amenable groups in the limit by starting with property (T) hyperbolic group. \par
 Intuitive idea behind an asymptotic cone of a metric space is to study "large scale" geometry of a metric space by viewing it from "infinite distance". The term "asymptotic cone" was coined by Gromov in \cite{Gr81} to prove that a group of polynomial growth is virtually nilpotent. Later the notion of asymptotic cone was formally developed for arbitrary finitely generated groups by L. van den Dries, A.J. Wilkie in \cite{vdDW84}. Using asymptotic cones, one can characterize several important classes of groups. For example, groups of polynomial growth are precisely groups with all asymptotic cones locally compact [\cite{Gr81,Pau89,D02}].In \cite{Gr87} M. Gromov characterized hyperbolic groups in terms of asymptotic cones.
    
    \begin{theorem}[\cite{Gr87}]
    	A finitely generated group is hyperbolic if and only if all of its asymptotic cones are $\mathbb{R}$-tree.
    \end{theorem}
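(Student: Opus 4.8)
The plan is to prove the two implications separately, with essentially all of the work concentrated in the converse direction.

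For the forward implication, suppose $G$ is $\delta$-hyperbolic, so that its Cayley graph $X$ is a geodesic $\delta$-hyperbolic space. Any asymptotic cone is by definition an ultralimit $\lim_\omega(X, o_n, d/d_n)$ for some non-principal ultrafilter $\omega$, basepoint sequence $(o_n)$, and scaling constants $d_n \to \infty$. Each rescaled space $(X, d/d_n)$ is geodesic and has all of its geodesic triangles $(\delta/d_n)$-thin, and $\delta/d_n \to 0$. First I would record the standard facts that an ultralimit of geodesic spaces is complete and geodesic, and that an ultralimit of $\delta_n$-thin spaces with $\delta_n \to 0$ is $0$-thin; the cone is therefore a complete geodesic $0$-hyperbolic space. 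It then remains only to invoke the elementary lemma that a geodesic space is an $\mathbb{R}$-tree if and only if it is $0$-hyperbolic, i.e. every geodesic triangle degenerates to a tripod, which closes this direction.

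For the converse I would argue by contraposition, building from the assumption that $G$ is not hyperbolic an asymptotic cone that fails to be an $\mathbb{R}$-tree. The key preliminary is a local-to-global principle for thinness: if all geodesic triangles in $X$ of perimeter at most $R$ are $\delta$-thin with $\delta$ small compared to $R$, then $X$ is hyperbolic. Applying its contrapositive, together with a minimal-perimeter (minimal-counterexample) argument, non-hyperbolicity of $X$ yields geodesic triangles $\Delta_n = \Delta(a_n, b_n, c_n)$ and points $p_n$ on a side $[a_n, b_n]$ such that the thinness defect $D_n = d(p_n, [a_n, c_n] \cup [b_n, c_n]) \to \infty$ while the perimeter of $\Delta_n$ stays comparable to $D_n$.

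I would then form the cone $\mathrm{Cone}_\omega(X, (p_n), (D_n))$. Because the perimeters are comparable to the scaling constants $D_n$, the rescaled vertices $\bar a, \bar b, \bar c$ together with the rescaled point $\bar p$ all lie within bounded distance of the basepoint and hence survive in the limit, and the rescaled sides converge to genuine geodesics forming a geodesic triangle $\Delta(\bar a, \bar b, \bar c)$. Passing the inequality $d(p_n, [a_n,c_n]\cup[b_n,c_n]) \ge D_n$ through the ultralimit shows that $\bar p$, which lies on $[\bar a, \bar b]$, sits at distance at least $1$ from $[\bar a, \bar c] \cup [\bar b, \bar c]$. This triangle is thus not a tripod, so the cone is not $0$-hyperbolic and in particular is not an $\mathbb{R}$-tree, contradicting the hypothesis.

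The main obstacle is precisely the scale-matching demanded by the converse: a naive choice of triangles whose defect merely tends to infinity may have perimeter growing much faster than the defect, so that rescaling by $D_n$ collapses the configuration, its vertices escaping to infinity and no obstruction to the tree property surviving in the limit. Securing triangles whose perimeter is comparable to their defect — that is, the local-to-global and minimal-perimeter argument of the previous paragraph — is the crux of the whole proof; once this scale control is in place, the ultralimit computation is routine.
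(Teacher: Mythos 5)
You should first note that the paper contains no proof of this statement at all: it is quoted from \cite{Gr87} (and restated as Theorem \ref{asymptoticgromov}), and it is then \emph{used} as the key imported ingredient in the proof of the paper's related Theorem \ref{hyptriangle}. So the comparison is with the standard argument. Your forward direction is essentially right, with one small caveat you should patch: geodesics in an asymptotic cone need not be ultralimits of geodesics in the rescaled spaces, so the assertion ``an ultralimit of $\delta_n$-thin spaces with $\delta_n\to 0$ is $0$-thin'' is not a one-line fact about limits of triangles. The clean fixes are either to pass the four-point (Gromov product) formulation of hyperbolicity through the ultralimit, which involves only distances, and then use that a complete geodesic $0$-hyperbolic space is an $\mathbb{R}$-tree, or to reduce to limit geodesic triangles as the paper does via Lemma \ref{limitgeodesictrg}. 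This is a minor, fixable gap.

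The genuine gap is in your converse. You correctly diagnose the crux --- without triangles whose perimeter is comparable to the thinness defect $D_n$, the vertices escape to infinity after rescaling by $D_n$ and no obstruction survives in the cone; and indeed, once such scale-matched triangles are in hand, the ultralimit computation is routine (it is the same computation as the hexagon argument in the proof of Theorem \ref{hyptriangle}). But you manufacture those triangles by invoking a ``local-to-global principle for thinness'' that you never prove and for which no simple ``minimal-perimeter/minimal-counterexample'' argument exists. That principle --- triangles of perimeter at most $R$ being $o(R)$-thin forces hyperbolicity --- is exactly the hard implication here: it is equivalent to the sublinear-thinness criterion of Theorem \ref{hyptriangle}, and the only proof of that criterion in this paper runs \emph{through} Gromov's cone characterization, so within the paper's logic your route is circular. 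Non-circular proofs do exist in the literature (Gromov's Cartan--Hadamard/local-to-global theorem for hyperbolicity, or the theorem that a subquadratic isoperimetric inequality implies hyperbolicity, due to Gromov, Ol'shanskii, Bowditch and Papasoglu), but these are substantial results, comparable in depth to the statement you are proving. As written, your proposal concentrates the entire content of the converse into an unproved black box; you must either prove that principle or find a different mechanism for producing triangles with defect comparable to perimeter.
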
 
  It was shown in \cite{DP99} that asymptotic cones of non-elementary hyperbolic groups are all isometric to the complete homogeneous $\mathbb{R}$-tree of valence continuum. Asymptotic cones of quasi-isometric spaces are bi-Lipschitz equivalent and hence the topology of an asymptotic cone of a finitely generated group does not depend on the choice of the generating set. For a survey of results on asymptotic cones and quasi isometric rigidity results see \cite{D02,DS05}. 
    In the paper \cite{OOS07}, the authors have considered lacunary hyperbolic groups (see definition, sec. \color{blue}\ref{lacunarydef}\color{black}) 
    \color{black}\ and showed that there exists a finitely generated group, one of whose asymptotic cone $\mathcal{C}$ has non trivial countable fundamental group ($\pi(\mathcal{C})\equiv \mathbb{Z}$). We studied those limit groups in this article via hyperbolicity function (see section 2,3) and characterized lacunary hyperblic groups by hyperbolicity functions which measures the thinness of triangles in a geodesic metric spaces. We show that the thinness of triangles can grow sublinearly with respect to the size of perimeter of triangles in hyperbolic metric space and thinness of triangle grows sublinearly with respect to an increasing sequence of the size of perimeter of triangles in lacunary hyperbolic metric space. 
    \begin{theorem}[Theorem \color{blue}\ref{lhgdef}\color{black}]
    		Let G be a finitely generated group with generating set $S$. Then $G$ is lacunary hyperbolic if and only if the hyperbolicity function $f_{G}$ of the corresponding Cayley graph $\Gamma(G,S)$ satisfies $\liminf_{t\rightarrow\infty}\frac{f_{G}(t)}{t}=0$.
    \end{theorem}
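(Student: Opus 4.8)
The plan is to pass between the hyperbolicity function $f_G$ and asymptotic cones, using the elementary fact that a geodesic metric space is an $\mathbb{R}$-tree precisely when it is $0$-hyperbolic, i.e. every geodesic triangle is a tripod. Recall that $f_G(t)$ is the least $\delta$ for which every geodesic triangle of perimeter at most $t$ in $\Gamma(G,S)$ is $\delta$-thin; it is non-decreasing and satisfies $0\le f_G(t)\le t$. For a scaling sequence $d=(d_n)$ with $d_n\to\infty$ and a non-principal ultrafilter $\omega$, write $\mathrm{Con}^{\omega}(\Gamma,d)$ for the corresponding asymptotic cone, where distances are rescaled by $1/d_n$. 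The bridge between the two sides is the observation that a geodesic triangle of perimeter $p$ in $\mathrm{Con}^{\omega}(\Gamma,d)$ is an ultralimit of geodesic triangles $T_n$ in $\Gamma$ of perimeter $(p+o(1))d_n$, and that its thinness equals $\lim_{\omega}\delta(T_n)/d_n$; thus controlling $f_G$ along $d$ controls the thinness of triangles in the cone.

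For the forward direction, suppose $G$ is lacunary hyperbolic, so that $\mathrm{Con}^{\omega}(\Gamma,d)$ is an $\mathbb{R}$-tree for some $d$ and $\omega$. For each $n$ I would choose a geodesic triangle $T_n$ of perimeter at most $d_n$ with $\delta(T_n)\ge f_G(d_n)-1$ (possible since $f_G(d_n)$ is a finite supremum). After translating a vertex to the base point and passing to the ultralimit, the rescaled triangles converge to a geodesic triangle of perimeter at most $1$ in the cone, which is a tripod. If $\lim_{\omega}f_G(d_n)/d_n$ were some $\varepsilon>0$, one could extract a point on one side of this limit triangle lying at distance at least $\varepsilon/3$ from the other two sides, contradicting the tripod property; hence $\lim_{\omega}f_G(d_n)/d_n=0$. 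Since $d_n\to\infty$ and $\omega$ is non-principal, this forces $\liminf_{t\to\infty}f_G(t)/t=0$.

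The reverse direction is the main point, and the difficulty is that a tree cone requires all of its triangles, of every finite size, to be tripods, whereas $\liminf f_G(t)/t=0$ only furnishes a sequence $a_n\to\infty$ with $f_G(a_n)/a_n\to 0$, which at first sight controls only unit-size triangles. The key idea is to choose the scaling sequence strictly between the ``thinness scale'' $f_G(a_n)$ and the ``good scale'' $a_n$. Concretely I would set $d_n=\sqrt{a_n\,f_G(a_n)}$ (and $d_n=\sqrt{a_n}$ in the degenerate case $f_G(a_n)=0$), so that $d_n\to\infty$, $d_n/a_n\to 0$, and $f_G(a_n)/d_n\to 0$ simultaneously. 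Then for any fixed perimeter bound $\lambda$ one has $\lambda d_n\le a_n$ for all large $n$, so every geodesic triangle of perimeter at most $\lambda$ in $\mathrm{Con}^{\omega}(\Gamma,d)$ arises from triangles $T_n$ in $\Gamma$ of perimeter at most $a_n$; monotonicity of $f_G$ gives $\delta(T_n)\le f_G(a_n)$, whence the rescaled thinness is $\lim_{\omega}\delta(T_n)/d_n\le \lim_{\omega}f_G(a_n)/d_n=0$. Therefore every triangle in the cone is a tripod, the cone is an $\mathbb{R}$-tree, and $G$ is lacunary hyperbolic.

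The steps that require care, rather than genuine obstruction, are the standard facts that asymptotic cones of geodesic spaces are geodesic and that their geodesics are ultralimits of geodesics of $\Gamma$ of comparable length, so that the perimeter and thinness of a cone triangle are indeed the claimed ultralimits; these let me move freely between triangles in the cone and in $\Gamma$. I would also remark at the outset that, since the various definitions of $\delta$-hyperbolicity differ only by universal multiplicative and additive constants, the additive error is killed by the rescaling $1/d_n$ while the multiplicative error leaves the condition $\liminf f_G(t)/t=0$ unchanged; hence the statement is insensitive to the precise form of $f_G$. The genuine insight, and the one I expect to be the crux, is the window choice $f_G(a_n)\ll d_n\ll a_n$ in the reverse direction, which converts pointwise sublinearity of $f_G$ along a single sequence into uniform control of triangles of all sizes in one cone.
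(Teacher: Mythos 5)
Your forward direction is correct but takes a genuinely different route from the paper's: the paper deduces $\liminf_{t\to\infty}f_G(t)/t=0$ from the direct--limit characterization (Theorem \ref{OOS}, part c), observing that a triangle of perimeter at most $r_{S_i}(\alpha_i)$ lives inside the $\delta_i$-hyperbolic group $G_i$, so $f_G(r_{S_i}(\alpha_i))=\mathcal{O}(\delta_i)=o(r_{S_i}(\alpha_i))$; you instead argue intrinsically in the tree cone with near-extremal triangles $T_n$, which is more self-contained (it needs no lifting through the limit decomposition, only that ultralimits of geodesics are geodesics and that distances in the cone dominate the rescaled ultralimits of distances to the approximating sides). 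That part of your argument is sound.

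The reverse direction contains one genuine gap. The assertion you flag as a ``standard fact'' --- that geodesics in $\mathrm{Con}^{\omega}(\Gamma,d)$ are ultralimits of geodesics of $\Gamma$ of comparable length --- is false in general: only \emph{limit geodesics} have this form, and an asymptotic cone may contain geodesics that are not ultralimits of geodesics. This is precisely why the Dru\c{t}u--Sapir criterion (Lemma \ref{limitgeodesictrg}, i.e.\ \cite[Corollary 4.18]{DS05b}) exists and why the paper invokes it. Consequently your step ``every geodesic triangle of perimeter at most $\lambda$ in the cone arises from triangles $T_n$ in $\Gamma$ of perimeter at most $a_n$'' is unjustified, and your conclusion only establishes that \emph{limit-geodesic} triangles are tripods. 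The repair is exactly the paper's mechanism: apply Lemma \ref{limitgeodesictrg} with $\mathcal{P}$ the collection of singletons, so that it suffices to rule out nontrivial simple triangles whose sides are limit geodesics; approximate such a triangle by geodesic hexagons $H_n$ (the endpoints of the approximating sides $A_n,B_n,C_n$ need not match, so one inserts connecting geodesics $P^i_n$ with $\ell(P^i_n)=o_{\omega}(d_n)$) and apply the $n$-gon Lemma \ref{hyperbolicgeodesicngon1}. With that reduction your argument goes through verbatim. It is worth adding that your two-scale window $d_n=\sqrt{a_n f_G(a_n)}$, giving $f_G(a_n)\ll d_n\ll a_n$, is not a mere nicety: since $|H_n|=\Theta_{\omega}(d_n)\leq a_n$ $\omega$-almost surely, monotonicity yields the needed bound $f_G(|H_n|)\leq f_G(a_n)=o_{\omega}(d_n)$, whereas the paper takes $d_n$ to be the liminf-realizing sequence itself and asserts $f_G(|H_n|)=o_{\omega}(|H_n|)$ from sublinearity of $f_G$ along $(d_n)$ alone, even though $|H_n|$ may exceed $d_n$ and $f_G$ is controlled only at the points $d_n$. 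So once the limit-geodesic issue is fixed, your proof is correct and, at that particular step, more careful than the paper's own.
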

   One advantage of having a dynamical characterization is that one can get a necessary and sufficient condition when lacunary hyperbolicity is preserved under free product. In general lacunary hyperbolicity is not stable under free product \cite[Example 3.16]{OOS07}. See Definition \color{blue}\ref{synchrodef}\color{black}\ for the definition of synchronized LH. 
   \begin{theorem}[Theorem \color{blue}\ref{synchro}\color{black}]
   	Let $G_1=G* H$. Then $G_1$ is lacunary hyperbolic if and only if $G$ and $H$ are synchronized LH.
   \end{theorem}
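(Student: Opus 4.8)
The plan is to reduce everything to the hyperbolicity-function criterion of Theorem~\ref{lhgdef} together with the combinatorial geometry of the free-product Cayley graph. Fix finite generating sets $S_G$ of $G$ and $S_H$ of $H$, and take $S = S_G \sqcup S_H$ as a generating set of $G_1 = G * H$. The Cayley graph $\Gamma(G_1, S)$ is then a tree of spaces: it is assembled from isometrically embedded copies of $\Gamma(G, S_G)$ and $\Gamma(H, S_H)$, one for each coset of a factor, glued along single cut vertices; collapsing each copy to a point recovers the Bass--Serre tree of the splitting, which is a genuine $0$-hyperbolic tree. The metric fact I will lean on is that the $S$-word length of an element respects its free-product normal form (it is the sum of the syllable lengths), so every geodesic of $\Gamma(G_1, S)$ traverses the pieces it meets along geodesics of those pieces and passes through the intervening cut vertices in order.

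The heart of the argument is a scale-wise comparison of hyperbolicity functions,
\[
\max\{f_G(t), f_H(t)\} \;\le\; f_{G_1}(t) \;\le\; \max\{f_G(t), f_H(t)\} + C,
\]
for a universal constant $C$. The lower bound is immediate: any geodesic triangle living inside a single copy of $\Gamma(G, S_G)$ (resp.\ $\Gamma(H, S_H)$) is again a geodesic triangle of $\Gamma(G_1, S)$ of the same perimeter, so the thinness defect of each factor is inherited by $G_1$. For the upper bound I take an arbitrary geodesic triangle $\Delta$ of perimeter $\le t$ and project it to the Bass--Serre tree; since that tree is $0$-hyperbolic, the projected tripod degenerates, and all of the non-tree thinness of $\Delta$ is concentrated in the unique piece containing the preimage of the tripod's branch point. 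Restricting the three sides of $\Delta$ to that piece yields a geodesic triangle, with vertices at the relevant cut points, whose perimeter is at most $t$, so its thinness is controlled by $f_G(t)$ or $f_H(t)$; the remaining parts of $\Delta$ lie in other pieces and are matched through the shared cut vertices with only bounded error. A case analysis according to how the three vertices of $\Delta$ sit relative to the branch piece produces the additive constant $C$.

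With the comparison in hand the theorem follows by dividing through by $t$ and taking $\liminf_{t\to\infty}$: since $C/t \to 0$, we obtain
\[
\liminf_{t\to\infty}\frac{f_{G_1}(t)}{t} = \liminf_{t\to\infty}\frac{\max\{f_G(t), f_H(t)\}}{t}.
\]
By Theorem~\ref{lhgdef}, $G_1$ is lacunary hyperbolic exactly when the left-hand side vanishes, i.e.\ exactly when there is a sequence $t_n \to \infty$ with $\max\{f_G(t_n), f_H(t_n)\}/t_n \to 0$. This says precisely that both $f_G(t_n)/t_n \to 0$ and $f_H(t_n)/t_n \to 0$ along one common sequence of scales, which is the content of $G$ and $H$ being synchronized LH (Definition~\ref{synchrodef}). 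It also clarifies Example~3.16 of \cite{OOS07}: two lacunary hyperbolic groups whose sequences of good scales are forced to be disjoint cannot be synchronized, and their free product fails to be lacunary hyperbolic.

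The step I expect to be the main obstacle is the upper bound in the comparison, namely making rigorous that the thinness of a free-product triangle is realized, up to bounded error, inside a single factor piece \emph{at the same scale} $t$. This demands controlling geodesics as they cross cut vertices via the syllable/normal-form description, organizing the case analysis for the location of the branch point, and verifying that restriction to a piece never increases the perimeter, so that one may legitimately invoke $f_G(t)$ and $f_H(t)$ rather than their values at some larger argument.
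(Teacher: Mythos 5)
Your overall scheme --- prove a scale-wise comparison of hyperbolicity functions and then divide by $t$ and take $\liminf$ via Theorem \ref{lhgdef} --- is the same as the paper's, and your lower bound coincides with the paper's ``only if'' direction verbatim: factor triangles are triangles of $G_1$ of the same perimeter because each coset copy of $\Gamma(G,S_G)$, $\Gamma(H,S_H)$ embeds isometrically and geodesically in $\Gamma(G_1,S_G\sqcup S_H)$. Where you genuinely diverge is the upper bound. The paper proves $f_{G_1}(t)\le\max\{f_G(t),f_H(t)\}$ by induction on the length of the free-product normal form of the boundary word of a geodesic triangle, splitting off a bigon or a factor triangle at each syllable cancellation and using monotonicity of the hyperbolicity functions; you instead use the tree-graded geometry of the free-product Cayley graph (pieces glued along cut vertices, Bass--Serre tree projection, branch piece). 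Both decompositions ultimately produce the same list of sub-configurations --- one central geodesic triangle inside a single piece plus geodesic bigons in pieces along the legs --- so your route is viable and arguably more transparent geometrically, while the paper's induction stays purely combinatorial and avoids tree-graded machinery.

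One sub-claim in your sketch is, however, wrong as stated, and it is exactly the point you flagged as the main obstacle: you say the legs of $\Delta$ away from the branch piece ``are matched through the shared cut vertices with only bounded error,'' yielding a universal additive constant $C$. The matching at cut vertices is exact, but inside an intermediate piece the two sides of $\Delta$ are in general two \emph{different} geodesics of that piece joining the same pair of cut vertices, and their divergence admits no uniform constant bound when a factor is lacunary hyperbolic without being hyperbolic --- which is precisely the regime of this theorem, so as literally phrased the step would fail. The repair is to note that each such configuration is a geodesic bigon (a degenerate triangle) contained in a single piece, of perimeter at most $t$, hence of thinness at most $\max\{f_G(t),f_H(t)\}$ by monotonicity of the factor hyperbolicity functions; the central triangle in the branch piece is handled the same way since restriction to a piece does not increase the perimeter. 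With this fix your comparison holds with $C=0$, i.e. $f_{G_1}(t)\le\max\{f_G(t),f_H(t)\}$, which is the paper's inequality, and the remainder of your argument (vanishing of $\liminf$ along a common sequence of scales being equivalent to Definition \ref{synchrodef}) goes through unchanged.
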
  
     Maximal elementary subgroup $E(g)$ of a non elementary hyperbolic group $G$, containing an element $g$ of infinite order, is characterized by $E(g)=\{x\in G| \ x^{-1}g^nx=g^{\pm n} \ for \ some \ n=n(x)\in\mathbb{N}-\{0 \}\}$. $E(g)$ in a hyperbolic group. In section \color{blue}\ref{section4}\color{black}\ of this article we take this algebraic definition (denote by $E^{\mathcal{L}}(g)$) and characterize for lacunary hyperbolic groups. 
    \begin{theorem}[Theorem \color{blue}\ref{elementarygroup}\color{black}]
    	Let $G$ be a lacunary hyperbolic group and $g\in G$ be an infinite order element. Then $E^{\mathcal{L}}(g)$ has a locally finite normal subgroup $N\triangleleft G$ such that:\\
    	Either $E^{\mathcal{L}}(g)/N$ is an abelian group of Rank 1(i.e. $E^{\mathcal{L}}(g)/N<(\mathbb{Q},+,\cdot )$) or $E^{\mathcal{L}}(g)/N$ is an extension of a rank one group by involutive automorphism(i.e, $a\rightarrow a^{-1}$).
    \end{theorem}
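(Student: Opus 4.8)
The plan is to exploit the realization of a lacunary hyperbolic group as a direct limit of hyperbolic groups and to analyze the maximal elementary subgroups one stage at a time. Using the characterization of lacunary hyperbolicity (Theorem~\ref{lhgdef}, in the form of \cite{OOS07}), I would write $G=\varinjlim(G_i,\alpha_i)$ as a direct limit of word hyperbolic groups $G_i=\langle S\rangle$ with hyperbolicity constants $\delta_i$ and injectivity radii $r_i$ satisfying $\delta_i=o(r_i)$, and denote by $\phi_i\colon G_i\to G$ the canonical surjections. Fixing a preimage $g_i\in G_i$ of $g$ with $\alpha_i(g_i)=g_{i+1}$, and noting that a homomorphism cannot increase the order of an element, each $g_i$ has infinite order in the hyperbolic group $G_i$, so the unique maximal elementary subgroup $E(g_i)$ containing $g_i$ is well defined.

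The first substantive step is to show $E^{\mathcal{L}}(g)=\bigcup_i\phi_i\bigl(E(g_i)\bigr)$. The inclusion $\supseteq$ is immediate, since applying $\phi_i$ to a relation $x^{-1}g_i^{\,n}x=g_i^{\pm n}$ in $G_i$ produces the defining relation of $E^{\mathcal{L}}(g)$. For $\subseteq$, given $x\in E^{\mathcal{L}}(g)$ with $x^{-1}g^{n}x=g^{\pm n}$, I lift $x$ to some $x_i\in G_i$; the word $x_i^{-1}g_i^{\,n}x_i g_i^{\mp n}$ lies in $\ker\phi_i$ and has length bounded independently of $i$, so once $r_i$ exceeds that length the relation already holds in $G_i$, placing $x_i\in E(g_i)$. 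The same relation-pushing gives $\alpha_i(E(g_i))\subseteq E(g_{i+1})$, so the subgroups $H_i:=\phi_i(E(g_i))$ form an increasing chain with union $E^{\mathcal{L}}(g)$; the analogous statement holds for the orientation-preserving part $E^{\mathcal{L}}_{+}(g)=\{x:x^{-1}g^{n}x=g^{n}\ \text{for some }n\}$, which has index at most $2$ and equals $\bigcup_i H_i^{+}$ with $H_i^{+}:=\phi_i(E^{+}(g_i))$.

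Next I would feed in the classical structure of $E(g_i)$ in a hyperbolic group: a maximal finite normal subgroup $F_i\triangleleft E(g_i)$ with $E^{+}(g_i)/F_i\cong\mathbb{Z}$ and $E(g_i)/F_i\in\{\mathbb{Z},D_\infty\}$. Since $\alpha_i$ sends finite-order elements to finite-order elements, $\alpha_i(F_i)\subseteq F_{i+1}$, so the images $\bar F_i=\phi_i(F_i)$ are finite, nested, and normal in $H_i$; I set $N=\bigcup_i\bar F_i$, which is locally finite and is exactly the torsion of $E^{\mathcal{L}}_{+}(g)$. Because $H_i^{+}\cap N=\bar F_i$, the bonding maps $H_i^{+}/\bar F_i\to H_{i+1}^{+}/\bar F_{i+1}$ are injective maps of infinite cyclic groups, so $E^{\mathcal{L}}_{+}(g)/N$ is a directed union of copies of $\mathbb{Z}$, i.e.\ a torsion-free rank-one abelian group embedding in $(\mathbb{Q},+)$. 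If $E^{\mathcal{L}}(g)=E^{\mathcal{L}}_{+}(g)$ this is the first alternative. Otherwise an element $t$ with $t^{-1}g^{n}t=g^{-n}$ induces on the rank-one group $A:=E^{\mathcal{L}}_{+}(g)/N$ an automorphism sending $\bar g\mapsto-\bar g$, which on a rank-one torsion-free group forces inversion on all of $A$; since a nonzero element of $A$ cannot be fixed by inversion, $t^{2}\in N$, and $E^{\mathcal{L}}(g)/N\cong A\rtimes\langle\bar t\rangle$ is the extension of a rank-one group by the involution $a\mapsto a^{-1}$, the second alternative.

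The step I expect to be the main obstacle is the normality of $N$ in all of $G$; normality in $E^{\mathcal{L}}(g)$ is clear, since conjugation preserves the torsion of the index-$\le2$ normal subgroup $E^{\mathcal{L}}_{+}(g)$. A conjugate $huh^{-1}$ of $u\in N$ is again torsion in $G$, but there is no a priori reason for it to return to $E^{\mathcal{L}}_{+}(g)$. To control this I would compare $\bar F_i$ with the image of the finite radical $E(G_i)$, the maximal finite normal subgroup of $G_i$: one has $E(G_i)\subseteq F_i$, and the subgroups $\phi_i(E(G_i))$ form a nested family that is genuinely normal in $G$. The task is then to prove that the sublinearity $\delta_i=o(r_i)$ forces $\phi_i(F_i)$ and $\phi_i(E(G_i))$ to coincide in the limit, so that $N$ is a directed union of finite radicals and hence normal in $G$. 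This uniformity statement, measuring exactly how much of the elementary finite radical $F_i$ survives in the quotient, is the technical heart of the argument; the remaining assertions are formal consequences of the direct-limit description established above.
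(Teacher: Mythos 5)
Your argument is, in substance, the paper's own proof. The paper likewise writes $G=\varinjlim G_i$ via Theorem \ref{OOS}, proves $\alpha_i(E(g_i))\subseteq E(g_{i+1})$ from the algebraic characterization (its Lemma \ref{lemma1}), identifies $E^{\mathcal{L}}(g)$ with $\varinjlim E(g_i)$ by pushing a relation $x^{-1}g^nx=g^{\pm n}$ down to a finite stage (Lemma \ref{lemma2}; note that the plain direct-limit property --- an element mapping to $1$ in the limit dies at a finite stage --- suffices here, whereas your appeal to the injectivity radii is slightly miscalibrated, since $r_{S_i}(\alpha_i)$ controls injectivity of $\alpha_i$, not of the map $G_i\to G$), introduces $E^{+}(g)=\varinjlim E^{+}(g_i)$ of index at most $2$, and takes $N=T=\varinjlim T_i$, the limit of the finite torsion subgroups, which is locally finite. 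Your route to rank one (a directed union of infinite cyclic groups) differs only cosmetically from the paper's (which checks directly that $E^{+}(g)/T$ is abelian and that any two nontrivial elements satisfy $x^ny^m=1$), and it needs one repair: $H_i^{+}\cap N$ can be strictly larger than $\bar F_i$, because an element of $\phi_i(E^{+}(g_i))$ may become torsion only at a later stage, so the maps $H_i^{+}/\bar F_i\to H_{i+1}^{+}/\bar F_{i+1}$ need not be injective; instead use $H_i^{+}N/N\cong H_i^{+}/(H_i^{+}\cap N)$, a torsion-free quotient of the cyclic group $H_i^{+}/\bar F_i$, hence trivial or infinite cyclic, and these form a directed union exhausting $E^{+}(g)/N$. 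Your explicit treatment of the dihedral case ($t^2\in N$ and the inversion action) is more detailed than the paper's, which leaves that alternative implicit.

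The one place you genuinely diverge is the final paragraph, and there you are chasing a phantom: the clause ``$N\triangleleft G$'' in the statement is a misprint. The paper's own lemma asserts and proves only that $T$ is a locally finite \emph{normal subgroup of} $E^{\mathcal{L}}(g)$, and normality in $G$ is false in general, so the program you outline --- forcing $\phi_i(F_i)$ to agree with the images of the finite radicals $E(G_i)$ using $\delta_i=o(r_{S_i}(\alpha_i))$ --- cannot be completed. Concretely, let $G=(\mathbb{Z}/2\times\mathbb{Z})*\mathbb{Z}$, which is hyperbolic and hence lacunary hyperbolic, and let $g$ generate the $\mathbb{Z}$ factor of $A:=\mathbb{Z}/2\times\mathbb{Z}$. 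By the conjugacy theorem for free products, $E^{\mathcal{L}}(g)=A$; the only locally finite subgroup of $A$ whose quotient has one of the two stated forms is $N=\mathbb{Z}/2\times\{0\}$ (the quotient by the trivial subgroup is $\mathbb{Z}/2\times\mathbb{Z}$, which neither embeds in $\mathbb{Q}$ nor is an extension by the inversion automorphism, the group being abelian), and conjugating the generator of $N$ by a generator of the free factor $\mathbb{Z}$ yields a reduced word of length three, so $N$ is not normal in $G$; here the finite radicals are trivial while $N$ is not, so the coincidence you hoped sublinearity would force fails outright. The correct move is to prove normality in $E^{\mathcal{L}}(g)$ only --- in your setup this is immediate, since $N$ is precisely the set of torsion elements of the index-at-most-$2$ subgroup $E^{\mathcal{L}}_{+}(g)$, which is preserved by conjugation within $E^{\mathcal{L}}(g)$ --- and to delete the final paragraph; with that, your proposal coincides with the paper's proof.
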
	
     Conversely, for any rank one abelian group $E$, there exists a lacunary hyperbolic group $G$ with an infinite order element $g\in G$ such that $E=E^{\mathcal{L}}(g)$. More generally we obtained the following,
     \begin{theorem}[Theorem \color{blue}\ref{monster}\color{black}]\label{rankoneintro}
     	For any torsion free non elementary hyperbolic group $G$ and a countable family $\mathscr{F}:=\{Q^i_m\}_{i\in\mathbb{N}}$ of subgroups of $(\mathbb Q,+)$, there exists a non elementary, torsion free, non abelian lacunary hyperbolic quotient $G^{\mathscr{F}}$ of $G$ such that the set of all maximal subgroups of $G^{\mathscr{F}}$ is equal up to isomorphism to $\{Q^i_m\}_{i\in\mathbb{N}}$ i.e, every maximal subgroup of $G^{\mathscr{F}}$ is isomorphic to $Q^i_m$ for some $i\in\mathbb{N}$ and for every $i$ there exists a maximal subgroup of $G^{\mathscr{F}}$ that is isomorphic to $Q^i_m$.
     \end{theorem}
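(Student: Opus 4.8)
The plan is to realize $G^{\mathscr{F}}$ as the direct limit of a tower of torsion-free, non-elementary hyperbolic quotients
\[
G = G(0) \twoheadrightarrow G(1) \twoheadrightarrow G(2) \twoheadrightarrow \cdots,
\]
each obtained from its predecessor by Olshanskii's graded small cancellation theory over hyperbolic groups \cite{Ol93}, and to certify lacunary hyperbolicity of the limit via the hyperbolicity-function criterion of Theorem \ref{lhgdef}. The combinatorial input is the classical description of rank-one groups: every nontrivial subgroup of $(\mathbb{Q},+)$ is, up to isomorphism, a direct limit $\varinjlim\bigl(\mathbb{Z}\xrightarrow{\times d_1}\mathbb{Z}\xrightarrow{\times d_2}\cdots\bigr)$ for a suitable sequence of integers $d_k\geq 2$ encoding its Baer characteristic. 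Thus realizing $Q^i_m$ as a maximal subgroup amounts to producing in the limit a loxodromic element $g_i$ whose root closure is forced to be exactly the nested union $\bigcup_k\langle v_{i,k}\rangle$ with $v_{i,k}=v_{i,k+1}^{d^i_k}$, in the spirit of Guba's divisible group \cite{Gu86}.

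First I would fix, inside the non-elementary hyperbolic group $G$, a countable set of pairwise non-commensurable loxodromic elements $\{g_i\}_{i\in\mathbb{N}}$ with $E(g_i)=\langle g_i\rangle$ (using torsion-freeness), serving as the seeds of the prescribed rank-one subgroups, together with an enumeration of all conjugacy classes of loxodromic elements of the successive quotients. Passing from $G(n)$ to $G(n+1)$, I would, for the relevant index $(i,k)$, adjoin a $d^i_k$-th root to the current seed $v_{i,k}$ by choosing a long, quasi-geodesic word $w$ in general position and imposing the single relation $w^{d^i_k}=v_{i,k}$; the graded small cancellation machinery (which already accommodates periodic relators of the form $w^{d}$) guarantees that, provided $w$ is chosen long enough relative to the hyperbolicity constant $\delta_n$ of $G(n)$, the quotient $G(n+1)$ is again torsion-free, non-elementary and hyperbolic, with $G(n)\to G(n+1)$ injective on a ball of some radius $r_n$. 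Interleaved with these root-adjoining steps, I would use the same tool to process every other loxodromic conjugacy class by a diagonal enumeration, forcing each remaining maximal elementary subgroup to acquire the characteristic of one of the prescribed $Q^i_m$ (and, if $\mathbb{Z}$ is not among the family, leaving no class with root closure $\mathbb{Z}$).

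Lacunary hyperbolicity then follows by arranging the defining relators to be long enough that $r_n\to\infty$ while $\delta_n/r_n\to 0$; this is precisely the regime in which Theorem \ref{lhgdef} (equivalently the direct-limit criterion of \cite{OOS07}) applies to $G^{\mathscr{F}}=\varinjlim G(n)$. To read off the maximal subgroups I would invoke Theorem \ref{elementarygroup}: in the lacunary hyperbolic group $G^{\mathscr{F}}$ every $E^{\mathcal{L}}(g)$ is, modulo a locally finite normal subgroup, either rank-one abelian or a rank-one group extended by the involution $a\mapsto a^{-1}$. Torsion-freeness of all the $G(n)$ eliminates the locally finite kernel, and it also rules out the involutive case, since the inversion action on a subgroup of $\mathbb{Q}$ has trivial norm and trivial fixed points, forcing that extension to split and hence to contain an involution. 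Therefore each maximal subgroup is rank-one abelian; by construction $E^{\mathcal{L}}(g_i)=\bigcup_k\langle v_{i,k}\rangle\cong Q^i_m$, and the diagonal step guarantees that every loxodromic element lies in a conjugate of some $E^{\mathcal{L}}(g_j)$, so the isomorphism types of maximal subgroups are exactly $\{Q^i_m\}_{i\in\mathbb{N}}$.

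I expect the main obstacle to be the simultaneous control required in the inductive step: keeping each intermediate group hyperbolic, torsion-free and non-elementary while (a) letting the injectivity radii grow fast enough for lacunarity and, most delicately, (b) ruling out unintended maximal subgroups. Point (b) is the genuine difficulty --- the diagonal enumeration must assign to every conjugacy class of loxodromic elements a prescribed Baer type drawn from $\{Q^i_m\}$ and verify that no incommensurable rank-one subgroup of a different type survives into the limit. This requires the small-cancellation relators to be chosen so that each root-extraction relation interacts only with the intended elementary subgroup, exploiting the almost malnormality of the $E(g_i)$ so that no accidental divisibilities or fusions of distinct classes are created. Establishing this non-interference uniformly along the entire tower is where the bulk of the technical work lies.
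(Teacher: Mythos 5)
There is a genuine gap, and it sits exactly at the step you yourself flagged as ``point (b)''. Your conclusion that the maximal subgroups are precisely the $Q^i_m$ is derived from the claim that every loxodromic element lies in a conjugate of some $E^{\mathcal{L}}(g_j)$ of prescribed Baer type. That inference is invalid: controlling the isomorphism type of every subgroup of the form $E^{\mathcal{L}}(g)$ says nothing about proper subgroups that are not of this form. In a tower built only from root-adjoining relations and conjugacy-class processing, a subgroup such as $\langle a, bab^{-1}\rangle$ for generic non-commensurable $a,b$ will typically survive as a proper non-abelian subgroup of the limit; since in a finitely generated group every proper subgroup lies in a maximal one (the union of a chain of proper subgroups is proper), some maximal subgroup would then be non-abelian, contradicting the desired conclusion. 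The same defect undermines the existence half of the statement: you never show that $E^{\mathcal{L}}(g_i)\cong Q^i_m$ is itself \emph{maximal}, i.e., that nothing proper sits strictly between it and the whole group. Your proposed remedy --- assigning Baer types to all loxodromic classes and preventing ``incommensurable rank-one subgroups of a different type'' --- does not touch this obstruction, because the danger is not exotic rank-one subgroups but non-elementary ones. The missing ingredient is Ol'shanskii's pair-enumeration trick, which is the backbone of the paper's proof: one enumerates \emph{all pairs} $(u_i,v_i)$ of reduced words and, whenever $v_{j_i}\notin E(u_{j_i})$, imposes small-cancellation relations of the form $zU^{m_1}VU^{m_2}V\cdots$ with $z$ ranging over the generators outside $E(u_{j_i})$ and $U,V$ representing $u_{j_i},v_{j_i}$, so that in the quotient $gp\{u_{j_i},v_{j_i}\}$ is the entire group. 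This yields the dichotomy of Lemma \ref{limitmax}(2) --- either $u_i\in E(v_i)$ or $\{u_i,v_i\}$ generates everything --- from which every proper subgroup of the limit is abelian of rank one, and only then do the maximal subgroups coincide with the $E^{\mathcal{L}}(g)\cong Q^i_m$. Without relations of this kind your construction cannot prove the theorem.

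Your other deviations from the paper are benign. You adjoin roots by imposing relations $w^{d^i_k}=v_{i,k}$ with $w$ a long word in general position (closer to Guba's original method), whereas the paper first forms amalgamated products $H^{k}_{i+1}=H^{k-1}_{i+1}*_{u_k=(g^k_{(k,i+1)})^{m^k_{i+1}}}\langle g^k_{(k,i+1)}\rangle_{\infty}$, hyperbolic by Theorem \ref{olmik} and \cite{MO98}, and then kills the new generators back into the old ones by small-cancellation quotients via Theorem \ref{ol93lemma}; either implementation of root adjunction can be made to work, and the paper's version makes the type bookkeeping cleaner through the centralizer-preservation property (5) of \cite[Theorem 2]{Ol93}. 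Likewise your lacunarity argument (injectivity radii growing with $\delta_n/r_n\to 0$), the elimination of the locally finite kernel by torsion-freeness of the limit, and the cohomological remark ruling out the involutive case of Theorem \ref{elementarygroup} are all sound. But as written, the proof of the maximality assertion --- the heart of Theorem \ref{monster} --- is absent.
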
  
     Taking $\mathscr{F}=\{\mathbb{Q}\}$ in previous theorem we recover the following theorem by V.S. Guba,
     \begin{corollary}\cite[Theorem 1]{Gu86}
     	There exists a non trivial finitely generated torsion free divisible group. 
     \end{corollary}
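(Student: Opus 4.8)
The plan is to specialize Theorem \ref{rankoneintro} to the one-element family $\mathscr{F} = \{\mathbb{Q}\}$, where $\mathbb{Q} = (\mathbb{Q},+)$ is regarded as a subgroup (indeed the maximal subgroup) of itself. Starting from any torsion free non elementary hyperbolic group $G$, for instance a free group of rank two, the theorem produces a non elementary, torsion free, non abelian lacunary hyperbolic quotient $G^{\mathscr{F}}$ all of whose maximal subgroups are isomorphic to $\mathbb{Q}$. Three of the four properties required of Guba's group are then immediate: $G^{\mathscr{F}}$ is finitely generated because it is a quotient of the finitely generated group $G$; it is torsion free by the conclusion of the theorem; and it is nontrivial since it is non abelian. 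It therefore remains only to verify divisibility.

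For divisibility I would argue as follows. Fix $g \in G^{\mathscr{F}}$ with $g \neq 1$ and an integer $n \geq 1$; I must produce $h$ with $h^n = g$. From the algebraic definition $E^{\mathcal{L}}(g) = \{x : x^{-1} g^k x = g^{\pm k} \text{ for some } k \in \mathbb{N} \setminus \{0\}\}$ one sees at once, by taking $x = g$, that $g \in E^{\mathcal{L}}(g)$. The construction underlying Theorem \ref{rankoneintro} is arranged so that the subgroups $E^{\mathcal{L}}(g)$, for infinite order $g$, are precisely the maximal subgroups of $G^{\mathscr{F}}$; since $G^{\mathscr{F}}$ is torsion free, every nontrivial $g$ has infinite order, so $E^{\mathcal{L}}(g)$ is a maximal subgroup containing $g$. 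By the conclusion of the theorem $E^{\mathcal{L}}(g) \cong \mathbb{Q}$. As multiplication by $n$ is an automorphism of $(\mathbb{Q},+)$, the $n$-th power map is surjective on $E^{\mathcal{L}}(g)$, so there exists $h \in E^{\mathcal{L}}(g) \leq G^{\mathscr{F}}$ with $h^n = g$. Hence $G^{\mathscr{F}}$ is divisible, which is the corollary.

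The only nonroutine point is the structural claim that every nontrivial element of $G^{\mathscr{F}}$ lies in a maximal subgroup, equivalently that each $E^{\mathcal{L}}(g)$ is maximal and is a full copy of $\mathbb{Q}$ rather than a proper subgroup of it. This is exactly what the small cancellation and direct limit machinery in the proof of Theorem \ref{rankoneintro} is engineered to guarantee: one imposes relations step by step so as to realize each prescribed member of $\mathscr{F}$ as some $E^{\mathcal{L}}(g)$ while ensuring that no larger subgroup survives in the limit, so that the elementary closure of every element is one of the maximal subgroups coming from $\mathscr{F}$. Granting that construction, the transfer of divisibility from $\mathbb{Q}$ to $G^{\mathscr{F}}$ is the elementary observation above, and no further work is needed.
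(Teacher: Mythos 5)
Your proposal is correct and matches the paper's own route: the paper likewise obtains this corollary by taking $\mathscr{F}=\{\mathbb{Q}\}$ in Theorem \ref{monster}, so that every nontrivial element $g$ lies in a maximal subgroup $E^{\mathcal{L}}(g)\cong(\mathbb{Q},+)$, whence divisibility transfers from $\mathbb{Q}$. You merely make explicit the divisibility verification that the paper leaves implicit, which is a fair elaboration rather than a different argument.
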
   
     By taking $\mathscr{F}=\{\mathbb{Z}\}$ we get following,
     \begin{corollary}\cite[Corollary 1]{Ol93}
     	Every non cyclic torsion free hyperbolic group $G$ has a non cyclic quotient $\bar{G}$ such that every proper non trivial subgroups of $\bar{G}$ is infinite cyclic.
     \end{corollary}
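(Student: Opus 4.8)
The plan is to derive this statement as a direct specialization of Theorem~\ref{monster}. First I would apply that theorem to the given non-cyclic torsion-free hyperbolic group $G$ --- which, being non-cyclic and torsion-free, is automatically non-elementary, since a torsion-free elementary hyperbolic group is either trivial or infinite cyclic --- taking the singleton family $\mathscr{F}=\{\mathbb{Z}\}$ and viewing $\mathbb{Z}$ as the rank-one subgroup $\mathbb{Z}\leq(\mathbb{Q},+)$. This produces a non-elementary, torsion-free, non-abelian lacunary hyperbolic quotient $\bar{G}:=G^{\mathscr{F}}$ of $G$ whose set of maximal subgroups, up to isomorphism, is exactly $\{\mathbb{Z}\}$; that is, every maximal subgroup of $\bar{G}$ is infinite cyclic. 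Since $\bar{G}$ is non-abelian it is in particular non-cyclic, which already yields the first assertion of the corollary.

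The remaining point is to upgrade the statement about \emph{maximal} subgroups to one about \emph{all} proper non-trivial subgroups. For this I would use that $\bar{G}$, being a quotient of the finitely generated group $G$, is itself finitely generated. In any finitely generated group every proper subgroup is contained in a maximal subgroup: given a proper subgroup $H$, one orders by inclusion the proper subgroups containing $H$ and observes that the union of any chain of such subgroups is again proper, since otherwise each of the finitely many generators of $\bar{G}$ would lie in a single member of the chain, forcing that member to be all of $\bar{G}$. Zorn's lemma then supplies a maximal subgroup $M$ with $H\subseteq M$.

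Finally, I would let $H$ be an arbitrary proper non-trivial subgroup of $\bar{G}$ and let $M\supseteq H$ be a maximal subgroup containing it. By the first paragraph $M\cong\mathbb{Z}$, so $H$ is a non-trivial subgroup of an infinite cyclic group and is therefore itself infinite cyclic, completing the proof. There is no serious obstacle here: essentially all of the content is carried by Theorem~\ref{monster}, and the only thing to verify by hand is the elementary fact that in a finitely generated group every proper subgroup lies inside a maximal one, which is exactly the step where the finite generation of $\bar{G}$ (inherited from $G$) is used.
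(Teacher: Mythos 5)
Your proposal is correct and takes essentially the same route as the paper, whose entire proof of this corollary is the one-line specialization of Theorem~\ref{monster} to the constant family $Q^i_m=\mathbb{Z}$ (together with the observation you make that a non-cyclic torsion-free hyperbolic group is non-elementary). Your supplementary Zorn's-lemma step for passing from maximal subgroups to all proper non-trivial ones is sound and fills a point the paper leaves implicit; alternatively, the paper's own proof of Theorem~\ref{monster} already shows via Lemma~\ref{limitmax}(2) that \emph{every} proper subgroup is abelian and contained in some $E^{\mathcal{L}}(g)\cong\mathbb{Z}$, which yields the same upgrade without invoking Zorn's lemma.
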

   A group $G$ has unique product property whenever for all pairs of non empty finite subsets $A$ and $B$ of $G$ the set of products $AB$ has an element $g\in G$ with a unique representation of the form $g=ab$ with $a\in A$ and $b\in B$. In \cite{RS87}, Rips and Segev gave the first examples of torsion-free groups without the unique product property. Other examples of torsion free groups without unique product can be found in \cite{Pas77,Pro88,Car14,Al91}. As a corollary of Theorem \color{blue}\ref{rankoneintro}\color{black}\ we get following,
   \begin{corollary}[Corollary \color{blue}\ref{monsteruniqueproduct}\color{black}]
   	For every rank one abelian group $Q_m$, there exists a non elementary, torsion free, property $(T)$, lacunary hyperbolic group without the unique product property $G^{Q_m}$ such that any maximal subgroup of $G^{Q_m}$ is isomorphic to $Q_m$.\par 
   	In particular, there exist a non trivial property $(T)$ torsion free divisible lacunary hyperbolic group without the unique product property.
   \end{corollary}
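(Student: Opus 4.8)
The plan is to deduce everything from Theorem~\ref{rankoneintro} by feeding it a carefully chosen seed group and then upgrading the output with the two extra features that theorem does not already guarantee, namely property $(T)$ and the failure of the unique product property; lacunary hyperbolicity, torsion-freeness, non-elementarity and the description of the maximal subgroups will all be inherited verbatim from the conclusion of Theorem~\ref{rankoneintro}.

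First I would fix the seed group. I need a torsion free, non elementary hyperbolic group $G_0$ that in addition has property $(T)$. Such groups exist: a cocompact lattice $\Gamma$ in $\mathrm{Sp}(n,1)$ with $n\geq 2$ is word hyperbolic, non elementary and has property $(T)$, and by Selberg's lemma $\Gamma$ contains a torsion free finite index subgroup $G_0$, which remains hyperbolic, non elementary and has property $(T)$ (property $(T)$ passing to finite index subgroups). Applying Theorem~\ref{rankoneintro} to $G_0$ with the one element family $\mathscr F=\{Q_m\}$ produces a non elementary, torsion free, non abelian lacunary hyperbolic quotient $G^{Q_m}$ of $G_0$ all of whose maximal subgroups are isomorphic to $Q_m$. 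Since $G^{Q_m}$ is a quotient of $G_0$ and property $(T)$ is preserved under quotients, $G^{Q_m}$ has property $(T)$. At this point every asserted property except the failure of the unique product property is in hand.

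The main obstacle is therefore to show that $G^{Q_m}$ fails the unique product property, i.e. to exhibit finite nonempty subsets $A,B\subseteq G^{Q_m}$ such that no element of $AB$ has a unique representation $ab$ with $a\in A$ and $b\in B$. Here the structural peculiarity of the monster is the key leverage: every nontrivial proper subgroup of $G^{Q_m}$ is contained in a maximal one, which is the abelian rank one group $Q_m$, so any two non commuting elements must generate all of $G^{Q_m}$. I would exploit the small cancellation nature of the construction behind Theorem~\ref{rankoneintro}: the group $G^{Q_m}$ is realized as a direct limit of hyperbolic groups obtained by imposing relators so that the injectivity radii of the successive quotient maps tend to infinity, and one has substantial freedom in the choice of relators at each stage. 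The plan is to plant, among these relators, a Rips--Segev type trap, a fixed finite configuration of words whose images $A,B$ form a non unique product pair, and then to verify by a van Kampen diagram argument that no cancellation introduced at later stages can destroy the coincidences $a_1b_1=a_2b_2$ witnessing the failure, while simultaneously checking that planting the trap changes neither the isomorphism type of the maximal subgroups nor the sublinear thinness estimates that guarantee lacunary hyperbolicity. Controlling this interaction, i.e. keeping the non unique product witness alive in the limit without creating new subgroups or spoiling the hyperbolicity function, is the delicate point and is where most of the work will go.

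Finally, the ``in particular'' statement follows by specialization. Taking $Q_m=\mathbb Q$, every nontrivial element of $G^{\mathbb Q}$ lies in a maximal subgroup isomorphic to the divisible group $\mathbb Q$, so for every $g\neq 1$ and every $n\geq 1$ there is an $n$th root of $g$ inside that maximal subgroup; hence $G^{\mathbb Q}$ is divisible. Combined with the conclusions already obtained, $G^{\mathbb Q}$ is a non trivial, torsion free, divisible, property $(T)$, lacunary hyperbolic group without the unique product property, as required.
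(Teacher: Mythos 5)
Your reductions for property $(T)$ (passes to quotients), for the maximal subgroup structure (inherited from Theorem~\ref{monster}), and for divisibility in the ``in particular'' part (every element lies in some $E^{\mathcal{L}}(g)\cong\mathbb{Q}$, cf.\ Remark~\ref{maxele}) are all correct. But the heart of the corollary --- failure of the unique product property --- is not proved in your proposal, and your strategy for it has a genuine gap. You seed the construction with a torsion-free lattice in $\mathrm{Sp}(n,1)$, a group that is not known to fail unique product, and then propose to ``plant a Rips--Segev type trap'' among the relators of the tower and to verify by van Kampen arguments that later quotients preserve the witnessing coincidences. This is not a proof step but a deferral of the entire difficulty: manufacturing a non-unique-product configuration by imposing relators over an arbitrary hyperbolic group is essentially the content of Arzhantseva--Steenbock \cite{AS14}, a substantial result in its own right, and it is far from clear it can be merged with the construction behind Theorem~\ref{monster}: the relators there have the rigid form (\ref{systemwords}) needed for the $C'(\lambda,c,\epsilon,\mu,\rho)$-condition and for the maximal-subgroup control (Lemma~\ref{limitmax}, in particular part 2, which forces any pair $u_i,v_i$ with $v_i\notin E(u_i)$ to generate all of $G_i$), and you give no argument that a Rips--Segev relator system satisfies a compatible small cancellation condition over each $G_i$, nor that it preserves torsion-freeness or the isomorphism type of the $E^{\mathcal{L}}(g)$'s. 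As written, the proposal proves strictly less than the statement.

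The paper avoids this entirely by inverting the logic: take as the \emph{seed} a group $H$ that already fails unique product --- by \cite{AS14} there exist torsion-free, property $(T)$, hyperbolic groups without the unique product property --- and run Theorem~\ref{monster} on it. The failure of unique product is witnessed by fixed finite sets $A,B$; if a quotient map is injective on a ball containing $A\cup B\cup AB$, then $\bar{a}_1\bar{b}_1=\bar{a}_2\bar{b}_2$ holds in the quotient exactly when $a_1b_1=a_2b_2$ holds upstairs, so the images $\bar{A},\bar{B}$ still witness the failure. Since in the construction one may choose the relator lengths (equivalently the injectivity radii $N_i$) as large as one wants --- this is precisely the remark following Corollary~\ref{monsteruniqueproduct}, requiring the first relators to be longer than the sizes of $A$, $B$ and $AB$ --- the witness survives every stage and hence survives into the direct limit $G^{Q_m}$. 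This one-paragraph persistence argument replaces all of the ``delicate'' work your plan leaves open; if you repair your proposal by swapping your $\mathrm{Sp}(n,1)$ seed for the \cite{AS14} seed and adding this injectivity-radius observation, the rest of your argument goes through as stated.
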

In subsection \color{blue}\ref{rankonebyfinitesection}\color{black}\ we discuss locally finite by rank one abelian subgroups of lacunary hyperbolic groups. We denote the class of increasing union of elementary subgroups as ${}_{rk-1}\mathscr{E}_{F}$ (see Notation \color{blue}\ref{notationelementary}\color{black}). Even though Theorem \color{blue}\ref{rankoneintro}\color{black} \ can be thought of as a corollary of the following theorem, we choose to discuss those in two different subsections in order to make a clear exposition.   
\begin{theorem}[Theorem \color{blue}\ref{rk1byfinite}\color{black}]
	Let $G$ be a torsion free non elementary hyperbolic group and $\mathscr{C}:=\{E^j \}_{j\in\mathbb{N}}$ be a countable collection of groups with $E^j\in {}_{rk-1}\mathscr{E}_{F}$ for all $j\geq 1$. Then there exists a non elementary lacunary hyperbolic quotient $G^{\mathscr{C}}$ of $G$ such that $\{E^{\El}(h)\ | \ h\in (G^{\mathscr{C}})^0 \}=\mathscr{C}$.\par
	Moreover $\mathscr{C}$ is the set of all maximal proper subgroups of the group $G^{\mathscr{C}}$.
\end{theorem}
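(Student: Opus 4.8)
The plan is to realize $G^{\mathscr{C}}$ as a direct limit
\[
G=G(0)\to G(1)\to G(2)\to\cdots,\qquad G^{\mathscr{C}}=\varinjlim_{i} G(i),
\]
of quotient maps, where each $G(i+1)$ is obtained from the non-elementary hyperbolic group $G(i)$ by adjoining finitely many relators satisfying a small cancellation condition over $G(i)$ in the sense of Olshanskii \cite{Ol93}. This keeps every $G(i)$ non-elementary hyperbolic with hyperbolicity constant $\delta_i$, and it lets me prescribe a lower bound $r_i$ on the lengths of the new relators (the injectivity radius at stage $i$). Writing each $E^j=\bigcup_k E^j_k$ as the given increasing union of elementary groups, I then process a single diagonal enumeration of three interleaved families of relations: the \emph{realization} relations, the \emph{killing} relations, and a \emph{lacunarity} schedule.

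\emph{Realization.} For the rank one abelian part of each $E^j$ I fix an infinite-order element $h_j$ and, stage by stage, force $h_j$ to acquire roots along the prescribed divisibility pattern by adjoining new generators $t$ together with relations of the form $t^n=(\text{current root of }h_j)$; for the locally finite part I amalgamate the relevant finite groups as an $E(g)$-type extension around $h_j$, following the $E(g)$-machinery of \cite{Ol93,OOS07}. Each such adjunction is arranged to satisfy the small-cancellation hypotheses, so that $G(i+1)$ is again non-elementary hyperbolic and the partial subgroup $E^j_k$ embeds into $G(i+1)$; in the limit this yields an embedding $E^j\hookrightarrow G^{\mathscr{C}}$ with $E^j\subseteq E^{\El}(h_j)$.

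\emph{Killing.} To force $\mathscr{C}$ to be not merely contained in, but equal to, the set of maximal subgroups, I enumerate all elements $w$ of the groups $G(i)$ and, for each infinite-order $w$ not already inside one of the $E^j$, add a relator conjugating a power of $w$ into a previously built $E^j$. This is the Tarski-monster mechanism: in the limit every infinite-order element lies in some $E^{\El}(h_j)$, hence every proper subgroup of the non-elementary group $G^{\mathscr{C}}$ is contained in some $E^{\El}(h)$. Combined with the structure theorem (Theorem \ref{elementarygroup}), which identifies $E^{\El}(h)$ as locally finite by rank one abelian, and with the realization step, this gives $\{E^{\El}(h)\mid h\in(G^{\mathscr{C}})^0\}=\mathscr{C}$ and shows that these groups are exactly the maximal proper subgroups.

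\emph{Lacunarity and the main obstacle.} Interleaving the stages so that along a subsequence $i_k$ one has $\delta_{i_k}/r_{i_k}\to 0$, Theorem \ref{lhgdef} gives $\liminf_{t\to\infty} f_{G^{\mathscr{C}}}(t)/t=0$, so the limit is lacunary hyperbolic. The hard part is the simultaneous bookkeeping that keeps realization and killing compatible at every finite stage while preserving both non-elementary hyperbolicity and the faithfulness of all partial embeddings $E^j_k\hookrightarrow G(i)$: when the killing step conjugates $w$ into some $E^j$, I must not collapse the pieces of the other $E^{j'}$ already embedded, nor disturb the divisibility and torsion structure already built into $E^j$, nor merge two distinct $E^j,E^{j'}$. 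This forces the new relators to satisfy a small-cancellation condition relative to all the distinguished subgroups constructed so far, which is exactly where Olshanskii's theory of suitable subgroups and the rigidity of $E(g)$ in hyperbolic groups enter, and where $r_i$ must be chosen large (depending on the finite data built before stage $i$) to rule out short unwanted coincidences. Verifying that no relation among old generators of length below the injectivity radius is ever created then shows the partial embeddings stay faithful in the limit, yielding all the asserted properties.
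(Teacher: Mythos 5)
Your overall architecture---a limit of non-elementary hyperbolic groups obtained by alternately adjoining roots/elementary pieces by amalgamation and passing to small-cancellation quotients, with lacunarity arranged by a $\delta_i/r_i\to 0$ schedule---is the same as the paper's. But your \emph{killing} step contains a genuine gap. You enumerate single elements $w$ and conjugate a power of each $w$ into some $E^j$; you then conclude that ``in the limit every infinite-order element lies in some $E^{\El}(h_j)$, hence every proper subgroup \ldots is contained in some $E^{\El}(h)$.'' That inference is false: knowing that every element lies in (a conjugate of) some maximal $E^{\El}$-subgroup says nothing about subgroups generated by two elements lying in different such subgroups. (In a free group every element lies in a maximal cyclic subgroup, yet almost no proper subgroup is contained in one.) What is actually needed---and what the paper does---is the Ol'shanskii \emph{pair} mechanism: enumerate all pairs $(u_i,v_i)$ of reduced words in the fixed generators, and at stage $i$ adjoin the relator system $\mathcal R(Z_i,u_{j_i},v_{j_i},\lambda,c,\epsilon,\mu,\rho)$, where $Z_i=\{x\in X\mid x\notin E(u_{j_i})\}$, which expresses every generator as a word in $u_{j_i},v_{j_i}$. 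This yields the dichotomy of Lemma \ref{limitmaxrankonebyfinite}(2): either $v_i\in E(u_i)$ or $gp\{u_i,v_i\}=G_i$, and it is exactly this dichotomy that powers the maximality argument (a proper subgroup $P$ containing anything outside a fixed $E^p$, after the $yg^p_1$ adjustment for torsion, would generate all of $G^{\mathscr{C}}$). Relatedly, your proposal never controls torsion: since the amalgamations introduce finite-order elements $a^p_l$, one must force every finite-order element into some $E^p_i$ (the paper gets this from Theorem \ref{osin} together with part (7) of \cite[Theorem 2]{Ol93}, Remark \ref{lemma4.41}(2)); otherwise a maximal subgroup generated by stray torsion is not excluded.

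A second, smaller defect: you describe the chain $G(0)\to G(1)\to\cdots$ as consisting of quotient maps, yet your realization step ``adjoin[s] new generators $t$ with $t^n=\cdots$,'' which is not a quotient. The paper resolves this tension with the zigzag $G_i\hookrightarrow H_{i+1}\twoheadrightarrow G'_{i+1}\twoheadrightarrow G_{i+1}$, where the small-cancellation words $\mathcal R(Y_i,c_i,c_i',\ldots)$ and the Osin relators $a^s_l w^s_l$ express the adjoined roots and torsion generators in terms of the \emph{old} generators, so that the composite $G_i\twoheadrightarrow G_{i+1}$ is surjective and carries a fixed finite generating set to a generating set. Without closing this loop you cannot invoke part (c) of Theorem \ref{OOS} (which requires $\alpha_i(S_i)=S_{i+1}$) for lacunarity, nor conclude that $G^{\mathscr{C}}$ is a quotient of the given group $G$, nor even that it is finitely generated. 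Your closing paragraph correctly identifies the bookkeeping difficulties, but identifying them is not the same as supplying the mechanism---the pair enumeration and the generator-re-expression relators---that actually resolves them.
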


    We also investigate Rips type construction in this article in order to construct lacunary hyperbolic groups from a given finitely generated group by closely following the proof strategy of E. Rips for hyperbolic group along with small cancellation condition developed by A. Olshanskii in \cite{Ol93}, later in \cite{OOS07} by A.O'lshanskii, D. Osin and M. Sapir, and in \cite{Os'10} by D. Osin.
    \begin{theorem}[Theorem \color{blue}\ref*{Rips}\color{black}]\label{ripsintro}
    		Let $Q$ be a finitely generated group and $H$ be a non elementary hyperbolic group. Then there exist groups $G$ and $N$, for which we get a short exact sequence;
    	$$1\rightarrow N\rightarrow G\rightarrow Q\rightarrow 1$$
    	such that;
    	\begin{enumerate} 
    	\item $G$ is a lacunary hyperbolic group.
    	\item $N$ is a $2$ generated non elementary quotient of $H$.
    	\item If $H$ is torsion free then so are $G$ and $N$.
    	\end{enumerate}    	 
    \end{theorem}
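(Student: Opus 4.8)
The plan is to run a graded (layered) version of the Rips construction over the hyperbolic group $H$, in the spirit of Olshanskii's small cancellation theory \cite{Ol93} and its refinements in \cite{OOS07,Os'10}, so that the resulting group is a direct limit of hyperbolic groups with rapidly growing injectivity radius. First I would reduce to the case that $H$ is generated by two elements: since $H$ is non-elementary hyperbolic, it admits a $2$-generated non-elementary hyperbolic quotient (adjoin generic small-cancellation relations expressing the remaining generators in terms of two fixed ones), and any quotient of this group is again a quotient of $H$. So fix $H=\langle a,b\rangle$ and write $Q=\langle x_1,\dots,x_m \mid \mathcal{O}\rangle$ with $\mathcal{O}$ a (possibly infinite) set of relators. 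The base group is the free product $K=H*F(x_1,\dots,x_m)$, which is hyperbolic, and $G$ will be a quotient of $K$ obtained by imposing two families of relations: (i) the lifted relators $\mathcal{O}$ of $Q$, written in the $x_i$; and (ii) conjugation relations $x_i^{-1}c\,x_i=V_{i,c}$ and $x_i c\,x_i^{-1}=V'_{i,c}$ for $c\in\{a,b\}$, where $V_{i,c},V'_{i,c}$ are words in $a,b$ chosen below. Family (ii) forces the image of $H$ to be normalized by the $x_i$, so that in the quotient it coincides with the normal closure of $\{a,b\}$.

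Next I would enumerate all these relations and impose them in finite blocks $\mathcal{S}_1,\mathcal{S}_2,\dots$, setting $G_0=K$ and $G_n=G_{n-1}/\langle\langle \mathcal{S}_n\rangle\rangle$. The words $V_{i,c},V'_{i,c}$ and the blocking are chosen inductively so that, over the hyperbolic group $G_{n-1}$, the relators in $\mathcal{S}_n$ satisfy a small-cancellation condition of type $C'(\lambda)$ with pieces short relative to the relator length (this is where the aperiodicity/genericity of the $V$'s enters). By Olshanskii's small cancellation theorem over hyperbolic groups each $G_n$ is then hyperbolic; moreover I would keep the relators in $\mathcal{S}_n$ very long compared with the hyperbolicity constant of $G_{n-1}$, so that if $\delta_n$ denotes the hyperbolicity constant of $G_n$ and $r_n$ the injectivity radius of $G_n\to G_{n+1}$ (the shortest length of a relator killed at the next stage), then $\delta_n=o(r_n)$. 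Note that all $G_n$ share the fixed generating set $\{a,b,x_1,\dots,x_m\}$ and the maps are the identity on generators, as required for the limit argument.

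Taking $G=\varinjlim G_n$, the control $\delta_n=o(r_n)$ makes the hyperbolicity function $f_{G}$ grow sublinearly along a subsequence, so $\liminf_{t\to\infty}f_{G}(t)/t=0$ and $G$ is lacunary hyperbolic by Theorem \ref{lhgdef}; this gives (1). The assignment $x_i\mapsto q_i$, $a,b\mapsto 1$ respects every imposed relation (the $Q$-relators map to relators of $Q$, and each conjugation relation maps to $1=1$), so it defines an epimorphism $G\to Q$; its kernel $N$ is the normal closure of $\{a,b\}$, which by family (ii) equals the image of $H$, hence a $2$-generated quotient of $H$. The graded small-cancellation/genericity guarantees that the image of $H$ is not collapsed to an elementary subgroup, giving (2). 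For (3), when $H$ is torsion free the base $K=H*F(x_1,\dots,x_m)$ is torsion free, and a small-cancellation quotient of a torsion-free hyperbolic group by relators that are not proper powers introduces no new torsion, so every $G_n$, the limit $G$, and thus $N$ are torsion free.

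The main obstacle is the inductive choice of the relators: I must simultaneously (a) make the image of $H$ normal and $2$-generated, (b) preserve the small-cancellation condition over the \emph{current} hyperbolic group at every stage, (c) keep the injectivity radius growing fast enough for lacunary hyperbolicity, and (d) ensure the image of $H$ stays non-elementary. These requirements compete — one wants enough relations to control normality and generation while keeping pieces short and $H$ large — and making them compatible at every finite stage is the technical heart, carried out via the graded small cancellation machinery of \cite{Ol93,OOS07,Os'10}.
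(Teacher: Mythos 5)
Your overall architecture (reduce $H$ to a two-generated non-elementary hyperbolic quotient, start from $H * F(x_1,\dots,x_m)$, impose conjugation relations so the image of $H$ is normal and equals the kernel, iterate small-cancellation quotients with $\delta_n = o(r_n)$, pass to the limit and invoke the lacunary criterion) matches the paper's proof of Theorem \ref{Rips}. But your family (i) --- imposing the lifted relators of $Q$ \emph{directly} as relations $r=1$ --- is a genuine gap, and it is exactly the point where the Rips trick is unavoidable. The relators of $Q$ are fixed, typically short, words in the $x_i$; unlike your $V_{i,c}$ they cannot be ``chosen inductively,'' so they can never satisfy a $C'(\lambda,c,\epsilon,\mu,\rho)$-condition with $\rho$ large over $G_{n-1}$ (condition $(C_2)$ demands $\|R\|\geq \rho$), and Theorem \ref{ol93lemma} does not apply to any block $\mathcal{S}_n$ containing them, so hyperbolicity of the stages is unjustified. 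Worse, killing a word $r$ that is nontrivial in $G_{n-1}$ caps the injectivity radius of $G_{n-1}\twoheadrightarrow G_n$ by $\|r\|$, a fixed bound, while $\delta_{n-1}$ has already been inflated by the long relations of earlier stages (recall $\delta_1\leq 4L$ with $L$ the maximal relator length); hence $\delta_n=o(r_n)$ in part (c) of Theorem \ref{OOS} fails and lacunary hyperbolicity of the limit cannot be extracted. Finally, conclusion (3) breaks outright: take $Q=\langle x \mid x^2\rangle$; your relation $x^2=1$ makes $x$ an element of order $2$ in $G$ (nontrivial since it survives in $Q$), so $G$ has torsion even when $H$ is torsion free --- note that $x^2$ \emph{is} a proper power, so your own ``relators are not proper powers'' caveat already fails on family (i).

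The paper avoids all three failures with one device: the relator $r_i$ is never killed; instead one imposes
$R_i := z_i U^{m_{i,1}} V U^{m_{i,2}} V \cdots V U^{m_{i,j_i}} = 1$,
where $z_i$ is a geodesic representative of $r_i$ and $U,V$ represent two non-commensurable elements $a,b$ with $E(a)\cap E(b)=\{1\}$ in the (suitable) image of $H$. This word is long --- the exponents $m_{i,t}$ are chosen at stage $i$, so $\rho$ and the injectivity radius $N_i$ can be made arbitrarily large --- it is $(\lambda,c)$-quasi-geodesic with small pieces by Lemmas \ref{smallwords} and \ref{lemma5}, it keeps the images of $gp\{a,b\}$, $gp\{a,z_i\}$, $gp\{b,z_i\}$ non-elementary via Theorem \ref{ol93lemma}, and it introduces no torsion. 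Since $U,V\mapsto 1$ under $G\rightarrow G/N$, the relation $R_i=1$ still forces $r_i=1$ in $G/N$, so $G/N\cong Q$ as required. Replacing your family (i) by these thickened relators, and handling the conjugation relations via Theorem \ref{osin} together with Remark \ref{osinlacunar} (which lets the words $w_i$ be long enough to preserve the injectivity-radius growth), turns your outline into the paper's argument.
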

\begin{remark}
	$G$ cannot be hyperbolic if $Q$ has no finite presentation.
\end{remark}
Note that if one takes $H$ to be a torsion free, property $(T)$, hyperbolic group without the unique product property in the Theorem \color{blue}\ref{ripsintro}\color{black}, then one can recover the following theorem (see, Section \color{blue}5\color{black}\ for details),
\begin{corollary}[Corollary \color{blue}\ref{uniqueproduct}\color{black},\ Theorem 3,\cite{AS14}]
	Let $Q$ be a finitely generated group. Then there exists a short exact
	sequence
	\begin{align}
	1\rightarrow N\rightarrow G\rightarrow Q\rightarrow 1\nonumber 
	\end{align}

	such that;
	\begin{itemize}
		\item $G$ is a torsion-free group without the unique product property which is a direct limit of Gromov hyperbolic groups.
		\item $N$ is a subgroup of $G$ with Kazhdan’s Property $(T)$ and without the unique product property.
	\end{itemize}
\end{corollary}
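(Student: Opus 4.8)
The plan is to obtain the corollary as a direct application of the Rips-type construction of Theorem \ref{ripsintro}, fed with a carefully chosen input hyperbolic group. First I would fix a torsion-free, non-elementary, hyperbolic group $H$ carrying Kazhdan's property $(T)$ and failing the unique product property; the existence of such an $H$ is the one external ingredient, and it is precisely the class of groups whose existence makes the statement of Theorem 3 of \cite{AS14} non-vacuous. Applying Theorem \ref{ripsintro} to the given finitely generated $Q$ and this $H$ produces a short exact sequence $1\rightarrow N\rightarrow G\rightarrow Q\rightarrow 1$ in which $G$ is lacunary hyperbolic, $N$ is a $2$-generated non-elementary quotient of $H$, and, since $H$ is torsion-free, both $G$ and $N$ are torsion-free by part (3) of that theorem.

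Next I would read off the required properties one at a time. Because $G$ is lacunary hyperbolic, it is a direct limit of Gromov hyperbolic groups (this is the structural description of lacunary hyperbolic groups from \cite{OOS07}), which together with torsion-freeness yields the first bullet apart from the failure of unique product. Property $(T)$ for $N$ is immediate: $N$ is a quotient of $H$ and property $(T)$ is inherited by quotients, so $N$ inherits $(T)$ from $H$. Moreover $N\leq G$ as the kernel of the surjection $G\twoheadrightarrow Q$, which gives the subgroup assertion of the second bullet.

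The heart of the argument is transporting the failure of unique product from $H$ down to $N$ and then up to $G$. The failure of unique product for $H$ is witnessed by two finite non-empty sets $A,B\subseteq H$ such that every element of $AB$ admits at least two factorizations. The key observation is that if the quotient map $\pi\colon H\twoheadrightarrow N$ is injective on $A$ and on $B$, then two distinct factorizations $c=a_1b_1=a_2b_2$ in $H$ descend to distinct factorizations $\pi(c)=\pi(a_1)\pi(b_1)=\pi(a_2)\pi(b_2)$ in $N$; hence every element of $\pi(A)\pi(B)=\pi(AB)$ still has at least two representations, so $N$ fails unique product. Finally, since the unique product property passes to subgroups, the inclusion $N\leq G$ forces $G$ to fail it as well (were $G$ to have unique product, so would its subgroup $N$). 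This simultaneously finishes the unique-product clauses of both bullets.

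The step I expect to be the main obstacle is guaranteeing the injectivity of $\pi$ on the prescribed finite sets $A$ and $B$, since this is not part of the bare statement of Theorem \ref{ripsintro}. I would therefore extract it from the construction carried out in Section \ref{ripssection}: the defining relators of the graded small cancellation quotient are long, so the kernel of $H\rightarrow N$ meets no fixed ball of $H$ once the small cancellation parameters are taken large enough. Choosing these parameters so that the effective injectivity radius of $H\rightarrow N$ exceeds the maximal word length of the finitely many elements of $A\cup B$ then secures the injectivity required above, completing the proof.
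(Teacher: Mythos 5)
Your proposal is correct and follows essentially the same route as the paper: apply Theorem \ref{Rips} with $H$ a torsion-free hyperbolic property $(T)$ group without the unique product property (from \cite{AS14}), and choose the injectivity radii ($N_1$ and the sizes of the elements $w_1,w_2,\dots$ supplied by Remark \ref{osinlacunar}) large relative to the witnessing finite sets so that the failure of unique product descends to $N$, whence to $G$ since unique product passes to subgroups. Your write-up is in fact slightly more careful than the paper's, which leaves the descent argument implicit; note only that, since $N$ is a direct limit of quotients $K_i$, the injectivity on $A\cup B$ (more precisely, on differences $a_1^{-1}a_2$, $b_1^{-1}b_2$) must be secured at every stage, which the uniformly large choice of the $N_i$ already guarantees.
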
   
The group von Neumann algebra $\El G$ corresponding to a countable discrete group $G$ is the weak operator topology closer of the group algebra $\mathbb{C}G$ inside $\mathscr{B}(\ell^2 G)$, the space of bounded linear operators on the Hilbert space $\ell^2G$. The group constructed in Theorem \color{blue}\ref{rankoneintro}\color{black}\ can be used to construct interesting concrete examples of a group such that the corresponding group von Neumann algebra has property $(T)$ while all of its maximal subalgebras does not have property $(T)$. Theorem \color{blue}4.4\color{black}\ in \cite{CDK19} shows how the Belegradek-Osin's group Rips construction techniques and Ol'shanski's type monster groups can be used in conjunction with Galois correspondence results for II$_1$ factors $\grave{a}$ $la$ Choda \cite{Ch78} to produce many maximal von Neumann subalgebras arising from group. In particular, many examples of II${_1}$ factors are constructed in \cite{CDK19} with property (T) that have maximal von Neumann subalgebras without property (T). In section \color{blue}\ref{maximalvonneumann}\color{black}, we extend the class of groups $\mathcal{R}ip_{T}(Q)$ considered in \cite{CDK19} by generalizing \cite[Theorem 3.10]{CDK19} and hence provide new examples of such groups (see section \color{blue}\ref{maximalvonneumann}\color{black}\ for details). For groups with property $(T)$ considered in Theorem \color{blue}\ref{rankoneintro}\color{black}\ there exists a property $(T)$ group $N$ such that $Q\hookrightarrow Out(N)$ and $[Out(N):Q]<\infty$ by \cite{BO06}. Denote $N\rtimes Q$, the groups getting from \cite{BO06} as described above, by the $\mathcal{R}ip_{T}(Q)$ for fixed $Q$. Note that $N\rtimes Q$ has property $(T)$.
\begin{theorem}[Theorem \color{blue}\ref{maximalvN}\color{black}]  $\mathcal M^i_m$ is a maximal von Neumann algebra of $\mathcal M$ for every $i$. In particular, when $N\rtimes Q\in \mathcal Rip_{\mathcal T}(Q)$  then for every $i$, $\mathcal M^i_m$ is a non-property (T) maximal von Neumann subalgebra of a property (T) von Neumann algebra $\mathcal M$.   
\end{theorem}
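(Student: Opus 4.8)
The plan is to deduce the maximality of each $\mathcal M^i_m$ in $\mathcal M$ from a Galois correspondence for outer crossed products together with the description of the maximal subgroups of $Q$ furnished by Theorem~\ref{monster}. Throughout I write $\mathcal M=\El(N\rtimes Q)=\El(N)\rtimes Q$ and, since each $Q^i_m$ is a subgroup of $Q$, $\mathcal M^i_m=\El(N\rtimes Q^i_m)=\El(N)\rtimes Q^i_m$; note that $\El(N)\subseteq\mathcal M^i_m$, so any intermediate algebra $\mathcal M^i_m\subseteq\mathcal P\subseteq\mathcal M$ is automatically intermediate for the inclusion $\El(N)\subseteq\mathcal M$.

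The first and technically decisive step is to show that the action of $Q$ on the factor $\El(N)$ induced by $Q\hookrightarrow Out(N)$ is properly outer, i.e.\ no $q\neq e$ is implemented by a unitary of $\El(N)$. This is where I expect the real difficulty, and where \cite[Theorem~3.10]{CDK19} must be generalized to the present groups: one must upgrade the purely algebraic injectivity of $Q\to Out(N)$ to a von Neumann algebraic statement. I would use that $N$, obtained from the Belegradek--Osin construction \cite{BO06}, is ICC, so $\El(N)$ is a II$_1$ factor, and then argue by a Fourier-coefficient computation that a unitary $u=\sum_g c_g u_g$ with $u\,u_h\,u^{*}=u_{\theta_q(h)}$ for all $h\in N$ would force $\theta_q$ into the image of $Inn(N)$, contradicting $Q\hookrightarrow Out(N)$; the strong (small cancellation / acylindrical) structure of $N$ is what rules out the degenerate solutions.

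Granting proper outerness, Choda's Galois correspondence \cite{Ch78} applies to $\El(N)\subseteq\El(N)\rtimes Q$ and gives an order isomorphism between subgroups $Q'\leq Q$ and intermediate von Neumann subalgebras, under which $\mathcal P=\El(N)\rtimes Q'$. Feeding in $\mathcal M^i_m\subseteq\mathcal P\subseteq\mathcal M$ produces $Q^i_m\leq Q'\leq Q$; since $Q^i_m$ is a maximal subgroup of $Q$ by Theorem~\ref{monster}, we must have $Q'=Q^i_m$ or $Q'=Q$, hence $\mathcal P=\mathcal M^i_m$ or $\mathcal P=\mathcal M$. This yields maximality of $\mathcal M^i_m$ in $\mathcal M$ for every $i$.

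For the final clause, $\mathcal M=\El(N\rtimes Q)$ has property (T) by Connes--Jones because $N\rtimes Q$ has property (T), as recorded before the statement via \cite{BO06}. To see that $\mathcal M^i_m$ fails property (T), I would first check that $N\rtimes Q^i_m$ is ICC: elements of $N$ have infinite conjugacy classes as $N$ is ICC, while for $q\neq e$ the set $\{m\,\theta_q(m)^{-1}:m\in N\}$ is infinite by the nontriviality (indeed outerness) of $\theta_q$, so $(e,q)$ has infinite conjugacy class. Since $N\rtimes Q^i_m$ surjects onto the infinite amenable group $Q^i_m$ (a rank one abelian group), it does not have property (T); being ICC, the converse direction of Connes--Jones then shows the II$_1$ factor $\mathcal M^i_m$ does not have property (T). Combining with maximality gives the asserted non-property-(T) maximal subalgebra inside the property-(T) algebra $\mathcal M$.
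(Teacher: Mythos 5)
Your proposal is correct and takes essentially the same route as the paper: both pass to the crossed product picture $\mathcal M^i_m=\El(N)\rtimes Q^i_m\subseteq \El(N)\rtimes Q=\mathcal M$, use outerness of the $Q$-action on the icc group $N$ (which the paper quotes as Lemma \ref{trivrel} from \cite{CDK19}, and which you propose to re-derive by the standard Fourier-coefficient argument) to invoke the Galois correspondence of Choda \cite{Ch78}, conclude via maximality of $Q^i_m\leq Q$ from Theorem \ref{monster}, and settle both property (T) claims by Connes--Jones \cite{CJ}. The only cosmetic differences are that you inline the proof of the outerness/trivial-relative-commutant lemma and explicitly verify that $N\rtimes Q^i_m$ is icc before applying the converse direction of \cite{CJ}, steps the paper handles by citation or leaves implicit.
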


\section{Hyperbolic spaces}
\subsection{Hyperbolic metric spaces}
I this section we shall mention equivalent definitions for hyperbolic spaces and their relations. In later sections we shall use whichever definition is convenient for the purpose.
\begin{definition}[slim-triangle][Rips]
	Let $\delta\geq 0$. A geodesic triangle in a metric space is said to be $\delta$-slim if each of it's side is contained in $\delta $ neighborhood of the union of other two sides. A geodesic metric space $X$ is said to be $\delta$-hyperbolic if all geodesic triangles in $X$ are $\delta$ hyperbolic. When a space is said to be hyperbolic it means that it is $\delta$-hyperbolic for some $\delta>0$.  
\end{definition}

Given a geodesic triangle (formed by joining three points $x,y,z\in X$ by geodesics, denote by $\triangle(x,y,z)$) in a metric space, the triangle equality tells us that there exists unique non-negative numbers $a,b,c$ such that $d(x,y)=a+b$, $d(x,z)=a+c$ and $d(y,z)=b+c$. Consider the metric tree $T_{\triangle}:=T(a,b,c)$, which consists of $3$ vertices with valency $1$ and $1$ vertices with valency 3, $3$ edges of length $a,b$ and $c$. There exists an isometry from $\{x,y,z\}$ to a subset of vertices of $T_{\triangle}$ (vertices with valency 1). Call these vertices $\{v_x,v_y,v_z\}$. This map $\{x,y,z\}\rightarrow \{v_x,v_y,v_z\}$ extends uniquely to a map $\chi_{\triangle}:\triangle \rightarrow T_{\triangle}$ such that restriction to each side of the triangle is an isometry. Denote $o_{\triangle}$ to be the central vertex of $T_{\triangle}$. 
\begin{definition}[$\delta$-thinness and Insize][Gromov,\cite{Gr87}]
	Let $\triangle$ be a geodesic triangle in a metric space $(X,d)$ and consider the map $\chi_{\triangle}:\triangle \rightarrow T_{\triangle}$ as defined above. Let $\delta\geq 0$. $\triangle$ is said to be $\delta$-thin if $p,q\in (\chi_{\triangle})^{-1}(t)$ implies $d(p,q)\leq \delta$ for all $t\in T_{\triangle}$. The diameter of $\chi_{\triangle}^{-1}(o_{\triangle})$ is denoted by insize $\triangle$. \\
	A geodesic metric space is called $\delta $-hyperbolic if there exists a $\delta\geq 0$ such that all geodesic triangles are $\delta$-thin. When a space is said to be hyperbolic it means that it is $\delta$-hyperbolic for some $\delta$. 
\end{definition}
These two definitions are equivalent but hyperbolicity constant may be different for two definitions. In that case one can take hyperbolicity as the maximum of hyperbolicity constants arising from different definition. For proofs we refer \cite{BH99}, chapter $III$.\\
Next property of hyperbolic space can be deduce from cutting hyperbolic $n$-gon into triangles.
\begin{lemma}\cite{Ol93}\label{hyperbolicngon}
	For $n\geq 3$, any side of a geodesic $n$-gon in a $\delta$-hyperbolic space belongs to the $(1+\log_2(n-1))\delta$ neighborhood of the other $(n-1)$ sides.
\end{lemma}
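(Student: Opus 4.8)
The plan is to prove the statement by a divide-and-conquer (binary subdivision) argument on the polygon, using only $\delta$-slimness of geodesic triangles. First I would fix notation: label the vertices of the geodesic $n$-gon as $u_0,u_1,\dots,u_{n-1}$ in cyclic order, take $\gamma=[u_{n-1},u_0]$ to be the distinguished side, and write the remaining $n-1$ sides as $e_i=[u_{i-1},u_i]$ for $1\le i\le n-1$, so that $P:=e_1\cup\cdots\cup e_{m}$ with $m=n-1$ is a broken geodesic from $u_0$ to $u_{n-1}$. It then suffices to show that every point $p\in\gamma$ lies in the $(1+\log_2 m)\delta$-neighborhood of $P$.

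The key step is a recursive halving. Consider more generally a sub-polygon consisting of a chosen geodesic side $L$ from $u_i$ to $u_k$ together with the sub-path $e_{i+1}\cup\cdots\cup e_k$ of $P$, having $s=k-i$ segments. I pick the middle vertex $u_j$ with $j=i+\lfloor s/2\rfloor$ and form the geodesic triangle $\triangle(u_i,u_j,u_k)$ whose third side is $L$ and whose two new sides are the diagonals $[u_i,u_j]$ and $[u_j,u_k]$. By $\delta$-slimness any point $q\in L$ lies within $\delta$ of one of these two diagonals; say it lies within $\delta$ of $[u_i,u_j]$, which is itself the distinguished side of a smaller sub-polygon carrying a sub-path of at most $\lceil s/2\rceil$ segments, and I then recurse. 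The base case occurs when a sub-path consists of a single segment $e_t$: here I choose the corresponding diagonal to be exactly $e_t$ (a legitimate geodesic between its endpoints), so that landing within $\delta$ of that diagonal already means landing within $\delta$ of $P$, at no further cost.

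To extract the constant, let $D(s)$ be the smallest number such that the distinguished side of any such $s$-segment sub-polygon lies in the $D(s)\,\delta$-neighborhood of its sub-path. The analysis above yields the recursion $D(s)\le 1+D(\lceil s/2\rceil)$ with $D(1)=0$, since each descent through one triangle costs a single $\delta$ while a one-segment sub-path is reached for free. An easy induction (using $\lceil\log_2\lceil s/2\rceil\rceil=\lceil\log_2 s\rceil-1$ for $s\ge 2$) gives $D(s)\le\lceil\log_2 s\rceil$, and since $\lceil\log_2 s\rceil\le 1+\log_2 s$, applying this with $s=m=n-1$ to the whole polygon produces the claimed bound $(1+\log_2(n-1))\delta$.

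The point needing the most care, and the step I would flag as the main obstacle, is the bookkeeping that produces the logarithmic rather than linear constant. A naive fan triangulation from one vertex peels off a single triangle at a time and accumulates a full $\delta$ per side, giving only an $(n-2)\delta$ bound; the logarithmic improvement hinges on splitting each sub-path as evenly as possible so that the recursion tree has depth $\lceil\log_2(n-1)\rceil$, together with the observation that single-segment sub-paths incur no extra $\delta$ once the diagonal is identified with the side itself. One should also note that all diagonals can be realized as genuine geodesics, which is automatic since $X$ is a geodesic space, so no consistency issue arises.
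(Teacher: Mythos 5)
Your proof is correct and is essentially the argument the paper intends: the lemma is stated with a citation to \cite{Ol93} together with the remark that it follows from ``cutting hyperbolic $n$-gon into triangles,'' and your balanced binary subdivision by diagonals, with the recursion $D(s)\le 1+D(\lceil s/2\rceil)$, $D(1)=0$, is exactly that standard dyadic cutting argument. Your bookkeeping in fact yields the slightly sharper bound $\lceil\log_2(n-1)\rceil\,\delta$, which implies the stated $(1+\log_2(n-1))\delta$.
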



\subsection{Ultrafilter and asymptotic cone}
In a metric space a segment is defined to be a subset isometric to an interval of the set of real numbers $\mathbb{R}$. A ray is defined to be a segment isometric to $[0,\infty)$.
\begin{definition}[\cite{Pau89}]
	An $\mathbb{R}$-tree is a complete metric space $T$ such that, for all points $x,y$ in $T$, there is a unique compact arc with endpoints $x$ and $y$, such that this arc is a segment.
\end{definition}
\begin{definition}
	A non principle ultrafilter $\omega$ is a finitely additive measure on all subsets $\mathcal{S}$ of natural numbers $\mathbb{N}$ such that ,
	$\omega(\mathcal{S})\in \{0,1 \}, \omega(\mathbb{N})=1$ and $\omega(\mathcal{S})=0$ for all finite subset of $\mathbb{N}$.	\par 
	For a bounded sequence of numbers $\{x_n \}_{n\in \mathbb{N}}$ the ultralimit $\lim^{\omega}_i$ with respect to $\omega$ is the unique number $a$ such that ;
	$$\omega(\{i\in \mathbb{N}| \ |a-x_i|<\epsilon \})=1, \ \ for \ all \ \epsilon>0$$
	Similarly $\lim^{\omega}_i x_i=\infty$ if;
	$$\omega(\{i\in \mathbb{N}| \ x_i>M \})=1, \ \ for \ all \ M<\infty$$
	For two infinite series of real numbers $\{a_n\}$, $\{b_n\}$ we write $a_n=o_{\omega}(b_n)$ if ${\lim^{\omega}}_i \frac{a_i}{b_i}=0$ and $a_n=\Theta_{\omega}(b_n)$ (respectively $a_n=\mathcal{O}_{\omega}(b_n)$) if $0 < {\lim^{\omega}}_i (\frac{a_i}{b_i})<\infty$ (respectively ${\lim_i^{\omega}} (\frac{a_i}{b_i})<\infty$).\par  
	Let $(X_n,d_n)$ be a metric space for every $n\in \mathbb{N}$. Let $\{e_n \}=:e$ be a sequence of points such that $e_n\in X_n$. Consider the set $\mathcal{F}$ of sequences $g=\{g_n \}$ where $g_n\in X_n$ such that $d_n(e_n,g_n)\leq C(g)$ $(C$ is a constant depending only on $g)$. Define an equivalence relation between elements of $\mathcal{F}$ as follows;
	\begin{center}
		$f=\{f_n\}$ is equivalent to $h=\{h_n\}$ in $\mathcal{F}$ if and only if $\lim^{\omega}_id_i(f_i,h_i)=0$
	\end{center}
	The equivalence class of an element $\{g_n\}$ of $\mathcal{F}$ is denoted by $\{g_n\}^{\omega}$. The $\omega$-limit $\lim^{\omega}(X_n)_e$ is the quotient space of $\mathcal{F}$ by this equivalence relation and distance between two elements is defined as follows;
	\begin{center}
		$Dist(f,g)=\lim^{\omega}d_i(f_i,g_i)$ for $f=\{f_i\}^{\omega}$, $g=\{g_i\}^{\omega}$ in $\lim^{\omega}(X_n,d_n)_e$.
	\end{center}
	An asymptotic cone $Con^{\omega}(X,e,l)$ for a metric space $(X,D)$, is an $\omega$-limit of metric space $X_n$ as above, where $X_n=X$ for all $n\in \mathbb{N}$, $d_n=\frac{D}{l_n}$ for a given non decreasing infinite sequence $l=\{l_n\}$ of positive real numbers and for a given sequence of points $e=\{e_n\}$ in $X$.\par 
	If $\{Y_n\}$ is a sequence of subsets of $X$ endowed with the induced metric, define $\lim^{\omega}(Y_n)_e$ to be the subset of $Con^{\omega}(X,e,l)$ consisting of $x\in Con^{\omega}(X,e,l)$ that can be represented by a sequence $\{x_n\}$, where $x_n\in Y_n$.
	\begin{remark}
		The asymptotic cone is a complete metric space. Moreover asymptotic cone $Con^{\omega}(X,e,l)$ is a geodesic metric space if $X$ is a geodesic metric space \cite{DS05,Gr96}.
	\end{remark}
	The asymptotic cone of a finitely generated group $G$ with a word metric is asymptotic cone of it's Cayley graph. Asymptotic cone of a finitely generated group with respect to two generating sets are bi-Lipscitz. \\
	A geodesic path $p$ in $Con^{\omega}(X,e,l)$ is called limit geodesic if $p=\lim^{\omega}(p_n([0,\lambda_n]))_e$ where $p_n:[0,\lambda_n]\rightarrow X$ is geodesic for all $n\in \mathbb{N}$.
	
\end{definition}
\begin{definition}\cite{DS05}
	Let $X$ be a complete geodesic metric space. Let $\mathcal{P}$ be a collection of closed geodesic non-empty subsets (called $''pieces''$). If the following properties are satisfied:\\
	$(T_1)$ Every two different pieces have at-most one common point.\\
	$(T_2)$ Every nontrivial simple geodesic triangle (A simple loop consists with 3 geodesics) in $X$ is contained in one piece.\\
	Then we say $X$ is tree-graded space with respect to $\mathcal{P}$.
\end{definition}
$(T_2')$ Every non trivial simple loop in $X$ is contained in one piece.\\
Here one can replace condition ($T_2$) by ($T_2'$).\\
Here by loop we mean a path with same initial point and terminal point. 
\begin{definition}
	A loop $\gamma:[a,b]\rightarrow X$ is simple if $\gamma(t_1)\neq \gamma(t_2)$ for $t_1\neq t_2$ in $[a,b)$.
\end{definition}
\begin{lemma} [corollary 4.18, \cite{DS05b}]\label{limitgeodesictrg}
	Assume that in an asymptotic cone $Con^\omega(X,e,d)$, a collection of closed subsets $\mathcal{P}$ satisfying ($T_1$) and every non trivial simple geodesic triangle in $Con^\omega(X,e,d)$ whose sides are limit geodesic is contained in a subset from the collection $\mathcal{P}$. Then $\mathcal{P}$ satisfy ($T_2$) i.e, $Con^\omega(X,e,d)$ is tree-graded with respect to $\mathcal{P}$.	
\end{lemma}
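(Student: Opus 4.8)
The plan is to upgrade the hypothesis from triangles with limit-geodesic sides to arbitrary geodesic triangles; since the conclusion $(T_2)$ is, by definition, a statement about nontrivial simple geodesic triangles, it suffices to fix one such triangle $\triangle(x,y,z)$ in $Con^\omega(X,e,d)$ and produce a single piece $M\in\mathcal{P}$ with $\triangle(x,y,z)\subseteq M$. The coherence needed to pin down one common $M$ will be supplied entirely by $(T_1)$, and the final passage from approximations to the actual sides will use that every piece is closed.

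The main tool is that an arbitrary geodesic of the cone is approximated, in the Hausdorff metric, by a concatenation of short limit geodesics. Concretely, for the side $[x,y]$ I would subdivide by points $x=p_0,p_1,\dots,p_N=y$ and, writing $p_i=(p_i^{(n)})^\omega$, replace each subsegment $[p_i,p_{i+1}]$ by a limit geodesic $L(p_i,p_{i+1})=\lim^\omega[p_i^{(n)},p_{i+1}^{(n)}]$ joining the same endpoints. Both $[p_i,p_{i+1}]$ and $L(p_i,p_{i+1})$ are geodesics of length $d(p_i,p_{i+1})\le \mathrm{length}([x,y])/N$, hence both lie within that distance of $p_i$, so the resulting limit-geodesic polygon $\Pi_N$ converges to $[x,y]$ in the Hausdorff metric as $N\to\infty$. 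Now fix the apex $z$ and a limit geodesic $L(p_i,z)$ for each $i$, and form the limit-geodesic triangles $T_i=\triangle(p_i,p_{i+1},z)$, whose three sides $L(p_i,p_{i+1})$, $L(p_{i+1},z)$, $L(p_i,z)$ are all limit geodesics. Assuming (see below) that each $T_i$ is nontrivial and simple, the hypothesis places $T_i$ in a piece $M_i$; consecutive triangles $T_i$ and $T_{i+1}$ share the limit geodesic $L(p_{i+1},z)$, which has length $d(p_{i+1},z)>0$ (as $z\notin[x,y]$, the triangle being simple) and so contains more than one point, whence $(T_1)$ forces $M_i=M_{i+1}$. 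Propagating along the fan gives a single piece $M^{(N)}$ containing $\Pi_N$, the apex $z$, and every chord. Since each $M^{(N)}$ contains the two distinct points $x=p_0$ and $y=p_N$, a second use of $(T_1)$ gives $M^{(N)}=M$ independent of $N$; as pieces are closed and $\Pi_N\to[x,y]$, we conclude $[x,y]\subseteq M$ and $z\in M$.

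Running the same fan with $[y,z]$ subdivided and apex $x$ produces a piece $M'$ containing $x,y,z$ and the side $[y,z]$; but $M$ and $M'$ both contain the distinct points $x,y$, so $(T_1)$ yields $M'=M$ and hence $[y,z]\subseteq M$, and symmetrically $[z,x]\subseteq M$. Therefore all three sides lie in $M$, i.e. $\triangle(x,y,z)\subseteq M$, which establishes $(T_2)$ and so the tree-gradedness of $Con^\omega(X,e,d)$ with respect to $\mathcal{P}$.

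The step I expect to be the main obstacle is justifying that the approximating triangles $T_i$ can be taken nontrivial and simple, precisely the condition under which the hypothesis applies. This cannot hold unconditionally: in an $\mathbb{R}$-tree every geodesic triangle degenerates to a tripod, though there the whole statement is vacuous because no nontrivial simple geodesic triangle exists. The point is to exploit that the given $\triangle(x,y,z)$ is itself simple and nondegenerate (so $z\notin[x,y]$ and the sides meet only at the vertices), which forces the limit geodesics from $z$ to nearby subdivision points to branch away from $[x,y]$ and bound genuine triangles for a generic choice of the $p_i$. I would therefore interleave the subdivision with a perturbation of the $p_i$ along $[x,y]$, and if necessary a small reselection of the sequences representing the limit geodesics, so as to avoid the nowhere-dense set of degenerate configurations while retaining the Hausdorff convergence $\Pi_N\to[x,y]$; checking that such a choice exists, and that any unavoidable degenerate $T_i$ only contributes a subsegment already placed in $M$, is the delicate part of the argument.
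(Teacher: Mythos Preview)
The paper does not prove this lemma; it is quoted from \cite{DS05b} (Corollary~4.18 there) without argument, so there is nothing in the present paper to compare your proof against.

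On its own merits: the fan construction, the propagation $M_i=M_{i+1}$ via $(T_1)$ along the shared chord $L(p_{i+1},z)$, and the Hausdorff-limit step using closedness of pieces are all sound \emph{when they apply}. The gap is exactly the one you flag at the end, and the patch you propose does not close it. In an arbitrary geodesic metric space there is no genericity principle available to justify ``perturb the $p_i$ off a nowhere-dense bad set'': two limit geodesics issuing from $z$ can share a long initial arc for \emph{every} choice of target point on $[x,y]$, and reselecting the representing sequences does not change this. Worse, a single degenerate $T_{i_0}$ severs the entire chain --- without $T_{i_0}\subseteq M_{i_0}$ you have no link between $M_{i_0-1}$ and $M_{i_0+1}$ --- so your fallback clause (``an unavoidable degenerate $T_i$ only contributes a subsegment already placed in $M$'') presupposes the very identification of pieces it was meant to supply, and is circular.

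The argument in \cite{DS05b} proceeds by decomposition rather than perturbation: non-simple limit-geodesic configurations are cut at their self-intersections into simple limit-geodesic subtriangles, to which the hypothesis applies directly and which are then glued using $(T_1)$; the passage from an arbitrary geodesic side to a limit geodesic with the same endpoints is handled by a separate bigon argument. If you want to rescue the fan approach you will need an honest decomposition step of this kind in place of the perturbation idea.
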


\begin{theorem}[\cite{Gr87}]\label{asymptoticgromov}
	A geodesic metric space is hyperbolic if and only if all of its asymptotic cones are $\mathbb{R}$-tree.
\end{theorem}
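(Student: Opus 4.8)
The plan is to prove the two implications separately, using the tree-graded criterion of Lemma~\ref{limitgeodesictrg} for the ``only if'' direction and a contrapositive rescaling argument for the ``if'' direction. Throughout I fix a non-principal ultrafilter $\omega$ and recall from the Remark preceding this theorem that every asymptotic cone $Con^\omega(X,e,l)$ of a geodesic space $X$ is itself complete and geodesic; hence it will be an $\mathbb{R}$-tree as soon as it is tree-graded with respect to the collection $\mathcal{P}$ of all one-point subsets, since a tree-graded space whose pieces are singletons has no non-trivial simple geodesic triangle and so a unique arc between any two points.

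First I would treat the forward implication: assume $X$ is $\delta$-hyperbolic and fix an asymptotic cone $Con^\omega(X,e,l)$ with $l=\{l_n\}$ and $\lim^\omega l_n=\infty$. The key observation is that rescaling the metric by $1/l_n$ makes $X$ into a space whose triangles are $(\delta/l_n)$-thin, and $\delta/l_n\to 0$ along $\omega$. To apply Lemma~\ref{limitgeodesictrg} I take $\mathcal{P}$ to be the singletons, which trivially satisfy $(T_1)$, and I must verify that every non-trivial simple geodesic triangle in the cone whose three sides are limit geodesics is contained in one piece; that is, that no such non-trivial triangle exists. Given such a triangle with sides $\lim^\omega(p_n^{(i)})_e$ for $i=1,2,3$, I would form for each $n$ the geodesic triangle $T_n$ in $X$ spanned by the endpoints of $p_n^{(1)},p_n^{(2)},p_n^{(3)}$; since $T_n$ is $\delta$-thin its insize is at most $\delta$, so its rescaled insize is at most $\delta/l_n$. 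Passing to the $\omega$-limit shows the limit triangle has insize $0$, i.e. it degenerates to a tripod, and a simple tripod is trivial. Thus the hypotheses of Lemma~\ref{limitgeodesictrg} hold and the cone is tree-graded over singletons, hence an $\mathbb{R}$-tree.

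For the converse I argue by contraposition. If $X$ is not hyperbolic then, by the thinness definition, for every $n$ there is a geodesic triangle $T_n$ with $\mathrm{insize}(T_n)=\Delta_n\geq n$. I would pick points $u_n,v_n$ lying on two sides of $T_n$ with $\chi_{T_n}(u_n)=\chi_{T_n}(v_n)=o_{T_n}$ and $d(u_n,v_n)=\Delta_n$, set the scaling sequence $l_n=\Delta_n$, and take the basepoint $e_n=u_n$. In the resulting cone $Con^\omega(X,e,l)$ the images $u=\{u_n\}^\omega$ and $v=\{v_n\}^\omega$ satisfy $Dist(u,v)=1$, while $u$ and $v$ lie on the limit geodesics obtained from the two sides through them and both project to the central point of the comparison tripod. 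This produces a geodesic triangle of the cone whose insize is at least $1$, contradicting the fact that the cone is an $\mathbb{R}$-tree, in which every triangle is a tripod of insize $0$.

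I expect the main obstacle to be the converse, specifically arranging that the rescaled non-thin triangles $T_n$ converge to a genuine non-degenerate geodesic triangle of the cone rather than collapsing. One must choose the basepoint at the witnessing pair $(u_n,v_n)$ so that the defect survives the rescaling at scale $\Theta_\omega(1)$, and one must control the vertices of $T_n$: when a vertex escapes to $\omega$-infinity the corresponding side becomes a ray, so the argument should be phrased in terms of the limit geodesics through $u$ and $v$ and their behaviour on a fixed bounded ball, using that geodesics in $X$ restrict to geodesics and pass to limit geodesics in the cone. The forward direction is comparatively routine once Lemma~\ref{limitgeodesictrg} is invoked, the only subtlety being that it suffices to rule out triangles whose sides are limit geodesics rather than all geodesic triangles, which is precisely what that lemma supplies.
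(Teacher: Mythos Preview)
The paper does not supply its own proof of this theorem: it is stated with attribution to \cite{Gr87} and then used as a black box (notably inside the proof of Theorem~\ref{hyptriangle}). So there is no in-paper argument to compare yours against.

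That said, your forward direction is essentially the mechanism the paper deploys in the proofs of Theorem~\ref{hyptriangle} and Theorem~\ref{lhgdef}, with one refinement you gloss over: the three limit geodesics forming the sides of a limit triangle need not share endpoints at the approximating stage, so the approximants are geodesic \emph{hexagons} $H_n$ with three short sides of length $o_\omega(l_n)$ rather than triangles. The paper handles this via Lemma~\ref{hyperbolicgeodesicngon1} applied to hexagons, not via a direct insize estimate on triangles. This is a genuine technical point, not merely cosmetic; your phrase ``form for each $n$ the geodesic triangle $T_n$ spanned by the endpoints'' hides it.

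For the converse, the gap you flag is real and is the substantive part of the theorem. Knowing only that $\mathrm{insize}(T_n)\to\infty$ does not suffice: if the perimeter $|T_n|$ grows much faster than $\Delta_n$, rescaling by $\Delta_n$ sends all three vertices to infinity and you are left with three rays in the cone, from which a non-degenerate simple triangle cannot be extracted without additional control. The standard route is to first establish a purely metric statement---that failure of hyperbolicity yields triangles with $\mathrm{insize}(T_n)/|T_n|$ bounded below (equivalently, that sublinear thinness implies bounded thinness)---and only then pass to a cone at scale $|T_n|$. Your sketch does not contain this step, and the paper does not supply it either, since it takes the whole theorem on citation.
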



\subsection{Hyperbolic group and hyperbolicity function}     

\begin{definition}
	Let $(X,d)$ be a geodesic metric space. For every positive real number $t$, define the following set:
	$$S_t:=\{ set \ of \ all \ geodesic \ triangles \ with \ perimeter \ \leq t \}$$
	For a geodesic triangle triangle $\triangle$ with sides $A,B,C$ in $X$ define, 
	$$\delta_{\triangle}:= \underset{p\in A\cup B\cup C}{\sup}d(p,union \ of \ two\ sides\ of\ \triangle \ not \ containing \ p)$$
	Define the hyperbolicity function $f_X:[0,\infty)\rightarrow [0,\infty)$ by $f_X(t):=\underset{\triangle \in S_t}{\sup}\delta_{\triangle}$.
\end{definition}
One can reformulate lemma $\color{blue}{\ref*{hyperbolicngon}}$ for a geodesic metric space in terms of it's hyperbolicity function as follows,
\begin{lemma}\label{hyperbolicgeodesicngon1}
	For $n\geq 3$, any side of a geodesic $n$-gon $T_n$, in a geodesic metric space $X$, belongs to the closed $(1+\log_2(n-1))f_X(|T_n|)$ neighborhood of other $(n-1)$ sides, where $f_X$ is the hyperbolicity function of the geodesic metric space $X$ and $|T_n|$ is the perimeter of the $n$-gon $T_n$.  
\end{lemma}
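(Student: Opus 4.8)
The plan is to re-run the classical proof of Lemma~\ref{hyperbolicngon} (the binary subdivision of a geodesic polygon into triangles), but to keep careful track of the perimeters of the auxiliary triangles so that at every step the global constant $\delta$ may be replaced by the local value $f_X(|T_n|)$. Recall that by definition $f_X(|T_n|)=\sup_{\triangle\in S_{|T_n|}}\delta_{\triangle}$, so that \emph{every} geodesic triangle of perimeter at most $|T_n|$ is $f_X(|T_n|)$-slim; this is the only way the function $f_X$ will enter the argument.

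The key geometric observation I would isolate first is the following: if $\triangle$ is a geodesic triangle whose three vertices lie on the boundary of the geodesic $n$-gon $T_n$, then $\triangle$ has perimeter at most $|T_n|$, and hence $\triangle\in S_{|T_n|}$. Indeed, three points on the boundary cycle of $T_n$ cut it into three boundary arcs whose lengths sum to $|T_n|$, and each geodesic side of $\triangle$ is no longer than the boundary arc joining its two endpoints that avoids the third vertex. Summing the three inequalities over the three sides gives perimeter $\le |T_n|$. The only subtlety is to bound each diagonal by the \emph{correct} boundary arc---the one not containing the opposite vertex---so that the three arcs partition the boundary and no length is double counted.

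With this in hand, I would set up the recursion. Write the distinguished side as $s_1=[a,b]$ and let $\gamma$ be the broken geodesic formed by the remaining $n-1$ sides, running from $b$ to $a$. Split $\gamma$ at its middle vertex $w$ (by segment count) into two sub-paths of $\lceil (n-1)/2\rceil$ and $\lfloor (n-1)/2\rfloor$ segments. The triangle $\triangle(a,b,w)$ has all three vertices on the boundary of $T_n$, hence lies in $S_{|T_n|}$ and is $f_X(|T_n|)$-slim; so every point of $s_1$ lies within $f_X(|T_n|)$ of $[a,w]\cup[w,b]$. Replacing $[a,b]$ by whichever of the two new diagonals carries the point and recursing on the corresponding sub-polygon (a diagonal together with half of $\gamma$, whose vertices are again boundary vertices of $T_n$), I obtain for the neighborhood bound $g$ the estimate $g(n-1)\le f_X(|T_n|)+g(\lceil (n-1)/2\rceil)$, with base case $g(1)=f_X(|T_n|)$ coming from the degenerate triangle (geodesic bigon, of perimeter $\le |T_n|$) formed by a single remaining side and its subtending diagonal. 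Iterating the $\lceil\log_2(n-1)\rceil$ halvings yields $g(n-1)\le \bigl(1+\log_2(n-1)\bigr)f_X(|T_n|)$, which is exactly the claimed bound.

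The combinatorics producing the factor $1+\log_2(n-1)$ are identical to those in Lemma~\ref{hyperbolicngon}; the genuinely new point---and the step I expect to require the most care---is the perimeter bookkeeping of the second paragraph, ensuring that every triangle generated along the recursion stays inside $S_{|T_n|}$ so that the single value $f_X(|T_n|)$ controls all of its uses. Monotonicity of $f_X$ (immediate from $S_t\subseteq S_{t'}$ for $t\le t'$) guarantees that working with the fixed threshold $|T_n|$ is harmless even though the auxiliary triangles may have strictly smaller perimeter.
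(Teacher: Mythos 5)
Your approach is correct and is essentially what the paper intends: the paper gives no separate proof of this lemma, presenting it as a reformulation of Lemma \ref{hyperbolicngon}, and the entire content of that reformulation is precisely your perimeter bookkeeping --- every triangle produced by the binary subdivision has all three vertices on the boundary of $T_n$, hence perimeter at most $|T_n|$ (each geodesic chord is bounded by the complementary boundary arc, and the three arcs partition the boundary), so each slimness application costs $f_X(|T_n|)$, with monotonicity of $f_X$ making the fixed threshold harmless.

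One accounting step, however, does not give what you claim. With your base case $g(1)=f_X(|T_n|)$ coming from a bigon, the recursion $g(m)\le f_X(|T_n|)+g\bigl(\lceil m/2\rceil\bigr)$ yields $g(n-1)\le\bigl(1+\lceil\log_2(n-1)\rceil\bigr)f_X(|T_n|)$, which exceeds the stated bound $\bigl(1+\log_2(n-1)\bigr)f_X(|T_n|)$ whenever $n-1$ is not a power of two: for $n=4$ you get $3f_X(|T_n|)$ versus the claimed $(1+\log_2 3)f_X(|T_n|)\approx 2.585\,f_X(|T_n|)$. The repair is to never descend to bigons at all: when a sub-path of $\gamma$ consists of a single segment, that segment is itself a geodesic, so you may choose it \emph{as} the corresponding side of the subdividing triangle, making $g(1)=0$ (a point thrown onto that side is already on a genuine side of $T_n$). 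Then $g(2)\le f_X(|T_n|)$ and induction gives $g(m)\le\lceil\log_2 m\rceil\, f_X(|T_n|)$, and since $\lceil\log_2(n-1)\rceil\le 1+\log_2(n-1)$ this recovers the claimed constant. With that one-line adjustment your proof is complete.
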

\begin{definition}
	A function $f:[0,\infty)\rightarrow [0,\infty)$ is called sub-linear if $\lim_{t\rightarrow \infty }\frac{f(t)}{t}=0$.
\end{definition}
\begin{theorem}\label{hyptriangle}
	A geodesic metric space $(X,d)$ is hyperbolic if and only if the hyperbolicity function $f_X$ is sub-linear. 
\end{theorem}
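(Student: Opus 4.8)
The plan is to treat the two directions separately: the forward implication is immediate, while the reverse one will be deduced from Gromov's asymptotic-cone criterion (Theorem \ref{asymptoticgromov}) together with Lemma \ref{limitgeodesictrg}. For the forward direction, if $X$ is $\delta$-hyperbolic then every geodesic triangle is $\delta$-slim, so $\delta_\triangle\le\delta$, and taking the supremum over triangles of perimeter at most $t$ gives $f_X(t)\le\delta$ for all $t$; being bounded, $f_X$ satisfies $f_X(t)/t\to 0$ and is sub-linear. I would also record two elementary facts for later use: $f_X$ is non-decreasing, since $S_t\subseteq S_{t'}$ whenever $t\le t'$, and $f_X(t)\le t/2$, since a point on a side of a triangle lies within half the perimeter of one of that side's endpoints (which are shared vertices); in particular $f_X$ is finite everywhere.

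For the reverse direction, assume $f_X$ is sub-linear. By Theorem \ref{asymptoticgromov} it suffices to show that every asymptotic cone $Con^\omega(X,e,l)$ is an $\mathbb{R}$-tree. I would apply Lemma \ref{limitgeodesictrg} with $\mathcal{P}$ the collection of all one-point subsets: condition $(T_1)$ holds trivially, and since a one-point piece can contain no non-trivial triangle, verifying the hypothesis of that lemma amounts exactly to ruling out non-trivial simple geodesic triangles whose sides are limit geodesics. Granting this, the cone is tree-graded with respect to its points, hence has no non-degenerate simple loop, and is therefore an $\mathbb{R}$-tree.

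The heart of the argument is then the following estimate. Suppose for contradiction that $\triangle^\infty$ is a non-trivial simple limit-geodesic triangle of perimeter $P>0$, with sides $\lim^\omega p_n$, $\lim^\omega q_n$, $\lim^\omega r_n$ for geodesics $p_n,q_n,r_n$ of $X$. Choosing representatives of the three vertices compatibly (so that consecutive endpoints agree up to $o_\omega(l_n)$, corrected by segments of negligible rescaled length) completes the $p_n,q_n,r_n$ to geodesic triangles $\triangle_n$ in $X$ of perimeter $\Theta_\omega(l_nP)$, which in particular tends to infinity. For a point of the side $p^\infty$ represented by $u_n\in p_n$, slimness of $\triangle_n$ together with monotonicity of $f_X$ gives $d(u_n,q_n\cup r_n)\le\delta_{\triangle_n}\le f_X(\mathrm{perim}\,\triangle_n)\le f_X(2l_nP)$ for $\omega$-almost every $n$. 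Dividing by $l_n$, writing $f_X(2l_nP)/l_n=2P\cdot f_X(2l_nP)/(2l_nP)$, and using that $2l_nP\to\infty$, sub-linearity forces $\lim^\omega f_X(2l_nP)/l_n=0$, so the cone-distance from that point to $q^\infty\cup r^\infty$ vanishes. Hence every side of $\triangle^\infty$ lies in the union of the other two; but in a simple triangle the interior of each side is disjoint from the other two sides, so no side can have interior points and $\triangle^\infty$ is degenerate, contradicting $P>0$.

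I expect the main obstacle to be conceptual rather than computational: it is precisely the genuine limit in the definition of sub-linearity --- as opposed to the $\liminf$ appearing in the lacunary case (Theorem \ref{lhgdef}) --- that makes the estimate $f_X(2l_nP)/l_n\to 0$ hold for \emph{every} scaling sequence $l_n$, and hence for all asymptotic cones simultaneously. The two remaining delicate points are the reduction from an arbitrary non-degenerate geodesic triangle in the cone to one with limit-geodesic sides, which is exactly what Lemma \ref{limitgeodesictrg} is designed to supply, and the bookkeeping needed to realize $\triangle^\infty$ as an $\omega$-limit of honest geodesic triangles $\triangle_n$ whose perimeters are $\Theta_\omega(l_nP)$.
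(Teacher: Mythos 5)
Your overall strategy is the paper's: the forward direction is the same one-line boundedness observation, and the reverse direction runs through Theorem \ref{asymptoticgromov} and Lemma \ref{limitgeodesictrg} applied to the collection of one-point pieces, with the key estimate being that sub-linearity (a genuine limit, in contrast with the $\liminf$ of Theorem \ref{lhgdef}) kills $f_X$ along \emph{every} scaling sequence. However, one step fails as stated: you cannot in general ``complete $p_n,q_n,r_n$ to geodesic triangles $\triangle_n$''. The three geodesics $p_n,q_n,r_n$ do not share endpoints in $X$ --- consecutive endpoints agree only up to $o_{\omega}(l_n)$ --- so after inserting the correcting segments the figure you actually have is a geodesic \emph{hexagon}, not a triangle, and the quantity $\delta_{\triangle_n}$ in your chain $d(u_n,q_n\cup r_n)\le\delta_{\triangle_n}\le f_X(\mathrm{perim}\,\triangle_n)$ is undefined, since $f_X$ ranges over triangles only. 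Nor can you dodge this by replacing, say, $q_n$ with an honest geodesic $\tilde q_n$ between the correct vertices: without hyperbolicity (which is exactly what is being proved) $\tilde q_n$ need not stay uniformly close to $q_n$, so $\lim^{\omega}\tilde q_n$ may be a different limit geodesic and the resulting limit triangle need not be the simple triangle $\triangle^{\infty}$ you started from, nor need it be simple at all. So the ``bookkeeping'' you defer at the end is not bookkeeping: realizing $\triangle^{\infty}$ as an $\omega$-limit of honest geodesic triangles is in general impossible, which is precisely why the paper works with hexagons.

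The repair is the paper's Lemma \ref{hyperbolicgeodesicngon1}: each side of a geodesic $n$-gon lies in the closed $(1+\log_2(n-1))f_X(|T_n|)$-neighborhood of the remaining sides, so for the hexagon $H_n$ the side $A_n$ lies within $4f_X(|H_n|)$ of the other five sides, hence within $\alpha_n=4f_X(|H_n|)+\max_{i=1,2,3}\ell(P^i_n)$ of $B_n\cup C_n$, the additive term absorbing the case where the nearest point sits on one of the short correcting sides. Since $|H_n|=\Theta_{\omega}(l_n)$ and $\max_i\ell(P^i_n)=o_{\omega}(l_n)$, your sub-linearity computation goes through verbatim with $f_X(2l_nP)$ replaced by $4f_X(|H_n|)+o_{\omega}(l_n)$; the factor $4$ and the additive correction are harmless, so $\alpha_n=o_{\omega}(l_n)$ and the contradiction with simplicity is obtained exactly as you intended. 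With this single substitution your proposal coincides with the paper's proof.
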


\begin{proof}
	If $X$ is $\delta$ hyperbolic space for some $\delta>0$, then by the definition of hyperbolicity function $f_X$ is bounded by $\delta$ and hence sub-linear.\par 
	Suppose the hyperbolicity function $f_X$ of $(X,d)$ is sub-linear. Lets assume that $(X,d)$ is not hyperbolic. By ${\color{blue}\ref*{asymptoticgromov}}$, there exists an asymptotic cone which is not  $\mathbb{R}$-tree (Say, $Con^{\omega}(X,e,d=\{d_n\})$). Hence by definition of $\mathbb{R}$-tree, there exists a non-trivial simple loop in $Con^{\omega}(X,e,d)$. As in the definition of tree graded space ($T_1$) and ($T_2'$) are equivalent, we can assume that there exists a non-trivial simple geodesic triangle in $Con^{\omega}(X,e,d)$. According to lemma $\color{blue}{\ref*{limitgeodesictrg}}$ applied to the collection $\mathcal{P}$ of all one-element subsets of $Con^{\omega}(X,e,d)$, to show that $Con^{\omega}(X,e,d)$ is a tree it suffices
	\begin{figure} 
		\centering
		\begin{tikzpicture}
		\draw (1,0) -- (5,0);
		\draw [dashed] (7,2) parabola (5,0);
		\draw [dashed] (-1,2) parabola (1,0);
		\draw (7,2) parabola (4,5);
		\draw [dashed] (1,4) parabola (4,5);
		\draw (-1,2) parabola (1,4);
		\node[left, red] at (0.15,0.75){$P^1_n=o_{\omega}(d_n)$};
		\node[right, red] at (5.85,0.75){$P^2_n=o_{\omega}(d_n)$};
		\node[left, red] at (3,4.75){$P^3_n=o_{\omega}(d_n)$};
		\draw [red] (4.90,3.5) parabola (3,0);
		\node[left] at (4.25,0.5){$\leq \alpha_n$};
		\node[left] at (0,5){$\mathbf{H_n}$};
		\node[below] at (1,0){$a_n^{\prime}$};
		\node[below] at (5,0){$b_n$};
		\node[right] at (7,2){$b_n^{\prime}$};
		\node[left] at (-1,2){$a_n$};
		\node[above] at (4,5){$c_n$};
		\node[above] at (1,4){$c_n^{\prime}$};
		\node [below, blue] at (3,-0.25){${A_n}$};
		\node [right, blue] at (5,4){${B_n}$};
		\node [left, blue] at (0,3){${C_n}$};
		\end{tikzpicture}
		\caption{$H_n$} \label{1}
	\end{figure}
	to prove that it contains no simple non-trivial limit geodesic triangles\footnote{by limit geodesic triangles we mean a triangle in $Con^{\omega}(X,e,d)$ whose sides are limit geodesics}.\par 
	Let $\triangle_{\infty}$ be a simple non-trivial limit geodesic triangle in $Con^{\omega}(X,e,d)$ with sides A, B and C with,
	\begin{center} 
		$\lim^{\omega}{A_n}=A, 
		\lim^{\omega}{B_n}=B, 
		\lim^{\omega}{C_n}=C$
	\end{center}
	where $A_n,B_n,C_n$ are geodesics in $X$ with endpoints $a_n',b_n$ for $A_n$ and $b_n',c_n$ for $B_n$ and $c_n',a_n$ for $C_n$, for all $n\in \mathbb{N}$.
	Let ${\triangle}_{\infty}$ is approached by hexagons $H_n$, formed by vertices $a_n{a_n}'b_n{b_n}'c_n{c_n}'$ in $X$ with $\ell (P^i_n)$\footnote{$\ell(P^i_n)$ denotes the length of the geodesic $P^i_n$ for $i=1,2,3$ and for all $n\in \mathbb{N}$}$=o_{\omega}(d_n)$ for $i=1,2,3$, where, $ P^1_n$ is a geodesic joining $a_n,a_n'$ in $X$ and respectively $P^2_n$ is a geodesic joining $b_n,b_n'$ in $X$ and $P^3_n$ is a geodesic joining $c_n,c_n'$ in $X$. \par 
	Denote the perimeter of the hexagon $H_n$ by $|H_n|$. Let $A$ be a nontrivial side of $\triangle_{\infty}$. By lemma $\color{blue}{\ref*{hyperbolicgeodesicngon1}}$, $A_n$ belongs to closed $4f_X(|H_n|)$ neighborhood of other $5$ sides in $H_n$. In particular $A_n$ is contained in closed $\alpha_n$ neighborhood of $B_n\cup C_n $, where \begin{align}\label{A} 
	\alpha_n=4f_X(|H_n|) + \max_{i=1,2,3}\{\ell(P^i_n)  \}.
	\end{align}
	We have,
	\begin{align}\label{B}
	\ell(P^i_n)=o_{\omega}(d_n)\\
	\ell(A_n) = \Theta_{\omega}(d_n)\nonumber
	\end{align} 
	Hence we get $|H_n|=\Theta_{\omega}(d_n)$. As the function $f_X$ is sub-linear, we also have, \begin{align}\label{C} 
	f_X(|H_n|)=o_{\omega}(|H_n|)=o_{\omega}(d_n).
	\end{align}
	Finally we get that $A$ is contained in the union of other two sides in $Con^{\omega}(X,e,d)$, as $\alpha_n=o_{\omega}(d_n)$ (combining $\color{blue}{(\ref*{A}),(\ref*{B}),(\ref*{C})}$). This contradicts out assumption that the triangle $\triangle_{\infty}$ is simple. Hence $Con^{\omega}(X,e,d)$ is an $\mathbb{R}$-tree. That contradicts our assumption that $X$ is not hyperbolic. 
\end{proof}


\section{Lacunary hyperbolic groups}\label{lacunarydef}

\begin{definition}[\cite{OOS07}]
	A metric space $X$ is called lacunary hyperbolic if one of the asymptotic cones of $X$ is an $\mathbb{R}$-tree. A finitely generated group is called lacunary hyperbolic if it has a finite generating set and the corresponding Cayley graph is lacunary hyperbolic.
\end{definition}
\begin{remark}
	Note that lacunary hyperbolicity does not depend on finite generating set. It follows from Theorem \color{blue}\ref{OOS}\color{black}\ below.
\end{remark}

\begin{definition}
	Let $\alpha: G\rightarrow H$ be a group homomorphism and $G=\langle A\rangle ,H=\langle B \rangle$. The injectivity radius $r_A(\alpha)$ is the radius of largest ball centered at identity of $G$ in the Cayley graph of $G$ with respect to $A$ on which the restriction of $\alpha$ is injective.
\end{definition} 
\begin{remark}
	In the above setting, $r_A(\alpha)$ can be $\infty$, for example when $\alpha$ is injective.
\end{remark}

\begin{theorem}\cite[Theorem 1.1~]{OOS07}\label{OOS}
	Suppose $G$ be a finitely generated group. Then following are equivalent:\\
	$\mathbf{a.)}$ $G$ is lacunary hyperbolic group.\\
	$\mathbf{b.)}$ There exists a scaling sequence $d=(d_n)$, such that $Con^{\omega}(G,d)$ is an $\mathbb{R}$-tree for all non-principal ultrafilter $\omega$.\\
	$\mathbf{c.)}$ $G$ is the direct limit of a sequence of finitely generated hyperboolic groups and epimorphisms;\\
	\begin{align}\label{lacunarylim} 
	G_1\overset{\alpha_1}{\longrightarrow} G_{2}\overset{\alpha_{2}}{\longrightarrow}\cdots G_i\overset{\alpha_i}{\longrightarrow} G_{i+1}\overset{\alpha_{i+1}}{\longrightarrow}G_{i+2}\overset{\alpha_{i+2}}{\longrightarrow}\cdots 
	\end{align}
	where $G_i$ is generated by a finite set $\langle S_i\rangle $ and $\alpha_i(S_i)=S_{i+1}$. Also $G_i$'s are $\delta_i$ hyperbolic where $\delta_i$=$o(r_{S_i}(\alpha_i))$ (where $r_{S_i}(\alpha_i)$=injective radius of $\alpha_i$ w.r.t. $S_i$ ).
\end{theorem}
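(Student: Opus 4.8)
The plan is to route all four implications through a single quantitative pivot, phrased in the language of the hyperbolicity function $f_G$ of $\Gamma(G,S)$: I would first prove the lemma that, for a scaling sequence $d=(d_n)$, the cone $Con^{\omega}(G,d)$ is an $\mathbb{R}$-tree for \emph{every} non-principal $\omega$ precisely when $f_G(d_n)/d_n\to 0$ as an ordinary limit in $n$. One direction reruns the estimate in the proof of Theorem \ref{hyptriangle} fibrewise over the scaling sequence, since after rescaling by $1/d_n$ the triangles contributing to bounded regions of the cone have perimeter $\Theta_{\omega}(d_n)$ and slimness at most $f_G(\Theta(d_n))$. For the converse, if $f_G(d_n)/d_n$ stays bounded below along a subsequence, then choosing $\omega$ supported on that subsequence produces uniformly fat rescaled triangles and, via Lemma \ref{limitgeodesictrg}, a simple non-trivial limit geodesic triangle in the cone, obstructing the tree property. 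Granting this lemma, $(b)\Rightarrow(a)$ is immediate, since tree cones for all $\omega$ include a tree cone for some $\omega$.

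For $(c)\Rightarrow(b)$ I would take the injectivity radii themselves as the scaling sequence, $d_n:=r_{S_n}(\alpha_n)$. Because the generating sets are matched by $\alpha_i(S_i)=S_{i+1}$ and each $\alpha_n$ is injective on the ball of radius $d_n$, one checks that $Con^{\omega}(G,d)$ coincides with the ultralimit of the rescaled groups $G_n$. Each $G_n$ is $\delta_n$-hyperbolic, so after rescaling by $1/d_n$ it is $(\delta_n/d_n)$-hyperbolic, and the hypothesis $\delta_n=o(r_{S_n}(\alpha_n))$ forces $\delta_n/d_n\to 0$ as an ordinary limit. The pivot lemma then yields a tree for every $\omega$.

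For $(a)\Rightarrow(b)$ the issue is to upgrade a single tree cone $Con^{\omega}(G,d)$ to one that is a tree for every ultrafilter, and this is exactly where the word \emph{lacunary} enters. Using that the given cone is $0$-hyperbolic, for $\omega$-almost all $n$ the rescaled ball of radius $d_n$ is $(1/k)$-hyperbolic up to scale $k$ for any fixed $k$; a diagonal extraction then produces a lacunary subsequence $(d_{n_k})$ along which $f_G(d_{n_k})/d_{n_k}\to 0$ as an ordinary limit. Applying the pivot lemma to the new scaling sequence $d'=(d_{n_k})$ finishes this direction.

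The main obstacle is the converse $(b)\Rightarrow(c)$, the reconstruction of an approximating tower of hyperbolic groups. Writing $G=\langle S\mid R\rangle$, I would set $G_n:=\langle S\mid R_n\rangle$, where $R_n\subseteq R$ collects the relators of length at most a threshold $C_n\to\infty$, take $\alpha_n$ to be the natural surjections, and note $G=\varinjlim G_n$. Two things must be verified and, crucially, coordinated. First, each $G_n$ must be hyperbolic: since $(b)$ gives $f_G(d_n)/d_n\to 0$, the ball of radius $d_n$ in $G$ is $o(d_n)$-hyperbolic, and a Cayley graph that is $\delta$-hyperbolic on a scale vastly exceeding $\delta$ is globally hyperbolic once the long relators are discarded; this is a local-to-global (Cartan--Hadamard type) argument combined with Lemma \ref{hyperbolicgeodesicngon1}. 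Second, one needs $\delta_n=o(r_{S_n}(\alpha_n))$, and here the injectivity radius of $\alpha_n$ is bounded below by the length of the shortest relator in $R\setminus R_n$, i.e. by $C_n$. The delicate point is therefore not any single estimate but the interleaving of the thresholds $C_n$ against the rescaled hyperbolicity constants, so that the hyperbolicity defect of $G_n$ grows strictly slower than its injectivity radius; balancing these two competing sequences is the crux of the argument.
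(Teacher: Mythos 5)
First, a point of reference: the paper does not prove this statement at all --- it is quoted verbatim from \cite[Theorem 1.1]{OOS07} --- so your proposal can only be measured against the original proof there. Your architecture genuinely parallels it: \cite{OOS07} also pivot through ``triangles of perimeter comparable to $d_n$ are $o(d_n)$-thin'', reconstruct the chain in $(b)\Rightarrow(c)$ by truncating the full set of relators at growing thresholds and invoking Gromov's Cartan--Hadamard local-to-global theorem, and get hyperbolicity plus the injectivity-radius bound simultaneously from that theorem. Your $(b)\Rightarrow(c)$ sketch is therefore the right strategy, with one caveat you should make explicit: the bound $r_{S_n}(\alpha_n)\geq C_n/2$ is valid only if $R_n$ consists of \emph{all} words of length at most $C_n$ that are trivial in $G$ (then $u=_G v$ with $\|u\|,\|v\|\leq C_n/2$ forces $uv^{-1}\in R_n$). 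If $R$ is a fixed defining presentation, the claim ``injectivity radius is bounded below by the shortest omitted relator'' is simply false --- adjoining a single long relator can kill short elements.

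There are, however, two genuine gaps. First, your pivot lemma is false as stated in the direction you use most. Knowing $f_G(d_n)/d_n\to 0$ controls triangles of perimeter at most $d_n$ only; the hexagons $H_n$ approximating a limit triangle in $Con^{\omega}(G,d)$ have perimeter $\Theta_{\omega}(d_n)$, possibly $Cd_n$ with $C>1$, and $f_G(Cd_n)=o(d_n)$ does \emph{not} follow from $f_G(d_n)=o(d_n)$ (monotonicity of $f_G$ goes the wrong way). Concretely, in an \cite{OOS07}-type group one can place $d_n$ just below the scale of the $n$-th relator: then $f_G(d_n)/d_n\to 0$, yet the cone at scaling $d$ sees the fat relator loop at bounded parameter and contains a circle, so it is not a tree. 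The correct pivot is either ``$f_G(Cd_n)/d_n\to 0$ for every fixed $C$'' or, for the existential statement $(b)$, a slow-down of the sequence such as $d_n':=\sqrt{d_n f_G(d_n)}$, for which $f_G(td_n')\leq f_G(d_n)=o(d_n')$ eventually; your $(a)\Rightarrow(b)$ extraction needs this same repair (and, implicitly, homogeneity of the Cayley graph to translate fat triangles to the basepoint before taking the ultralimit --- note also that Lemma \ref{limitgeodesictrg} serves the tree-graded direction, not the production of a fat limit triangle). Second, in $(c)\Rightarrow(b)$ the choice $d_n:=r_{S_n}(\alpha_n)$ fails on two counts. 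The radii $r_{S_n}(\alpha_n)$ concern only the single maps $\alpha_n$ and need not be monotone, so the ball of radius $r_{S_n}(\alpha_n)$ in $G_n$ need not inject into the limit $G$: a later $\alpha_m$ with small injectivity radius can identify short elements, and your claimed coincidence of $Con^{\omega}(G,d)$ with the ultralimit of the rescaled $G_n$ breaks. One must first pass to a subchain realizing tail minima of the $r$'s (using that word length is non-increasing along the epimorphisms). Even after that, the scale $r_n$ itself is too large: the cone at scaling $\asymp r_n$ can detect the fat relators introduced at stage $n+1$, whose thinness is controlled only by $\delta_m$ for $m>n$. The proof in \cite{OOS07} takes an intermediate scaling with $\delta_n=o(d_n)$ and $d_n=o(r_{S_n}(\alpha_n))$ (a geometric mean works), for which every fixed multiple $td_n$ eventually fits inside the stable ball and $f_G(td_n)\leq\delta_n=o(d_n)$, yielding a tree for every $\omega$. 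Your closing remark that ``balancing the two competing sequences is the crux'' is accurate, but the balancing must happen here in $(c)\Rightarrow(b)$ as well, not only in $(b)\Rightarrow(c)$.
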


The following theorem gives us a characterization of lacunary hyperbolic group in terms of the hyperbolicity function of the corresponding Cayley graph. 

\begin{theorem}\label{lhgdef}
	Let G be a finitely generated group with generating set $S$. Then $G$ is lacunary hyperbolic if and only if the hyperbolicity function $f_{G}$ of the corresponding Cayley graph $\Gamma(G,S)$ satisfies $\liminf_{t\rightarrow\infty}\frac{f_{G}(t)}{t}=0$.
\end{theorem}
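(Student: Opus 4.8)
The plan is to prove both implications by reducing to the machinery already set up for Theorem~\ref{hyptriangle}, the one genuinely new ingredient being a careful choice of scaling sequence that upgrades a mere $\liminf$ into honest sublinearity \emph{along that scale}. Two elementary facts will be used throughout: first, $f_G$ is non-decreasing, since $S_t\subseteq S_{t'}$ whenever $t\le t'$; second, because $G$ is a group acting on itself by isometries, the cone $Con^\omega(G,e,d)$ does not depend on the basepoint sequence $e$, so I am free to place the basepoint at a vertex of whatever triangle I like.

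For the forward direction I argue by contraposition. Assume $\liminf_{t\to\infty}f_G(t)/t>0$, so there are $T$ and $c'>0$ with $f_G(t)>c't$ for all $t\ge T$; I claim no asymptotic cone of $G$ is an $\mathbb{R}$-tree. Fix any non-principal $\omega$ and any scaling sequence $d=(d_n)$. For $\omega$-almost every $n$ we have $d_n\ge T$, whence $f_G(d_n)>c'd_n$, so by the definition of $f_G$ as a supremum there is a geodesic triangle $\triangle_n$ of perimeter $p_n\le d_n$ with $\delta_{\triangle_n}>c'd_n$; since a point on one side lies within $\ell(\text{that side})\le p_n$ of the shared endpoints, $\delta_{\triangle_n}\le p_n$, forcing $p_n=\Theta_\omega(d_n)$. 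Placing the basepoint at a vertex of $\triangle_n$, the rescaled triangles converge to a genuine geodesic triangle in the cone with sides the limit geodesics $\lim^\omega A_n,\lim^\omega B_n,\lim^\omega C_n$, and the points $p_n$ realizing $\delta_{\triangle_n}$ converge to a point $p$ on one side whose distance in the cone to the union of the other two sides is at least $c'>0$. But in an $\mathbb{R}$-tree every geodesic triangle is a tripod, for which that distance is $0$; hence this cone is not an $\mathbb{R}$-tree. As $\omega$ and $d$ were arbitrary, $G$ is not lacunary hyperbolic.

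For the converse, suppose $\liminf_{t\to\infty}f_G(t)/t=0$ and pick $s_k\to\infty$ with $f_G(s_k)/s_k\to 0$. Choosing $k_n$ increasing so that $s_{k_n}\ge n^2$ and $f_G(s_{k_n})/s_{k_n}\le n^{-2}$, I set $d_n:=s_{k_n}/n$, arranged non-decreasing; then $d_n\to\infty$ and
\[
\frac{f_G(n\,d_n)}{d_n}=n\,\frac{f_G(s_{k_n})}{s_{k_n}}\le\frac1n\longrightarrow 0 .
\]
I now rerun the hexagon argument from the proof of Theorem~\ref{hyptriangle} for the cone $Con^\omega(G,e,d)$: by Lemma~\ref{limitgeodesictrg} it suffices to rule out nontrivial simple limit geodesic triangles, and for such a triangle of cone-perimeter $P$ the approximating hexagons $H_n$ satisfy $\lim^\omega|H_n|/d_n=P$, so $|H_n|=\Theta_\omega(d_n)$. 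Because $n\to\infty$, for $\omega$-almost all $n$ one has $|H_n|\le(P+1)d_n\le n\,d_n$, and monotonicity gives $f_G(|H_n|)\le f_G(n\,d_n)=o_\omega(d_n)$. This makes the neighbourhood constant $\alpha_n=4f_G(|H_n|)+\max_i\ell(P^i_n)$ of that proof equal to $o_\omega(d_n)$, collapsing the triangle exactly as before and contradicting simplicity. Hence this cone is an $\mathbb{R}$-tree and $G$ is lacunary hyperbolic.

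The main obstacle is precisely the highlighted step. Unlike in Theorem~\ref{hyptriangle}, where full sublinearity controls $f_G$ at every scale, here $f_G(t)/t$ is small only along a sparse sequence, whereas the perimeter of a limit geodesic triangle is an arbitrary constant $P$ times $d_n$. The diagonal choice $n\,d_n=s_{k_n}$ together with the monotonicity of $f_G$ is what lets a single scale $d_n$ absorb \emph{every} constant $P$ at once, and getting this bookkeeping right (the non-decreasingness of $d_n$ and the $\omega$-almost-everywhere comparisons) is the only point beyond the purely hyperbolic case that requires care.
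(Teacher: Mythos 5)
Your proof is correct, and it is worth recording where it coincides with and where it departs from the paper's argument. For the forward implication the paper does not work in asymptotic cones at all: it invokes Theorem \ref{OOS}(c) to write $G$ as a direct limit of $\delta_i$-hyperbolic groups with $\delta_i=o(r_{S_i}(\alpha_i))$, and reads off $f_G(r_{S_i}(\alpha_i))=\mathcal{O}(\delta_i)=o(r_{S_i}(\alpha_i))$ because triangles of perimeter at most the injectivity radius are already witnessed in the hyperbolic approximant; your contrapositive argument --- extracting triangles with $\delta_{\triangle_n}>c'd_n$, recentering at a vertex by homogeneity, and contradicting the tripod property of geodesic triangles in an $\mathbb{R}$-tree --- is more elementary in that it never uses the direct-limit characterization, at the cost of some ultralimit bookkeeping the paper gets for free; it also has the advantage of working verbatim for arbitrary geodesic spaces, which is what the paper's subsequent remark about lacunary hyperbolic \emph{spaces} actually requires, since Theorem \ref{OOS}(c) is group-specific. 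For the converse you follow the paper's hexagon scheme (Lemma \ref{limitgeodesictrg} applied to singleton pieces, plus Lemma \ref{hyperbolicgeodesicngon1}), but with one genuine improvement: the paper simply fixes $d_n$ with $f_G(d_n)/d_n\to 0$ and then asserts $f_G(|H_n|)=o_{\omega}(|H_n|)=o_{\omega}(d_n)$ citing (\ref{S}), even though $|H_n|$ is only $\Theta_{\omega}(d_n)$ and may exceed $d_n$ by the arbitrary perimeter factor $P$ of the limit triangle --- a step that is automatic under the full sublinearity available in Theorem \ref{hyptriangle}, but does not follow from control of $f_G$ along the single sequence $d_n$, since $f_G(Pd_n)$ is a priori unrelated to $f_G(d_n)$. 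Your diagonal choice $d_n=s_{k_n}/n$, which yields $f_G(n\,d_n)=o(d_n)$ so that monotonicity of $f_G$ absorbs every constant $P$ at once, is exactly the repair this step needs; on this one point your write-up is more careful than the published proof.
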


\begin{proof}
	Suppose $G$ be lacunary hyperbolic group. 
	Consider any geodesic triangle of perimeter less than or equal to $r_{S_i}(\alpha_i)$, then we use part $c.)$ of  Theorem~${\color{blue}\ref*{OOS}}$ and get that $f_G(r_{S_i}(\alpha_i))=\mathcal{O}(\delta_i)=o(r_{S_i}(\alpha_i))$, as $\delta_i=o(r_{S_i}(\alpha_i))$. Hence,
	$$\lim_{n\rightarrow\infty}\frac{f_G(r_{S_i}(\alpha_i))}{r_{S_i}(\alpha_i)}=0\Rightarrow \liminf_{n\rightarrow\infty}\frac{f_G(t)}{t}=0.$$
	\\
	\\
	Suppose the hyperbolicity function $f_G$ satisfies the following condition: $$\liminf_{t\rightarrow\infty}\frac{f_G(t)}{t}=0$$ Hence there exists a non-decreasing scaling sequence, say $k=(k_n)$ such that:
	\begin{align}\label{S} 
	\lim_{n\rightarrow\infty}\frac{f_G(d_n)}{d_n}=0.
	\end{align} 
	Fix the sequence $d=\{d_n\}$. Then we have,
	\begin{align}\label{lac}
	f_G(d_n)=o(d_n).
	\end{align}
	Let $\omega$ be an arbitrary non principle ultrafilter. According to lemma $\color{blue}{\ref*{limitgeodesictrg}}$ applied to the collection $\mathcal{P}$ of all one-element subsets of $Con^{\omega}(G,d)$ to show that $Con^{\omega}(G,d)$ is an $\mathbb{R}$-tree it is sufficient to prove that it contains no non-trivial simple limit geodesic triangles.
	\\
	Let $\triangle_{\infty}$ be a simple non-trivial limit geodesic triangle in $Con^{\omega}(G,d)$ with sides A, B and C with,
	\begin{center} 
		$\lim^{\omega}{A_n}=A, 
		\lim^{\omega}{B_n}=B, 
		\lim^{\omega}{C_n}=C$
	\end{center}
	where $A_n,B_n,C_n$ are geodesics in $X:=\Gamma(G,S)$ with endpoints $a_n',b_n$ for $A_n$ and $b_n',c_n$ for $B_n$ and $c_n',a_n$ for $C_n$, for all $n\in \mathbb{N}$.
	Let ${\triangle}_{\infty}$ is approached by hexagons $H_n$, formed by vertices $a_n{a_n}'b_n{b_n}'c_n{c_n}'$ in $X$ with $\ell (P^i_n)$\footnote{$\ell(P^i_n)$ denotes the length of the geodesic $P^i_n$ for $i=1,2,3$ and for all $n\in \mathbb{N}$}$=o_{\omega}(d_n)$ for $i=1,2,3$, where, $ P^1_n$ is a geodesic joining $a_n,a_n'$ in $X$ and respectively $P^2_n$ is a geodesic joining $b_n,b_n'$ in $X$ and $P^3_n$ is a geodesic joining $c_n,c_n'$ in $X$. \\
	Denote the perimeter of the hexagon $H_n$ by $|H_n|$. Let $A$ be a nontrivial side of $\triangle_{\infty}$. By lemma $\color{blue}{\ref*{hyperbolicgeodesicngon1}}$, $A_n$ belongs to closed $4f_G(|H_n|)$ neighborhood of other $5$ sides in $H_n$. In particular $A_n$ is contained in closed $\beta_n$ neighborhood of $B_n\cup C_n $, where \begin{align}\label{A1} 
	\beta_n=4f_G(|H_n|) + \max_{i=1,2,3}\{\ell(P^i_n)  \}.
	\end{align}
	We have,
	\begin{align}\label{B1}
	\ell(P^i_n)=o_{\omega}(d_n)\\
	\ell(A_n) = \Theta_{\omega}(d_n)\nonumber\\
	\ell(B_n),\ell(C_n)=\mathcal{O}_{\omega}(d_n)\nonumber 
	\end{align} 
	Hence we get $|H_n|=\Theta_{\omega}(d_n)$. By $\color{blue}{\ref*{S}}$, we also have, 
	\begin{align}\label{C1} 
	f_G(|H_n|)=o_{\omega}(|H_n|)=o_{\omega}(d_n).
	\end{align}
	Finally we get that $A$ is contained in the union of other two sides in $Con^{\omega}(X,e,d)$, as $\beta_n=o_{\omega}(d_n)$ (combining $\color{blue}{(\ref*{A1}),(\ref*{B1}),(\ref*{C1})}$). This contradicts out assumption that the triangle $\triangle_{\infty}$ is simple. Hence $Con^{\omega}(X,e,d)$ is an $\mathbb{R}$-tree. Which implies that the group $G$ is lacunary hyperbolic.
\end{proof}

\begin{remark}
	One can use similar proof to characterize lacunary hyperbolic spaces i.e, a geodesic metric space $(X,d)$ is lacunary hyperbolic if and only if the corresponding hyperbolicity function $f_X$ satisfies $\liminf_{t\rightarrow \infty}\frac{f_X(t)}{t}=0$.
\end{remark}
\begin{definition}
	Let $(X,d_X),(Y,d_Y)$ be metric spaces and $f:X\rightarrow Y$ be a map. The map $f$ is called a $(\lambda,c)$ quasi isometric embedding if for all $x,y\in X$,
	\begin{center}
		$\frac{d_X(x,y)}{\lambda}-c < d_Y(f(x),f(y))< \lambda d_X(x,y)+c$
	\end{center}
	A $(\lambda,c)$ quasi isometric embedding is a $(\lambda,c)$ quasi-isometry if there exists a constant $D\geq0 $ such that for all $y\in Y$ there exists $x\in X$ with $d_Y(f(x),Y)<D$. \par 
	We say $X$ is quasi isometric to $Y$ is there exists a quasi isometry from $X$ to $Y$ for some $\lambda$ and $c$.
\end{definition}
\begin{remark}\cite{BH99}
	Quasi isometry is an equivalence relation.
\end{remark}
\begin{definition}\label{quasigeodesic}
	A $(\lambda,c)$quasi-geodesic in a metric space $(X,d_X)$ is a $(\lambda,c)$ quasi-isometric embedding of $\mathbb{R}$ into $(X,d_X)$. More precisely a map $q:\mathbb{R}\rightarrow (X,d_X)$ such that there exists $\lambda,c>0$ so that for all $s,t\in \mathbb{R}$,
	$$\frac{|s-t|}{\lambda}-c < d_X(f(x),f(y))< \lambda|s-t|+c$$
	is called a $(\lambda,c)$ quasi geodesic.
\end{definition}

\begin{theorem}\cite[Theorem 3.18~]{OOS07}\label{aa}
	Let $G$ be a lacunary hyperbolic group. Then, every finitely generated undistorted (i.e, quasi-isometrically embedded) subgroup of $G$ is lacunary hyperbolic.
\end{theorem}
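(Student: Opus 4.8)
The plan is to prove this via the hyperbolicity-function criterion of Theorem~\ref{lhgdef} rather than working with cones directly. The naive approach — observing that a quasi-isometric embedding induces a bi-Lipschitz embedding of $Con^\omega(H,d)$ into the $\mathbb{R}$-tree $Con^\omega(G,d)$ — is insufficient: the sides of a limit triangle in $Con^\omega(H,d)$ map to $(\lambda,0)$-quasigeodesics in a tree, and in a tree such a quasigeodesic may stray from the geodesic with the same endpoints by an amount proportional to its length, so this gives only that $Con^\omega(H,d)$ is coarsely hyperbolic, never that it is a tree. Instead I would fix finite generating sets, write $\phi\colon H\to G$ for the inclusion (by hypothesis a $(\lambda,c)$-quasi-isometric embedding of Cayley graphs), and let $f_H,f_G$ be the hyperbolicity functions. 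Since $G$ is lacunary hyperbolic, Theorem~\ref{lhgdef} yields $t_n\to\infty$ with $f_G(t_n)=o(t_n)$. The goal is to produce $s_n\to\infty$ with $f_H(s_n)=o(s_n)$; then $\liminf_{t\to\infty}f_H(t)/t=0$, and Theorem~\ref{lhgdef} (together with generating-set independence) shows $H$ is lacunary hyperbolic.

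First I would set $s_n:=\lfloor(t_n/c_0-c)/\lambda\rfloor$ for an absolute constant $c_0$ to be fixed later, so that $s_n\to\infty$. Given a geodesic triangle $\triangle_H$ in $\Gamma(H,S_H)$ of perimeter at most $s_n$, with vertices $a,b,c$ and sides $A,B,C$, I would push it forward: each side is an edge path between vertices whose $H$-distance equals its length, so applying $\phi$ and interpolating by geodesics in $\Gamma(G,S_G)$ between images of consecutive vertices (which lie at distance $\le\lambda+c$) produces a continuous $(\lambda,c')$-quasigeodesic with $c'=c'(\lambda,c)$. Joining $\phi(a),\phi(b),\phi(c)$ by genuine geodesics gives a triangle $\triangle_G$ of perimeter at most $\lambda s_n+3c$, hence below $t_n$ once $c_0$ is large; its thinness is therefore at most $f_G(t_n)=o(t_n)$.

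\textbf{The main obstacle} is that a point $p$ on the $H$-side $A$ controls only a point $\phi(p)$ on the \emph{quasigeodesic} $\phi(A)$, whereas the thinness bound of $\triangle_G$ lives on its \emph{geodesic} sides, and $G$ is not globally hyperbolic. I would bridge this with a scale-localized Morse estimate: run the standard argument that the geodesic $[\phi(a),\phi(b)]$ and the quasigeodesic $\phi(A)$ have small mutual Hausdorff distance, but replace every appeal to a global hyperbolicity constant by the perimeter-dependent neighborhood bound of Lemma~\ref{hyperbolicgeodesicngon1}. Because $f_G$ is nondecreasing and every triangle, quadrilateral and pentagon arising in that argument has perimeter $O\big(\ell(\phi(A))\big)=O(\lambda s_n+c)$, enlarging $c_0$ forces all these perimeters below $t_n$, so each neighborhood constant is at most a fixed multiple of $f_G(t_n)$. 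This yields Hausdorff distance $D=O\big(f_G(t_n)\big)+O(1)=o(t_n)$, with implied constants depending only on $\lambda,c$.

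Finally, with the local Morse estimate in hand, $\phi(p)$ lies within $o(t_n)$ of $[\phi(a),\phi(b)]$; by the $o(t_n)$-thinness of $\triangle_G$ it lies within $o(t_n)$ of $[\phi(b),\phi(c)]\cup[\phi(c),\phi(a)]$, which by the same estimate lie within $o(t_n)$ of $\phi(B)\cup\phi(C)$. Thus $\phi(p)$ is within $o(t_n)$ of $\phi(q)$ for some vertex $q\in B\cup C$, and the lower (undistortion) bound $\tfrac1\lambda d_H(p,q)-c\le d_G(\phi(p),\phi(q))$ gives $d_H(p,q)=o(t_n)$. Since $t_n=\Theta(\lambda s_n)$, this is $o(s_n)$, uniformly over all triangles of perimeter $\le s_n$, so $f_H(s_n)=o(s_n)$. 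Hence $\liminf_{t\to\infty}f_H(t)/t=0$ and $H$ is lacunary hyperbolic by Theorem~\ref{lhgdef}. I expect the genuine work to be confined to the scale-localized Morse estimate of the third paragraph — in particular, checking that the iteration inherent in the Morse argument does not accumulate scale beyond a fixed multiple of $\ell(\phi(A))$, so that monotonicity of $f_G$ keeps every evaluation at or below $t_n$.
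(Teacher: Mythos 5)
Your opening dismissal of the cone argument is mistaken, and this matters for the comparison: the paper gives no independent proof of this statement (it quotes \cite{OOS07}, and says Theorem \ref{qisubspace} follows by the ``similar argument''), and the argument behind that citation is precisely the ``naive'' one you reject. What your objection misses is that the induced map $Con^{\omega}(H,d)\rightarrow Con^{\omega}(G,d)$ (same scaling sequence $d$ witnessing lacunarity of $G$, same ultrafilter) is a bi-Lipschitz \emph{embedding} — the additive constant $c$ dies under rescaling by $d_n$ — so no quasigeodesic stability is needed at all, only injectivity and continuity: a nontrivial simple geodesic triangle in $Con^{\omega}(H,d)$ would map homeomorphically onto an embedded circle in the $\mathbb{R}$-tree $Con^{\omega}(G,d)$, and $\mathbb{R}$-trees contain no embedded circles; since $Con^{\omega}(H,d)$ is a complete geodesic space, Lemma \ref{limitgeodesictrg} applied to singleton pieces (or the standard fact that a geodesic space without embedded circles is an $\mathbb{R}$-tree) finishes the proof. (Your quantitative claim about trees is also off: a \emph{continuous} $(\lambda,0)$-quasigeodesic in an $\mathbb{R}$-tree has no excursions whatsoever — revisiting a point at distinct parameters would violate the lower quasigeodesic bound — so it traverses exactly the geodesic between its endpoints; but the robust repair of the cone argument is the topological one above, not a metric one.)

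Your alternative route through Theorem \ref{lhgdef} is genuinely different and can be completed, but the step you yourself isolate as ``the genuine work'' is not only unproved — the bookkeeping you propose for it fails as stated. Localizing the standard Morse argument at unit scale (equivalently, applying Lemma \ref{hyperbolicgeodesicngon1} to the polygon on $[\phi(a),\phi(b)]$ and $N\approx\ell(\phi(A))$ unit chords) yields a constant $O\bigl(f_G(t_n)\log t_n\bigr)$, and $f_G(t_n)=o(t_n)$ does \emph{not} imply $f_G(t_n)\log t_n=o(t_n)$: take $f_G(t_n)=t_n/\log\log t_n$. So ``each neighborhood constant is at most a fixed multiple of $f_G(t_n)$'' is exactly the assertion that naive perimeter bookkeeping cannot deliver; the iteration does accumulate a factor of the number of scales. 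The repair is to subdivide at scale $\delta_n:=f_G(t_n)+1$ rather than scale $1$: the chords then form a polygon with roughly $\ell/\delta_n$ sides and perimeter still $O_{\lambda,c}(t_n)\le t_n$ after enlarging $c_0$, so Lemma \ref{hyperbolicgeodesicngon1} puts $[\phi(a),\phi(b)]$ in the $\bigl(\delta_n(1+\log_2(\ell/\delta_n))+O_{\lambda,c}(\delta_n)\bigr)$-neighborhood of $\phi(A)$; writing $\delta_n=\varepsilon_n t_n$ with $\varepsilon_n\to 0$, this is $t_n\cdot O\bigl(\varepsilon_n\log_2(1/\varepsilon_n)\bigr)=o(t_n)$ because $x\log(1/x)\to 0$. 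The reverse inclusion uses no hyperbolicity at all, only quasigeodesy (a maximal excursion of $\phi(A)$ outside this neighborhood has length $O_{\lambda,c}$ of its depth), giving mutual Hausdorff distance $o(t_n)$, after which your transfer of the thinness of $\triangle_G$ back through the undistortion inequality is correct. With that one correction your proof is complete; what it buys over the cited argument is a quantitative estimate $f_H(s_n)=o(s_n)$ at explicit scales $s_n=\Theta(t_n)$, at the cost of replacing a two-line topological argument with a delicate localized Morse lemma.
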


So it is natural to expect the ``same" result to be true for lacunary hyperbolic space.

\begin{theorem} \label{qisubspace}
	Let $X$ be a lacunary hyperbolic space and $q: Y\rightarrow X$ is a quasi isometric embedding. Then $Y$ is also lacunary hyperbolic space.
\end{theorem}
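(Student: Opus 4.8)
The plan is to reduce the claim to the statement that a single asymptotic cone of $Y$ is an $\mathbb{R}$-tree, and to produce such a cone directly from a cone of $X$ that is an $\mathbb{R}$-tree by transporting the ``no embedded circle'' property through the map induced by $q$. As in the rest of this section I take $Y$ and $X$ to be geodesic, so that their asymptotic cones are complete geodesic spaces and Lemma \ref{limitgeodesictrg} applies.

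First I would fix the scales and build the induced map. Since $X$ is lacunary hyperbolic, the space version of Theorem \ref{lhgdef} (the Remark following it) gives $\liminf_{t\to\infty}f_X(t)/t=0$, so there is a non-decreasing scaling sequence $d=(d_n)$ with $d_n\to\infty$ and $f_X(d_n)=o(d_n)$. The proof of Theorem \ref{lhgdef} only uses the global hyperbolicity function $f_X$ through the hexagon estimate, so it is insensitive to the basepoint and the ultrafilter; hence for this fixed $d$ \emph{every} cone $Con^{\omega}(X,\star,d)$ is an $\mathbb{R}$-tree. Now fix $\omega$ and a basepoint $e_0\in Y$ and set $x_0:=q(e_0)$. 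Writing $q$ as a $(\lambda,c)$-quasi-isometric embedding, the rule $\{y_n\}^{\omega}\mapsto\{q(y_n)\}^{\omega}$ defines a map $\bar q\colon Con^{\omega}(Y,e_0,d)\to Con^{\omega}(X,x_0,d)$: the estimate $d_X(x_0,q(y_n))\le \lambda d_Y(e_0,y_n)+c$ keeps image sequences admissible, and since the additive constant $c$ disappears after division by $d_n$ one gets $\tfrac1\lambda\,\mathrm{Dist}_Y(u,v)\le \mathrm{Dist}_X(\bar q(u),\bar q(v))\le \lambda\,\mathrm{Dist}_Y(u,v)$, so $\bar q$ is a $\lambda$-bi-Lipschitz embedding, in particular an injective continuous map, hence a homeomorphism onto its image.

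The heart of the argument is then purely topological. The target $Con^{\omega}(X,x_0,d)$ is an $\mathbb{R}$-tree, and an $\mathbb{R}$-tree contains no subset homeomorphic to $S^1$: two distinct subarcs of such a circle would give two distinct arcs between a pair of its points, contradicting unique arcwise connectedness. Suppose toward a contradiction that $Con^{\omega}(Y,e_0,d)$ contained a nontrivial simple limit geodesic triangle $\triangle_{\infty}$; being a simple loop made of three finite geodesics, it is compact and homeomorphic to $S^1$. Then $\bar q(\triangle_{\infty})$ would be a subset of the tree $Con^{\omega}(X,x_0,d)$ homeomorphic to $S^1$, which is impossible. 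Hence $Con^{\omega}(Y,e_0,d)$ has no nontrivial simple limit geodesic triangle, and applying Lemma \ref{limitgeodesictrg} to the collection $\mathcal{P}$ of one-point subsets (for which $(T_1)$ is automatic) shows that $Con^{\omega}(Y,e_0,d)$ is tree-graded over singletons, i.e.\ an $\mathbb{R}$-tree. Therefore $Y$ has an asymptotic cone that is an $\mathbb{R}$-tree, so $Y$ is lacunary hyperbolic.

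I expect the delicate points to be bookkeeping rather than conceptual. The main obstacle is matching the data: one must use the \emph{same} $d$ and $\omega$ for both spaces and be sure that the cone of $X$ based at the forced basepoint $x_0=q(e_0)$ is a tree. This is exactly why I stress that the good scaling sequence coming from Theorem \ref{lhgdef} makes \emph{all} based cones of $X$ trees at once --- for the group version this is automatic by homogeneity, but for spaces it must be extracted from the basepoint-free nature of the hyperbolicity-function estimate. The remaining items --- admissibility and the two-sided bound for $\bar q$, and the compactness of $\triangle_{\infty}$ --- are routine, and the input that an $\mathbb{R}$-tree carries no embedded circle is elementary. Notice also that $q$ need only be a quasi-isometric \emph{embedding}: the image $\bar q\big(Con^{\omega}(Y,e_0,d)\big)$ may be a proper subset of the tree, which is harmless since we only need it to sit inside a tree.
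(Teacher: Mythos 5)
Your endgame is correct and is exactly the argument the paper intends: the one-line proof in the paper defers to the argument of Theorem \ref{aa} (\cite[Theorem 3.18]{OOS07}), which is precisely your mechanism --- the quasi-isometric embedding induces a bi-Lipschitz embedding $\bar q$ of cones taken with the same $\omega$ and $d$, a nontrivial simple limit geodesic triangle is a compact set homeomorphic to $S^1$, an $\mathbb{R}$-tree contains no embedded circle by unique arcwise connectedness, and Lemma \ref{limitgeodesictrg} with singleton pieces closes the loop. The genuine gap is in your opening move. To arrange that \emph{every} cone $Con^{\omega}(X,\star,d)$ is a tree, you invoke the only-if direction of the space version of Theorem \ref{lhgdef} (the Remark): lacunary hyperbolic $\Rightarrow\liminf_{t\rightarrow\infty}f_X(t)/t=0$. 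The paper proves that direction only for groups --- it runs through part c.) of Theorem \ref{OOS}, the direct-limit characterization, which has no analogue for spaces --- and for inhomogeneous geodesic spaces the implication is actually \emph{false}. Take $X$ to be the ray $[0,\infty)$ with a circle of circumference $2^n$ glued at the point $4^n$ for every $n$. Any circle visible at scale $d_j$ (glued at distance $O(d_j)$) has rescaled diameter at most $\sqrt{4^n}/d_j=O(1/\sqrt{d_j})\rightarrow 0$, so every constant-basepoint cone of $X$ is an $\mathbb{R}$-tree and $X$ is lacunary hyperbolic; yet an inscribed equilateral geodesic triangle on the circle of length $L=2^n$ has thinness at least $L/6$, whence $f_X(t)\geq t/12$ for all $t\geq 2$. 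Homogeneity of Cayley graphs (translating any triangle to the basepoint) is what rescues the group case; your step would fail in exactly the generality where you need it.

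The repair is easy and shortens the proof: you do not need all cones of $X$ to be trees. Lacunary hyperbolicity hands you one triple $(\omega, x_0, d)$ with $Con^{\omega}(X,x_0,d)$ a tree, and since $d_n\rightarrow\infty$, cones with \emph{constant} observation points are independent of the choice of point: for basepoints $x_0,x_0'$ at distance $D<\infty$, the admissible sequences and the limit metric coincide because $D/d_n\rightarrow 0$. Hence $Con^{\omega}(X,q(e_0),d)$ is that same tree, and your map $\bar q$, run with this very $\omega$ and $d$, lands in it; the rest of your argument goes through verbatim. So your flagged ``delicate point'' about forcing the basepoint $q(e_0)$ is vacuous under the constant-basepoint reading, rather than something to be extracted from the hyperbolicity function. (If one instead reads the paper's definition of cones literally, allowing non-constant observation sequences $\{e_n\}$, then the basepoint issue is real but the theorem itself becomes false --- glue a Euclidean plane to the end of a long ray: a cone observed far out along the ray is a line, so $X$ is lacunary hyperbolic, while the isometrically embedded plane has no tree cone --- so the constant-basepoint reading is the one under which the statement is true.)
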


Proof follows from similar argument from Theorem ({\color{blue}\ref*{aa}}).

\begin{corollary}
	Suppose $X,Y$ are two quasi-isometric geodesic metric space. Then $X$ is lacunary hyperbolic if and only if $Y$ is lacunary hyperbolic.
\end{corollary}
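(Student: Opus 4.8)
The plan is to read this off directly from Theorem~\ref{qisubspace}, whose content is precisely that lacunary hyperbolicity is inherited by the source of a quasi-isometric embedding into a lacunary hyperbolic space. The only point to observe is that a quasi-isometry is, by definition, a quasi-isometric embedding (it satisfies the two-sided distance bounds of Definition~\ref{quasigeodesic}'s ambient notion, with the extra coarse-surjectivity condition that we simply discard), and that the quasi-isometry relation is symmetric.

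Concretely, I would argue as follows. Suppose first that $X$ is lacunary hyperbolic. Since $X$ and $Y$ are quasi-isometric, there is a $(\lambda,c)$ quasi-isometry $q\colon Y\rightarrow X$; forgetting coarse surjectivity, $q$ is in particular a quasi-isometric embedding, so Theorem~\ref{qisubspace} applies and yields that $Y$ is lacunary hyperbolic. For the converse, I would invoke the Remark (citing \cite{BH99}) that quasi-isometry is an equivalence relation: symmetry produces a quasi-isometry $q'\colon X\rightarrow Y$, which is again a quasi-isometric embedding, so if $Y$ is lacunary hyperbolic then Theorem~\ref{qisubspace} gives that $X$ is lacunary hyperbolic. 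Combining the two directions gives the stated equivalence.

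There is essentially no genuine obstacle here; the result is a formal consequence of Theorem~\ref{qisubspace} together with the symmetry of the quasi-isometry relation. If one wished to avoid appealing to the equivalence-relation remark, the one mildly technical point would be to exhibit an explicit quasi-inverse $\bar q\colon X\rightarrow Y$ of $q$: for each $x\in X$ one chooses $\bar q(x)\in Y$ with $d_X(q(\bar q(x)),x)<D$ (possible by coarse surjectivity), and then checks that $\bar q$ is a quasi-isometric embedding with constants depending only on $\lambda,c,D$. This is the standard construction of a coarse inverse and is entirely routine, so I would instead simply cite the symmetry already recorded in the excerpt and keep the proof to the two-line application of Theorem~\ref{qisubspace} in each direction.
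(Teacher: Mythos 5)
Your proposal is correct and is exactly the argument the paper intends: the corollary is stated as an immediate consequence of Theorem~\ref{qisubspace}, applied in each direction using the symmetry of the quasi-isometry relation (recorded in the paper's remark citing \cite{BH99}), since a quasi-isometry is in particular a quasi-isometric embedding. There is nothing further to add; your optional construction of an explicit coarse inverse is standard and unnecessary given that remark.
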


\begin{definition}\label{synchrodef}
	Let $G_i$ be LHG (lacunary hyperbolic group) for $i=1,2,\cdots ,k$ with corresponding hyperbolicity function $f_{G_i}$ for $G_i$ for all $i=1,2,\cdots,k$. Then we call $G_i$'s are synchronized LH if there exists an increasing infinite sequence $\{x_j\}_{j\in \mathbb{N}}$ of real numbers and finite generating sets $X_i$ of $G_i$ for all $i$ such that the hyperbolicity function with respect to generating sets $X_i$, $f_{G_i}$ satisfies $\lim_{j\rightarrow\infty}\frac{f_{G_i}(x_j)}{x_j}=0$ for all $i=1,2,\cdots,k$ (equivalently $\liminf_{t\rightarrow \infty} \frac{\sum_{i=1}^{k}f_{G_i}(t)}{t}=0$).
\end{definition}
Free product of two lacunary hyperbolic group is not necessarily a lacunary hyperbolic (Example 3.16, $\cite{OOS07}$).\par 
\begin{theorem}\label{synchro}
	Let $G=G_0* G_1$ with $G_i$'s LHG. Then $G$ is LHG if and only if $G_0$ and $G_1$ are synchronized LH.
\end{theorem}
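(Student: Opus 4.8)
Fix arbitrary finite generating sets $X_0,X_1$ of $G_0,G_1$ and put $X=X_0\cup X_1$, a finite generating set of $G=G_0*G_1$. Since lacunary hyperbolicity does not depend on the finite generating set and is, by Theorem~\ref{lhgdef}, equivalent to $\liminf_{t\to\infty}f_G(t)/t=0$, everything reduces to comparing the hyperbolicity function $f_G$ of $\Gamma(G,X)$ with the functions $f_{G_0},f_{G_1}$ computed with respect to $X_0,X_1$. The plan is to prove the two-sided estimate
\[
 f_{G_i}(t)\ \le\ f_G(t)\ \le\ \max\{f_{G_0}(t),f_{G_1}(t)\}\qquad(i=0,1)
\]
for all $t\ge 0$, after which both implications are immediate. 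The left inequality will give the ``only if'' direction and the right inequality the ``if'' direction.

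\textbf{The lower bound (convex embedding of factors).} The left inequality follows from the fact that each factor sits inside $\Gamma(G,X)$ convexly. By the normal form theorem for free products, for $u,v\in G_0$ the element $u^{-1}v\in G_0$ has every $X$-geodesic representative supported in $X_0$ (a nontrivial $X_1$-syllable would survive in the normal form and force $u^{-1}v\notin G_0$, while a trivial syllable could be deleted, contradicting geodesicity). Hence $\Gamma(G_0,X_0)$, and likewise every coset copy $gG_i$, is isometrically and convexly embedded in $\Gamma(G,X)$. Consequently any geodesic triangle of $\Gamma(G_i,X_i)$ is a geodesic triangle of $\Gamma(G,X)$ with the same sides and the same thinness, so $f_{G_i}(t)\le f_G(t)$.

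\textbf{The upper bound (the crux).} Let $\triangle(p,q,r)$ be a geodesic triangle in $\Gamma(G,X)$ of perimeter $\le t$. By the normal form, $\Gamma(G,X)$ is a tree of spaces: copies of $\Gamma(G_0,X_0)$ and $\Gamma(G_1,X_1)$ glued along single cut vertices, with underlying tree the Bass--Serre tree of $G_0*G_1$; every geodesic runs through a canonical sequence of cut vertices and lies inside a single coset copy between consecutive ones. Let $C\cong\Gamma(G_i,X_i)$ be the central copy (the median coset of $p,q,r$), and let $w_p,w_q,w_r\in C$ be the cut vertices at which the branches toward $p,q,r$ leave $C$ (if the median is a cut vertex the central copy degenerates to a point and the argument only simplifies). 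Then $[w_p,w_q]\subset[p,q]$, $[w_q,w_r]\subset[q,r]$, $[w_r,w_p]\subset[r,p]$ form a genuine geodesic triangle inside $C$ of perimeter $\le t$, hence it is $f_{G_i}(t)$-thin. On the $p$-branch the sub-arcs $[p,w_p]\subset[p,q]$ and $[p,w_p]\subset[r,p]$ are two geodesics with common endpoints; they pass through the same cut vertices, and between consecutive ones they form a bigon inside a single coset copy $\Gamma(G_j,X_j)$. Since the two arcs of this bigon lie on distinct sides of $\triangle$, its perimeter is $\le d(p,q)+d(r,p)\le t$, so the two arcs are within $f_{G_j}(t)\le\max\{f_{G_0}(t),f_{G_1}(t)\}$ of each other (treating the bigon as a degenerate triangle; one may alternatively invoke Lemma~\ref{hyperbolicgeodesicngon1}). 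The same holds on the $q$- and $r$-branches. Combining the central region and the three branches shows that every point of a side of $\triangle$ lies within $\max\{f_{G_0}(t),f_{G_1}(t)\}$ of the union of the other two sides, i.e. $\delta_\triangle\le\max\{f_{G_0}(t),f_{G_1}(t)\}$; taking the supremum over $\triangle\in S_t$ gives the right inequality.

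\textbf{Conclusion and main obstacle.} Given the estimate, suppose $G_0,G_1$ are synchronized LH (Definition~\ref{synchrodef}): choose $X_0,X_1$ and an increasing sequence $x_j\to\infty$ with $f_{G_i}(x_j)/x_j\to 0$ for $i=0,1$. Then $f_G(x_j)/x_j\le\max_i f_{G_i}(x_j)/x_j\to 0$, so $\liminf_t f_G(t)/t=0$ and $G$ is lacunary hyperbolic by Theorem~\ref{lhgdef}. Conversely, if $G$ is lacunary hyperbolic then (for the generating set $X$) $\liminf_t f_G(t)/t=0$, so there is $y_j\to\infty$ with $f_G(y_j)/y_j\to 0$; by the lower bound, $f_{G_i}(y_j)/y_j\le f_G(y_j)/y_j\to 0$ for $i=0,1$ \emph{simultaneously along the single sequence} $x_j:=y_j$, which is exactly the assertion that $G_0,G_1$ are synchronized LH. I expect the main obstacle to be making the tree-of-spaces decomposition of the upper bound fully rigorous: namely that each geodesic side is forced through a canonical sequence of cut vertices and stays in one coset copy between them, that the three central arcs reassemble into a single geodesic triangle inside one factor, and that both the inner triangle and the branch bigons have perimeter $\le t$. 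All of these rest on the free-product normal form and the convexity of the factor subgraphs established above; the two $\liminf$ manipulations at the end are routine.
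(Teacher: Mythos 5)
Your proposal is correct, and it reaches the theorem through the same overall skeleton as the paper --- the two-sided estimate $f_{G_i}(t)\le f_G(t)\le \max\{f_{G_0}(t),f_{G_1}(t)\}$ followed by the same routine $\liminf$ bookkeeping, with your ``only if'' direction essentially identical to the paper's --- but the crux upper bound is established by a genuinely different method. The paper writes the boundary word of a geodesic triangle $\Delta$ in $G_0*G_1$ as a product of conjugates of relators in the free group $\mathcal F(X_0\cup X_1)$ and inducts on the total length of this normal form: each cancellation in $G_0$ or $G_1$ splits $\Delta$ into a factor bigon and a smaller triangle, yielding $f_G(t)\le\max\{f_{G_0}(t),f_{G_1}(t)\}$ recursively. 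You instead decompose each triangle in a single pass using the tree-graded (Bass--Serre) structure of $\Gamma(G_0*G_1,X_0\cup X_1)$: a central geodesic triangle in the median coset piece plus matched bigons between consecutive cut vertices along the three branches, every piece of perimeter $\le t$ and contained in one factor copy. Your route buys an explicit geometric picture, transparent perimeter bookkeeping (the paper's induction, as written, tracks normal-form length rather than perimeter, and is terser about why the split-off pieces are again geodesic bigons/triangles of controlled size), and it extends verbatim to finitely many factors as in Remark \ref{freeprodlacuremark}; the paper's induction buys elementarity, needing only the free-product normal form and no tree-of-spaces machinery. One small remark on your lower bound: convexity of the factor copies, which you carefully derive, is more than is needed --- the isometric embedding $G_i\hookrightarrow G$ (the paper's $(1,0)$-quasi-isometric embedding) already guarantees that a geodesic triangle of $\Gamma(G_i,X_i)$ is a geodesic triangle of $\Gamma(G,X)$ with the same thinness, since all relevant distances agree; convexity becomes essential only in your upper-bound decomposition, where it is indeed the right tool.
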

\begin{proof}    
Let $G_1,G_2$ are synchronized LH. Let $\Delta=T_1T_2T_3$ be a geodesic triangle in $G:=G_1*G_2$ with geodesic sides $T_1,T_2,T_3$, where $G_i$s are hyperbolic groups with finite presentation $G_i=\la X_i|R_i\ra$ for $i=1,2$. Since $\Delta=1$ in $G$, consider the normal form $\Delta=\prod_{i=1}^{k}g_i^{-1}r_i^{\pm 1}g_i$ in the free group $\mathcal{F}(X_1\cup X_2)$ for some positive integer $k$ where $g_i\in (X_1\cup X_2)^{\pm 1}$ and $r_i\in R_1\cup R_2$. We are going to induct on the sum of lengths of the normal form of $\Delta$. Suppose the sum of length of normal form is $\leq 3$. Then the triangle $\Delta$ belongs to one or the free factors either $G_1$ or $G_2$ since the product is $1$. Let us assume that the sum of lengths of normal form is $n\geq 4$. Let we have a cancellation in $T_1T_2=T_1'xyT_2'$ where $xy=1$ in $G_1$ (respectively in $G_2$). Then the triangle $\Delta $ can be written as union of triangle $\Delta_1=xy$ (which is a bigon with non trivial side $x$ and $y$) and the triangle $\Delta_2$ with sides $T_1',T_2'$ and $T_3$. Notice that sum of lengths normal form of $\Delta_1,\Delta_2$ say $n_1,n_2$ respectively, are strictly less than $n$. Hence we get
\begin{equation} 
 f_{G}(n=n_1+n_2)   \leq \max\{f_{G_1}(n_1),f_{G_1}(n_2), f_{G_2}(n_1), f_{G_2}(n_2)\}
   \leq \max\{ f_{G_1}(n),f_{G_2}(n)\}.\nonumber 
 \end{equation} 
Last inequality is coming from the fact that hyperbolicity function is non  decreasing. When there is a cancellation $xyz=1$ in $G_1$ (or in $G_2$) then we are in the following setting, $T_1=T_1'x, T_2=y$ and $T_3=zT_2'$, where $xyz=1$ in $G_1$ (or in $G_2$). In this case we can also write the triangle $\Delta$ as union of triangle $\Delta$ formed by $x,y,z$ and the triangle $\Delta_2$ (bigon) formed by $T_1',T_2'$ with lengths of normal forms $n_1,n_2$. Then we get 
\begin{equation} 
f_{G}(n=n_1+n_2)   \leq \max\{ f_{G_1}(n_1),f_{G_1}(n_2), f_{G_2}(n_1), f_{G_2}(n_2)\}
\leq \max\{ f_{G_1}(n),f_{G_2}(n)\}.\nonumber 
\end{equation} 
Hence by induction we can see that $f_G(t)\leq \max\{ f_{G_1}(t),f_{G_2}(t)\}$ for all $t$. Which implies that,
\begin{align}
\liminf_{t\rightarrow \infty}\frac{f_G(t)}{t}\leq \liminf_{t\rightarrow\infty}\frac{f_{G_1}(t)+f_{G_2}(t)}{t} \rightarrow 0\nonumber. 
\end{align}        
Hence $G$ is a lacunary hyperbolic group.\par     
    
	For the other direction let $G=G_1* G_2$ is LHG. Note that we have 
	\begin{align} 
		f_{G_i}(t)\leq f_G(t)\ \textit{for all } t> 0\ \textit{and for all }i=1,2.
	\end{align} 
	since every geodesic triangles in $G_i$ is also a geodesic triangle in $G$ for all $i=1,2$ as the embedding $G_i\rightarrow G_1*G_2$ is a $(1,0)$ quasi isometric embedding for all $i=1,2$. There exists a sequence, say $\{ y_i\}$ such that, $\lim_{i\rightarrow\infty }\frac{f_G(y_i)}{y_i}=0$. So we have $\lim_{i\rightarrow\infty }\frac{f_{G_j}(y_i)}{y_i}=0$ for all $j=1,2$. Hence by definition $G_1,G_2$ are synchronized LH. 
\end{proof}	

\begin{remark}\label{freeprodlacuremark}
	Above proposition is in fact true for free product of finite number of LHGs i.e, for a finite number of lacunary hyperbolic groups $\{G_i\}_{i=1}^{k}$, $G=*^{k}_{i=1}G_i$ is LHG if and only if $\{G_i\}$ are synchronized LH. In particular $*_{i=1}^kG$ is a lacunary hyperbolic group whenever $G$ is a lacunary hyperbolic group for any positive integer $k$.
\end{remark}


\section{Algebraic properties of lacunary hyperbolic groups}\label{section4}
\subsection{Small cancellation theory}
\subsubsection{van Kampen diagrams}
Given a word $W$ in alphabets $\mathcal{S}$, we denote its length by $\|W\|$. We also write $W\equiv V$ to express the letter-for-letter equality for words $U,V$. \par 
Let $G$ be a group generated by alphabets $\mathcal{S}$. A van Kampen diagram $\triangle$ over a presentation 
\begin{align} \label{vankampen}
G=\langle S|\mathcal{R}\rangle
\end{align} 
where $\mathcal{R}$ is cyclically reduced words over alphabets $S$, is a finite, oriented, connected, planar 2-complex endowed with a labeling function $Lab:E(\triangle)\rightarrow \mathcal{S}^{\pm 1}$, where $E(\triangle)$ denotes the set of oriented edges of $\triangle$, such that $Lab(e^{-1})\equiv (Lab(e))^{-1}$. Given a cell $\Pi$ of $\triangle$, $\partial \Pi$ denotes its boundary; similarly $\partial \triangle$ denote boundary of $\triangle$. The labels of $\partial \triangle$ and $\partial \Pi$ are defined up to cyclic permutations. Also one additional requirment is that the label for any cell $\Pi$ of $\triangle$ is equal to (up to a cyclic permutation) $R^{\pm 1}$, where $R\in\ \mathcal{R}$.\par 
By van Kampen lemma, a word $W$ over alphabets $\mathcal{S}$ represents the identity element in the group given by (\color{blue}{\ref{vankampen}}\color{black}) if and only if there exists a van Kampen diagram $\triangle$ over (\color{blue}{\ref{vankampen}}\color{black}) such that, $Lab(\partial\triangle)\equiv W$. [Ch.5, Theorem 1.1]\cite{LS77}.


\subsubsection{Small cancellation over hyperbolic groups}
Let $G=\langle X\rangle$ be a finitely generated group. 
\begin{definition}
	A word $W$ in the alphabet $X^{\pm 1}$ is called $(\lambda,c)$-quasi geodesic (respectively geodesic) in $G$ if any path in the Cayley graph $\Gamma(G,X)$ labeled by $W$ is $(\lambda,c)$-quasi geodesic (respectively geodesic).
\end{definition}
Let $G=\langle X\rangle$ be a finitely generated group, and let $\mathcal{R}$ be a symmetric set of words (i.e. it is closed under operations of taking cyclic shifts and inverse of words and all the words are cyclically reduces) from $X^*$. 
\begin{definition}\cite{LS77}
	Let \begin{align} 
	G=\langle X\rangle 
	\end{align} 
	be a group generated by $X$, and $\mathcal{R}$ be a symmetrized set of reduced words in a finite set of alphabets $X^{\pm 1}$. A common initial sub-word of any two distinct words in $\mathcal{R}$ is called a piece. We say that $\mathcal{R}$ satisfies $C'(\mu)$ condition if any piece contained (as a sub-word) in a word $R\in\mathcal{R}$ has length less than $\mu\|R\|$.  
\end{definition}

Let $G$ be a group generated by a set $X$. A subword $U$ of a word $R\in \mathcal{R}$ is called an $\epsilon$-piece (reference \cite{Ol93}) for $\epsilon\geq 0$ if there exists a word $R'\in \mathcal{R}$ such that
\begin{enumerate}[(a)]
	\item $R\equiv UV$ and $R^{\prime}\equiv U^{\prime}V^{\prime}$ for some $U^{\prime},V^{\prime}\in \mathcal{R}$.
	\item $U^{\prime}= YUZ$ in $G$ for some $Y,Z\in X^*$ where $\| Y\|,\| Z\| \leq \epsilon$.
	\item $YRY^{-1}{\neq} R^{\prime}$ in the group $G$.
\end{enumerate}

It is said that the system satisfies the $C(\lambda,c,\epsilon,\mu,\rho)$-condition for some $\lambda\geq 1,c\geq 0,\epsilon\geq0,\mu>0,\rho>0$ if,
\begin{enumerate}[a)] 
\item $\| R\| \geq \rho$ for any $R\in \mathcal{R}$.
\item any word $R\in \mathcal{R}$ is a $(\lambda,c)$-quasi geodesic.
\item  for any $\epsilon$-$piece$ of any word $R\in \mathcal{R}$, the inequalities $\| U\|,\| U^{\prime } \| <\mu \| R\|$ holds.
\end{enumerate}
Let $U^{\pm 1}$ be a subword of $R\in \mathcal{R}$ and we have,
\begin{enumerate} 
\item $R\equiv UVU^{\prime} V^{\prime}$ for some $V,U^{\prime} ,V^{\prime}\in X^*$,
\item $U^{\prime}=YU^{\pm 1}Z$ in the group $G$ for some words $Y,Z\in X^*$ where $\| Y\| ,\| Z\| \leq \epsilon$.
\end{enumerate}
Then we call $U$ an $\epsilon^{\prime}$-$piece$ of the word $R$. If $\mathcal{R}$ satisfies the $C(\lambda,c,\epsilon,\mu,\rho)$-condition and in addition for all $R\in \mathcal{R}$, the above decomposition implies $\| U\| , \| U^{\prime} \| < \mu \| R\|$, we say that $\mathcal{R}$ satisfies $C'(\lambda,c,\epsilon,\mu,\rho)$-condition. 
\begin{definition}[Definition 4.3, \cite{OOS07}]
	Let $\epsilon\geq 0,\mu \in (0,1),$ and $\rho>0$. We say that a symmetrized set $\mathcal{R}$ of words over the alphabet $X^{\pm 1}$ satisfies condition $C(\epsilon,\mu,\rho)$ for the group $G$, if
	\begin{enumerate}[label=(\subscript{C}{{\arabic*}})]
	\item All words from $\mathcal{R}$ are geodesic in $G$.

	\item $\|R\|\geq \rho$ for all $R\in \mathcal{R}$.
    \item The length of any $\epsilon$-piece contained in any word $R\in \mathcal{R}$ is smaller than $\mu\|R\|$.
\end{enumerate}
\end{definition}
Suppose now that $G$ be a group defined by 
\begin{align}\label{hyppresent}
G=\langle X|\mathcal{O}\rangle
\end{align}
where $\mathcal{O}$ is the set of all relators (not only defining) of $G$. Given a symmetrized set of words $\mathcal{R}$, we consider the quotient group ,
\begin{align}\label{representation}
H=\langle G|\mathcal{R}\rangle =\langle X|\mathcal{O}\cup \mathcal{R}\rangle.
\end{align} 
A cell over a van Kampen diagram over ($\color{blue}{\ref*{representation}}$) is called an $\mathcal{R}$-cell (respectively, an $\mathcal{O}$-cell) if it's boundary label is a word from $\mathcal{R}$ (respectively, $\mathcal{O}$). We always consider van Kampen diagram over $(\color{blue}{\ref*{representation}})$ up to some elementary transformations. For examples we do not distinguish diagrams if one can be obtained from other by joining two distinct $\mathcal{O}$-cells having a common edge or by inverse transformation, etc (ref, \cite[Section 5~]{Ol93}).\par 
Let $\triangle$ be a van Kampen diagram over ($\color{blue}{\ref*{representation}}$), $q$ be a sub-path of it's boundary $\partial \triangle$, $\Pi,\Pi'$ some $\mathcal{R}$-cells of $\triangle$. Suppose $p=s_1q_1s_2q_2$ be a simple closed path in $\triangle$, where $q_1$ (respectively $q_2$) is a sub-path of the boundary $\partial \Pi$ (respectively $q$ or $\partial \Pi'$) with $\max\{\|s_1\|,\|s_2\| \}\leq \epsilon $ for some constant $\epsilon$. Then denote $\Gamma$ to be the sub-diagram of $\triangle$ bounded by $p$. We call $\Gamma$ is an $\epsilon$-contiguity sub-diagram of $\Pi$ to the part $q$ of $\partial \triangle$ (or $\Pi'$ respectively) if $\Gamma$ contains no $\mathcal{R}$-cells. The sub-paths $q_1,q_2$ are called contiguity arcs of $\Gamma$ and the ratio $\|q_1\|/\|\partial \triangle\|$ is called contiguity degree of $\Pi$ to $\partial \triangle$ (or to $\Pi'$ respectively). Contiguity degree is denoted by $(\Pi,\Gamma,\partial\triangle)$ or $(\Pi,\Gamma,\Pi')$.\\
We call a (disc) van Kampen diagram over $(\color{blue}{\ref*{representation}})$ minimal if it has minimal number of $\mathcal{R}$-cells among all disc diagrams with the same boundary label.

\begin{lemma}[lemma 4.6, \cite{OOS07}]
	Let $G$ be a $\delta$ hyperbolic group having presentation $\langle X|\mathcal{O}\rangle$ as $(\color{blue}{\ref*{hyppresent}})$, $\epsilon \geq 2\delta$, $0<\mu\leq 0.01$, and $\rho$ is large enough (it suffices to choose $\rho > 10^6\frac{\epsilon}{\mu}$). Let $H$ is given by,
	\begin{align}
	H=\langle G|\mathcal{R}\rangle =\langle X|\mathcal{O}\cup \mathcal{R}\rangle.
	\end{align}
	as in ($\color{blue}{\ref*{representation}}$) where $\mathcal{R}$ is a symmetrized set of words in $X^{\pm 1}$ satisfying the $C(\epsilon,\mu,\rho)$-condition. Then the following holds,
	
	1. Let $\triangle$ be a minimal disc diagram over ($\color{blue}{\ref*{representation}}$). Suppose that $\partial\triangle=q^1q^2\cdots q^t$, where the labels of $q^1,q^2,\cdots ,q^t$ are geodesic in $G$ and $t\leq 12$. Then provided $\triangle $ has an $\mathcal{R}$-cell, there exists an $\mathcal{R}$-cell $\Pi$ in $\triangle$ and disjoint $\epsilon$-contiguity sub-diagrams $\Gamma_1,\Gamma_2,\cdots,\Gamma_t$ (some of them may be absent) of $\Pi$ to $q^1,\cdots,q^t$ respectively such that, $$(\Pi,\Gamma_1,q^1)+\cdots +(\Pi,\Gamma_t,q^t)>1-23\mu.$$

	2. $H$ is a $\delta_1$ hyperbolic group with $\delta_1\leq 4L$, where $L=\max\{\|R\| | R\in \mathcal{R} \}$.
	
\end{lemma}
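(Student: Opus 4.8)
The plan is to treat the two parts in order: statement (1) is a Greendlinger-type lemma for the presentation (\ref{representation}), and statement (2) is deduced from (1) by a thin-triangle estimate. Throughout I would work with a \emph{minimal} diagram $\triangle$ and exploit the fact that, once the $\mathcal{R}$-cells are removed, the remaining subdiagrams are diagrams over the $\delta$-hyperbolic group $G$ and are therefore governed by its thinness constant $\delta$. This is exactly where the standing hypothesis $\epsilon\ge 2\delta$ enters: it guarantees that the $\mathcal{O}$-filled regions separating the $\mathcal{R}$-cells can be taken $\epsilon$-thin.

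For part (1), I would fix a minimal disc diagram $\triangle$ over (\ref{representation}) with $\partial\triangle=q^1\cdots q^t$ geodesic and $t\le 12$, and assume it contains at least one $\mathcal{R}$-cell. The key structural observation is that any two $\mathcal{R}$-cells, or an $\mathcal{R}$-cell and a boundary arc $q^i$, are joined by $\mathcal{O}$-subdiagrams which, being diagrams over the $\delta$-hyperbolic group $G$ bounded by geodesic (or relator) sides, are $\epsilon$-thin; this lets me organize $\triangle$ into a system of disjoint $\epsilon$-contiguity subdiagrams. I would then pass to the associated estimating graph whose faces correspond to $\mathcal{R}$-cells and whose edges record contiguity arcs, and run a combinatorial Gauss--Bonnet / Euler-characteristic argument on it. The small cancellation condition $C(\epsilon,\mu,\rho)$ forces every contiguity arc between two distinct $\mathcal{R}$-cells to be an $\epsilon$-piece, hence of length below $\mu\|R\|$, so no cell can distribute more than the controlled fraction $23\mu$ of its perimeter to its neighbours and to the gaps. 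The curvature estimate then extracts an $\mathcal{R}$-cell $\Pi$ whose total contiguity to $q^1\cup\cdots\cup q^t$ exceeds $1-23\mu$, which is the asserted inequality.

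For part (2), I would use (1) to show that geodesic triangles in $\Gamma(H,X)$ are thin with the stated constant. Given such a triangle, fill it with a minimal diagram. If the diagram has no $\mathcal{R}$-cell it is a diagram over $G$ and the $\delta$-thinness of $G$ applies directly. If it contains an $\mathcal{R}$-cell, part (1) with $t=3$ yields a cell $\Pi$ with contiguity degree $>1-23\mu$ to the union of the three geodesic sides; since $\|\partial\Pi\|\le L$, bookkeeping the contiguity arcs against this perimeter forces each side of the triangle to run within distance comparable to $L$ of the other two, giving the bound $\delta_1\le 4L$.

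The main obstacle is the combinatorial heart of part (1): arranging the estimating graph so that the $\mathcal{O}$-subdiagrams are genuinely controlled by $\epsilon\sim\delta$, and verifying the curvature inequality with the explicit constant $23\mu$. This hinges on the quantitative interplay already encoded in the thresholds $\epsilon\ge 2\delta$, $\mu\le 0.01$ and $\rho>10^6\epsilon/\mu$, between the thinness of $G$ (which bounds the \emph{widths} of contiguity regions) and the small cancellation condition (which bounds their \emph{lengths}); these are precisely the inequalities that make the Gauss--Bonnet count close. This is the content of Ol'shanskii's contiguity machinery in \cite{Ol93}, which I would follow.
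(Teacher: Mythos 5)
Your sketch is essentially a restatement of the original proof: the paper itself does not prove this lemma at all --- it is quoted verbatim as Lemma 4.6 of \cite{OOS07}, whose part (1) is in turn Ol'shanskii's contiguity/estimating-graph Greendlinger lemma \cite[Lemma 6.6]{Ol93} (proved there for $t\leq 4$, with the same argument working for $t\leq 12$), and whose part (2) is deduced from part (1) exactly as you outline. One small point your part (2) glosses over: part (1) requires the boundary sections $q^i$ to be geodesic \emph{in $G$}, while the sides of a triangle in $\Gamma(H,X)$ are geodesic in $H$; this is reconciled by the standard observation that a word geodesic in a quotient is automatically geodesic in the original group (since the quotient map does not increase word length, $\|W\|\geq |W|_G\geq |W|_H=\|W\|$ forces equality), after which your case analysis --- no $\mathcal{R}$-cell gives $\delta$-thinness with $\delta\leq\epsilon/2\ll L$, and otherwise part (1) with $t=3$ plus $\|\partial\Pi\|\leq L$ gives the $4L$ bound --- goes through as in \cite{OOS07}.
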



\subsection{Elementary subgroups of hyperbolic groups}

A group $E$ is called elementary if it is virtually cyclic. We now state an elementary properties of elementary group.
\begin{lemma}
	If $E$ is a torsion free elementary group then $E$ is cyclic.
\end{lemma}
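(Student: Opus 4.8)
The statement to prove is that a torsion free elementary group is cyclic. Recall that elementary means virtually cyclic, so the plan is to exploit the finite-index cyclic subgroup together with torsion-freeness.

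\medskip

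The plan is to start from the definition: since $E$ is elementary, it contains a cyclic subgroup $C$ of finite index, say $[E:C]=n$. If $C$ is trivial then $E$ itself is finite, and being torsion free it must be trivial, hence cyclic; so assume $C=\langle c\rangle$ is infinite cyclic. First I would reduce to a \emph{normal} infinite cyclic subgroup of finite index. This is a standard observation: the core $C_0:=\bigcap_{g\in E} gCg^{-1}$ is a normal subgroup of finite index in $E$, and as a subgroup of the infinite cyclic group $C$ it is itself infinite cyclic (a nontrivial subgroup of $\mathbb{Z}$ is isomorphic to $\mathbb{Z}$; it is nontrivial because finite index in an infinite group forces infiniteness). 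So without loss of generality I may take $C\triangleleft E$ with $E/C$ finite and $C\cong\mathbb{Z}$.

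\medskip

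Next I would analyze the conjugation action of $E$ on $C=\langle c\rangle$. Since $\mathrm{Aut}(\mathbb{Z})=\{\pm 1\}$, each $g\in E$ satisfies $gcg^{-1}=c^{\pm 1}$, giving a homomorphism $\varphi:E\to\{\pm 1\}$ with kernel $K:=C_E(C)$, the centralizer of $C$, which contains $C$ and has index at most $2$ in $E$. The key step is to show $K$ is infinite cyclic. Here I would use that $K$ is a finitely generated (indeed finite-index in the finitely generated $E$) torsion-free abelian-ish group containing the central infinite cyclic $C$ of finite index: concretely, for any $k\in K$ the subgroup $\langle c,k\rangle$ is abelian (as $k$ commutes with $c$), finitely generated, torsion free, and of finite rank, and since $c$ has finite index in it, the group $\langle c,k\rangle$ is a torsion-free abelian group of rank $1$, hence a subgroup of $\mathbb{Q}$; a finitely generated subgroup of $\mathbb{Q}$ is cyclic. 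Taking $k$ to range over generators of $K$, or better, observing that $K$ is torsion-free abelian of rank $1$ and finitely generated, I conclude $K\cong\mathbb{Z}$.

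\medskip

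Finally I would rule out the index-$2$ case. Suppose $[E:K]=2$ with $K=\langle k\rangle\cong\mathbb{Z}$, and pick $t\in E\setminus K$; then $t^2\in K$, so $t^2=k^m$, and conjugation by $t$ inverts $K$ (since $t\notin C_E(C)$ forces $\varphi(t)=-1$, and $k$ generates a finite-index subgroup of $C$), i.e. $tkt^{-1}=k^{-1}$. Conjugating $t^2=k^m$ by $t$ gives $t^2=k^{-m}$, whence $k^{2m}=1$, forcing $m=0$, so $t^2=1$. But $E$ is torsion free and $t\neq 1$, a contradiction. Therefore $E=K$ is infinite cyclic. I expect the \textbf{main obstacle} to be the bookkeeping in the rank-one abelian argument for $K$ — one must be careful that ``torsion-free abelian with an infinite cyclic subgroup of finite index'' really is finitely generated of rank one and hence cyclic; the index-$2$ step and the reduction to a normal cyclic subgroup are then routine, relying only on torsion-freeness to eliminate the dihedral-type possibility.
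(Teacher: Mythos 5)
The paper states this lemma without proof, as a standard fact about virtually cyclic groups, so there is no in-paper argument to compare against; your proposal must stand on its own. Its skeleton is the standard elementary one and is mostly sound: passing to the normal core to get $C=\langle c\rangle\cong\mathbb{Z}$ normal of finite index is fine, the sign homomorphism $\varphi:E\to\mathrm{Aut}(C)=\{\pm 1\}$ with kernel $K=C_E(C)$ is fine, and your elimination of the index-$2$ case is correct as written: $t^2\in K$, the computation $tkt^{-1}=k^{-1}$ (forced since $c=k^s$ and $\varphi(t)=-1$), and then $t^2=k^m=k^{-m}$ gives $t^2=1$, contradicting torsion-freeness.

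The genuine gap is in the key step: you never actually prove that $K$ is abelian. Your per-element argument shows that for each $k\in K$ the subgroup $\langle c,k\rangle$ is abelian, hence infinite cyclic — correct — but this does not yield commutativity of two arbitrary elements of $K$: knowing $\langle c,k_1\rangle=\langle d_1\rangle$ and $\langle c,k_2\rangle=\langle d_2\rangle$ with $c$ a common nontrivial power of $d_1$ and $d_2$ does not force $d_1d_2=d_2d_1$, since in a general torsion-free group commuting with a power of an element does not imply commuting with the element (in the Klein bottle group $\langle a,b\mid bab^{-1}=a^{-1}\rangle$, $a$ commutes with $b^2$ but not with $b$). "Taking $k$ to range over generators of $K$" does not repair this, as the generators need not commute pairwise a priori; and asserting outright that $K$ is torsion-free abelian is close to circular, because "torsion-free with central $\mathbb{Z}$ of finite index implies abelian" is exactly the content of the $\varphi$-trivial case of the lemma. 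The fix is standard: since $C\leq Z(K)$ has finite index $m=[K:C]$, Schur's theorem gives that $K'$ is finite, hence trivial by torsion-freeness, so $K$ is abelian and your rank-one argument finishes; even more directly, the transfer homomorphism $V:K\to C$, which for central $C$ of index $m$ is $k\mapsto k^{m}$, has kernel $\{k:k^{m}=1\}=\{1\}$ by torsion-freeness, embedding $K$ into $\mathbb{Z}$ and giving $K\cong\mathbb{Z}$ in one stroke. With that patch your proof is complete.
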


\begin{lemma}
	Let $E$ be an infinite elementary group. Then it contains normal subgroups $T\leq E^{+}\leq E$ such that $|E:E^{+}|\leq 2$ , $T$ is finite and $E^{+}/T\simeq \mathbb{Z}$. If $E\neq E^+$ then $E/T\simeq D_{\infty}$(infinite dihedral group).
\end{lemma}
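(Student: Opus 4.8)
The plan is to first produce a normal, infinite cyclic, finite-index subgroup of $E$, then use the conjugation action on it to split off the index-$\leq 2$ subgroup $E^+$, and finally extract the finite normal subgroup $T$ from the abelianization of $E^+$.

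Since $E$ is virtually cyclic and infinite, it contains an infinite cyclic subgroup $C$ of finite index. First I would replace $C$ by its normal core $C_0 := \bigcap_{g\in E} gCg^{-1}$. As $C$ has finite index, this is an intersection of finitely many conjugates, each infinite cyclic of finite index in $E$; a finite intersection of finite-index subgroups has finite index, and every nontrivial subgroup of $C\cong\mathbb{Z}$ is again infinite cyclic. Hence $C_0\triangleleft E$ is infinite cyclic of finite index. Conjugation then gives a homomorphism $\phi\colon E\to\mathrm{Aut}(C_0)\cong\{\pm 1\}$; I set $E^+:=\ker\phi$, so that $|E:E^+|\leq 2$ and $C_0$ lies in the center of $E^+$.

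Next I would analyze $E^+$. Since $C_0\leq Z(E^+)$ has finite index, $Z(E^+)$ has finite index, so Schur's theorem gives that $[E^+,E^+]$ is finite. The abelianization $E^+/[E^+,E^+]$ is a finitely generated abelian group that is virtually $\mathbb{Z}$, hence isomorphic to $\mathbb{Z}\times F$ with $F$ finite. I define $T$ to be the preimage in $E^+$ of the torsion subgroup $F$. Then $T$ is characteristic in $E^+$ (being the preimage of the torsion subgroup of the abelianization), it is finite because $[E^+,E^+]$ is finite and $T/[E^+,E^+]\cong F$, and $E^+/T\cong\mathbb{Z}$. Because $T$ is characteristic in the normal subgroup $E^+\triangleleft E$, it is normal in $E$; this yields $T\leq E^+\leq E$ with all the required properties, and when $E=E^+$ we are done.

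It remains to treat the case $E\neq E^+$, which I expect to be the one point genuinely needing care. Here $E/T$ is an extension of $\mathbb{Z}/2$ by $E^+/T\cong\mathbb{Z}$. Any element $t\in E\setminus E^+$ inverts $C_0$, and since $C_0$ maps to a finite-index subgroup of $E^+/T\cong\mathbb{Z}$ while the only automorphism of $\mathbb{Z}$ restricting to inversion on a finite-index subgroup is inversion itself, the image $\bar t$ acts on $E^+/T$ by inversion. Writing $\bar t^2\in E^+/T\cong\mathbb{Z}$ and using that $\bar t^2$ commutes with $\bar t$ while $\bar t$ inverts $\mathbb{Z}$, I get $\bar t^2=(\bar t^2)^{-1}$, which forces $\bar t^2=1$ since $\mathbb{Z}$ is torsion-free. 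Thus $E/T=\langle E^+/T,\bar t\rangle\cong\mathbb{Z}\rtimes\mathbb{Z}/2=D_{\infty}$, completing the argument. The main obstacle is organizing these standard structural facts — in particular invoking Schur's theorem to bound the commutator subgroup and nailing the dihedral identification via the squaring argument — rather than any deep difficulty.
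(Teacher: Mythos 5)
Your proof is correct and complete. Note, however, that the paper offers no proof of this lemma at all --- it is stated as a standard structural fact about infinite virtually cyclic groups --- so there is no argument in the paper to compare against; your route (normal core $C_0$ of an infinite cyclic finite-index subgroup, $E^+$ as the kernel of $E\to\mathrm{Aut}(C_0)\cong\{\pm 1\}$, Schur's theorem to make $[E^+,E^+]$ finite, $T$ as the preimage of the torsion of the abelianization, and the squaring trick $\bar t^2=(\bar t^2)^{-1}$ forcing $\bar t^2=1$ in the torsion-free quotient) is the standard one and every step checks out, including the small points needing care: $C_0\cap T=1$ so $C_0$ injects into $E^+/T$ with finite index, and an automorphism of $\mathbb{Z}$ inverting a nontrivial finite-index subgroup must be inversion. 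One remark worth recording: since $E^+/T\cong\mathbb{Z}$ is torsion-free and $T$ is finite, your $T$ is precisely the set of all torsion elements of $E^+$; this matches exactly how the paper later uses the lemma in the proof of Theorem \ref{elementarygroup}, where $T_i$ is defined as the set of torsion elements of $E^+(g_i)$, so your formulation dovetails with the application.
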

\begin{definition}
	Let $G$ be a hyperbolic group and $g\in G$ be an infinite order element. Then the elementary subgroup containing $g$ is equal to the following set,
	\begin{center}
		$E(g):=\{x\in G| \ x^{-1}g^nx=g^{\pm n} \ for\ some \ n=n(x)\in \mathbb{N}-\{0\} \}$
	\end{center}
\end{definition}
\begin{remark} 
	For hyperbolic group $E(g)$ is unique maximal elementary subgroup of $G$ containing the infinite order element $g\in G$ (see \cite[Lemma 1.16]{Ol93}). Geometrically $E(g)$ is the kernel of the natural action of non elementary hyperbolic group $G$ on its hyperbolic boundary.
\end{remark}


\subsection{Necessary lemmas and theorems}
\begin{lemma}\cite{BH99}\label{Hau}
	Let $G=\langle X\rangle $ be a $\delta$ hyperbolic group. Then there exists a constant $R_{\lambda,c}$ depending on $\lambda,c$ such that any $(\lambda,c)$ quasi geodesic in $\Gamma(G,X)$ is $R_{\lambda,c}$ Hausdorff distance away from a geodesic. 
\end{lemma}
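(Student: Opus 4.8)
The plan is to prove the assertion in its usual form, the Morse (stability) lemma: inside the $\delta$-hyperbolic geodesic space $\Gamma(G,X)$, any $(\lambda,c)$-quasi-geodesic $\sigma$ and any geodesic $[p,q]$ joining its endpoints lie within Hausdorff distance $R_{\lambda,c}$, where $R_{\lambda,c}$ depends only on $\delta,\lambda,c$. First I would reduce to a continuous, rectifiable representative. A $(\lambda,c)$-quasi-geodesic in the sense of Definition \ref{quasigeodesic} need not be continuous, so I would replace $\sigma$ by a continuous path $\sigma'$ with the same endpoints, lying within distance $\lambda+c$ of $\mathrm{im}\,\sigma$ and obeying a tameness bound $\ell(\sigma'|_{[s,t]})\le k\,d(\sigma'(s),\sigma'(t))+k$ for a constant $k=k(\lambda,c)$; this is routine, and in the applications, where $\sigma$ is the edge path labelled by a word, it holds automatically with $k=\lambda c$. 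It then suffices to bound the Hausdorff distance between $\mathrm{im}\,\sigma'$ and $[p,q]$, and I would add back $\lambda+c$ at the end.

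The crux is the inclusion $[p,q]\subseteq N_{D_0}(\mathrm{im}\,\sigma')$. Let $D:=\sup_{x\in[p,q]}d(x,\mathrm{im}\,\sigma')$, attained at some $x_0$ by compactness of $[p,q]$, so that $d(x_0,\mathrm{im}\,\sigma')=D$ and every point of $\mathrm{im}\,\sigma'$ is at distance $\ge D$ from $x_0$. Choose $y,z\in[p,q]$ on the two sides of $x_0$ with $d(x_0,y)=d(x_0,z)=2D$ (take the endpoints $p,q$ if a side of $[p,q]$ is shorter), and points $\sigma'(s),\sigma'(t)$ within $D$ of $y,z$ respectively. Concatenating the geodesics $[y,\sigma'(s)]$ and $[\sigma'(t),z]$ with the subarc $\sigma'|_{[s,t]}$ produces a path $\gamma$ from $y$ to $z$; the two short geodesics have length $\le D$, so every point of $\gamma$ lies at distance $\ge D$ from $x_0$ (the quasi-geodesic part because $d(x_0,\mathrm{im}\,\sigma')=D$, the short geodesics because $d(x_0,y)=d(x_0,z)=2D$), while tameness gives $\ell(\gamma)\le (6k+2)D+k$ since $d(\sigma'(s),\sigma'(t))\le 6D$. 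Now subdivide $\gamma$ into $N=\lceil \ell(\gamma)\rceil$ arcs of length $\le 1$, join consecutive subdivision points by geodesics, and close up with $[z,y]$: this is a geodesic $(N{+}1)$-gon one of whose sides is the subsegment $[y,z]\subseteq[p,q]$ containing $x_0$. By Lemma \ref{hyperbolicngon} the side $[y,z]$ lies in the $(1+\log_2 N)\delta$-neighbourhood of the other $N$ sides, each of which is within $1$ of $\mathrm{im}\,\gamma$; hence $x_0$ is within $(1+\log_2 N)\delta+1$ of $\mathrm{im}\,\gamma$. Combining with the lower bound $D$ on distances to $\mathrm{im}\,\gamma$ yields the self-improving estimate $D\le \big(1+\log_2\lceil (6k+2)D+k\rceil\big)\delta+1$, whose right-hand side grows only logarithmically in $D$; this forces $D\le D_0$ for a constant $D_0=D_0(\delta,\lambda,c)$.

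For the reverse inclusion $\mathrm{im}\,\sigma'\subseteq N_{D_1}([p,q])$ I would bound the length of the maximal subarcs of $\sigma'$ lying outside $N_{D_0}([p,q])$. On such a subarc $\sigma'|_{[a,b]}$ the endpoints $\sigma'(a),\sigma'(b)$ are at distance $D_0$ from $[p,q]$, with nearest points $x_a,x_b$; applying Lemma \ref{hyperbolicngon} to the geodesic quadrilateral $\sigma'(a)\,x_a\,x_b\,\sigma'(b)$ confines the chord $[\sigma'(a),\sigma'(b)]$ to the $\big(D_0+(1+\log_2 3)\delta\big)$-neighbourhood of $[p,q]$. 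Since a long subarc avoiding $N_{D_0}([p,q])$ would, by the exponential-divergence estimate of the previous paragraph applied to the subarc, force its chord far from $[p,q]$, the two facts together bound $d(\sigma'(a),\sigma'(b))$ and hence, via tameness, bound $\ell(\sigma'|_{[a,b]})$; therefore every $\sigma'(u)$ with $u\in[a,b]$ satisfies $d(\sigma'(u),[p,q])\le \ell(\sigma'|_{[a,b]})\le D_1$. Taking $R_{\lambda,c}=\max\{D_0,D_1\}+\lambda+c$ completes the argument. The main obstacle is the forward inclusion, specifically establishing and then exploiting the logarithmic self-improving inequality for $D$; once that exponential-divergence mechanism is in place, the taming step and the reverse excursion estimate are comparatively standard.
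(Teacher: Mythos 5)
The paper offers no proof of Lemma \ref{Hau}: it is quoted from \cite{BH99} (stability of quasi-geodesics, Theorem III.H.1.7 there), so your proposal has to be judged against the standard argument. Your first half is correct and is essentially the Bridson--Haefliger proof with a sensible local substitution: where \cite{BH99} bounds the distance from a point of the geodesic to the path via dyadic subdivision of the path (their Proposition III.H.1.6), you subdivide the comparison path $\gamma$ into unit arcs and invoke the paper's $n$-gon Lemma \ref{hyperbolicngon}. The self-improving inequality $D \le \bigl(1+\log_2\lceil(6k+2)D+k\rceil\bigr)\delta + 1$ is exactly the right mechanism, the taming reduction is standard (Lemma III.H.1.11 in \cite{BH99}), and the edge cases you wave at ($y$ or $z$ an endpoint) are genuinely harmless.

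The second half, however, has a genuine gap. The pivotal claim --- that a long maximal subarc $\sigma'|_{[a,b]}$ avoiding $N_{D_0}([p,q])$ would ``force its chord far from $[p,q]$'' by the exponential-divergence estimate applied to the subarc --- does not follow from anything you established, and as a purely local statement it is false. Part one applied to the subarc gives only that the chord $[\sigma'(a),\sigma'(b)]$ lies in the $D_0$-neighbourhood of the subarc; combined with the subarc lying outside $N_{D_0}([p,q])$ this yields no lower bound at all on the chord's distance to $[p,q]$. Concretely, a tame subarc hovering at height just above $D_0$ over a long stretch of $[p,q]$ satisfies every fact your argument uses (chord confined near $[p,q]$ by the quadrilateral estimate, chord near the subarc, tameness) while being arbitrarily long with its chord hugging $[p,q]$; so no bound on $d(\sigma'(a),\sigma'(b))$, nor on $\ell(\sigma'|_{[a,b]})$, can be extracted from these facts alone. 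What excludes such excursions is global, namely the already-proved inclusion $[p,q]\subseteq N_{D_0}(\mathrm{im}\,\sigma')$, which your second half never invokes. The standard repair, as in \cite{BH99}: each point of the subsegment of $[p,q]$ under the excursion is within $D_0$ of $\mathrm{im}\,\sigma'$ but cannot be within $D_0$ of $\sigma'((a,b))$; hence the points of $[p,q]$ within $D_0$ of $\sigma'([0,a])$ and those within $D_0$ of $\sigma'([b,T])$ are two nonempty closed sets covering a connected segment, so they meet at some $\xi$, producing parameters $t_1\le a\le b\le t_2$ with $d(\sigma'(t_1),\sigma'(t_2))\le 2D_0$. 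Tameness then gives $\ell(\sigma'|_{[t_1,t_2]})\le 2kD_0+k$, so every point of the excursion is within $2kD_0+k+D_0$ of $[p,q]$. With that connectedness step substituted for your invalid inference, the rest of your write-up (including the final assembly of $R_{\lambda,c}$) goes through.
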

The following theorem can also be found as a corollary of the combination theorem in \cite{BF93}. 
\begin{theorem}\cite[Corollary 7]{MO98}\label{olmik}
	Let $G$ and $H$ be hyperbolic groups, $A$ and $B$ be infinite elementary subgroups of $G$, $H$ respectively. Then the free product of the groups $G$ and $H$ with amalgamated subgroups $A$ and $B$ is hyperbolic if and only if either $A$ is a maximal elementary subgroup of $G$ or $B$ is a maximal elementary
	subgroup of $H$.
\end{theorem}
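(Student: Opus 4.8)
The plan is to prove the two directions separately, the engine throughout being the standard fact that a maximal elementary subgroup of a hyperbolic group is \emph{almost malnormal}: if $E$ is the (unique) maximal elementary subgroup containing an infinite order element, then for $x\notin E$ the intersection $E\cap xEx^{-1}$ is finite. This follows from the uniqueness of the maximal elementary subgroup containing a given infinite order element together with the fact that $E$ is self-normalizing. I write $\Gamma=G*_{A=B}H$ and work with its action on the Bass--Serre tree $T$, whose vertex stabilizers are the conjugates of $G$ and $H$ and whose edge stabilizers are the conjugates of $A$. The point is that almost malnormality is exactly the device converting the algebraic maximality hypothesis into the geometric input needed below.

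For necessity I argue the contrapositive: assuming neither $A$ is maximal in $G$ nor $B$ is maximal in $H$, I produce a copy of $\mathbb{Z}^2$ in $\Gamma$, which is impossible since hyperbolic groups contain no $\mathbb{Z}^2$. Let $a$ be an infinite order element generating the cyclic part of $A$ up to finite index, and let $E_G(a)$ denote the maximal elementary subgroup of $G$ containing $a$. Since $A$ is not maximal elementary, $A\subsetneq E_G(a)$, so I may pick $g\in E_G(a)\setminus A$; as $E_G(a)$ is virtually cyclic, $g$ commensurates $\langle a\rangle$, i.e. $ga^{m}g^{-1}=a^{\pm m}$ for some $m\geq 1$. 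Identifying $A$ with $B$, the same element $a$ (after replacing it by a power lying in the cyclic part of $B$) is commensurated by some $h\in E_H(b)\setminus B$. Setting $c=a^{mk}$ for a suitable common power and $t=h^{-1}g$, a direct computation gives $tct^{-1}=c^{\pm 1}$. Now $c$ has infinite order and stabilizes an edge of $T$, hence acts elliptically, while $t=h^{-1}g$ is a reduced word of syllable length two and therefore acts loxodromically on $T$; consequently no nontrivial power of $t$ is elliptic and $\langle c\rangle\cap\langle t\rangle=1$. If $tct^{-1}=c$ then $\langle c,t\rangle\cong\mathbb{Z}^2$; if $tct^{-1}=c^{-1}$ then $\langle c,t^{2}\rangle\cong\mathbb{Z}^2$. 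Either way $\mathbb{Z}^2\hookrightarrow\Gamma$, a contradiction.

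For sufficiency assume, without loss of generality by symmetry, that $A$ is a maximal elementary subgroup of $G$, so that $A$ is almost malnormal in $G$. I would deduce hyperbolicity of $\Gamma$ from the Bestvina--Feighn combination theorem \cite{BF93}, verifying its two hypotheses. First, the edge group $A$ is elementary, hence quasiconvex, hence quasi-isometrically embedded in each vertex group (Lemma \ref{Hau} controls the relevant quasigeodesics), so the qi-embedded-edge-group hypothesis holds. Second, I establish the hallways flare condition by showing the action on $T$ is acylindrical: the pointwise stabilizer of a reduced path of length three in $T$ is contained in $A\cap xAx^{-1}$ for some $x\in G\setminus A$, because one of the two interior vertices of such a path is a $G$-vertex and distinct incident edges there correspond to distinct cosets of $A$; this intersection is finite by almost malnormality of $A$ in $G$. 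Finite stabilizers of long paths yield the flare condition, and the combination theorem then gives that $\Gamma$ is hyperbolic.

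The main obstacle is the sufficiency direction, specifically the passage from almost malnormality to the flare condition. One must set up the acylindricity bookkeeping on $T$ carefully: since maximality is assumed on only one side, acylindricity has to be extracted from the $G$-colored interior vertices alone, which is exactly why length-three (rather than length-two) paths are needed, a length-two path possibly centered at an $H$-vertex having infinite stabilizer. One then has to invoke the combination theorem in a form whose hypotheses match precisely what finite-stabilizer acylindricity supplies. The underlying almost malnormality statement, though standard, is the other place demanding care, as it is what turns the algebraic hypothesis on $A$ into the geometric input the combination theorem consumes.
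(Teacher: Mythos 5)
This statement is imported in the paper: Theorem \ref{olmik} is quoted from \cite[Corollary 7]{MO98} with no proof given, the paper only remarking that it ``can also be found as a corollary of the combination theorem in \cite{BF93}.'' So there is no internal proof to compare against; your proposal is a reconstruction, and it follows exactly the route the paper gestures at (Bass--Serre tree plus Bestvina--Feighn) rather than the original argument of \cite{MO98}. Your necessity direction is correct and essentially complete: non-maximality of $A$ and of $B$ gives $g\in E_G(a)\setminus A$ and $h\in E_H(b)\setminus B$ commensurating a common infinite-order element of the (identified) edge group; since any two infinite-order elements of a virtually cyclic group have commensurable cyclic subgroups, a common power $c$ exists with $gcg^{-1}=c^{\pm1}$ and $hch^{-1}=c^{\pm1}$; and $t=h^{-1}g$ is cyclically reduced of syllable length two, hence loxodromic on the tree, so $\langle c,t\rangle$ (or $\langle c,t^2\rangle$) is $\mathbb{Z}^2$, which no hyperbolic group contains. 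The derivation of almost malnormality from maximality, which you treat as standard, is indeed standard (uniqueness of $E(y)$ plus $N(E)=E$, the latter via invariance of stable translation length).

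The one genuine gap is the step you yourself flag: ``finite stabilizers of long paths yield the flare condition.'' As written this is an assertion, not a proof, and it is precisely where all the analytic work of the sufficiency direction lives: one must show that bounded pointwise stabilizers of $3$-paths in the Bass--Serre tree, together with quasiconvexity of the edge group in both factors, rule out long essential annuli/hallways of bounded girth, so that the hallways flare hypothesis of \cite{BF93} is met. This implication is a theorem in the literature (it is the content of the Kharlampovich--Myasnikov result that $G*_AH$ is hyperbolic when $A$ is quasiconvex in both factors and almost malnormal in one, and of the acylindrical combination theorems of I.~Kapovich), but your sketch does not supply it, so the proposal is complete only modulo citing such a result rather than \cite{BF93} alone. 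Two smaller points: you need the path stabilizers to be \emph{uniformly} bounded, not merely finite, which follows from the fact that a hyperbolic group has finitely many conjugacy classes of finite subgroups (so the finite groups $A\cap xAx^{-1}$, $x\notin A$, have bounded order) --- worth saying explicitly; and in the acylindricity argument one should note that the reduced $3$-path has exactly one interior $G$-vertex by bipartiteness, which you do use correctly to avoid the possibly non-malnormal $H$-side.
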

\begin{theorem}\cite[Lemma 6.7, 7.5]{Ol93}\label{ol93lemma}
	Let $H_1,H_2,\cdots,H_k$ be non elementary subgroups of a hyperbolic group $G$ and $\lambda>0$. Then there exists $\mu>0$ such that for any $c\geq 0$, there is $\epsilon\geq 0$ such that for any $N\geq 0$ there is $\rho>0$ with the following property:\par 
	Let the symmetrized presentation in (\color{blue}\ref{representation}\color{black})\ satisfies $C'(\lambda,c,\epsilon,\mu,\rho)$ condition. Then the quotient $H$ is a hyperbolic group. Moreover, $W=_H1$ if and only if $W=_G1$ for every word $W$ with $\|W\|\leq N$ and the images of $H_1,H_2,\cdots,H_k$ are non elementary subgroups in the quotient group $H$. 
\end{theorem}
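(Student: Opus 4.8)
The plan is to argue entirely through minimal van Kampen diagrams over the presentation $H=\la X\mid \mathcal{O}\cup\mathcal{R}\ra$ of (\ref{representation}), using the contiguity (Greendlinger-type) lemma \cite[Lemma 4.6]{OOS07} as the single geometric engine. I would begin by unpacking the quantifier order, since it records the logical dependencies of the proof. The relators in $\mathcal{R}$ are only assumed $(\lambda,c)$-quasi-geodesic, so by Lemma~\ref{Hau} each relator fellow-travels a genuine geodesic within Hausdorff distance $R_{\lambda,c}$; this lets me replace the quasi-geodesic hypothesis by a geodesic one at the cost of enlarging the contiguity constant by $2R_{\lambda,c}$, which is exactly why $\epsilon$ is allowed to depend on $c$ and is chosen after it. With that reduction I would fix $\epsilon\ge 2\delta+2R_{\lambda,c}$ and keep $\mu\le 0.01$, so that the hypotheses of the contiguity lemma hold once $\rho>10^{6}\epsilon/\mu$.

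Granting these choices, hyperbolicity of $H$ is immediate from part~2 of the contiguity lemma, which gives $\delta_1\le 4L$ with $L=\max\{\|R\|:R\in\mathcal{R}\}$. For the injectivity statement I would argue by contradiction: suppose $W=_H1$ but $W\ne_G1$ with $\|W\|\le N$, and let $\triangle$ be a minimal diagram with $Lab(\partial\triangle)\equiv W$. Since $W\ne_G1$, the diagram must contain an $\mathcal{R}$-cell, so the contiguity lemma supplies an $\mathcal{R}$-cell $\Pi$ with total contiguity degree to $\partial\triangle$ exceeding $1-23\mu$. The contiguity arcs then force a subword of $\partial\triangle$ whose length is at least $(1-23\mu)\rho$, up to an additive error of order $\epsilon$. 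Choosing $\rho$ so large that this quantity exceeds $N$ contradicts $\|W\|\le N$; this is precisely why $\rho$ is selected last, as a function of $N$.

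The substantive step is preserving non-elementarity of the images of $H_1,\dots,H_k$. For each $i$, since $H_i$ is non-elementary in the hyperbolic group $G$, I would fix two loxodromic elements $a_i,b_i\in H_i$ with $E(a_i)\cap E(b_i)$ finite, represented by words of bounded length; such a pair exists and generates a non-elementary subgroup of $G$. The goal is to show $a_i,b_i$ remain loxodromic with distinct pairs of fixed points in the boundary of $H$, whence a ping-pong argument on high powers $a_i^{M},b_i^{M}$ inside the hyperbolic group $H$ yields a free, hence non-elementary, subgroup of the image of $H_i$. Any failure — $a_i$ becoming torsion, or $a_i^{p}=_H b_i^{q}$ forcing a shared elementary closure — produces a diagram whose boundary is labelled by a word in $a_i,b_i$ that is trivial in $H$ but not in $G$, and hence again contains $\mathcal{R}$-cells. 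Here I would invoke the refined $\epsilon'$-piece clause of the $C'(\lambda,c,\epsilon,\mu,\rho)$-condition to bound the contiguity of any $\mathcal{R}$-cell to the sides labelled by powers of $a_i$ and $b_i$, and derive a contradiction from the fact that long geodesic subwords of such powers cannot be $\epsilon$-pieces of a relator once $\mu$ is chosen small relative to the geometry of $a_i,b_i$ in $G$.

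The hard part is exactly this last step, and it is what dictates the whole quantifier structure: non-elementarity must survive even for $N=0$, so it cannot be extracted from injectivity on a finite ball and must instead be forced directly by the small-cancellation geometry. Consequently $\mu$ has to be pinned down first, in terms of $G$ and of the fellow-travelling and translation data of the chosen $a_i,b_i\in H_i$, before $c,\epsilon,N,\rho$ are revealed. The $\epsilon'$-piece strengthening of $C'$ over the plain $C(\epsilon,\mu,\rho)$-condition exists precisely to control how a relator can run alongside a single subword — a long power of $a_i$ or $b_i$ — that reappears in a diagram, which is the only mechanism by which elementarity could be manufactured in the quotient.
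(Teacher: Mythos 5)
You should first be aware that the paper does not prove this statement: it is imported verbatim from Ol'shanskii \cite[Lemmas 6.7 and 7.5]{Ol93}, so the only fair comparison is with the argument there, whose general shape (minimal van Kampen diagrams, a Greendlinger-type contiguity lemma, boundary sections labelled by quasi-geodesic words) your sketch does follow. Your unpacking of the quantifier order and your treatment of hyperbolicity and of the injectivity radius are essentially right, with one technical slip: the engine you quote, \cite[Lemma 4.6]{OOS07} as reproduced in the paper, applies to minimal diagrams whose boundary is divided into at most $12$ sections labelled by words \emph{geodesic} in $G$, whereas your applications need sections labelled by an arbitrary word of length $\le N$ (injectivity step) and by powers $a_i^m,b_i^n$ (non-elementarity step), which are only quasi-geodesic with constants depending on $a_i,b_i$. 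Ol'shanskii's Lemmas 6.6--6.7 are formulated precisely for quasi-geodesic boundary sections, and that is the version you must invoke; your fellow-travelling reduction via Lemma \ref{Hau} is sketched only for the relators, not for the boundary sections.

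The genuine gap is in the non-elementarity step, which you correctly identify as the crux. Your stated mechanism --- that ``long geodesic subwords of such powers cannot be $\epsilon$-pieces of a relator once $\mu$ is chosen small relative to the geometry of $a_i,b_i$'' --- is false as stated, and this very paper supplies the counterexample: the relator systems (\ref{systemwords}) used in all the applications (Theorems \ref{rank1}, \ref{monster}, \ref{Rips}) are words $z_iU^{m_{i,1}}VU^{m_{i,2}}V\cdots$ where $U,V$ represent exactly the kind of elements one picks inside the suitable subgroup, and they contain enormous power blocks $U^{m_{i,t}}$. Nothing in $C'(\lambda,c,\epsilon,\mu,\rho)$ forbids a relator from running along a power of $a_i$; what the $\epsilon'$-piece clause forbids is that such an overlap exceed roughly a $\mu$-fraction of $\|R\|$, because a subword $\epsilon$-close to $a_i^s$ of length at least $2\mu\|R\|+O(\|a_i\|+\epsilon)$ splits, by periodicity, into two disjoint subwords, each of length at least $\mu\|R\|$, that are conjugate by short elements --- an $\epsilon'$-piece violating $C'$. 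The correct argument therefore runs: if the image of $H_i$ were elementary, either some $a_i$ becomes torsion or $\bar a_i^{\,p}=\bar b_i^{\,q}$ for nonzero $p,q$ (infinite cyclic subgroups of a virtually cyclic group are commensurable, with no conjugator needed); a minimal diagram over (\ref{representation}) for the resulting word has at most two periodic quasi-geodesic boundary sections, the Greendlinger lemma forces an $\mathcal{R}$-cell with contiguity degree at least about $(1-23\mu)/2$ to one of them, and the periodicity-to-self-piece conversion contradicts $C'$ once $(1-23\mu)/2$ dominates $2\mu$ and $\mu\rho$ dominates $\|a_i\|+\epsilon$ --- the latter available precisely because $\rho$ is chosen last. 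Your sketch never performs this conversion, never checks the quantitative compatibility, and as literally written would outlaw the relators to which the theorem is applied in this paper. Note also that once the images of $a_i,b_i$ are shown to be of infinite order and non-commensurable you are already done, since a virtually cyclic group cannot contain such a pair; the ping-pong/free-subgroup layer of your argument is superfluous.
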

We are going to state a simple version of a lemma that we will be using later
\begin{lemma}[Lemma 7.5, \cite{Dar17}]\label{properpower}
	Let $H=\la X\ra $ be a torsion free non elementary hyperbolic group and $G=H/\la\la \mathcal{R}\ra\ra$ be a quotient group where $\mathcal{R}$ satisfies $C'(\lambda,c,\epsilon,\mu,\rho)$ condition for spars enough parameters $\lambda,c,\epsilon,\mu,\rho$ with $1>1-122\lambda\mu>0$. Suppose $U,W\in \mathcal{F}(X)$ such that $U=_GW^k$ for some $k\geq 2$ and $$\|U\| <\frac{\mu\rho-c}{\lambda}-\epsilon.$$ Then $U=_HW^k$.  
\end{lemma}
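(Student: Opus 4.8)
The plan is to argue by contradiction using van Kampen diagrams over the presentation $(\ref{representation})$ together with the contiguity machinery of Lemma 4.6 of \cite{OOS07}. First I would suppose that $U=_GW^k$ but $U\neq_HW^k$, and replace $U$ and $W$ by geodesic words of $H$ representing the same elements; this does not increase $\|U\|$, so the hypothesis $\|U\|<\frac{\mu\rho-c}{\lambda}-\epsilon$ survives, and it changes neither of the two relations. The word $UW^{-k}$ is then nontrivial in $H=\la X\mid\mathcal O\ra$ but trivial in $G=\la X\mid\mathcal O\cup\mathcal R\ra$, so the key structural observation is that every minimal disc diagram $\Delta$ over $(\ref{representation})$ with $\mathrm{Lab}(\partial\Delta)\equiv UW^{-k}$ must carry at least one $\mathcal R$-cell: a diagram with only $\mathcal O$-cells would be a diagram over $\la X\mid\mathcal O\ra$ witnessing $UW^{-k}=_H1$.

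Next I would write $\partial\Delta=p\,t$ with $\mathrm{Lab}(p)\equiv U$ and $\mathrm{Lab}(t)\equiv W^{-k}$ and apply the contiguity lemma to obtain an $\mathcal R$-cell $\Pi$, with relator $R:=\mathrm{Lab}(\partial\Pi)$ of length $\|R\|\geq\rho$, whose total contiguity degree to $p$ and $t$ exceeds $1-23\mu$. The length hypothesis on $U$ is exactly what controls the contiguity to the short side $p$: for a contiguity subdiagram $\Gamma$ of $\Pi$ to $p$ with arcs $q_1\subset\partial\Pi$, $q_2\subset p$ and side-paths of length $\leq\epsilon$, the subdiagram $\Gamma$ has no $\mathcal R$-cell, so its boundary is trivial in $H$; since $R$ is $(\lambda,c)$-quasigeodesic and $q_2$ is geodesic with $\|q_2\|\leq\|U\|$, comparing the two routes across $\Gamma$ yields $\|q_1\|\leq\lambda(\|U\|+\epsilon)+c$, so the contiguity degree to $p$ is at most $\frac{\lambda(\|U\|+\epsilon)+c}{\rho}<\mu$ — which is precisely the content of $\|U\|<\frac{\mu\rho-c}{\lambda}-\epsilon$. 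Hence the contiguity degree of $\Pi$ to $t$ exceeds $1-24\mu$.

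Then I would exploit the periodicity of $W^{-k}$. A subword $q_1\subset R$ with $\|q_1\|>(1-24\mu)\|R\|$ is contiguous, across an $\mathcal R$-cell-free subdiagram, to a subword $q_2$ of $W^{-k}$, and the length comparison above forces $\|q_2\|$ to be proportional to $\|R\|\geq\rho$, hence long. Being a subword of $W^{-k}$, $q_2$ is $\|W\|$-periodic, so it contains two disjoint sub-arcs one period apart carrying identical labels; the two facing subwords $\pi',\pi''\subset R$ are then equal in $G$ up to left and right multiplication by words of length $\leq2\epsilon$, making $\pi'$ an $\epsilon'$-piece of $R$ as defined above. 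Tracking lengths, $\pi'$ can be taken of size roughly $\tfrac12\big((1-24\mu)\|R\|-\lambda\|W\|\big)$, and the hypothesis $1-122\lambda\mu>0$ is exactly the inequality that makes $\|\pi'\|>\mu\|R\|$. This contradicts the bound on $\epsilon'$-pieces built into the $C'(\lambda,c,\epsilon,\mu,\rho)$-condition, so $\Delta$ has no $\mathcal R$-cell and $U=_HW^k$.

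The hardest point I anticipate is the treatment of the side $t$ labeled $W^{-k}$. The contiguity lemma is stated for boundaries cut into at most twelve \emph{geodesic} arcs, whereas $W^{-k}$ is not a single geodesic and, for large $k$, cannot be absorbed into $\leq11$ geodesics alongside $p$. I would handle this by treating $t$ as one quasigeodesic arc: since $H$ is torsion-free hyperbolic, a cyclically reduced $W\neq1$ has positive translation length, so $W^{-k}$ is a $(\lambda_0,c_0)$-quasigeodesic which by Lemma \ref{Hau} stays uniformly close to a geodesic, with constants we may assume compatible with the $C'$-parameters, and then invoke the quasigeodesic form of the contiguity lemma. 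A residual subtlety arises when $\|W\|$ is large relative to $\|R\|$, so that $q_2$ spans less than a full period and the periodicity step stalls; there I would use $k\geq2$ together with the lower bound on translation lengths in the hyperbolic group $H$ to bound $k$ and return to the few-arc regime, or argue directly that near-total contiguity of $\Pi$ to the interior of a single $W^{\pm1}$ is incompatible with $\Delta$ being a reduced disc carrying the remaining copies of $W^{-1}$ on its boundary.
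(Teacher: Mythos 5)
You cannot be checked against an in-paper proof here, because the paper does not prove Lemma \ref{properpower} at all: it is imported verbatim from \cite[Lemma 7.5]{Dar17}. Judged on its own terms, your skeleton (minimal diagram over (\ref{representation}) with boundary $UW^{-k}$, the contiguity statement of \cite[Lemma 4.6]{OOS07}, the estimate showing the hypothesis on $\|U\|$ caps the contiguity degree to the $U$-side at roughly $\mu$, then periodicity of $W^{-k}$ producing a forbidden $\epsilon'$-piece) is the expected Ol'shanskii-style route, and the $U$-side estimate is essentially right up to bookkeeping of $\epsilon$'s and $c$'s that the slack in $23\mu$ absorbs for large $\rho$. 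But the step you yourself flag as hardest is a genuine gap, and your repair does not work. The contiguity lemma requires the boundary cut into at most $12$ sections that are geodesic (or, in Ol'shanskii's version, $(\lambda,c)$-quasigeodesic for the \emph{fixed} parameters of the $C'$-condition). The broken line labeled $W^{-k}$ is a quasigeodesic whose multiplicative constant is of order $\|W\|/\tau(W)$: the uniform positive lower bound on translation numbers of infinite-order elements of $H$ controls $\tau(W)$ from below but does nothing about $\|W\|$, which is unbounded, so the constants cannot be ``assumed compatible with the $C'$-parameters'' --- they depend on $W$, while $\lambda,c,\epsilon,\mu,\rho$ are fixed before $W$ is given. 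Passing to a conjugacy-minimal representative of $W$ would restore uniform constants but conjugates $U$ as well, destroying the hypothesis $\|U\|<\frac{\mu\rho-c}{\lambda}-\epsilon$; and your fallback of bounding $k$ fails because nothing in the hypotheses bounds $\|W\|$ in the metric of $H$.

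There is a second, decisive problem: the conclusion you are driving at --- no $\mathcal{R}$-cell in a minimal diagram with boundary label $UW^{-k}$, hence $U=_HW^k$ for the \emph{given} word $W$ --- is false as literally stated, so no completion of your diagram argument can exist. Take $R\in\mathcal{R}$, a short word $w$ with $\|w^2\|<\frac{\mu\rho-c}{\lambda}-\epsilon$, and set $U:\equiv w^2$, $k=2$, and $W$ an $H$-geodesic word equal in $H$ to $wR$. Then $U=_GW^2$ because $R=_G1$, while $U=_HW^2$ would force $wRw^{-1}=_HR^{-1}$, which fails for generic $R$; note this $W$ survives your normalization step, since it is already geodesic in $H$ (also, a small slip: the facing subwords in your $\epsilon'$-piece step should be compared in the base group $H$, not in $G$). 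What is true --- and is all the paper ever uses, e.g.\ ``$g'$ is not a proper power in $G'$'' in Theorem \ref{rank1} --- is that $U$ is a $k$-th power in $H$ of some word equal to $W$ in $G$, and the efficient argument goes through centralizers rather than a diagram over $UW^{-k}$: if $\bar U\neq 1$ then $\bar W\in C_G(\bar U)$; by \cite[Theorem 2, property (5)]{Ol93}, for sparse enough parameters the centralizer of an element within the injectivity radius is the image of its centralizer, and $C_H(U)=E(U)=\la u_0\ra$ is cyclic since $H$ is torsion-free; writing $U=_Hu_0^m$ and $\bar W=\bar u_0^{\,s}$ gives $\bar u_0^{\,m-sk}=1$, and torsion-freeness of the quotient yields $m=sk$, i.e.\ $U=_H(u_0^{s})^{k}$ with $u_0^{s}=_GW$. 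Your proposal, locked by its boundary label onto the literal reading $U=_HW^k$, cannot reach this corrected statement.
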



\subsection{Words with small cancellations}
Let $G=\la X\ra$ be a non elementary torsion free $\delta$-hyperbolic group for some $\delta>0$. Let us consider the set $\mathcal{R}$ of words consisting of the form,
\begin{align}\label{systemwords}
R_i=z_iU^{m_{i,1}}VU^{m_{i,2}}V\cdots VU^{m_{i,j_i}} , \ i=1,2,\cdots,k
\end{align} 
and their cyclic shifts, where $k\in\mathbb{N},U,V,z_1,z_2,\cdots,z_k\in \mathcal{F}(X)$ are geodesic words in $G$, $U,V\neq_G 1$ and $m_{i,t}\in\mathbb{N}$ (also assume that $m_{i,j}\neq m_{i',j'}$ if $(i,j)\neq (i',j')$) for $1\leq i\leq k,1\leq t\leq j_i$. Denote $Z=\{ z_1,z_2,\cdots ,z_k\}$. Let $L:=\max \{\|U\|,\|V\|,\|z_1\|,\|z_2\|,\cdots,\|z_k\| \}$, $\underline{m}:=\min\{m_{i,t} |1\leq i\leq k,1\leq t\leq j_i \}$ and $\overline{m}_i:=\max\{m_{i,t}|1\leq t\leq j_i \}$.\par 
We also specify $m_{i,t}$ as follows: choose $m_{1,1}$ and for all $1\leq i\leq k, \ m_{i,1}=2^{i-1}m_{1,1}, j_i=m_{i,1}-1$ and for all $1\leq t\leq j_i, \ m_{i,t}=m_{i,1}+(t-1)$.
\begin{lemma}\cite[Lemma 5.1]{Dar17}\label{smallwords}
	For the set of words $\mathcal{R}$ suppose that $V\notin E(U)$, $z_i\notin E(U)$ for
	$1 \leq i \leq k$. Then there exist constants $\lambda = c = \tilde{K} \in\mathbb{N}$ depending on $G, U, V$ and $Z$, such that the words of the system (\color{blue}\ref{systemwords}\color{black}) are $(\lambda, c)$-quasi-geodesic in
	$\Gamma(G, X)$, provided that $\underline{m} \geq \tilde{K}$ .
\end{lemma}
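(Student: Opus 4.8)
The plan is to verify the two quasi-geodesic inequalities for a path $p$ in $\Gamma(G,X)$ reading $R_i$. The upper bound $d(p(s),p(t))\le|s-t|$ is automatic because $p$ is parametrized by arc length, so the entire content is the lower bound: $p$ must not backtrack on the scale of its own length. First I would record the building block. Since $G$ is torsion-free and $U\neq_G 1$, the element $U$ has infinite order, so the cyclic subgroup $\la U\ra$ is undistorted in the hyperbolic group $G$; hence there are constants $\lambda_0,c_0$ (depending only on $G,U$) such that every subword $U^m$ labels a $(\lambda_0,c_0)$-quasi-geodesic, which by Lemma~\ref{Hau} lies within Hausdorff distance $R_0:=R_{\lambda_0,c_0}$ of a geodesic axis $\gamma_U$ of $U$. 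The words $z_i$ and $V$ are geodesic by hypothesis.

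The key input is a uniform bounded-overlap estimate. Recall that $E(U)$ is exactly the set-stabilizer of the endpoint pair $\{U^{+\infty},U^{-\infty}\}\subset\partial G$, equivalently the set of $g$ with $d_{\mathrm{Haus}}(g\gamma_U,\gamma_U)<\infty$. Since $V\notin E(U)$ and each $z_i\notin E(U)$, the translated axes $V\gamma_U$ and $z_i\gamma_U$ are not at finite Hausdorff distance from $\gamma_U$; the standard divergence argument in $\delta$-hyperbolic geometry then bounds their coarse intersection. Because $V,z_1,\dots,z_k$ are finitely many fixed elements, I can take a single constant $C_0=C_0(G,U,V,Z)$ that bounds, at every junction occurring in any $R_i$, the Gromov product $(x\,|\,z)_y$ of the two endpoints $x,z$ of a subpath $U^mVU^{m'}$ (or $z_iU^{m_{i,1}}$) taken at the junction vertex $y$, and this bound is independent of the exponents $m,m'$.

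With these constants I would invoke the hyperbolic local-to-global principle for quasi-geodesics. Fix the local-to-global threshold $k=k(\delta,\lambda_0,c_0)$, then choose $\tilde K$ much larger than $k,C_0,R_0$ and $\delta$. Imposing $\underline m\ge\tilde K$ guarantees that every subpath of $p$ of length at most $k$ meets at most one junction and, on each side of that junction, reads a genuine power of $U$ of length $\gg C_0$. For such a subpath the two-segment no-backtracking estimate $d(x,z)\ge d(x,y)+d(y,z)-2(x\,|\,z)_y\ge d(x,y)+d(y,z)-2C_0$ shows it is a $(\lambda_1,c_1)$-quasi-geodesic for uniform $\lambda_1,c_1$, while subpaths lying inside a single block are quasi-geodesic by the first paragraph (the $z_i$-prefix junction is handled identically, using $z_i\notin E(U)$). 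Thus $p$ is a $k$-local $(\lambda_1,c_1)$-quasi-geodesic, and the local-to-global theorem upgrades it to a global $(\lambda,c)$-quasi-geodesic with $\lambda,c$ depending only on $G,U,V,Z$. Setting $\lambda=c=\tilde K$, after enlarging $\tilde K$ if necessary, yields the claim.

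I expect the main obstacle to be the bounded-overlap estimate of the second paragraph: converting the qualitative fact $V\notin E(U)$ (and $z_i\notin E(U)$) into a quantitative, junction-uniform bound $C_0$ on fellow-traveling, and verifying that this bound is genuinely independent of the growing exponents $m_{i,t}$. This is exactly where the hypotheses are essential, since a single alignment $V\in E(U)$ would let two adjacent $U$-blocks merge into one long power and annihilate the turn at the junction. The remaining bookkeeping, namely the choice of thresholds, the local-to-global constants, and the short initial $z_i$ segment, is routine once $C_0$ is in hand.
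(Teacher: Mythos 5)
The paper itself contains no proof of this lemma: it is imported verbatim from \cite[Lemma 5.1]{Dar17}, so the only comparison available is with the cited source, and your argument is in substance the same one used there --- uniform quasi-geodesicity of the $U$-blocks from undistortion of $\langle U\rangle$, a junction-uniform fellow-traveling bound $C_0$ extracted from $V, z_i\notin E(U)$, and a broken-line/local-to-global upgrade. Three points in your write-up deserve tightening, none fatal. First, the passage from ``$V\gamma_U$ and $\gamma_U$ are at infinite Hausdorff distance'' to ``bounded coarse intersection'' is too quick as stated: two quasi-axes sharing a \emph{single} endpoint at infinity have infinite Hausdorff distance yet unbounded overlap, and that configuration (e.g.\ $VU^{+\infty}=U^{-\infty}$) would make the Gromov product at a junction grow with the exponents, destroying the uniformity of $C_0$. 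You must rule it out via the standard fact that in a hyperbolic group two loxodromic axes share either both endpoints or none: a single common endpoint forces commensurability of $U$ and $VUV^{-1}$, hence $E(U)=E(VUV^{-1})$, hence $V\in E(U)$ since the maximal elementary subgroup $E(U)$ is self-normalizing --- so the hypothesis does exactly this job, but the fact needs to be invoked. Second, a quantifier slip: the local-to-global threshold must be $k=k(\delta,\lambda_1,c_1)$ computed from the junction-corrected local constants $(\lambda_1,c_1)$ (which depend on $C_0$, $\lambda_0$, $c_0$, $L$), not $k(\delta,\lambda_0,c_0)$ as you wrote; since $(\lambda_1,c_1)$ do not depend on $\tilde K$, there is no circularity and the fix is cosmetic. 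Third, recall that $\mathcal{R}$ is closed under cyclic shifts, so junctions of the form $U^{m}z_iU^{m'}$ genuinely occur; this, rather than the prefix position of $z_i$ (a bounded initial segment never threatens quasi-geodesicity), is where $z_i\notin E(U)$ is actually needed, and your parenthetical treatment of the $z_i$ junction should be made explicit on this point. With these repairs your proof is complete and consistent with the approach of the cited source.
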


Let $\lambda,c$ is defined from Lemma \color{blue}\ref{smallwords}\color{black}, $\mathcal{R}$ is defined as above. Since all the words in the system of words $\mathcal{R}$ is $(\lambda,c)$-quasi geodesic by Lemma \color{blue}\ref{smallwords}\color{black}, there exists a constant $R_{\lambda,c}$ such that these words are $R_{\lambda,c}$ Hausdorff distance away from a geodesic by Lemma \color{blue}\ref{Hau}\color{black}. Also assume that with respect to given constants $\epsilon\geq 0,\mu>0,\rho>0$, following conditions are satisfied. 
\begin{align}\label{allconditions}
\|R\|\geq \rho, \ for \ all\ R\in\mathcal{R}\nonumber\\
\underline{m}\geq\tilde{K}\\
\mu\rho\geq 6L(\overline{m}_i+1) \ for \ 1\leq i\leq k\nonumber \\
\underline{m}\geq \frac{2\epsilon'}{\|U\|}\cdot 12\lambda,\ where\ \epsilon'=\epsilon +2\|c\|+5(2R_{\lambda,c}+182\delta +\frac{\|c\|}{2})\nonumber 
\end{align}
\begin{lemma}\cite[Lemma 5.2]{Dar17}\label{lemma5}
	Using the setting of the previous lemma and assuming that the above
	described conditions take place, let us consider the system of words $\mathcal{R}$ given by (\color{blue}\ref{systemwords}\color{black}). Let $\lambda,c$ be defined by the Lemma \color{blue}\ref{smallwords}\color{black}. Then, if for the given constants $\epsilon \geq 0,
	\mu > 0, \rho > 0$, the conditions in (\color{blue}\ref{allconditions}\color{black}) are satisfied, then the system $\mathcal{R}$ satisfies the $C'(\lambda,c,\epsilon,\mu,\rho)$-condition.
\end{lemma}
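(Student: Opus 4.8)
The plan is to verify the four clauses packaged into the $C'(\lambda,c,\epsilon,\mu,\rho)$-condition one at a time, with all the real work concentrated in the length bound on pieces. Clause (a), that $\|R\|\geq\rho$ for every $R\in\mathcal{R}$, is immediate from the first line of (\ref{allconditions}) and survives passage to cyclic shifts and inverses, since these do not change length. Clause (b), that each $R\in\mathcal{R}$ is $(\lambda,c)$-quasi-geodesic, is exactly the conclusion of Lemma \ref{smallwords}, whose hypothesis $\underline m\geq\tilde K$ is the second line of (\ref{allconditions}); cyclic shifts of quasi-geodesic words remain quasi-geodesic for the same $\lambda,c$. So everything reduces to bounding the length of $\epsilon$-pieces (for the $C$-condition) and of $\epsilon'$-pieces (for the extra $C'$-clause).

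For the piece bound I would argue by contradiction. Suppose $U_0$ is an $\epsilon$-piece of $R=R_i$ with $\|U_0\|\geq\mu\|R\|$; by definition there is $R'\in\mathcal{R}$ and words $Y,Z$ with $\|Y\|,\|Z\|\leq\epsilon$, $R\equiv U_0V_0$, $R'\equiv U_0'V_0'$, $U_0'=_G YU_0Z$, and $YRY^{-1}\neq_G R'$. Lifting $U_0'=_G YU_0Z$ to $\Gamma(G,X)$, the paths $p$ and $p'$ labelled by $U_0$ and $U_0'$ are each within $R_{\lambda,c}$ of a geodesic by Lemma \ref{Hau}, and since $Y,Z$ are short, $\delta$-hyperbolicity forces $p$ and $p'$ to fellow-travel within a Hausdorff distance bounded by a constant of exactly the shape collected into $\epsilon'$ in (\ref{allconditions}). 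The structural heart is then to read the alternating pattern $z\,U^{m_1}VU^{m_2}V\cdots$ off the two fellow-travelling paths: because each block $U^{m_{i,t}}$ has length at least $\underline m\|U\|$, and the last line of (\ref{allconditions}) gives $\underline m\|U\|\geq 24\lambda\epsilon'$, so that a block dwarfs the fellow-travelling error, the maximal $U$-subpaths of $p$ must be matched to maximal $U$-subpaths of $p'$; here $V\notin E(U)$ and $z_i\notin E(U)$ are used to guarantee that the $V$- and $z$-separators cannot be absorbed into a $U$-power. Matching a subpath labelled $U^a$ to one labelled $U^b$ with aligned phase forces $a=b$, since $U$ is not a proper power.

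Consequently, if $U_0$ were long enough to contain a \emph{complete} block $U^{m_{i,t}}$ delimited by its two neighbouring separators, the matched block in $U_0'$ would carry the identical exponent $m_{i',t'}=m_{i,t}$; by the standing assumption that all the $m_{i,t}$ are pairwise distinct this gives $(i,t)=(i',t')$, and propagating the match through the adjacent blocks aligns $R'$ with a cyclic shift of $R$ conjugated by $Y$, i.e. $YRY^{-1}=_GR'$, contradicting the defining condition of an $\epsilon$-piece. Hence $U_0$ can contain no complete block, so it is confined to a proper suffix of one block, at most one separator $V$, and a proper prefix of the next block, whence $\|U_0\|<2\overline m_i\|U\|+\|V\|\leq(2\overline m_i+1)L<6L(\overline m_i+1)$. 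Combining with the third line of (\ref{allconditions}) and $\|R\|\geq\rho$ yields $\|U_0\|<6L(\overline m_i+1)\leq\mu\rho\leq\mu\|R\|$, and the same structural count bounds $\|U_0'\|$, contradicting $\|U_0\|\geq\mu\|R\|$. The $\epsilon'$-piece case is identical once the self-overlap of $R$ (now involving $U^{\pm1}$) is read as an overlap of $R$ with a cyclic shift of itself; since the exponents are strictly increasing, no nontrivial phase-aligned self-overlap can enclose a complete block, so the same length estimate applies and the full $C'(\lambda,c,\epsilon,\mu,\rho)$-condition follows.

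The main obstacle is the middle step: making rigorous that the bounded fellow-travelling between $p$ and $p'$ genuinely transports the block–separator pattern of $R$ onto that of $R'$, so that the distinct exponents can actually be detected. This is where the quantitative hyperbolic geometry lives — stability of quasi-geodesics together with thin-triangle/bigon estimates to force phase-aligned $U$-subpaths and to rule out a separator being swallowed by a $U$-block — and it is precisely to make this detection robust against the error $\epsilon'$ that $\underline m$ is taken so large relative to $\epsilon'$ in (\ref{allconditions}); the remaining clauses are then essentially bookkeeping.
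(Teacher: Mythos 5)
The first thing to note is that the paper itself supplies no proof of this lemma: it is imported verbatim from \cite[Lemma 5.2]{Dar17}, so the benchmark is Darbinyan's argument, and your outline does follow its architecture (clauses (a), (b) from (\ref{allconditions}) and Lemma \ref{smallwords}, then fellow-travelling via Lemma \ref{Hau} and hyperbolicity, then block--separator matching, then distinctness of the exponents forcing $YRY^{-1}=_G R'$ against the definition of an $\epsilon$-piece). But there is a genuine gap at the decisive step, the very one you flag as ``the main obstacle''. Your justification for exact exponent matching --- ``since $U$ is not a proper power'' --- is not an assumption anywhere in Section 4.4 (only $U,V\neq_G 1$, $V\notin E(U)$, $z_i\notin E(U)$ are assumed), and even granting it the argument fails quantitatively: fellow-travelling at Hausdorff distance $\epsilon'$ matches $U$-powers only up to an additive error of order $\epsilon'/\|U\|$ in the exponent, while within a single relator consecutive exponents differ by exactly $1$ (recall $m_{i,t}=m_{i,1}+(t-1)$), so when $\epsilon'\gg\|U\|$ no purely metric count can distinguish the block $U^{m_{i,t}}$ from $U^{m_{i,t\pm1}}$, and ``phase alignment'' is precisely what remains unproved.

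The mechanism that actually yields exactness is algebraic, and it is where the torsion-freeness of $G$ and the hypotheses $V,z_i\notin E(U)$ do their real work. The long $U$-power overlaps (long thanks to $\underline{m}\geq \frac{2\epsilon'}{\|U\|}\cdot 12\lambda$) force the bounded connectors between matched vertices to lie in the maximal elementary subgroup $E(U)$; at a matched pair of separators one then gets an equation $hVh'=V$ (or its analogue with $z_i$) with $h,h'\in E(U)$. Since $V\notin E(U)$ gives $E(VUV^{-1})=VE(U)V^{-1}\neq E(U)$ (the normalizer of a maximal elementary subgroup is the subgroup itself), and distinct maximal elementary subgroups intersect in a finite --- hence, in torsion-free $G$, trivial --- subgroup, the connectors at separators are forced to be trivial; only then do the block equations collapse to $U^{m_{i,t}}=U^{m_{i',t'}}$ in $G$, giving exact equality of exponents, whence $(i,t)=(i',t')$ by pairwise distinctness and, by propagation along the overlap, $YRY^{-1}=_G R'$, the desired contradiction. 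Without this $E(U)$-algebra your sketch does not close. Two smaller repairs: cyclic shifts of a $(\lambda,c)$-quasi-geodesic word are not in general $(\lambda,c)$-quasi-geodesic (consider shifts of $a^nba^{-n}$ in a free group), so clause (b) must be quoted from Lemma \ref{smallwords} as stated for the whole symmetrized system (\ref{systemwords}) rather than derived from a general principle; and extracting the contradiction from a single complete block is fragile at the two ends of the piece, where the matching degrades --- the standard argument buffers the block well inside the overlap, which is exactly the slack the constant $6L(\overline{m}_i+1)$ in (\ref{allconditions}) provides, and your final length count still fits under that bound once this is repaired.
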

For any given constants $\epsilon \geq 0,\mu > 0, \rho > 0$, we denote a system of words $\mathcal{R}$ as described above, by $\mathcal{R}(Z,U,V,\lambda,c,\epsilon,\mu,\rho)$.


\subsection{Elementary subgroups of lacunary hyperbolic groups}

\begin{definition}
	Let $G$ be a lacunary hyperbolic group and $g\in G$ be an infinite order element. Then define $E^{\mathcal{L}}(g)=\{x\in G |xg^nx^{-1}=g^{\pm n}$, for some $n=n(x)\in \mathbb{N}-\{0\}\}$.
\end{definition}

\begin{theorem}\label{elementarygroup}\label{elementarytheorem}
	Let $G$ be a lacunary hyperbolic group and $g\in G$ be an infinite order element. Then $E^{\mathcal{L}}(g)$ has a locally finite normal subgroup $N\triangleleft G$ such that:\\
	Either $E^{\mathcal{L}}(g)/N$ is an abelian group of Rank 1(i.e. $E^{\mathcal{L}}(g)/N  \ embeds \ in \ (\mathbf{Q},+)$) or $E^{\mathcal{L}}(g)/N$ is an extension of a rank one group by involutive automorphism (i.e, $a\rightarrow a^{-1}$).\\
\end{theorem}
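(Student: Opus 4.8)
The plan is to exploit the description of $G$ as a direct limit of hyperbolic groups and to realize $E^{\mathcal{L}}(g)$ as the corresponding direct limit of the (honest) maximal elementary subgroups, then transport the known structure of elementary subgroups of hyperbolic groups through the limit. By Theorem~\ref{OOS}(c) write $G=\varinjlim G_i$ with bonding epimorphisms $\alpha_i\colon G_i\to G_{i+1}$ satisfying $\alpha_i(S_i)=S_{i+1}$, and let $\beta_i\colon G_i\to G$ be the canonical maps. Fixing a word over $S$ representing $g$ gives compatible elements $g_i\in G_i$ with $\alpha_i(g_i)=g_{i+1}$ and $\beta_i(g_i)=g$; since $\beta_i(g_i^m)=g^m\neq 1$ for every $m$, each $g_i$ has infinite order, so the maximal elementary subgroup $E(g_i)\leq G_i$ is defined. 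First I would check that $E^{\mathcal{L}}(g)=\bigcup_i\beta_i(E(g_i))$: applying a homomorphism to the defining relation $xg^nx^{-1}=g^{\pm n}$ shows $\alpha_i(E(g_i))\subseteq E(g_{i+1})$ and $\beta_i(E(g_i))\subseteq E^{\mathcal{L}}(g)$, while conversely a relation witnessing $x\in E^{\mathcal{L}}(g)$ holds in the limit and hence already in some $G_i$, placing a lift of $x$ in $E(g_i)$.

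Next I would set up the two-step filtration that survives the limit. The sign of conjugation gives a well-defined homomorphism $\epsilon\colon E^{\mathcal{L}}(g)\to\{\pm 1\}$ (well-definedness uses that $g$ has infinite order), with $E^{\mathcal{L}}(g)^+:=\ker\epsilon$ of index at most $2$; similarly one has $E(g_i)^+\leq E(g_i)$. By the structure lemma for infinite elementary groups, $E(g_i)^+$ has a finite normal subgroup $T_i$ — namely its torsion subgroup — with $E(g_i)^+/T_i\cong\mathbb{Z}$. The maps respect this filtration: $\alpha_i(E(g_i)^+)\subseteq E(g_{i+1})^+$ (the $+$ sign is preserved under a homomorphism), and since $\alpha_i$ carries torsion to torsion and $E(g_{i+1})^+/T_{i+1}$ is torsion-free, $\alpha_i(T_i)\subseteq T_{i+1}$.

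Taking direct limits then yields the structure. Set $N:=\bigcup_i\beta_i(T_i)$; as a nested union of finite groups it is locally finite, and a lifting argument (an order-$m$ element of $E^{\mathcal{L}}(g)^+$ has, for large $i$, an order-$m$ lift in $E(g_i)^+=$ torsion$\,\cup\,\mathbb{Z}$, hence in $T_i$) identifies $N$ with the torsion subgroup of $E^{\mathcal{L}}(g)^+$, so $N\triangleleft E^{\mathcal{L}}(g)$. For the quotient, $E^{\mathcal{L}}(g)^+/N\cong\varinjlim\big(E(g_i)^+/T_i\big)=\varinjlim\mathbb{Z}$, where the bonding map $\mathbb{Z}\to\mathbb{Z}$ sends the class of $g_i$ to that of $g_{i+1}$. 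Writing $g_iT_i=c_i^{k_i}$ for a generator $c_i$, each bonding map is multiplication by the integer $k_{i+1}/k_i$, so the embeddings $c_i\mapsto 1/k_i$ turn the system into the inclusions $\tfrac1{k_i}\mathbb{Z}\hookrightarrow\tfrac1{k_{i+1}}\mathbb{Z}$ and $E^{\mathcal{L}}(g)^+/N\cong\bigcup_i\tfrac1{k_i}\mathbb{Z}\leq\mathbb{Q}$, a rank-one torsion-free abelian group. If $\epsilon$ is trivial this is $E^{\mathcal{L}}(g)/N$ and we are in the abelian case; if $\epsilon$ is onto, conjugation by any sign-reversing element sends $g$ to $g^{-1}$ and hence acts on the rank-one group as $a\mapsto a^{-1}$, exhibiting $E^{\mathcal{L}}(g)/N$ as the asserted extension of a rank-one group by an involutive automorphism.

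The step I expect to be the main obstacle is upgrading $N\triangleleft E^{\mathcal{L}}(g)$ to $N\triangleleft G$. Normality inside $E^{\mathcal{L}}(g)$ is automatic, but a $G$-conjugate $hT_ih^{-1}$ lands in the torsion part of $E^{\mathcal{L}}(hgh^{-1})$, an a priori different elementary subgroup, so $G$-invariance is not formal. My plan here is to compare $N$ with $\bigcup_i\beta_i(K_i)$, where $K_i$ is the maximal finite normal subgroup of $G_i$: because each $\alpha_i$ is surjective one has $\alpha_i(K_i)\subseteq K_{i+1}$, the union $\bigcup_i\beta_i(K_i)$ is genuinely normal in $G$, and $K_i\subseteq T_i$. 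The crux is therefore to show that in the lacunary limit the extra torsion $T_i\setminus K_i$ is absorbed, i.e. that every $\beta_i(T_i)$ eventually lies in some $\beta_j(K_j)$, so that $N=\bigcup_i\beta_i(K_i)$ is normal in $G$; I would attack this by pushing a fixed $t\in T_i$ forward along the $\alpha_j$ and using hyperbolicity of the $G_j$ together with the constraint $\delta_j=o(r_{S_j}(\alpha_j))$ to force the conjugates of $t$ to collapse into the finite radical at a finite stage.
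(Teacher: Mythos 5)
Your first three paragraphs reproduce, in essentially the same way, the paper's own proof: the paper likewise writes $G=\varinjlim G_i$ via Theorem \ref{OOS}(c), proves $\alpha_i(E(g_i))\subseteq E(g_{i+1})$, identifies $E^{\mathcal{L}}(g)$ with $\varinjlim E(g_i)$ (checking independence of the representative of $g$), passes to $E^+(g)=\varinjlim E^+(g_i)$ of index at most $2$, sets $N=T=\varinjlim T_i$ with $T_i$ the finite torsion subgroup of $E^+(g_i)$, and concludes that $T$ is locally finite and $E^+(g)/T$ is abelian of rank one. Your explicit identification of $\varinjlim\bigl(E(g_i)^+/T_i\bigr)$ with $\bigcup_i \frac{1}{k_i}\mathbb{Z}\leq\mathbb{Q}$ is just a more concrete version of the paper's argument that $E^+(g)/T$ is abelian and any two of its elements satisfy a relation $x^ny^m=1$; both are fine, and your sign-homomorphism case analysis supplies the dichotomy that the paper leaves implicit.

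The one genuine problem is your last paragraph, which chases a claim that is both unprovable and unnecessary. As printed, ``$N\triangleleft G$'' in the statement is a misprint: the paper's own proof establishes only that $T$ is normal in $E^{\mathcal{L}}(g)$ (as a limit of the subgroups $T_i\triangleleft E(g_i)$), and normality in $E^{\mathcal{L}}(g)$ is all that is needed to form the quotient $E^{\mathcal{L}}(g)/N$. Normality in $G$ is false in general, and your proposed absorption step --- that every $\beta_i(T_i)$ eventually lies in some $\beta_j(K_j)$ --- cannot be rescued by the lacunarity constraint $\delta_j=o(r_{S_j}(\alpha_j))$, because every hyperbolic group is lacunary hyperbolic via the constant sequence $G_i=G$, $\alpha_i=\mathrm{id}$, where the injectivity radii are infinite. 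Concretely, take $G=(\mathbb{Z}\times\mathbb{Z}/2\mathbb{Z})*\mathbb{Z}$ (hyperbolic, being virtually free) and let $g$ generate the $\mathbb{Z}$ in the first free factor: then $E^{\mathcal{L}}(g)=E(g)=\mathbb{Z}\times\mathbb{Z}/2\mathbb{Z}$ and $N=T\cong\mathbb{Z}/2\mathbb{Z}$, while $G$, a free product of two infinite groups, has trivial maximal finite normal subgroup, so $K_i=\{1\}$ for all $i$; moreover conjugating the involution by the generator of the second free factor moves it outside $E(g)$, so $N$ is genuinely not normal in $G$. You should therefore drop the $K_i$-comparison entirely and record only $N\triangleleft E^{\mathcal{L}}(g)$, which your torsion-subgroup identification already yields (the torsion subgroup of $E^+(g)$ is characteristic in $E^+(g)$, which has index at most $2$ in $E^{\mathcal{L}}(g)$); with that reading your proof is complete and coincides with the paper's.
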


\begin{proof}
	By Theorem~\color{blue}\ref*{OOS}\color{black}, there exist hyperbolic groups $G_i$ and epimorphism $\alpha_i$:\\
	$$G_i\overset{\alpha_i}{\longrightarrow}G_{i+1}\overset{\alpha_{i+1}}{\longrightarrow}G_{i+2}\cdots$$\\
	where $G_i$ is generated by a finite set $\la S_i\ra $ and $\alpha_i(S_i)=S_{i+1}$. Also $G_i$'s are $\delta_i$ hyperbolic where $\delta_i$=$o(r_{S_i}(\alpha_i))$ (where $r_{S_i}(\alpha_i)$=injective radius of $\alpha_i$ w.r.t. $S_i$ ) and $$G=\underrightarrow{\lim} \ G_i$$
	Take a representative\footnote{representative of an element $g\in G$ we mean $\{g_i\}$, $g_i\in G_i$ with $\alpha_i(g_i)=g_{i+1}$ for all $i$} $\{g_i\}_{i\in \mathbb{N}}$ of the element $g$ in the direct limit. Then $g_i$ has infinite order in $G_i$ for all $i\in \mathbb{N}$ as $g\in G$ has infinite order. So for all $i\in \mathbb{N}$, there exists a unique maximal elementary subgroup of $G_i$ containing $g_i$ (as $G_i$ is hyperbolic), namely $E(g_i)$.
	
	\begin{lemma}\label{lemma1}
		Suppose $H$ and $K$ are two hyperbolic groups and $\alpha :H\longrightarrow K$ be an epimorphism. If $h\in H$ and $\alpha(h)\in K$ has infinite order then $$\alpha(E(h))\subset E(\alpha(h))$$ where $E(h)$ and $E(\alpha(h))$ are unique maximal elementary subgroup containing $h$ and $\alpha(h)$ respectively.
	\end{lemma}
	
	\begin{proof}
		From a characterization of the unique maximal elementary subgroup of hyperbolic group we can write, $E(h)=\{x\in H|xh^nx^{-1}=h^{\pm n}$, for some $n=n(x)\in \mathbb{N}-\{0\}\}$ and similarly for $E(\alpha(h))=\{x\in K|x\alpha(h)^mx^{-1}=\alpha(h)^{\pm m}$,for some $m=m(x)\in \mathbb{N}-\{0\}\}$. So the lemma clearly follows for this. 
	\end{proof}
	
	By previous lemma we can see that $\alpha_i(E(g_i))\subset E(g_{i+1})$. Define $\overline{E}(g)=\underrightarrow{\lim}E(g_i)$. Next lemma shows that this definition is independent of choice of that representative $\{g_i\}_{i\in \mathbb{N}}$.
	
	\begin{lemma}\label{lemma2}
		$(a)$ $\overline{E}(g)$ is independent of choice of $\{g_i\}_{i\in \mathbb{N}}$.\\
		$(b)$ $\overline{E}(g)=E(g)$.
	\end{lemma}

	\begin{proof}
		$(a):$ Suppose $\{g'_j\}_{j\in \mathbb{N}}$ be another representative of $g$ in the direct limit. Then define $\overline{E'}(g)=\underrightarrow{\lim}E(g'_i)$. Let $x\in \overline{E'}(g)$ and $\{x_j\}$ be a representative of $x$ in direct limit. Hence $x_{j_0}\in E(g'_{j_0})$ for some $j_0$ hence for all $j\geq j_0$. But $\{g_i\}_{i\in \mathbb{N}}$ and $\{g'_j\}_{j\in \mathbb{N}}$ are both representative of the element $g$, So there exists a number $k\in \mathbb{N}$ such that $g'_k=g_k$. Hence $x_t\in E(g'_t)=E(g_t)$ for all $t\geq k$. Which implies $x\in \overline{E}(g)$. So we have $\overline{E'}(g)\subset \overline{E}(g)$. Now by doing the same method we can show that $\overline{E}(g)\subset \overline{E'}(g)$. That proves $(a)$.\\
		\\ 
		$(b):$ From the definition of $E^{\mathcal{L}}(g)$ and $\overline{E}(g)$ it is clear that $\overline{E}(g)\subset E_G(g)$. Now to prove the other inclusion let assume that $x\in E(g)$. Then there exists representative $\{x_i\}_{i\in \mathbb{N}}$ for $x$ and $\{g_i\}_{i \in \mathbb{N}}$ for $g$ and natural numbers $s,n$ such that:
		$$x_sg_{s}^nx_s^{-1}=g_s^{\pm n}$$
		Hence $x_s\in E(g_s)<G_s $ for large enough $s(\geq j_0)$ by the characterization we mentioned in the proof of lemma $\color{blue}{\ref*{lemma1}}$. Hence $x\in \underrightarrow{\lim}E(g_i)$ (But this direct limit is independent of choice of representative of $g$ by part $(a)$). So $x\in \overline{E}(g)$. Hence we have $\overline{E}(g)=E(g)$.	
	\end{proof}
	
	Now define $E^{+}(g)=\{x\in G|xg^nx^{-1}=g^n$ for some $n=n(x)\in \mathbb{N}-\{0\}\}$. Then clearly $E^+(g)<E(g)$.\\
	Take a representative $\{g_i\}_{i\in \mathbb{N}}$ of the element $g$ and define $E^+(g_i)=\{x\in G_i|xg_i^mx^{-1}=g_i^m$, for some positive integer $m=m(x)\}$. Clearly $E^+(g_i)<E(g_i)$ and $|E(g_i):E^+(g_i)|\leq 2$ for every $i\in\mathbb{N}$.\\
	Now from lemma $\color{blue}{\ref*{lemma1}}$ and lemma $\color{blue}{\ref*{lemma2}}$ it follows that $E^+(g)$ is independent of choice of representative of $g$ and $E^+(g)=\underrightarrow{\lim}E^+(g_i)$.\\
	Hence it follows from the definition that $|E(g):E^+(g)|\leq 2$.
	As we can see that $E(g)$ and $E^+(g)$ is independent of choice of representative of the element $g$, we can fix a representative $\{g_i\}_{i\in \mathbb{N}}$ of $g$. Define $T_i<E^+(g_i)$ be the set of all torsion elements of $E^+(g_i)$. Now $T_i$ is finite normal subgroup of $E(g_i)$ for every $i\in \mathbb{N}$.\\
	In particular for every $i\in \mathbb{N}$ we get a series of normal subgroups of $G_i$:\\
	$$1\leq T_i\leq E^+(g_i)\leq E(g_i),$$ where $|E^+(g_i):E(g_i)|\leq 2$ and $E^+(g_i)/T_i$ is infinite cyclic and $T_i$ is finite.\\
	\\
	Now we can see that $\alpha_i(T_i)\subset T_{i+1}$.(Follows from the definition that $T_i$ is torsion subgroup of $E^+(g_i)$ and $\alpha_i(E^+(g_i))\subset E^+(g_{i+1})$) So we define $T=\underrightarrow{\lim}T_i$, the set of torsion elements of $E^{\mathcal{L}}(g)$. And this definition of $T$ is also independent of choice of representative of the element $g$.

	\begin{lemma}
		$T$ is locally finite normal subgroup of $E(g)$.
	\end{lemma}
	
	\begin{proof}
		A direct limit of finite groups is always locally finite and a limit of normal subgroups is always normal.
	\end{proof}
	
	\begin{lemma}
		$E^+(g)/T$ is an abelian group of rank 1.
	\end{lemma}
	
	\begin{proof}
		Let $[x],[y]\in E^+(g)/T$ and $x,y\in G$ with representative $\{x_i\}$ and $\{y_i\}$ respectively. Let $\{g_i\}$ be a representative of $g$. Then there exists $i$ such that $x_i,y_i\in E^+(g_i)$ in $G_i$, i.e. $x_i^{-1}y_i^{-1}x_iy_i\in T_i$. Hence $[x][y]=[y][x]$ in $E^+(g)/T$ in $G$, i.e. $E^+(g)/T$ is abelian. 
		\\ 
		Let $[x],[y]\in E^+(g)/T$. As this group is abelian we get $x_iy_i=y_ix_i$ (mod T) in $G_i$ for some $i\in \mathbb{N}$, where $\{x_i\},\{y_i\}$ are representatives of $x,y$ respectively. Hence if $y_i\notin T_i$ we have $x_i\in E(y_i)$. So there exists integers $m,n\neq 0$ such that $x_i^my_i^n=1_{G_i}$. Hence $x^ny^m=1_G$. This proves the lemma hence the proposition.
	\end{proof}
\end{proof}

\begin{proposition}
	Let $g$ be an infinite order element in a lacunary hyperbolic group $G$. Every finitely generated subgroup of $E^{\mathcal{L}}(g)$ is elementary. 
\end{proposition}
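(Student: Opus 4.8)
The plan is to use the direct limit description of $E^{\mathcal{L}}(g)$ established in the proof of Theorem \ref{elementarygroup}. There, via Theorem \ref{OOS}, we realized $G = \underrightarrow{\lim}\, G_i$ with each $G_i$ a $\delta_i$-hyperbolic group and $\alpha_i(S_i) = S_{i+1}$, and we showed $E^{\mathcal{L}}(g) = \overline{E}(g) = \underrightarrow{\lim}\, E(g_i)$, where $E(g_i)$ is the unique maximal elementary subgroup of the hyperbolic group $G_i$ containing the infinite-order element $g_i$. In particular each $E(g_i)$ is elementary, i.e.\ virtually cyclic.

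First I would push a finite generating set of the given subgroup down to a single level of the limit. Let $H = \langle h_1, \dots, h_k \rangle \le E^{\mathcal{L}}(g)$. Each $h_j$ is represented by an element of some $E(g_{i_j})$; applying the inclusions $\alpha_i(E(g_i)) \subseteq E(g_{i+1})$ (Lemma \ref{lemma1}) repeatedly, I transport all these representatives to a common index $N := \max_j i_j$, obtaining $\tilde h_1, \dots, \tilde h_k \in E(g_N)$ whose images under the canonical homomorphism $\beta_N : G_N \to G$ are precisely $h_1, \dots, h_k$. Setting $\tilde H := \langle \tilde h_1, \dots, \tilde h_k \rangle \le E(g_N)$, we then have $\beta_N(\tilde H) = H$.

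Second, I would invoke two elementary closure properties. Since $\tilde H$ is a subgroup of the virtually cyclic group $E(g_N)$, it is itself virtually cyclic; pick a cyclic subgroup $C \le \tilde H$ of finite index. Then $\beta_N(C)$ is cyclic, being a homomorphic image of a cyclic group, and $[\,H : \beta_N(C)\,] \le [\,\tilde H : C\,] < \infty$. Hence $H$ has a cyclic subgroup of finite index and is therefore elementary, which is exactly the assertion.

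I do not anticipate a genuine obstacle here; the only delicate point is the reduction to one level, which uses that the colimit is directed together with the inclusions $\alpha_i(E(g_i)) \subseteq E(g_{i+1})$ to guarantee that the common index $N$ still lands inside $E(g_N)$. Everything else rests on the standard fact that the class of virtually cyclic (elementary) groups is closed under passing to subgroups and to homomorphic images.
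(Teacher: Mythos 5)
Your proof is correct and takes essentially the same route as the paper: the paper pushes the finitely many defining relations $a_i g^{l_i} a_i^{-1} = g^{\pm l_i}$ down to a single stage $G_{n_0}$ of the direct limit so that the lifted subgroup lies inside the elementary group $E(g_{n_0})$, exactly matching your transport of the generators to a common level $N$ inside $E(g_N)$ via the inclusions $\alpha_i(E(g_i)) \subseteq E(g_{i+1})$. The only cosmetic difference is that you route through the limit description $E^{\mathcal{L}}(g)=\underrightarrow{\lim}\,E(g_i)$ while the paper uses the algebraic characterization of $E^{\mathcal{L}}(g)$ directly, and you spell out the closure of virtually cyclic groups under subgroups and finite-index-preserving images, which the paper leaves implicit.
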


\begin{proof}
	Let $H=\la a_1,a_2,\cdots ,a_k\ra$ be a finitely generated subgroup of $E^{\mathcal{L}}(g)$. Also let $G=\la x_1,\cdots ,x_n\ra $. By definition of $E^{\mathcal{L}}(g)$ there exists natural numbers $\{l_i\}_{i=1}^{k}$ such that 
	\begin{align}\label{elementaryrel} 
	a_ig^{l_i}a^{-1}_i=g^{\pm l_i} \ for\  all \ i=1,2,\cdots ,k
	\end{align}
	So there exists a natural number $n_0$ such that (\ref{elementaryrel}) is satisfied in $G_j$ for all $j\geq n_0$ (where $G_i$'s are from \ref{lacunarylim} of Theorem\ref{OOS}). One can see that $H\leq E(g)$ in $G_{n_0}$. Hence $H$ is virtually cyclic.
\end{proof}
Converse of the Theorem \color{blue}\ref{elementarytheorem}\color{black}, is also true in general. We are going to see torsion free version of the converse now with some interesting corollary.  
\begin{theorem}\label{rank1}
	Given any non trivial rank one abelian group $L$ and a non elementary hyperbolic group $G_0$, there exists a lacunary hyperbolic quotient $G$ of $G_0$ and an infinite order element $g\in G$ such that $E^{\mathcal{L}}(g)$ is isomorphic to $L$.
\end{theorem}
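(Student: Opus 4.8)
The plan is to realize $G$ as the direct limit of a tower of torsion-free non-elementary hyperbolic groups $G_0=G_1\to G_2\to\cdots$ in which the maximal elementary subgroup of a fixed infinite-order element grows, stage by stage, so that its direct limit is the prescribed rank one group $L$; then I invoke the description of $E^{\mathcal{L}}$ as such a direct limit (Lemma \ref{lemma2}) together with the lacunary criterion of Theorem \ref{OOS}. First I record the algebraic input. Up to isomorphism $L$ is a subgroup of $(\mathbb{Q},+)$ containing $\mathbb{Z}$, and every such group is a directed union of infinite cyclic groups; concretely there are integers $c_n\ge 1$ with $L\cong\varinjlim\left(\mathbb{Z}\xrightarrow{\times c_1}\mathbb{Z}\xrightarrow{\times c_2}\cdots\right)$, where $\times c_n$ sends a generator to $c_n$ times the next one. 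The target is a tower with $E(g_n)=\langle t_n\rangle\cong\mathbb{Z}$, $g_n=t_n^{c_1\cdots c_{n-1}}$ and $\alpha_n(t_n)=t_{n+1}^{c_n}$, matching this diagram. We may take $G_0$ torsion-free, as in the torsion-free version under discussion, so that every infinite-order element has infinite cyclic maximal elementary subgroup; replacing the given element by a generator of its maximal cyclic subgroup, we start with $E(g_1)=\langle t_1\rangle$, $t_1=g_1=g$.

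Next comes the inductive step. Suppose $G_n$ is torsion-free, non-elementary and hyperbolic with $t_n$ satisfying $E(t_n)=\langle t_n\rangle$. I adjoin a $c_n$-th root of $t_n$ by a small-cancellation quotient of $G_n$ that introduces no new generators, so that $\alpha_n(S_n)=S_{n+1}$ automatically. Using the small-cancellation word systems of Lemmas \ref{smallwords} and \ref{lemma5}, pick $V\in G_n\setminus E(t_n)$ and form, with $U=t_n$, a symmetrized system $\mathcal{R}=\mathcal{R}(Z,U,V,\lambda,c,\epsilon,\mu,\rho)$ whose relations force $t_n$ to equal the $c_n$-th power of a word $t_{n+1}$ that mixes $t_n$ and $V$ and lies outside $E(t_n)$. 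For sparse enough parameters, Lemma \ref{smallwords} makes the relators $(\lambda,c)$-quasigeodesic and Lemma \ref{lemma5} shows they satisfy $C'(\lambda,c,\epsilon,\mu,\rho)$; hence by \cite[Lemma 4.6]{OOS07} and Theorem \ref{ol93lemma} the quotient $G_{n+1}=G_n/\langle\langle\mathcal{R}\rangle\rangle$ is hyperbolic, stays non-elementary and torsion-free, and coincides with $G_n$ on all words shorter than a prescribed threshold $N_n$. By construction $t_{n+1}^{c_n}=t_n$, so $\alpha_n$ embeds $E(g_n)$ as an index-$c_n$ subgroup of $E(g_{n+1})$. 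It remains to verify $E(g_{n+1})=\langle t_{n+1}\rangle$: Lemma \ref{properpower} rules out any new short root of $t_n$, and the conjugacy control furnished by Theorem \ref{ol93lemma} forces the maximal elementary subgroup of $g_{n+1}$ to be exactly the infinite cyclic group $\langle t_{n+1}\rangle$.

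Finally I address lacunarity and assemble the conclusion. After $G_n$, and hence its hyperbolicity constant $\delta_n$, is fixed, I choose the relators at stage $n$ long enough — enlarging $\rho_n$ and the threshold $N_n$ of Theorem \ref{ol93lemma} — that the injectivity radius $r_{S_n}(\alpha_n)\ge N_n$ dwarfs $\delta_n$, while \cite[Lemma 4.6]{OOS07} bounds $\delta_{n+1}$ by a multiple of the maximal relator length. This bookkeeping yields $\delta_n=o(r_{S_n}(\alpha_n))$, so Theorem \ref{OOS}(c) shows $G=\varinjlim G_n$ is lacunary hyperbolic. Then $g=\varinjlim g_n$ is an infinite-order element of $G$, and by Lemma \ref{lemma2} we obtain $E^{\mathcal{L}}(g)=\varinjlim E(g_n)=\varinjlim\langle t_n\rangle\cong\varinjlim\left(\mathbb{Z}\xrightarrow{\times c_1}\mathbb{Z}\xrightarrow{\times c_2}\cdots\right)\cong L$, as required.

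The main obstacle is the exact determination of $E(g_{n+1})$ at each stage: the root-adjunction must neither enlarge the elementary subgroup beyond the single new cyclic group $\langle t_{n+1}\rangle$ nor create torsion or additional unexpected roots, all while keeping the quotient non-elementary and hyperbolic. This is precisely what the small-cancellation package of Lemmas \ref{smallwords}, \ref{lemma5} and \ref{properpower} together with Theorem \ref{ol93lemma} is engineered to deliver; the delicate part of the write-up is the simultaneous choice of the words $V$, $t_{n+1}$ and the parameters $(\lambda,c,\epsilon,\mu,\rho)$ so that all these constraints hold at once and the lengths grow fast enough for lacunarity. I expect this to be a specialization of the more general machinery behind Theorem \ref{monster}.
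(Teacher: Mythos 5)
Your global architecture --- a tower of torsion-free non-elementary hyperbolic quotients with $E(g_n)$ realizing the directed system $\mathbb{Z}\xrightarrow{\times c_1}\mathbb{Z}\xrightarrow{\times c_2}\cdots$, lacunarity via Theorem \ref{OOS}(c), and the identification $E^{\mathcal{L}}(g)=\varinjlim E(g_n)$ --- matches the paper. But the inductive step, which is the heart of the matter, fails as you describe it. You propose to adjoin a $c_n$-th root of $t_n$ by a small-cancellation quotient of $G_n$ ``that introduces no new generators,'' i.e.\ by imposing a relator of the shape $t_n^{-1}W^{c_n}$ with $W$ a word in the old generators. No such relator can satisfy the $C'(\lambda,c,\epsilon,\mu,\rho)$-condition this machinery runs on: $W^{c_n}$ is periodic, so the relator overlaps its own cyclic shift by $\|W\|$ in a piece of relative length roughly $(c_n-1)/c_n$ (and passing to a geodesic representative of the element $W^{c_n}$ does not help, since subwords of that geodesic at shift $\|W\|_{G_n}$ are equal in $G_n$ up to short conjugators, hence are still $\epsilon$-pieces), which is far above the bound $\mu\leq 0.01$. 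This is precisely why the system (\ref{systemwords}) of Section 4.4 --- the only relators to which Lemmas \ref{smallwords} and \ref{lemma5} apply --- carries pairwise distinct exponents $m_{i,t}$: such relators express $z_i$ as a word in $U,V$ and never impose a power relation. Worse, Lemma \ref{properpower}, which you invoke, actively refutes your step: with the sparse parameters you need, if $t_n=t_{n+1}^{c_n}$ held in $G_{n+1}$ and $\|t_n\|<\frac{\mu\rho-c}{\lambda}-\epsilon$ (automatic, since $t_n$ is a fixed element while $\rho$ is taken huge for lacunarity), then $t_n=t_{n+1}^{c_n}$ would already hold in $G_n$, contradicting $E(t_n)=\langle t_n\rangle$. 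In short, these small-cancellation quotients can kill generators but can never create new roots of short elements.

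The missing idea is the paper's embedding step. One first enlarges $G_n$ to the amalgamated free product $H_{n+1}=G_n\underset{t_n=(g')^{c_n}}{*}\langle g'\rangle_{\infty}$, which is hyperbolic by Theorem \ref{olmik} because $\langle g'\rangle_{\infty}$ is a maximal elementary subgroup of $G_n*\langle g'\rangle_{\infty}$; the root $g'$ enters as a genuinely new generator, so no power relator is ever imposed inside one group. Only then does one apply the small-cancellation system $\mathcal{R}(Z,U,V,\lambda,c,\epsilon,\mu,\rho)$ with $z$ the new generator, expressing $g'$ as a word in two non-commensurable elements $a,b$ of the old group with $E(a)\cap E(b)=\{1\}$; Theorem \ref{ol93lemma} keeps the quotient $G_{n+1}$ hyperbolic and non-elementary with injectivity radius $\geq N_n$, surjectivity of $G_n\twoheadrightarrow G_{n+1}$ is restored, and Lemma \ref{properpower} is then used in the correct direction --- to verify that $g'$ is \emph{not} a proper power in $G_{n+1}$, so that $E(g')$ remains infinite cyclic containing $t_n$ with index exactly $c_n$. (Your closing guess is right that this should specialize the machinery of Theorem \ref{monster}; note that that construction, too, proceeds via the amalgamation (\ref{amalgum}) rather than via power relators.) With this replacement your lacunarity bookkeeping and the final limit identification go through exactly as in the paper.
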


\begin{proof}
	To construct such group we are going to use the following method (section 11,12 of \cite{Dar17}). We are going to construct the following chain;\\
	\begin{align}
	G_0\overset{\beta_0}{\hookrightarrow} H_1\overset{\gamma_1}{\twoheadrightarrow} G_1\overset{\beta_1}{\hookrightarrow} H_2 \overset{\gamma_1}{\twoheadrightarrow} G_2  \cdots
	\end{align}
	where $H_i,G_i$ are hyperbolic for all $i$ and $\gamma_i\circ \beta_{i-1}$ is surjective for all $i\geq 1$. \par 
	Being a rank $1$ abelian group $L$ can be written as $L=\cup_{i=0}^{i=\infty}L_i$, where $L_i=\langle g_i\rangle_{\infty}$ and $g_i=(g_{i+1})^{m_{i+1}}$ for some $m_{i+1}\in \mathbb{N}$.\par 
	Let $G=\langle X|\mathcal{O}\rangle$ be a torsion free non elementary hyperbolic group with $a,b\in X$ and $h\in G\setminus\{e\} $, not a proper power in $G$ such that $a,b\notin E(h),b\notin E(a)$. We define,
	\begin{align}\label{amalgamprod}
	H:=G*_{h=(g')^m}\langle g'\rangle_{\infty}
	\end{align}
	where $\langle g'\rangle_{\infty}$ is an infinite cyclic group and $m\in\mathbb{N}$. \par 
	Then we shall construct a non elementary hyperbolic factor group $G'$ such that ,
	\begin{align}
	G\overset{\beta}{\hookrightarrow} H\overset{\gamma}{\twoheadrightarrow} G'
	\end{align} 
	Where $\beta$ is an embedding induced from $Id:X\rightarrow X$ and $\gamma\circ \beta$ is a surjective homomorphism. \par 
	By Theorem \color{blue}\ref{olmik}\color{black}, the group $H$ in (\color{blue}\ref*{amalgamprod}\color{black}) is hyperbolic as $\langle g'\rangle_{\infty}$ is maximal elementary subgroup of $G*\langle g'\rangle_{\infty}$. \par
	Let $N$ be a positive number. For any $\epsilon\geq 0,\mu>0,\rho>0$ with $\lambda,c$ given by Lemma \color{blue}\ref{smallwords}\color{black}\ and $\lambda,\mu,\epsilon,\rho,N,c$ satisfies condition for Theorem \color{blue}\ref{ol93lemma}\color{black}\  for the hyperbolic group $G$. Let  $\mathcal{R}(Z,U,V,\lambda,c,\epsilon,\mu,\rho)$ be a set of words as described in Section \color{blue}4.4\color{black}, with $Z=\{z \}$ such that $z,U,V$ are geodesic words in $\mathcal{F}(X)$ representing the elements $h,a,b$ respectively i.e, $z=_Gh, U=_Ga,V=_Gb$. 
	Combining the fact that $\mathcal{R}(Z,U,V,\lambda,c,\epsilon,\mu,\rho)$ satisfies $C'(\lambda,c,\epsilon,\mu,\rho)$ and Theorem \color{blue}\ref{ol93lemma}\color{black}\ with $H_1:=gp\{a,b\},H_2:=gp\{a,h\},H_3:=gp\{b,h\}$, we get that the factor group $G':=\langle H|\mathcal{R}(Z,U,V,\lambda,c,\epsilon,\mu,\rho)\rangle $ is hyperbolic and the injective radius of the factor homomorphism is $\geq N$. Note that we can choose $N$ as large as we want. Also we get that $\gamma \circ \beta$ is surjective homomorphism. Moreover we get that for $i=1,2,3$, images of $H_i$ are non elementary in the factor group $G'$, i.e. $a,b\notin E(h),a\notin E(b)$ in $G'$. Note that by Lemma \color{blue}\ref{properpower}\color{black},  $g'$ is not a proper power in $G'$ for spars enough parameters $\lambda,c,\epsilon,\mu,\rho$. Also note that this fact can be deduced directly from \cite[Theorem 2, property (5)]{Ol93}. \par 
	
	We start with the given non elementary hyperbolic group $G_0$ and let $F(a,b,h)$ be a free subgroup of $G_0$ with three generator over the alphabets $\{a,b,h\}$. Note that it was shown in \cite[page 157]{Gr87} that every non elementary hyperbolic group contains a copy of free group with two generator hence one can always choose a free subgroup of any rank as a subgroup of a non elementary hyperbolic group. $a,b\notin E(h),b\notin E(a)$ in $G_0$. Now we perform the step described in beginning of the proof with $H:=G_0*_{h=(g_1)^{m_1}}\langle g_1\rangle_{\infty}$ to get the factor group $G_1$ with $a,b\notin E(h),b\notin E(a)$ in $G_1$ and $g_1$ is not a proper power in the factor group. Hence one can now use induction to get $G_i$ from $G_{i-1}$ for all $i\geq 1$. Define $G:=\underset{\rightarrow}{\lim} G_i$. $G$ is lacunary hyperbolic group as injectivity radius of factor homomorphism can be as large as we want in every step. Note that we have $E_i(h)\equiv L_i$ for every $i$ by construction, where $E_i(h)$ is the maximal elementary subgroup containing the infinite order element $h$ in $G_i$. One can see that $E^{\mathcal{L}}(h)=\underset{\rightarrow}{\lim}E_i(h)=\cup_{i=1}^{\infty}L_i$ and hence $E^{\mathcal{L}}(h)\cong L$. 
\end{proof}
\begin{corollary}
	There exists lacunary hyperbolic group $G$ and an infinite order element $g\in G$ such that there is no maximal elementary subgroup of $G$ containing the element $g$.
\end{corollary}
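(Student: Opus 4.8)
The plan is to apply Theorem \ref{rank1} with the simplest non-finitely-generated rank one abelian group, namely $L=(\mathbb{Q},+)$. This produces a lacunary hyperbolic group $G$ together with an infinite order element $g\in G$ such that $E^{\mathcal{L}}(g)\cong\mathbb{Q}$. By the very construction in the proof of Theorem \ref{rank1}, one has $E^{\mathcal{L}}(g)=\bigcup_{i\geq 0}L_i$, where each $L_i=\langle g_i\rangle$ is infinite cyclic with $g_i=(g_{i+1})^{m_{i+1}}$; choosing the presentation of $\mathbb{Q}$ so that every $m_{i+1}\geq 2$ makes $L_i\subsetneq L_{i+1}$ a strictly increasing chain, and since $g$ generates $L_0$ we have $g\in L_i$ for every $i$. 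The goal is then to show that no elementary (i.e.\ virtually cyclic) subgroup of $G$ containing $g$ can be maximal.

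The first step is to verify that every elementary subgroup $E\leq G$ with $g\in E$ is contained in $E^{\mathcal{L}}(g)$. Since $E$ is virtually cyclic it has an infinite cyclic normal subgroup $C=\langle c\rangle$ of finite index, and some power $g^k$ lies in $C$. For any $x\in E$ normality gives $xcx^{-1}=c^{\pm 1}$, whence $xg^kx^{-1}=g^{\pm k}$ with the single exponent $n=k$ working for all $x\in E$. By the definition of $E^{\mathcal{L}}(g)$ this shows $x\in E^{\mathcal{L}}(g)$, so $E\subseteq E^{\mathcal{L}}(g)\cong\mathbb{Q}$.

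Since $E$ embeds into the abelian group $\mathbb{Q}$, it is abelian, and being virtually cyclic it is finitely generated; a nontrivial finitely generated subgroup of $\mathbb{Q}$ is infinite cyclic. Thus $E$ is infinite cyclic and, as a finitely generated subgroup of $\bigcup_i L_i$, it is contained in some $L_i$. But then $E\subseteq L_i\subsetneq L_{i+1}$, and $L_{i+1}$ is itself an infinite cyclic (hence elementary) subgroup of $G$ containing $g$. Therefore $E$ is properly contained in a strictly larger elementary subgroup containing $g$, so $E$ is not maximal. As $E$ was an arbitrary elementary subgroup containing $g$, there is no maximal elementary subgroup of $G$ containing $g$.

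I expect the only genuinely load-bearing point to be the uniform-conjugation observation of the second step, which is what guarantees that elementary subgroups containing $g$ cannot escape $E^{\mathcal{L}}(g)$; once that is in hand, the absence of a maximal cyclic subgroup in $\mathbb{Q}$ furnishes the conclusion essentially for free.
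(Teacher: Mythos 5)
Your proof is correct and follows exactly the route the paper intends: the corollary is stated as an immediate consequence of Theorem \ref{rank1}, obtained by taking $L$ to be a non-finitely-generated rank one group such as $(\mathbb{Q},+)$, so that $E^{\mathcal{L}}(g)\cong\mathbb{Q}$ is a strictly increasing union of infinite cyclic subgroups with no maximal member. Your uniform-conjugation argument (via the finite-index normal infinite cyclic core) carefully supplies the containment $E\subseteq E^{\mathcal{L}}(g)$ that the paper leaves implicit, which is a welcome addition rather than a deviation.
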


For hyperbolic group G we know that for two infinite order element $a,b\in G$ , $E(a)\cap E(b)$ is finite if $E(a)\neq E(b)$ (ref. \cite{Ol93}). In the case of lacunary hyperbolic group one can obtain the following,
\begin{lemma}
	Let $G=\underset{\rightarrow}{lim} G_i$ be a lacunary hyperbolic group with two infinite order element $a,b\in G$. If $E^{\mathcal{L}}(a)\neq E^{\mathcal{L}}(b)$ then $E^{\mathcal{L}}(a)\cap E^{\mathcal{L}}(b)$ is a locally finite group. 
\end{lemma}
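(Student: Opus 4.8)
The plan is to prove local finiteness directly from its definition, that is, to show that every finitely generated subgroup of $E^{\mathcal{L}}(a)\cap E^{\mathcal{L}}(b)$ is finite, by pushing the question down into the approximating hyperbolic groups $G_i$ and invoking the classical fact there. Fixing coherent representatives $\{a_i\}$ and $\{b_i\}$ of $a$ and $b$, Lemma~\ref{lemma2} in the proof of Theorem~\ref{elementarygroup} gives $E^{\mathcal{L}}(a)=\underset{\rightarrow}{\lim}\,E(a_i)$ and $E^{\mathcal{L}}(b)=\underset{\rightarrow}{\lim}\,E(b_i)$, where $E(a_i),E(b_i)$ are the unique maximal elementary subgroups of the hyperbolic group $G_i$ containing $a_i,b_i$. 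Unwinding this, and using that $\alpha_i(E(a_i))\subset E(a_{i+1})$, an element $x\in G$ with representative $\{x_i\}$ lies in $E^{\mathcal{L}}(a)$ if and only if $x_i\in E(a_i)$ for all sufficiently large $i$, and the analogous criterion holds for $E^{\mathcal{L}}(b)$.

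First I would fix an arbitrary finitely generated subgroup $H=\langle h_1,\dots,h_k\rangle\leq E^{\mathcal{L}}(a)\cap E^{\mathcal{L}}(b)$. Since two coherent representatives of a single element of a direct limit are eventually equal, each generator $h_j$ admits one representative $\{h_{j,i}\}$ that eventually witnesses membership in both factors; hence there is an index $i_0$ with $h_{j,i}\in E(a_i)\cap E(b_i)$ for all $j$ and all $i\geq i_0$. Consequently the subgroup $H_i:=\langle h_{1,i},\dots,h_{k,i}\rangle$ of $G_i$ is contained in $E(a_i)\cap E(b_i)$ for every $i\geq i_0$, and the canonical homomorphism $G_i\rightarrow G$ carries $H_i$ onto $H$. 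It therefore suffices to produce a single index $i\geq i_0$ for which $H_i$ is finite, since the image of a finite group is finite.

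The decisive step, which I expect to be the main obstacle, is converting the global inequality $E^{\mathcal{L}}(a)\neq E^{\mathcal{L}}(b)$ into a stage-wise inequality $E(a_i)\neq E(b_i)$; I would handle this by a dichotomy. If there is some $i\geq i_0$ with $E(a_i)\neq E(b_i)$, then the classical fact for hyperbolic groups, namely that $E(a_i)\cap E(b_i)$ is finite whenever the two maximal elementary subgroups differ (see \cite{Ol93}), shows that $H_i$ is finite, and hence $H$ is finite. If instead $E(a_i)=E(b_i)$ for all $i\geq i_0$, then by the membership criterion every $x\in E^{\mathcal{L}}(a)$ satisfies $x_i\in E(a_i)=E(b_i)$ for large $i$, so $x\in E^{\mathcal{L}}(b)$, and symmetrically the reverse inclusion holds; this forces $E^{\mathcal{L}}(a)=E^{\mathcal{L}}(b)$, contrary to hypothesis. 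Thus the second alternative never occurs, the first always does, and every finitely generated subgroup of $E^{\mathcal{L}}(a)\cap E^{\mathcal{L}}(b)$ is finite, proving that the intersection is locally finite.
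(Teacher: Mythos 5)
Your proof is correct and follows the same skeleton as the paper's: fix coherent representatives, identify $E^{\mathcal{L}}(a)\cap E^{\mathcal{L}}(b)$ with the direct limit of the intersections $E_i(a_i)\cap E_i(b_i)$ in the hyperbolic approximants $G_i$, invoke Ol'shanskii's theorem that the intersection of two \emph{distinct} maximal elementary subgroups of a hyperbolic group is finite, and conclude local finiteness because every finitely generated subgroup of the limit is the image of a finitely generated subgroup sitting at some finite stage. Where you genuinely add value is the dichotomy step. The paper simply asserts that $E_i(a_i)\cap E_i(b_i)$ is finite by \cite[Lemma 1.6]{Ol93}, but that lemma has the hypothesis $E_i(a_i)\neq E_i(b_i)$, and the stated assumption $E^{\mathcal{L}}(a)\neq E^{\mathcal{L}}(b)$ lives only in the limit; the paper never converts one into the other. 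Your argument supplies exactly the missing implication: for a fixed finitely generated $H\leq E^{\mathcal{L}}(a)\cap E^{\mathcal{L}}(b)$ with threshold $i_0$, either some stage $i\geq i_0$ has $E(a_i)\neq E(b_i)$, in which case $H_i\leq E(a_i)\cap E(b_i)$ is finite and so is its image $H$, or else $E(a_i)=E(b_i)$ for all $i\geq i_0$, and then the eventual-membership criterion (legitimate here, since two coherent representatives of one element of a direct limit of epimorphisms eventually coincide, as in Lemma~\ref{lemma2}) forces $E^{\mathcal{L}}(a)=E^{\mathcal{L}}(b)$, contradicting the hypothesis. Since the second horn yields an absolute contradiction independent of $H$, the first always applies, and the remaining steps (an intersection of subgroups is a subgroup, a homomorphic image of a finite group is finite) are sound. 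In short: same route as the paper, but your write-up closes a gap the paper's two-line proof glosses over.
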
 
\begin{proof}
	Let $\{a_i\},\{b_i\} $ be representatives of $a,b$ respectively. Note that for any non trivial element $g\in G$, $E^{\El}(g)=\underset{\rightarrow}{\lim}E_i(g)$, where $E_i(g)$ is the maximal elementary subgroup in $G_i$ containing the element $g$ when viewed as an element of $G_i$ for every $i$. We get $E^{\mathcal{L}}(a)\cap E^{\mathcal{L}}(b)= \underset{\rightarrow}{\lim}E_i(a_i)\cap E_i(b_i) $, where $E_i(a),E_i(b)$ are the maximal elementary subgroups of the hyperbolic group $G_i$ containing the elements $a_i,b_i$ respectively. $E_i(a_i)\cap E_i(b_i) $ is finite (see \cite[Lemma 1.6]{Ol93}). Being a limit of finite group $E^{\mathcal{L}}(a)\cap  E^{\mathcal{L}}(b)$ is locally finite.
\end{proof}

\subsection{Rank one abelian subgroups of lacunary hyperbolic groups}\label{rank1monstergroup}

In this section we are going to describe rank one abelian subgroups of lacunary hyperbolic groups and maximal subgroups. 
\begin{definition}
	Two element $g,h$ of infinite order in a (hyperbolic) group is said to be commensurable if $g^k=ah^la^{-1}$ for some non zero integers $k,l$ and some element $a\in G$.
\end{definition} 
By \cite[Lemma 1.6]{Ol93}, $g$ and $h$ are commensurable if and only if the maximal elementary subgroups $E(g),E(h)$ containing two elements $g$ and $h$ respectively, are conjugate.  \par 
Let $G_0=\la X\ra$ be a torsion free $\delta$-hyperbolic group with respect to $X$, where $X=\{x_1,x_2,\cdots,x_n \}$ is a finite generating set. Without loss of generality we assume that $E(x_i)\cap E(x_j)=\{e\}$ for $i\neq j$. Let $X$ be linearly ordered such that $x_i^{-1}<x_j^{-1}<x_i<x_j$ if $i<j$. Let $F'(X)$ denote the set of non empty reduced words on $X$, and $F'(X)=\{w_1,w_2,\cdots \}$ be an enumeration with $w_i<w_j$ for $i<j$ according to the lexicographic order induced from the order on $X$. Note that $w_1=(x_1)^{-1}$ and $w_2=(x_2)^{-1}$. We now order the set $\mathcal S:= F'(X)\times F'(X)$ lexicographically and enumerate them as,
\begin{align}\label{u_kwords} 
\mathcal S=\{(u_1,v_1),(u_2,v_2),\cdots  \}
\end{align}
where for $i<j$ we have $(u_i,v_i)<(u_j,v_j)$. \par 
We are going to construct the following chain;\\
\begin{align}
G_0\overset{\beta_0}{\hookrightarrow} H_1\overset{\alpha_1}{\twoheadrightarrow}G'_1\overset{\gamma_1}{\twoheadrightarrow} G_1\overset{\beta_1}{\hookrightarrow} H_2 \overset{\alpha_2}{\twoheadrightarrow}G'_2\overset{\gamma_1}{\twoheadrightarrow} G_2  \cdots
\end{align}
where $H_i,G_i,G'_i$ are hyperbolic for all $i$ and $\gamma_i\circ \alpha_{i-1}\circ \beta_{i-1}$ is surjective for all $i\geq 1$ and takes generating set to generating set. In particular we are going to show that the following chain
\begin{align}
G_0\overset{\gamma_1\circ \alpha_{1}\circ \beta_{0}}{\twoheadrightarrow}G_1\overset{\gamma_2\circ \alpha_{2}\circ \beta_{1}}{\twoheadrightarrow} G_2\overset{\gamma_3\circ \alpha_{3}\circ \beta_{2}}{\twoheadrightarrow}\cdots G_{i-1}\overset{\gamma_i\circ \alpha_{i}\circ \beta_{i-1}}{\twoheadrightarrow}G_i\rightarrow   \cdots
\end{align}
satisfies part c. of Theorem \color{blue}\ref{OOS}\color{black}.\par 
Let $\mathscr{C}=\{Q^p\}_{p\in\mathbb{N}}$ be a countable family of rank one abelian groups. Then for every $p\in\mathbb{N}$, $Q^j$ can be written as $Q^p=\cup_{i=0}^{i=\infty}Q^p_i$, where $Q^p_i=\langle g^p_i\rangle_{\infty}$ and $g^p_i=(g^p_{i+1})^{m^p_{i+1}}$ for some $m^p_{i+1}\in \mathbb{N}$.\par 
Then there exists a smallest index $j_i\geq i$ such that $v_{j_i}\notin E(u_{j_i})$. For $m\in \mathbb N$, define 
\begin{align}\label{amalgum} 
H^k_{i+1}:=H^{k-1}_{i+1}\underset{u_k=(g^k_{(k,i+1)})^{m^k_{i+1}}}{*}\la g^k_{(k,i+1)} \ra_{\infty}\ where\  H^0_{i+1}=G_i\ and \ g^k_{(k,i+1)}=g^k_{i+1} \ for\ k=1,2,\cdots ,j_i.
\end{align}
For $i \geq 0$ let $H_{i+1}$ to be $H^{j_i}_{i+1}$. Note that $H_{i+1}$ is hyperbolic as $H^{k}_{i+1}$ is hyperbolic for all $k$ by \cite[Theorem 3]{MO98}. By construction there is a natural embedding $\beta_i:G_i\hookrightarrow H_{i+1}$. Take $c_i,c_i'\in G$ such that $c_i',c_i\notin E(u_k)$ for all $1 \leq k\leq j_i$ and $c_i',c_i\notin E(v_{j_i}) $. Such $c_i$ and $c_i'$ exists as there are infinitely many elements in a non elementary hyperbolic group which are pairwise non commensurable by \cite[Lemma 3.2]{Ol93}. Now we define the following set 
\begin{align} 
Y_i:=\{g_{(k,i+1)}|1\leq k\leq j_i \}
\end{align}
Let us consider $\mathcal{\tilde{R}}_{i+1}:=\mathcal{R}(Y_i,c_i,c_i',\lambda,c,\epsilon,\mu,\rho)$ as defined in the section 4.4.  

Consider the natural quotient map $\alpha_{i+1}:H_{i+1}\twoheadrightarrow G'_{i+1}$ to the quotient $G'_{i+1}:=\la H_{i+1}|\tilde{\mathcal{R}}_{i+1}\ra$. Also the factor group $G'_{i+1}$ is hyperbolic by \cite{Ol93}. it is easy to see that $\alpha_{i+1}\circ \beta_{i}$ is a surjective map that takes generators to generators.\par  
Consider the following set $$Z_i:=\{x\in X|x\notin E(u_{j_i})  \}.$$ 
Let $G_{i+1}:= G'_{i+1}/\la \la \mathcal R (Z_i,u_{j_i},v_{j_i},\lambda,c,\epsilon,\mu,\rho)\ra \ra$ and let $\gamma_{i+1}:G'_{i+1}\twoheadrightarrow G_{i+1}$ be the quotient map. Hence we get that the group $G_{i+1}$ is hyperbolic by Theorem \color{blue}\ref{ol93lemma}\color{black}\ since $\mathcal R (Z_i,u_{j_i},v_{j_i},\lambda,c,\epsilon,\mu,\rho)$ satisfies $C'(\lambda,c,\epsilon,\mu,\rho)$ small cancellation condition as discussed in section \color{blue}4.4\color{black}\ and the map $\gamma_{i+1}$ takes generating set to generating set. In particular $\eta_{i+1}:=\gamma_{i+1}\circ \alpha_{i+1}\circ \beta_{i}$ is surjective homomorphism which takes the generating set of $G_i$ to the generating set of $G_{i+1}$. Let $G^{\mathscr{C}}:=\underset{\rightarrow}{\lim}G_i$. 
We summarize the above discussion in the following statement.
\begin{lemma}\label{limitmax}
	The above construction satisfies the following properties:
	\begin{enumerate}
		\item $G_i$ is non elementary hyperbolic group for all $i$;
		\item Either $u_i\in E(v_i)$ or the group genarated by $\{u_i,v_i \}$ is equal to all of $G_i$;
		\item For every $k\in\mathbb{N}$ we have $E(u_k)\simeq Q^k_l$ in $G_l$ for all $l\geq k$, where the element $u_k$ is defined in $\color{blue}(\ref{u_kwords})\color{black}$. 
	\end{enumerate}
\end{lemma}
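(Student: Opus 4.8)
The plan is to prove the three assertions by a single induction on $i$, monitoring how each of the three moves in the chain---the iterated amalgam producing $H_{i+1}$, and the two small-cancellation quotients $\alpha_{i+1}\colon H_{i+1}\twoheadrightarrow G'_{i+1}$ and $\gamma_{i+1}\colon G'_{i+1}\twoheadrightarrow G_{i+1}$---affects hyperbolicity, non-elementariness, and the maximal elementary subgroups $E(u_k)$. The base case is the hypothesis that $G_0$ is torsion-free, non-elementary and hyperbolic.

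For (1), hyperbolicity is inherited at every move. Each amalgam $H^k_{i+1}$ is hyperbolic by Theorem~\ref{olmik}: the cyclic factor $\la g^k_{(k,i+1)}\ra_\infty$ is elementary, and on the $H^{k-1}_{i+1}$-side the amalgamated subgroup is the maximal elementary subgroup $E(u_k)$ supplied by the inductive hypothesis, which is therefore maximal elementary in its factor. The two quotients are hyperbolic because $\tilde{\mathcal R}_{i+1}$ and $\mathcal R(Z_i,u_{j_i},v_{j_i},\lambda,c,\epsilon,\mu,\rho)$ satisfy the $C'(\lambda,c,\epsilon,\mu,\rho)$-condition by Lemma~\ref{lemma5}, so Theorem~\ref{ol93lemma} applies. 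Non-elementariness is then propagated through the quotients by the last clause of Theorem~\ref{ol93lemma}: I would feed it a fixed finite family of non-elementary subgroups, most importantly $gp\{u_{j_i},v_{j_i}\}$ (non-elementary precisely because $v_{j_i}\notin E(u_{j_i})$) together with a free subgroup witnessing non-elementariness, so that their images remain non-elementary and hence $G_{i+1}$ is non-elementary.

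For (3) I would track $E(u_k)$ stage by stage and show $E(u_k)=\la g^k_{(k,l)}\ra\cong Q^k_l$ in $G_l$. Granting $E(u_k)=\la g^k_{(k,i)}\ra$ in $G_i$, the amalgam adjoins the next root of the chain defining $Q^k$ by identifying this maximal cyclic subgroup with the index-$m^k_{i+1}$ subgroup $\la (g^k_{(k,i+1)})^{m^k_{i+1}}\ra$ of the new factor; the maximality and malnormality properties of such amalgams then give that the maximal elementary subgroup of $H_{i+1}$ containing $u_k$ is exactly $\la g^k_{(k,i+1)}\ra\cong Q^k_{i+1}$ and that $g^k_{(k,i+1)}$ is not a proper power. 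The decisive point is that neither $\alpha_{i+1}$ nor $\gamma_{i+1}$ disturbs this picture: $g^k_{(k,i+1)}$ must remain a non-proper-power, so that $E(u_k)$ acquires no spurious extra root, and no new element may be conjugated into a power of $u_k$, so that $E(u_k)$ is not enlarged. For the former I would invoke Lemma~\ref{properpower}; for the latter I would use that, by Theorem~\ref{ol93lemma}, the quotient maps are injective on a ball of radius $\geq N$ with $N$ taken larger than all relevant word lengths, together with the fact that a sufficiently sparse $C'$-quotient of a hyperbolic group creates no new commensurabilities among elements of bounded length. This control of $E(u_k)$ across the two small-cancellation quotients is the main obstacle, and it is exactly what the choice of sparse parameters $\lambda,c,\epsilon,\mu,\rho$ and large $N$ is designed to secure.

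Finally, (2) should be read off from the way the enumeration $\mathcal S=\{(u_i,v_i)\}$ is processed. If $u_i\in E(v_i)$---equivalently $v_i\in E(u_i)$, since both elements have infinite order---the first alternative holds at once. Otherwise $v_i\notin E(u_i)$, so this pair is the one handled at the corresponding stage through the relators $\mathcal R(Z_i,u_i,v_i,\lambda,c,\epsilon,\mu,\rho)$; each such relator has the form $z\,u_i^{m_1}v_iu_i^{m_2}v_i\cdots u_i^{m_t}$ with $z\in Z_i$, so setting it equal to $1$ rewrites $z$ as a word in $u_i$ and $v_i$. Hence $Z_i\subseteq gp\{u_i,v_i\}$, and since the only generators outside $Z_i$ lie in the infinite cyclic group $E(u_i)$ and are thus themselves expressible through $u_i$, the whole generating set $X$ is contained in $gp\{u_i,v_i\}$; as the relevant $G_i$ equals $\la X\ra$, this yields $gp\{u_i,v_i\}=G_i$, the second alternative.
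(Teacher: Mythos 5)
Your proposal follows essentially the same route as the paper's proof: part (1) via the small cancellation machinery (the paper cites \cite[Lemma 7.2]{Ol93}, you route it through Theorem \ref{olmik} and Theorem \ref{ol93lemma}, which is the same toolbox), part (2) read off from the dichotomy in the enumeration ($j_i>i$ forces $v_i\in E(u_i)$, while $j_i=i$ means the relators $\mathcal R(Z_i,u_{j_i},v_{j_i},\lambda,c,\epsilon,\mu,\rho)$ rewrite every generator into $gp\{u_i,v_i\}$), and part (3) by first identifying $E(u_k)\simeq Q^k_l$ in the amalgam $H_l$ from the identity $u_k=(g^k_l)^{m^k_l}$ and then arguing that the two quotients $\alpha_l,\gamma_l$ neither enlarge nor shrink this subgroup. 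One cosmetic slip in your part (2): a generator outside $Z_i$ lies in the rank-one group $E(u_{j_i})$ but need not be a power of $u_{j_i}$ itself --- it is a power of the adjoined root, which has been expressed through $c_i,c_i'$ by the relators $\tilde{\mathcal R}_{i+1}$; the paper glosses this too, so no real harm.

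The one place where your justification is genuinely weaker than the paper's is the preservation step in part (3). The paper invokes property (5) of \cite[Theorem 2]{Ol93}: inside the injectivity radius, the centralizer of the image of an element equals the image of its centralizer. Since all groups in the construction are torsion free, $E(u_k)$ coincides with the centralizer $C(u_k)$, so this single property simultaneously rules out new roots and new conjugators --- in particular your appeal to Lemma \ref{properpower} becomes superfluous. By contrast, your patch --- injectivity of the quotient on a ball of radius $N$ together with the asserted ``fact'' that a sparse $C'$-quotient creates no new commensurabilities among elements of bounded length --- does not close the argument as stated: injectivity on a ball only constrains short elements, whereas an enlargement of $E(u_k)$ in the quotient could a priori be witnessed by a relation $xu_k^nx^{-1}=u_k^{\pm n}$ with $x$ arbitrarily long, which is exactly what the centralizer-preservation property (and not the injectivity radius) excludes. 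The commensurability claim you leave as a black box is thus the real content of the step; replacing it with the precise citation \cite[Theorem 2, property (5)]{Ol93}, as the paper does, completes your proof.
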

\begin{proof} Part 1. follows from \cite[Lemma 7.2]{Ol93}. To see part 2. notice that by definition if $j_i>i$ then $v_i\in E(u_i)$ in $G_i$. Otherwise if $j_i=i$ then $v_i\notin E(u_i)$ in $G_i$ and $G_i=gp\{u_i,v_i\}$. For part 3. note that we have $u_k=(g^{k}_{l})^{m^{k}_l}$ in $H_l$ for all $l\geq k$ by \color{blue}(\ref{u_kwords})\color{black} hence $E(u_k)\simeq Q^k_l$ in $H_l$. By part $(5)$ of \cite[Theorem 2]{Ol93} we get $E(u_k)\simeq Q^{k}_l$ since centralizer of image of an element inside the injectivity radius in the quotient group is same as image of centralizer of that element. For the same reason we have $E(u_k)\simeq Q^{k}_l$ in the quotient group $G_l$.    
\end{proof}
\begin{remark}\label{maxele}
Note that every element $g\in G$ is equal to $u_k$ for some $k\in\mathbb{N}$ and hence $E^{\El}(g)=\underset{\rightarrow}{\lim} E_l(u_k)$ where $E_l(u_k)$ is $E(u_k)$ in $G_l$. By part 3. of lemma \ref{limitmax} we get that $E_l(u_k)\simeq Q^k_l$ for every $l\geq k$. Hence $E^{\El}(g)=\underset{\rightarrow}{\lim} E_l(u_k)=\underset{\rightarrow}{\lim}Q^k_l\simeq Q^k$
\end{remark}
We now give the main theorem of this section.
\begin{theorem}\label{monster}
	For any torsion free non elementary hyperbolic group $G$ and a countable family $\mathscr{F}:=\{Q^i_m\}_{i\in\mathbb{N}}$ of subgroups of $(\mathbb Q,+)$, there exists a non elementary, torsion free, non abelian lacunary hyperbolic quotient $G^{\mathcal{C}}$ of $G$ such that the set of all maximal subgroups of $G^{\mathscr{F}}$ is equal up to isomorphism to $\{Q^i_m\}_{i\in\mathbb{N}}$ i.e, every maximal subgroup of $G^{\mathscr{F}}$ is isomorphic to $Q^i_m$ for some $i\in\mathbb{N}$ and for every $i$ there exists a maximal subgroup of $G^{\mathscr{F}}$ that is isomorphic to $Q^i_m$.
\end{theorem}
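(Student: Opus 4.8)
The plan is to assemble the theorem from the machinery built in this subsection, since the hard constructive work has already been packaged into Lemma~\ref{limitmax} and Remark~\ref{maxele}. The construction produces a chain
$$G_0 \overset{\eta_1}{\twoheadrightarrow} G_1 \overset{\eta_2}{\twoheadrightarrow} G_2 \overset{\eta_3}{\twoheadrightarrow} \cdots, \qquad \eta_{i+1}=\gamma_{i+1}\circ\alpha_{i+1}\circ\beta_i,$$
of non-elementary hyperbolic groups with surjections taking generating set to generating set, and sets $G^{\mathscr{F}}:=\underset{\rightarrow}{\lim}\,G_i$. First I would verify lacunary hyperbolicity: at each stage the small cancellation parameters were chosen so that the injectivity radius $r_{S_i}(\eta_i)$ of the factor map is at least $N$, which may be taken arbitrarily large, while the hyperbolicity constant $\delta_i$ of $G_i$ is controlled by the relator lengths. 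One therefore arranges $\delta_i=o(r_{S_i}(\eta_i))$, so that part $\mathbf{c.)}$ of Theorem~\ref{OOS} applies and $G^{\mathscr{F}}$ is lacunary hyperbolic.

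Next I would record the remaining algebraic properties. Torsion-freeness follows stagewise: $G=G_0$ is torsion free, the amalgamations in \eqref{amalgum} are over the infinite cyclic subgroups $\la g^k_{(k,i+1)}\ra_{\infty}$ and hence preserve torsion-freeness, and the $C'$-small cancellation quotients giving $\alpha_{i+1}$ and $\gamma_{i+1}$ preserve it as well because the defining relators are not proper powers and the parameters are sparse. Non-elementarity is immediate from part~1 of Lemma~\ref{limitmax} together with the fact that, since the injectivity radii grow, a fixed free subgroup embeds in the limit; in particular $G^{\mathscr{F}}$ is non-abelian.

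The heart of the argument is the identification of the maximal subgroups, and this is where I would spend the most care. The key step is to promote the finite-stage dichotomy of part~2 of Lemma~\ref{limitmax} to the limit: for any $u,v\in G^{\mathscr{F}}$, writing $u\equiv u_k$ and $v\equiv v_k$ in the enumeration \eqref{u_kwords}, either $v\in E^{\El}(u)$ or $gp\{u,v\}=G^{\mathscr{F}}$. Indeed, if at the stage where this pair is processed one has $gp\{u_k,v_k\}=G_i$, then since the maps $\eta_j$ are surjective the images of $u,v$ generate every $G_j$ and hence $G^{\mathscr{F}}$; otherwise $v\in E(u)$ at that stage, whence $v\in E^{\El}(u)=\underset{\rightarrow}{\lim}\,E_l(u)$. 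As $E^{\El}(u)$ is rank-one abelian by Remark~\ref{maxele} while $G^{\mathscr{F}}$ is non-abelian, the two alternatives are mutually exclusive. With this dichotomy in hand the classification follows: given a maximal proper subgroup $M$, pick any nontrivial $u\in M$ (of infinite order, by torsion-freeness); every $v\in M$ must lie in $E^{\El}(u)$, for otherwise $gp\{u,v\}=G^{\mathscr{F}}$ contradicts properness, so $M\subseteq E^{\El}(u)$, and maximality together with properness of $E^{\El}(u)$ forces $M=E^{\El}(u)$. Conversely each $E^{\El}(u_k)$ is a proper subgroup which, by the dichotomy, admits no proper overgroup, hence is maximal, and by Remark~\ref{maxele} it is isomorphic to $Q^k$.

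Finally I would observe that the enumeration \eqref{u_kwords} runs over all pairs of reduced words, so every element of $G^{\mathscr{F}}$ occurs as some $u_k$ and realizes $E^{\El}(u_k)\simeq Q^k$; thus the set of isomorphism types of maximal subgroups is exactly $\{Q^k\}_k=\{Q^i_m\}_i$, as claimed. The main obstacle is precisely this limit dichotomy, and within it the verification that in the second case the relators $\mathcal R(Z_i,u_{j_i},v_{j_i},\lambda,c,\epsilon,\mu,\rho)$ genuinely force $gp\{u,v\}=G^{\mathscr{F}}$ by expressing every generator outside $E(u_{j_i})$ in terms of $u_{j_i}$ and $v_{j_i}$; by comparison the lacunary-hyperbolicity bookkeeping on $\delta_i$ versus $r_{S_i}(\eta_i)$ is routine given Theorem~\ref{OOS}.
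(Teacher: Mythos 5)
Your proposal is correct and takes essentially the same route as the paper: both assemble the theorem from the chain construction of Section \color{blue}\ref{rank1monstergroup}\color{black}, invoking part c.) of Theorem \color{blue}\ref{OOS}\color{black}\ (via arbitrarily large injectivity radii) for lacunary hyperbolicity, Remark \color{blue}\ref{maxele}\color{black}\ to identify $E^{\El}(g)\simeq Q^k$, and the dichotomy of part 2 of Lemma \color{blue}\ref{limitmax}\color{black}\ to show a proper maximal subgroup coincides with some $E^{\El}(u)$. Your only deviations are organizational: you pass directly from the limit dichotomy to $M\subseteq E^{\El}(u)$ where the paper routes through ``$P$ is abelian, hence contained in a centralizer,'' and you make explicit two points the paper leaves implicit, namely the stagewise persistence argument behind the limit dichotomy and the converse direction that each $E^{\El}(u_k)$ is itself maximal, so that every $Q^i_m$ is actually realized.
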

\begin{proof}
	By letting $\mathscr{F}=\mathscr{C}$ in the above construction we get $G^{\mathscr{C}}=G^{\mathscr{F}}$, where $Q^i_j=\langle g^i_j\rangle_{\infty}$ and $g^i_j=({g^i_{j+1}})^{m^i_{j+1}}$ for some $m^i_{j+1}\in \mathbb{N}$ and $Q^i_m=\cup_{j=1}^{\infty}Q^i_j$. One can choose sparse enough parameters to satisfy the injectivity radius condition in Definition \ref{lacunarylim} which in turn will ensure that $G$ is lacunary hyperbolic. By remark \color{blue}\ref{maxele}\color{black}\ we get that for any $g\in G$, $E^{\El}(g)=Q^i$ for some $i$ depending of $g$. Suppose $P\nleqslant G$ is a maximal subgroup of $G$. As $P$ is a proper subgroup, $P$ is abelian by part $2.$ of Lemma \ref{limitmax}. Now let $e\neq h\in G$. Note that being abelian $P$ is contained in the centralizer of $h$. Now from Definition of $E^{\El}(g)$, it follows that $g\in P\leq E^{\El}(g)(\cong Q_m)\nleqslant G$. By maximality of $P$ we get that $P\cong Q_m$. Thus, all maximal subgroups of $G$ are isomorphic to $Q^i_m$ for some $i$ and hence the theorem is proved. 
\end{proof}

As a corollary of this theorem we can recover the seminal result of monster groups constructed by Ol'shanskii in $\cite{Ol93}$.
\begin{corollary}[\cite{Ol93}]
	For every non-cyclic torsion free hyperbolic group $G$, there exists a non abelian torsion free quotient $\overline{G}$ such that all proper subgroups of $\overline{G}$ are infinite cyclic.
\end{corollary}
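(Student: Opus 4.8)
The plan is to derive this statement as the special case of Theorem~\ref{monster} in which the prescribed family of rank one abelian groups consists of a single copy of $\mathbb{Z}$. First I would check that the hypotheses of Theorem~\ref{monster} are met. A non-cyclic torsion free hyperbolic group is automatically non-elementary: a torsion free elementary group is infinite cyclic, so a torsion free hyperbolic group that failed to be non-elementary would itself be infinite cyclic, contrary to assumption. Moreover $\mathbb{Z}\cong\la 1\ra\leq(\mathbb{Q},+)$ is a rank one abelian group, so $\mathscr{F}:=\{\mathbb{Z}\}$ is a legitimate one-element countable family of subgroups of $(\mathbb{Q},+)$, and Theorem~\ref{monster} applies verbatim.

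Applying Theorem~\ref{monster} with this $\mathscr{F}$ produces a non-elementary, torsion free, non-abelian lacunary hyperbolic quotient $\overline{G}:=G^{\mathscr{F}}$ of $G$ all of whose maximal subgroups are isomorphic to $\mathbb{Z}$. Since $\overline{G}$ is non-abelian it is in particular non-cyclic, which already yields the first assertion of the corollary.

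It remains to upgrade the information about \emph{maximal} subgroups to a statement about \emph{all} proper nontrivial subgroups, and this is the only step requiring care, since in an abstract group a proper subgroup need not lie in a maximal one. Here I would reuse the structural mechanism from the proof of Theorem~\ref{monster}. Let $P$ be a proper subgroup of $\overline{G}$. By part 2 of Lemma~\ref{limitmax} any two elements of $\overline{G}$ either lie in a common (maximal elementary) subgroup or generate all of $\overline{G}$; as $P$ is proper, no two of its elements can generate $\overline{G}$, so $P$ is abelian. Fix a nontrivial $h\in P$, which has infinite order because $\overline{G}$ is torsion free. Being abelian, $P$ is contained in the centralizer of $h$, which by the very definition of $E^{\El}(h)$ (taking $n=1$) lies inside $E^{\El}(h)$. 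By Remark~\ref{maxele}, since $\mathscr{F}=\{\mathbb{Z}\}$ we have $E^{\El}(h)\cong\mathbb{Z}$ for every nontrivial $h$. Hence $P$ embeds into $\mathbb{Z}$, so $P$ is either trivial or infinite cyclic. Thus every proper nontrivial subgroup of $\overline{G}$ is infinite cyclic, as required. The heavy lifting is carried out entirely by Theorem~\ref{monster}, so there is no genuine obstacle beyond this bookkeeping: the one point that must be argued, rather than quoted, is precisely that the centralizer containment forces each proper subgroup into some $E^{\El}(h)$, which in this specialization is $\cong\mathbb{Z}$.
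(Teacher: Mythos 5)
Your proposal is correct and follows the paper's own route exactly: the paper's entire proof is the one-liner ``Take $Q^i_m=\mathbb{Z}$ for all $i\in\mathbb{N}$ in Theorem~\ref{monster}.'' Your extra paragraph passing from \emph{maximal} subgroups to \emph{all} proper subgroups---using Lemma~\ref{limitmax}(2) to force a proper subgroup $P$ to be abelian and then the containment $P\leq C(h)\leq E^{\El}(h)\cong\mathbb{Z}$ via Remark~\ref{maxele}---is precisely the mechanism already inside the paper's proof of Theorem~\ref{monster}, so it is a faithful completion (making explicit a step the paper's one-line citation leaves implicit, and which is genuinely needed since the theorem as stated only controls maximal subgroups) rather than a different argument.
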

\begin{proof}Take $Q^i_m=\mathbb{Z}$ for all $i\in\mathbb{N}$ in Theorem \ref{monster}. \end{proof} 
A group $G$ is called divisible if for any element $g$ of $G$ and any non zero integer $n$ the equation $x^n=g$ has a solution in $G$. The first example of non trivial finitely generated divisible group was constructed by V. S. Guba in \cite{Gu86}. Later Ol'shanskii and Mikhajlovskii proved following;
\begin{corollary}\cite[Corollary 3]{MO98}
	For every non cyclic torsion free hyperbolic group $G$ there exists a non abelian torsion free divisible quotient $H$ of $G$.
\end{corollary}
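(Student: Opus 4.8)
The plan is to specialize Theorem~\ref{monster} to the constant family consisting of the full additive group of rationals. First I would check that the hypotheses match up: by the lemma asserting that a torsion-free elementary (virtually cyclic) group is infinite cyclic, a non-cyclic torsion-free hyperbolic group is automatically non-elementary, so Theorem~\ref{monster} applies to $G$. I would then invoke it with the family $\mathscr{F}=\{Q^i_m\}_{i\in\mathbb{N}}$ in which $Q^i_m=\mathbb{Q}$ for every $i$ (equivalently, the singleton family $\{\mathbb{Q}\}$). This yields a non-abelian, torsion-free, lacunary hyperbolic quotient $H:=G^{\mathscr{F}}$ of $G$ in which, by Remark~\ref{maxele}, every non-trivial element $g$ satisfies $E^{\mathcal{L}}(g)\cong\mathbb{Q}$.

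Next I would verify divisibility directly from this structural description. Fix $g\in H$ and a non-zero integer $n$. If $g=e$, take $x=e$. Otherwise $g$ has infinite order, since $H$ is torsion-free, and $g$ lies in its own maximal subgroup $E^{\mathcal{L}}(g)$: indeed, taking $x=g$ in the defining condition $xg^nx^{-1}=g^{\pm n}$ shows $g\in E^{\mathcal{L}}(g)$. Because $E^{\mathcal{L}}(g)\cong\mathbb{Q}$ is a divisible abelian group, the image of $g$ under this isomorphism admits an $n$-th root for every non-zero $n$; pulling this root back gives an element $x\in E^{\mathcal{L}}(g)\leq H$ with $x^n=g$. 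Hence every equation $x^n=g$ is solvable in $H$, so $H$ is a non-abelian torsion-free divisible quotient of $G$, as required.

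The only genuine content beyond Theorem~\ref{monster} is the containment $g\in E^{\mathcal{L}}(g)$ together with the identification $E^{\mathcal{L}}(g)\cong\mathbb{Q}$, and both are already furnished by Remark~\ref{maxele} and part~(3) of Lemma~\ref{limitmax}; thus no substantial new obstacle arises. The point I would emphasize is the reason for taking the constant family $\{\mathbb{Q}\}$ rather than a family of proper rank-one subgroups: a proper subgroup of $(\mathbb{Q},+)$ need not be divisible, so $n$-th roots would fail for some exponents, whereas the full group $\mathbb{Q}$ guarantees solvability of $x^n=g$ for every $g$ and every $n$ simultaneously within the single fixed group $H$.
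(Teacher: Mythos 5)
Your proposal is correct and matches the paper's own proof, which is exactly the one-line specialization $\mathscr{F}=\{\mathbb{Q}\}$ in Theorem~\ref{monster}; you merely make explicit the routine verifications the paper leaves implicit (non-cyclic torsion-free hyperbolic implies non-elementary, $g\in E^{\mathcal{L}}(g)$, and divisibility of $\mathbb{Q}$ transferring $n$-th roots to $H$). Your closing remark on why the full group $\mathbb{Q}$, rather than a proper rank-one subgroup, is needed for divisibility is accurate and a worthwhile clarification, but it does not change the argument.
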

\begin{proof}
	Take $\mathcal{C}=\{\mathbb{Q} \}$ in Theorem \color{blue}\ref{monster}\color{black}.
\end{proof}
\begin{corollary}\cite[Theorem 1]{Gu86}
	There exists a non trivial finitely generated torsion free divisible group. 
\end{corollary}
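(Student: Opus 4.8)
The plan is to deduce this existence statement directly from the immediately preceding corollary (\cite[Corollary 3]{MO98}), which already supplies, for every non-cyclic torsion free hyperbolic group $G$, a non-abelian torsion free divisible quotient. So the only work is to fix a concrete seed group and then verify the two remaining adjectives in the statement (``finitely generated'' and ``non trivial''). For the seed I would take the free group $F_2$ of rank two: it is torsion free, word-hyperbolic, and non-cyclic, so it is a legitimate input to the preceding corollary. (Equivalently, one could bypass \cite[Corollary 3]{MO98} and invoke Theorem \ref{monster} directly with $\mathscr{F}=\{\mathbb{Q}\}$, since $\mathbb{Q}$ is the rank one abelian group all of whose finite-rank-$1$ subgroups generate it divisibly.)

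Applying \cite[Corollary 3]{MO98} to $G=F_2$ produces a group $H$ that is non-abelian, torsion free, and divisible. It then remains only to record the two formal properties. Finite generation is immediate: $H$ is a quotient of the finitely generated group $F_2$, and the images of a finite generating set of $F_2$ generate $H$. Non-triviality is equally immediate, since $H$ is non-abelian and therefore in particular $H\neq\{e\}$. This exhibits $H$ as a non-trivial finitely generated torsion free divisible group, which is exactly the assertion.

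I expect essentially no obstacle here: all of the genuine content lives in Theorem \ref{monster} and the chain of small-cancellation constructions of Section \ref{rank1monstergroup}, from which \cite[Corollary 3]{MO98} was obtained by specializing $\mathscr{F}=\{\mathbb{Q}\}$, and the present statement is a purely formal consequence of that. The one point deserving a moment's care is that the seed group must be genuinely non-cyclic, so that the output quotient is non-abelian and hence non-trivial rather than collapsing; the choice $G=F_2$ guarantees this, and indeed any non-cyclic torsion free hyperbolic group would serve equally well.
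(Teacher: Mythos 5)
Your proposal is correct and matches the paper's route exactly: the paper deduces this corollary from the preceding one (itself obtained by taking $\mathscr{F}=\{\mathbb{Q}\}$ in Theorem \ref{monster}), and your only additions --- fixing the concrete seed $F_2$ and spelling out that finite generation passes to quotients while non-abelianness gives non-triviality --- are precisely the implicit formalities the paper leaves unstated.
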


\begin{remark}
	If one start with non trivial property $(T)$ torsion free hyperbolic group in Theorem \color{blue}\ref{monster}\color{black} (same for Corollary 3,\cite{MO98}), then one gets finitely generated non amenable divisible group with property $(T)$, since property $(T)$ is preserved under taking quotients. 
\end{remark} 

\begin{definition} A group $G$ has the unique product property (or said to be unique product group) whenever for all pairs of non empty finite subsets $A$ and $B$ of $G$ the set of products $AB$ has an element $g\in G$ with a unique representation of the form $g=ab$ with $a\in A$ and $b\in B$.
\end{definition}
Note that unique product groups are torsion free. The first examples of torsion-free groups without the unique product property was given by Rips and Segev in \cite{RS87}. Existence of property $(T)$ hyperbolic group without the unique product property has been shown in \cite{AS14}. By starting with property $(T)$ hyperbolic group without unique product property in Theorem \color{blue}\ref{monster}\color{black}, we obtain following:
\begin{corollary}\label{monsteruniqueproduct}
	For every rank one abelian group $Q_m$, there exists a non elementary, torsion free, property $(T)$, lacunary hyperbolic group without the unique product group $G^{Q_m}$ such that any maximal subgroup of $G^{Q_m}$ is isomorphic to $Q_m$.\par 
	In particular, there exist a non trivial property $(T)$ torsion free divisible lacunary hyperbolic group without the unique product property.
\end{corollary}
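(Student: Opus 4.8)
The plan is to feed a suitable base group into Theorem~\ref{monster} and then verify that the three additional features---Kazhdan's property $(T)$, failure of the unique product property, and (in the special case) divisibility---survive the passage to the quotient. By \cite{AS14} there exists a torsion free hyperbolic group $H$ with property $(T)$ that fails the unique product property; such an $H$ is automatically non elementary, because an infinite elementary group is amenable and an infinite amenable group cannot have property $(T)$. Applying Theorem~\ref{monster} to $G:=H$ with the constant one-element family $\mathscr F=\{Q_m\}$ (i.e.\ $Q^i_m=Q_m$ for all $i$) yields a non elementary, torsion free, non abelian lacunary hyperbolic quotient $G^{Q_m}$ of $H$ in which every maximal subgroup is isomorphic to $Q_m$.

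First I would record that $G^{Q_m}$ is genuinely a quotient of $H$: it is the direct limit of the surjections $\eta_{i+1}\colon G_i\twoheadrightarrow G_{i+1}$ with $G_0=H$, so the canonical map $\pi\colon H\to G^{Q_m}$ is surjective. Since property $(T)$ passes to quotients, $G^{Q_m}$ has property $(T)$.

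The substantive step is to show that $G^{Q_m}$ still fails the unique product property, and this is where I expect the only real work to lie. Failure of the property for $H$ is witnessed by finite subsets $A,B\subset H$ such that each element of $AB$ has at least two representations $ab=a'b'$ with $(a,b)\neq(a',b')$; this is a condition on finitely many elements. Choose $\Gamma$ so large that $a a'^{-1}$ and $a\,b\,(b')^{-1}(a')^{-1}$ lie in the $\Gamma$-ball about the identity of $H$ for all $a,a'\in A$ and $b,b'\in B$. As in the proofs of Theorems~\ref{rank1} and~\ref{monster}, the injectivity radius $r_{S_i}(\eta_{i+1})$ at every step may be chosen as large as we please; taking these radii non decreasing, tending to infinity, with first term exceeding $\Gamma$, and using that each $\eta_{i+1}$ sends generators to generators (hence does not increase word length), one sees that $\pi$ is injective on the $\Gamma$-ball. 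Therefore every equality and every inequality among the products $ab$ is preserved under $\pi$, so each element of $\pi(A)\pi(B)=\pi(AB)$ again has at least two factorizations; hence $G^{Q_m}$ has no unique product. The crux is precisely the transfer of this finite combinatorial obstruction, which works only because the lacunary construction lets us embed an arbitrarily large ball of $H$ into $G^{Q_m}$.

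Finally, for the concluding assertion I would set $Q_m=\mathbb Q$. The group $G^{\mathbb Q}$ is then a nontrivial (indeed non elementary), finitely generated, torsion free, property $(T)$, lacunary hyperbolic group without the unique product property by what precedes. To see divisibility, take any $g\neq e$ in $G^{\mathbb Q}$; by the description of maximal subgroups in Theorem~\ref{monster} together with Remark~\ref{maxele} we have $g\in E^{\El}(g)\cong\mathbb Q$, and since the additive group $\mathbb Q$ is divisible the equation $x^n=g$ is solvable inside $E^{\El}(g)$ for every nonzero integer $n$ (the case $g=e$ being trivial). Thus $G^{\mathbb Q}$ is divisible, which finishes the argument.
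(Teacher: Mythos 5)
Your proposal is correct and takes essentially the same route as the paper: the paper also feeds the torsion-free, property $(T)$ hyperbolic group without the unique product property from \cite{AS14} into Theorem \ref{monster} with the constant family $\mathscr{F}=\{Q_m\}$ (taking $Q_m=\mathbb{Q}$ for the divisible case), using that property $(T)$ passes to quotients. Your injectivity-radius argument for transferring the failure of unique product to the limit is precisely the point the paper records in the remark following the corollary (the relations added at each step must be longer than the witnessing sets $A$, $B$, $AB$), only spelled out in more detail.
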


Note that when we are adding relations to the starting group we need to start with a relation of sufficiently large length (bigger that the size of the sets $A,B$ and $AB$ for with the starting group does not have unique product property).
\subsection{Locally finite by rank one abelian subgroups of lacunary hyperbolic groups}\label{rankonebyfinitesection}

In this section we are going to prove the full converse of Theorem \color{blue}\ref{elementarygroup}\color{black}. First we are going to recall some definitions and theorems.
\begin{notation} 
	Given a subgroup $H\leq G$, $H^0$ denotes the set of all infinite order elements of $H$.
\end{notation}
\begin{proposition}[Proposition 1,\cite{Ol93}]
	Let $H$ be a non elementary subgroup of a hyperbolic group $G$. Then $E_G(H):=\underset{x\in H^0}{\bigcap}E(x)$ is the unique maximal finite subgroup of $G$, normalized by the subgroup  $H$, where $E(x)$ is the unique maximal elementary subgroup of $G$ containing the infinite order element $x$.
\end{proposition}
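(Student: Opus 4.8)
The plan is to establish three properties of $E_G(H)=\bigcap_{x\in H^0}E(x)$: (i) it is finite; (ii) it is normalized by $H$; and (iii) it contains every finite subgroup of $G$ that is normalized by $H$. Taken together these say precisely that $E_G(H)$ is a maximal finite subgroup of $G$ normalized by $H$, and since by (iii) it contains every competitor, it is the \emph{unique} such maximal subgroup. Throughout I would use only the standard structure theory of maximal elementary subgroups already recalled above: the characterization of $E(x)$ as the unique maximal elementary subgroup containing an infinite order element $x$, the finiteness of $E(a)\cap E(b)$ when $E(a)\neq E(b)$ (\cite[Lemma 1.6]{Ol93}), and the existence of pairwise non-commensurable infinite order elements in a non-elementary subgroup (\cite[Lemma 3.2]{Ol93}).

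For (i), I would use that $H$, being non-elementary, cannot be contained in any single maximal elementary subgroup; hence there exist $a,b\in H^0$ with $E(a)\neq E(b)$, i.e.\ $a$ and $b$ non-commensurable in $G$. Since $a,b\in H^0$, the definition of $E_G(H)$ gives $E_G(H)\subseteq E(a)\cap E(b)$, and the right-hand side is finite by \cite[Lemma 1.6]{Ol93}; thus $E_G(H)$ is finite.

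For (ii), the key point is that conjugation sends maximal elementary subgroups to maximal elementary subgroups: for every $h\in G$ and $x\in G^0$ one has $hE(x)h^{-1}=E(hxh^{-1})$, because $hE(x)h^{-1}$ is an elementary subgroup containing $hxh^{-1}$ which is maximal (conjugation being an automorphism), so it must equal the unique such subgroup $E(hxh^{-1})$. If now $h\in H$, conjugation by $h$ permutes $H^0$ bijectively (it preserves $H$ and preserves orders of elements), so
\[
hE_G(H)h^{-1}=\bigcap_{x\in H^0}hE(x)h^{-1}=\bigcap_{x\in H^0}E(hxh^{-1})=\bigcap_{y\in H^0}E(y)=E_G(H).
\]
Hence $H$ normalizes $E_G(H)$.

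For (iii), let $F\leq G$ be any finite subgroup normalized by $H$ and fix $x\in H^0$. Since $x\in H$ normalizes $F$, the product $F\langle x\rangle$ is a subgroup in which every element has the form $fx^{n}$ with $f\in F$, so the infinite cyclic group $\langle x\rangle$ has index at most $|F|$ in it; thus $F\langle x\rangle$ is elementary. Being an elementary subgroup containing the infinite order element $x$, it lies inside $E(x)$ by maximality of the latter, whence $F\subseteq E(x)$. As $x\in H^0$ was arbitrary, $F\subseteq\bigcap_{x\in H^0}E(x)=E_G(H)$, giving (iii). The main obstacle is concentrated entirely in step (i): it is the only place where non-elementarity of $H$ is used essentially, through the production of a non-commensurable pair together with the finiteness of the intersection of two distinct maximal elementary subgroups. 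Once finiteness is secured, steps (ii) and (iii) are formal consequences of the uniqueness of maximal elementary subgroups and routine bookkeeping with cosets.
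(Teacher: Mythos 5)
The paper does not actually prove this proposition; it is imported verbatim from \cite{Ol93} (Proposition 1 there), so there is no in-paper argument to compare against. Your reconstruction is correct and is essentially Ol'shanskii's original argument: finiteness of $E_G(H)$ from a non-commensurable pair $a,b\in H^0$ together with the finiteness of $E(a)\cap E(b)$ when $E(a)\neq E(b)$; $H$-invariance from the conjugation formula $hE(x)h^{-1}=E(hxh^{-1})$ and the fact that conjugation by $h\in H$ permutes $H^0$; and maximality plus uniqueness from the observation that $F\langle x\rangle$ is elementary (it contains $\langle x\rangle$ with index at most $|F|$) whenever $F$ is finite and normalized by $H$, hence $F\subseteq E(x)$ for every $x\in H^0$. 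One caution about step (i): the inline inference ``$H$ cannot be contained in a single maximal elementary subgroup, hence some $a,b\in H^0$ have $E(a)\neq E(b)$'' is not immediate as phrased. Its contrapositive would require that if every $x\in H^0$ had the same $E(x)=E$, then $H\subseteq E$; this does not follow directly, since torsion elements of $H$ need not lie in $E$. The correct deduction is that $H$ would then normalize the infinite elementary subgroup $E$, and the normalizer of an infinite elementary subgroup of a hyperbolic group is itself elementary (it stabilizes the pair of fixed boundary points of $E$), contradicting the non-elementarity of $H$. Since, however, you also explicitly invoke \cite[Lemma 3.2]{Ol93} --- exactly the fact the paper itself uses elsewhere to produce pairwise non-commensurable infinite-order elements in a non-elementary subgroup --- the pair $a,b$ is available by citation and your proof is complete as written.
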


Now we are going to state a well known fact about elementary subgroups of hyperbolic groups.
\begin{proposition}
	For every infinite order element $g$ in a hyperbolic group $G$ there exists a series of normal subgroups;
	\begin{align}
	1\leq T(g)\leq E^{+}(g)\leq E(g)
	\end{align}
	where $T(g)$ is the set of torsion elements of the unique maximal elementary subgroup $E(g)$ of the hyperbolic group $G$, $|E(g):E^+(g)|\leq 2$, $E^+(g)/T$ 
	is infinite cyclic. Note that $T(g)$ is finite.
\end{proposition}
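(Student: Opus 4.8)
The plan is to deduce the entire statement from the structure theory of infinite elementary (virtually cyclic) groups, applied to $E(g)$ itself. First I would observe that $E(g)$ is an infinite elementary group: it is elementary by definition of the maximal elementary subgroup, and it is infinite because it contains the infinite order element $g$. Consequently the earlier structural Lemma on infinite elementary groups applies verbatim to $E := E(g)$, producing normal subgroups $T \le E^+ \le E(g)$ with $|E(g):E^+| \le 2$, with $T$ finite, with $E^+/T \cong \mathbb{Z}$, and with $E(g)/T \cong D_\infty$ whenever $E(g) \ne E^+$. This already furnishes the required series $1 \le T \le E^+ \le E(g)$; what remains is to match the subgroups produced abstractly with the concrete objects $E^+(g)$ and $T(g)$ named in the statement, and to check normality.

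Next I would identify the index $\le 2$ subgroup with $E^+(g) = \{x \in E(g) : x g^n x^{-1} = g^n \text{ for some } n \ge 1\}$. To see $E^+(g)$ is a subgroup, note that $x g^n x^{-1} = g^n$ forces $x g^{nk} x^{-1} = g^{nk}$ for all $k$, so products and inverses of such elements again fix a positive power of $g$. I would then build the orientation homomorphism $\sigma : E(g) \to \{\pm 1\}$ sending $x$ to the sign $\epsilon$ determined by $x g^n x^{-1} = g^{\epsilon n}$; this sign is well defined because $g$ has infinite order (if some $x$ satisfied both $xg^nx^{-1}=g^n$ and $xg^mx^{-1}=g^{-m}$ then $g^{2nm}=1$), and a short computation shows $\sigma$ is a homomorphism with kernel exactly $E^+(g)$, whence $|E(g):E^+(g)| \le 2$. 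Comparing with the lemma, both $E^+$ and $E^+(g)$ are the preimage of the infinite cyclic (rotation) part of $D_\infty$, so they coincide; in particular $E^+(g)/T \cong \mathbb{Z}$.

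Finally I would pin down $T(g)$. The cleanest route is to take $T(g)$ to be the set of torsion elements of $E^+(g)$, equivalently the unique maximal finite normal subgroup of $E(g)$: since $E^+(g)/T \cong \mathbb{Z}$ is torsion free, every torsion element of $E^+(g)$ lies in $T$, and conversely $T$ is finite, so $T(g) = T$ is a finite subgroup, characteristic and hence normal in $E(g)$. Finiteness of $T$ is precisely the finiteness asserted by the structural lemma, and is ultimately a consequence of $E(g)$ being virtually cyclic inside the hyperbolic group $G$.

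The subtle point I would flag most carefully is the precise meaning of the phrase ``set of torsion elements''. Read literally for $E(g)$, this set need not be a subgroup in the dihedral case: if $E(g) \cong D_\infty$ then the reflections are torsion yet do not form a subgroup. The correct object is the torsion subgroup of the orientation preserving part $E^+(g)$, equivalently the maximal finite normal subgroup of $E(g)$, which is also the convention used in the proof of Theorem~\ref{elementarygroup}. I would make this identification explicit, so that the asserted series $1 \le T(g) \le E^+(g) \le E(g)$ genuinely consists of normal subgroups of $E(g)$ with $T(g)$ finite and $E^+(g)/T(g)$ infinite cyclic.
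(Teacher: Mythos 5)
Your proposal is correct and takes essentially the same route the paper implicitly relies on: the paper states this proposition without proof as a well-known fact, and your argument --- specializing the earlier structural lemma on infinite elementary groups to $E(g)$, realizing $E^{+}(g)$ as the kernel of the sign homomorphism $\sigma:E(g)\rightarrow\{\pm 1\}$ (well defined because $g$ has infinite order), and taking $T(g)$ to be the torsion subgroup of $E^{+}(g)$ --- is exactly the standard derivation, and it matches the paper's own convention in the proof of Theorem \ref{elementarygroup}, where $T_i$ is defined as the set of torsion elements of $E^{+}(g_i)$, not of $E(g_i)$. Your closing observation, that the statement's literal phrase ``set of torsion elements of $E(g)$'' need not be a subgroup when $E(g)/T\simeq D_{\infty}$, is a correct and worthwhile clarification of the wording rather than a gap in your argument.
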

\begin{definition}
	A subgroup $H\leq G$ is called suitable, if there exist two non commensurable elements $g,h$ in $H^0$ such that $E(g)\cap E(h)=\{1\}$. 
\end{definition}
Note that a subgroup $H$ of a hyperbolic group $G$ is same as $H$ is non elementary and $E_G(H)=\{1\}$. 
We would like to state a simple version of a very beautiful theorem by D. Osin in our context,
\begin{theorem}[Theorem 2.4, \cite{Os'10}]\label{osin}
	Let $H$ be a suitable subgroup of a hyperbolic group $G$ and $T:=\{t_1,t_1,\cdots,t_n\}\subset G$. Then there exists $\{w_1,w_2,\cdots,w_n\}\subset H$ such that the quotient group $\overline{G}:=G/\la\la t_1w_1,t_2w_2,\cdots,t_nw_n\ra\ra$ satisfies following; 
	\begin{enumerate}
		\item $\overline{G}$ is hyperbolic.
		\item Image of $H$ in $\overline{G}$ is a suitable subgroup of $\overline{G}$.
		\item If $G$ is torsion free then so is $\overline{G}$.
	\end{enumerate}
\end{theorem}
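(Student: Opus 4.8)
The plan is to realize $\overline{G}$ as a small-cancellation quotient of $G$ in which each generator of the normal closure has the prescribed form $t_i w_i$, with $w_i\in H$ chosen as a long word in two fixed non-commensurable elements of $H$. Since $H$ is suitable it is in particular non-elementary, so by \cite[Lemma 3.2]{Ol93} it contains infinitely many pairwise non-commensurable infinite-order elements. First I would fix two of them, $a,b\in H^0$, witnessing suitability, so that $b\notin E(a)$ and $E(a)\cap E(b)=\{1\}$, and arrange (replacing $a$ by another element of its abundant supply if necessary) that none of the $t_1,\dots,t_n$ lies in $E(a)$.

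Next I would define the relators to be exactly the words studied in Section \color{blue}4.4\color{black}: set $U=a$, $V=b$, $Z=\{t_1,\dots,t_n\}$ and
$$R_i = t_i\, a^{m_{i,1}} b\, a^{m_{i,2}} b \cdots b\, a^{m_{i,j_i}}, \qquad i=1,\dots,n,$$
with the exponents $m_{i,t}$ large, increasing and pairwise distinct as prescribed there, so that each $R_i$ reads as $t_i w_i$ with $w_i\in H$. By Lemma \color{blue}\ref{smallwords}\color{black}\ the $R_i$ are $(\lambda,c)$-quasi-geodesic once the minimal exponent is large, and by Lemma \color{blue}\ref{lemma5}\color{black}\ the symmetrized set $\mathcal{R}$ they generate satisfies the $C'(\lambda,c,\epsilon,\mu,\rho)$ condition for sparse enough parameters. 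This is precisely the input required by Theorem \color{blue}\ref{ol93lemma}\color{black}.

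I would then apply Theorem \color{blue}\ref{ol93lemma}\color{black}\ with the non-elementary subgroups $H_1=\la a,b\ra$ and $H_2=H$. Its conclusion gives at once that $\overline{G}=\la G\mid \mathcal{R}\ra$ is hyperbolic (item 1), that the images of $\la a,b\ra$ and $H$ remain non-elementary, and — crucially — that $W=_{\overline{G}}1$ iff $W=_G1$ for every word $W$ with $\|W\|\leq N$, where $N$ may be taken arbitrarily large by shrinking the parameters. For item 2 I would take the images $\bar a,\bar b$ of $a,b$ inside the image of $H$: the large injectivity radius $N$ forces $E_{\overline{G}}(\bar a)$ and $E_{\overline{G}}(\bar b)$ to be the images of $E(a)$ and $E(b)$ (the centralizer-within-injectivity-radius statement, \cite[Theorem 2, property (5)]{Ol93}), whence $\bar a,\bar b$ stay non-commensurable with $E(\bar a)\cap E(\bar b)=\{1\}$, so the image of $H$ is suitable. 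Finally, if $G$ is torsion free then the quasi-geodesic small-cancellation structure of $\mathcal{R}$ forbids new torsion (again \cite[Theorem 2]{Ol93}), giving item 3.

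The main obstacle is item 2: controlling the elementary subgroups $E_{\overline{G}}(\bar a)$ and $E_{\overline{G}}(\bar b)$ and checking they still meet trivially and remain non-commensurable. Hyperbolicity and torsion-freeness follow fairly mechanically from the small-cancellation package, but suitability requires that adding the relators $t_i w_i$ neither collapses $\la a,b\ra$ onto an elementary subgroup nor creates a conjugacy between powers of $a$ and powers of $b$. The injectivity-radius clause of Theorem \color{blue}\ref{ol93lemma}\color{black}\ together with \cite[Theorem 2, property (5)]{Ol93} is exactly what rules this out, provided the parameters — hence the exponents $m_{i,t}$ and the lengths $\|R_i\|$ — are chosen large enough relative to $a,b,t_1,\dots,t_n$.
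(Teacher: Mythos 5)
You should know at the outset that the paper contains no proof of this statement to compare against: it is quoted, explicitly as ``a simple version,'' from \cite[Theorem 2.4]{Os'10}, where Osin proves it in the much more general setting of small cancellation theory over relatively hyperbolic groups. Your reconstruction is instead the elementary hyperbolic-case argument, and it is precisely the way this paper deploys its own machinery elsewhere: the relators $z_iU^{m_{i,1}}VU^{m_{i,2}}V\cdots VU^{m_{i,j_i}}$ of Section 4.4 together with Lemmas \ref{smallwords} and \ref{lemma5} feeding into Theorem \ref{ol93lemma}, and the trick of listing $gp\{a,b\}$, $gp\{a,z_i\}$, $gp\{b,z_i\}$ among the subgroups whose images must stay non-elementary, appear almost verbatim in the proofs of Theorems \ref{rank1} and \ref{Rips}. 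So as a route it is both legitimate and consonant with the paper; what Osin's original argument buys beyond yours is the relatively hyperbolic generality, which is not needed for the statement as quoted here.

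Two steps in your outline need repair. First, your device for arranging $t_i\notin E(a)$ --- that each $t_i$ ``excludes'' at most one element of the abundant supply of pairwise non-commensurable elements --- is only valid when $t_i$ has infinite order (then $t_i\in E(a)$ forces $E(t_i)=E(a)$, one commensurability class). A torsion element can lie in $E(a)$ for \emph{every} infinite-order $a$: any element of a finite normal subgroup does. What rescues you is suitability itself: by \cite[Proposition 1]{Ol93} (quoted in the paper) $E_G(H)=\bigcap_{x\in H^0}E(x)=\{1\}$, and the standard refinement of \cite[Lemma 3.2]{Ol93} used by Osin says a suitable subgroup contains infinitely many pairwise non-commensurable elements $a$ with $E(a)=\la a\ra\times E_G(H)=\la a\ra$; for such $a$ no nontrivial torsion $t_i$ can lie in $E(a)$, and each infinite-order $t_i$ rules out only one commensurability class. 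You should also dispose of the degenerate case $t_i=1$, where the hypothesis $z_i\notin E(U)$ of Lemma \ref{smallwords} cannot hold for any choice of $U$. Second, for item 2 your appeal to the injectivity-radius clause plus \cite[Theorem 2, property (5)]{Ol93} controls centralizers of elements inside the injectivity ball, but the conditions $E_{\overline G}(\bar a)\cap E_{\overline G}(\bar b)=\{1\}$ and non-commensurability of $\bar a,\bar b$ quantify over conjugating elements of arbitrary length, so injectivity radius alone does not literally close the argument; one needs the small-cancellation analysis of elementary subgroups of the quotient as in \cite[Lemmas 6.7, 7.5]{Ol93} (or Osin's corresponding lemmas). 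Since the paper itself makes exactly this leap in the same words (compare Lemma \ref{limitmax} and Remark \ref{lemma4.41}), I would classify both issues as patchable gaps in an otherwise faithful reconstruction rather than a wrong approach.
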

\begin{remark}\label{osinlacunar}
	Note that one can choose lengths of the elements $w_1,w_2,\cdots,w_n$ to be as large as one wants. 
\end{remark}

\begin{theorem}\label{rankonefinite}
	Let $G$ be a non elementary hyperbolic group, $\{E_i\}_{i\geq 1}$ be a collection of elementary groups with $E_i\subset E_{i+1}$ for all $i\geq 1$ and denote $E:=\underset{i\geq 1}{\bigcup}E_i$. Then there exists a lacunary hyperbolic quotient $\overline{G}$ of $G$ with an infinite order element $g\in \overline{G}$, such that $E^{\El}(g)\simeq E$. 
\end{theorem}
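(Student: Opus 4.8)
The plan is to realize $\overline{G}$ as a direct limit $\overline{G}=\underset{\rightarrow}{\lim}\,G_i$ of non-elementary hyperbolic groups in which a fixed infinite order element $g$ has maximal elementary subgroup $E(g)\cong E_i$ inside $G_i$, with the transition epimorphisms inducing the prescribed inclusions $E_i\hookrightarrow E_{i+1}$. (Reindexing if necessary, we may assume each $E_i$ is infinite, so that it contains an infinite cyclic subgroup generated by the image of $g$.) Granting this, the direct-limit description of the elementary subgroup established in the proof of Theorem \ref{elementarygroup}, namely $E^{\mathcal{L}}(g)=\underset{\rightarrow}{\lim}\,E(g_i)$ independently of the chosen representative, yields $E^{\mathcal{L}}(g)\cong\bigcup_i E_i=E$; lacunary hyperbolicity of the limit then follows from part (c) of Theorem \ref{OOS} once the injectivity radii of the transition maps are made to grow fast enough relative to the hyperbolicity constants.

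For the base step I would embed $E_1$ into a hyperbolic group built from $G=G_0$: the free product $G_0*E_1$ is hyperbolic, $E_1$ is a maximal elementary subgroup of it, and the generator $g$ of its infinite cyclic part satisfies $E_{G_0*E_1}(g)=E_1$. Each inductive step is the analogous amalgamated version. Given $G_i$ with $E(g)=E_i$ maximal elementary, form
$$H_{i+1}:=G_i\underset{E_i}{*}E_{i+1},$$
gluing $E_{G_i}(g)$ to the distinguished copy of $E_i\subset E_{i+1}$. Since $E_i$ is maximal elementary in $G_i$, Theorem \ref{olmik} guarantees that $H_{i+1}$ is hyperbolic, and the normal-form theory of amalgams shows that the maximal elementary subgroup of $g$ in $H_{i+1}$ is exactly the glued copy of $E_{i+1}$. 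At this point $G_i\hookrightarrow H_{i+1}$ is injective but far from surjective, as $H_{i+1}$ contains the new generators of $E_{i+1}$ (additional roots of $g$, new torsion, and possibly an involution).

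To surjectivize I would invoke Osin's Theorem \ref{osin}. The image of $G_i$ is a suitable subgroup of $H_{i+1}$, being non-elementary with trivial maximal normalized finite subgroup, so feeding the list $t_1,\dots,t_n$ of new generators of $E_{i+1}$ into Theorem \ref{osin} produces words $w_1,\dots,w_n$ inside that suitable subgroup with
$$G_{i+1}:=H_{i+1}\big/\langle\langle\, t_1w_1,\dots,t_nw_n\,\rangle\rangle$$
hyperbolic and with the image of $G_i$ still suitable. The added relations force each $t_j=w_j^{-1}$ into the image of $G_i$, so the composite $G_i\to G_{i+1}$ is a surjection carrying generators to generators; in particular every $G_i$, and hence $\overline{G}$, is a quotient of $G$. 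By Remark \ref{osinlacunar} the words $w_j$ may be taken arbitrarily long, which makes the injectivity radius of $G_i\to G_{i+1}$ as large as we please and secures the $\delta_i=o(r_{S_i})$ condition required by Theorem \ref{OOS}.

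The crux, and the step I expect to be hardest, is verifying that this surgery leaves the isomorphism type of $E(g)$ untouched, i.e. $E_{G_{i+1}}(g)\cong E_{i+1}$ with neither collapse nor unexpected enlargement. I would arrange this by choosing the suitable subgroup and the words $w_j$ to be non-commensurable with $g$ and to meet $E(g)$ trivially, so that the relations $t_jw_j$ identify the new elements of $E(g)$ with elements transverse to it rather than imposing any relation internal to $E_{i+1}$. The $C'$-type small cancellation behaviour underlying Theorem \ref{osin}, combined with the large injectivity radius, then guarantees that elements of bounded length retain their relations: $E_{i+1}$ embeds faithfully and no short element outside $E_{i+1}$ can commensurate $g$. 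Passing to the limit and reusing the argument of Lemma \ref{lemma2}$(b)$—any $x\in E^{\mathcal{L}}(g)$ is already realized in some $G_j$, where its defining relation forces $x\in E(g_j)\cong E_j$—then gives $E^{\mathcal{L}}(g)=\bigcup_i E_i\cong E$, completing the construction.
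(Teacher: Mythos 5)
Your overall skeleton coincides with the paper's: iterate the amalgamation $H_{i+1}=G_i *_{E_i} E_{i+1}$ (hyperbolic by Theorem \ref{olmik} since $E_i=E(g)$ is maximal elementary in $G_i$), surjectivize by imposing relations that equate each new generator with a word in a suitable subgroup, keep the injectivity radii of the transition maps large so that part (c) of Theorem \ref{OOS} applies, and pass to the limit using the description $E^{\mathcal{L}}(g)=\underset{\rightarrow}{\lim}\,E(g_i)$ from Lemma \ref{lemma2}. The genuine divergence is at the step you yourself single out as the crux, and there your argument has a gap: you feed \emph{all} new generators of $E_{i+1}$, including the new infinite-order root $g_{i+1}$ of $g$, into Theorem \ref{osin} as a black box. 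But Theorem \ref{osin} as stated gives only hyperbolicity of the quotient, suitability of the image of $H$, and preservation of torsion-freeness. It says nothing about centralizers, proper powers, or maximal elementary subgroups in the quotient, and it does not let you \emph{choose} the words $w_j$ — it merely asserts their existence, and Remark \ref{osinlacunar} only lets you take them long, not non-commensurable with $g$ or transverse to $E(g)$ in any prescribed sense. Hence your claims that ``$E_{i+1}$ embeds faithfully and no short element outside $E_{i+1}$ can commensurate $g$'' do not follow from the results you quote, and the appeal to ``the $C'$-type small cancellation behaviour underlying Theorem \ref{osin}'' amounts to reproving parts of Osin's construction that you have not carried out.

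The paper avoids exactly this by splitting the surjectivization in two: Osin's theorem is applied only to the finite-order coset representatives $x_1,\dots,x_{n_2}$ of $E_{i+1}$, while the new root is killed by a dedicated relation of the form $R:=g_2U^{l_1}VU^{l_2}V\cdots U^{l_{n_2}}$, where $U,V$ represent two non-commensurable elements $h_1,h_2$ of the suitable subgroup with $E(h_1)\cap E(h_2)=\{1\}$ and $E(h_i)=\la h_i\ra_{\infty}$ — i.e., the explicitly controlled words of Section 4.4 (Lemmas \ref{smallwords} and \ref{lemma5}), which satisfy $C'(\lambda,c,\epsilon,\mu,\rho)$. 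For such quotients, property (5) of \cite[Theorem 2]{Ol93} is quotable: within the injectivity ball, centralizers of images equal images of centralizers. This is precisely what yields both that $\epsilon(g_2)$ is not a proper power (needed so that $E(g)$ acquires no extra roots and the next amalgam has the correct edge group) and that $E(\epsilon(g_2))\simeq E_2$. Your proof becomes correct if you substitute this maneuver for the black-box use of Theorem \ref{osin} at the root $g_{i+1}$; as written, the crux step rests on properties the stated theorems do not supply. (Your base-step normalization — reindexing so that each $E_i$ is infinite, whence $E(g)=E_1$ in $G_0*E_1$ — is fine and consistent with the paper.)
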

\begin{proof}
	Let $G_0=\la S\ra, \ |S|<\infty $ be a non elementary hyperbolic group, $E_i$ be elementary groups with $\la g_i\ra$ be maximal cyclic subgroup of $E_i$ and $E_i=\la g_i\ra \cup x_1\la g_i\ra \cup \cdots \cup x_{n_i}\la g\ra$ for $i=1,2$ and $n_1\leq n_2$. Note that $x_1,x_2,\cdots,x_{n_2}$ are elements of finite order and $g_1=(g_2)^{m_2}$ for some $m_2\in \mathbb{N}$. Let $H_0$ be a suitable subgroup of $G_0$. One can choose $H_1$ since any non elementary hyperbolic group contains a copy of free group with countably many generators.\par 
	Consider the group, 
	\begin{align}
	G'_0:=(G_0*E_1)\underset{E_1}{*}E_2.
	\end{align}
	Note that $G'_0$ is a non elementary hyperbolic group by Theorem \color{blue}\ref{olmik}\color{black} as $E_1$ is maximal elementary in the hyperbolic group $G_0*E_1$. Note that $H_0$ is a suitable subgroup of $G'_0$. Let $N\in\mathbb{N}$. Now we apply Lemma \color{blue}\ref{osin}\color{black}\ with $T=\{x_1,x_2,\cdots,x_{n_2} \}, \ G=G'_0$ and $H=H_0$. We choose $w_1,w_2,\cdots,w_n$ such that the injectivity radius of the the quotient map $\phi:G'_0\twoheadrightarrow \overline{G'_0}:=G'_0/\la\la x_1w_1,\cdots,x_{n_2}w_{n_2}\ra\ra$ is greater that $N$ (by Remark \color{blue}\ref{osinlacunar}\color{black}). We record the fact that the group $\overline{G_1}$ is generated by $X\cup \{g_2 \}$ and image of the suitable group $H_0$, say $H'_0$ is also suitable in the quotient group $\overline{G'_0}$. Now we consider the set $\mathcal{R}_0$ of cyclic shifts of following set of words
	\begin{align}
	R:=g_2U^{l_1}VU^{l_{2}}V\cdots U^{l_{n_2}}
	\end{align}
	where $U,V$ are geodesic representative of two non commensurable elements $h_1,h_2$ in the suitable group $H'_0$ such that $E(h_1)\cap E(h_2)=\{1\}$ and $E(h_i)=\la h_i\ra_{\infty}$ for $i=1,2$. Then there exists $\lambda,c,N$ such that for all $\epsilon,\mu,\rho$ and $N<l_1<l_2<\cdots<l_{n_2}$, $\mathcal{R}_0$ satisfies $C'(\epsilon,\mu,\lambda,c,\rho)$ condition over the group $\overline{G'_0}$. \par     
	
\begin{lemma} 	
The quotient group $G_1:=\la \overline{G'_0}|\mathcal{R}_0\ra $ enjoys following properties:
	\begin{enumerate}
		\item $G_1$ is hyperbolic.
		\item Injectivity radius of the quotient map $\epsilon: G_0\twoheadrightarrow G_1$ is $\geq N$.
		\item $\epsilon(g_2)$ is not a proper power in $G_1$.
		\item $\epsilon(H_0)$ is again suitable in $G_1$.
		\item $E(\epsilon(g_2))\simeq E_2$.
	\end{enumerate}
\end{lemma} 
\begin{proof}    
$1.$ and $2.$ and $4.$ follows from Theorem \color{blue}\ref{osin}\color{black} and Lemma \color{blue}\ref{ol93lemma}\color{black}.\par 
By property $(5)$ of \cite[Theorem 2]{Ol93} we get that the centralizer $C_{G_1}(\epsilon(a))$ for every $a$ in the injectivity ball of $\epsilon$ is the image of centralizer $\epsilon(C_{G_0}(a))$. Suppose $\epsilon(g_2)=\epsilon(y)^m=\epsilon(y^m)$ for some $y\in G_0$. Then $\epsilon(y)\in C_{G_1}(\epsilon(g_2))=\epsilon(C_{G_0}(g_2))$. This implies that $y\in C_{G_0}(g_2)\ \Rightarrow y\in \la g_2\ra$, as $y$ has infinite order. Hence $m=1$ and $\epsilon(g_2)$ is not a proper power in $G_1$.   \par 
$5.$ follows from the construction and above properties. 
\end{proof}
We start with a given non elementary hyperbolic group $G$. By \cite[Proposition 1]{Ol93} Every non–elementary hyperbolic group $G$ contains a unique maximal normal finite subgroup $K \leq  G$, in fact $K$ is precisely the kernel of the $G$-action on the boundary of $G$. Thus passing to the quotient $G/K$ if necessary we may assume that $G$ has no nontrivial finite normal subgroups. Take $G=G_0$ in the above process to obtain a quotient hyperbolic group $G_1$ of $G$  with an element $g_2$ such that $E(g_2)\simeq E_2$ with $N_1$ the injectivity radius of $\epsilon_1:G\twoheadrightarrow G_1$. Now one can repeat above procedure with a slide modification: take $G'_0=G_1*_{E(g_2)}E_{3}$. Then called the quotient group $G_3$ and the quotient map $\epsilon_2:G_1\twoheadrightarrow G_3$ and the injectivity radius of the map $\epsilon_2$ is $N_3$. We get $E(\epsilon_2(g_2))\simeq E_3$. Using induction we get,     
\begin{align}
G=G_0\overset{\epsilon_1}{\twoheadrightarrow} G_1\overset{\epsilon_2}{\twoheadrightarrow} G_3\overset{\epsilon_3}{\twoheadrightarrow} G_4\overset{\epsilon_4}{\twoheadrightarrow}\cdots 
\end{align}
Define $\overline{G}:=\underset{\rightarrow}{\lim}G_i$. Note that one can take the numbers $N_i$ to be as large as one wants in order to make $\overline{G}$ a lacunary hyperbolic group. Observe that $E^{\El}(g_2)=\underset{i\geq 1}{\bigcup} E_i=E$. Note that we are viewing $g_2$ as an element of the limit group.
\end{proof}

\begin{notation}\label{notationelementary}
	We denote the class of increasing union of elementary subgroups as ${}_{rk-1}\mathscr{E}_{F}$. Note that increasing union of elementary groups and also one can write those as rank one by finite groups. 
\end{notation}

By using the fact that $G$ is countable and by doing the process described in Theorem \color{blue}\ref{rankonefinite}\color{black} we can get following,

\begin{theorem}\label{rk1byfinite}
	Let $G$ be a torsion free non elementary hyperbolic group and $\mathscr{C}:=\{E^j \}_{j\in\mathbb{N}}$ be a countable collection of groups with $E^j\in {}_{rk-1}\mathscr{E}_{F}$ for all $j\geq 1$. Then there exists a non elementary lacunary hyperbolic quotient $G^{\mathscr{C}}$ of $G$ such that $\{E^{\El}(h)\ | \ h\in (G^{\mathscr{C}})^0 \}=\mathscr{C}$.\par	Moreover $\mathscr{C}$ is the set of all maximal proper subgroups of the group $G^{\mathscr{C}}$.
\end{theorem}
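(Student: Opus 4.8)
The plan is to merge the three mechanisms already used in Theorems~\ref{rankonefinite} and~\ref{monster} into a single tower
$$G=G_0\twoheadrightarrow G_1\twoheadrightarrow G_2\twoheadrightarrow\cdots$$
of non-elementary hyperbolic groups and surjections carrying generating set to generating set, and to set $G^{\mathscr{C}}:=\varinjlim G_i$. The three mechanisms are: (i) amalgamation $G_n*_{E(g)}E'$ over a maximal elementary subgroup to enlarge an elementary subgroup by its next piece, which preserves hyperbolicity by Theorem~\ref{olmik}; (ii) Osin's Dehn filling, Theorem~\ref{osin}, to absorb the finite-order coset representatives of each elementary piece into a suitable subgroup while keeping a suitable subgroup suitable; and (iii) the small-cancellation quotients by the systems $\mathcal{R}(Z,U,V,\lambda,c,\epsilon,\mu,\rho)$ of Section~4.4, whose $C'$-property is guaranteed by Lemmas~\ref{smallwords} and~\ref{lemma5} and which, via Theorem~\ref{ol93lemma}, pin down the elementary subgroups, keep chosen subgroups non-elementary, and (by Lemma~\ref{properpower}) keep the relevant generators non-proper-powers. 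Throughout, the injectivity radius $N_i$ of the $i$-th surjection is taken as large as desired, so that by part (c) of Theorem~\ref{OOS} the limit $G^{\mathscr{C}}$ is lacunary hyperbolic.

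The interleaving is organized by a diagonal enumeration. Writing each $E^j=\bigcup_i E^j_i$ as an increasing union of elementary groups with $\langle g^j_i\rangle$ maximal cyclic in $E^j_i$ and $g^j_i=(g^j_{i+1})^{m^j_{i+1}}$, I would fix a bijection $\mathbb{N}\to\mathbb{N}\times\mathbb{N}$ so that every pair $(j,i)$ is visited infinitely often. Simultaneously, as in the construction preceding Lemma~\ref{limitmax}, enumerate $F'(X)\times F'(X)=\{(u_k,v_k)\}$ and reserve a cofinal set of steps to process the $k$-th pair. At a step assigned to $(j,i)$ I would carry out one pass of the Theorem~\ref{rankonefinite} procedure for $E^j$: form $G'_n=G_n*_{E(g^j)}E^j_{i}$, apply Osin's theorem with $T$ the set of torsion coset representatives of $E^j_i$ and $H$ the current suitable subgroup, and then quotient by a system $\mathcal{R}$ built from $g^j_i$ together with two non-commensurable elements of the suitable subgroup, so that in $G_{n+1}$ one has $E(g^j)\simeq E^j_i$, $g^j_i$ is not a proper power, and the image of $H$ stays suitable. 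At a step reserved for $(u_k,v_k)$ I would instead run the monster step of Lemma~\ref{limitmax}(2), forcing either $v_k\in E(u_k)$ or $\langle u_k,v_k\rangle=G_{n+1}$.

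For the verification, lacunary hyperbolicity is immediate from Theorem~\ref{OOS}(c) once the $N_i$ grow fast enough. For the realization statement, fixing $j$ and tracking the maximal elementary subgroups $E(g^j)$ through the tower shows, exactly as in Remark~\ref{maxele} and the closing line of Theorem~\ref{rankonefinite}, that they form an increasing chain whose direct limit is $\bigcup_i E^j_i=E^j$; hence $E^{\El}(g^j)\cong E^j$ and every member of $\mathscr{C}$ is realized. Conversely the monster steps give, in the limit, the dichotomy that for $g$ of infinite order either $h\in E^{\El}(g)$ or $\langle g,h\rangle=G^{\mathscr{C}}$; therefore any proper subgroup $P$ containing an infinite-order element $g$ satisfies $P\le E^{\El}(g)$, and since the only elementary-type subgroups produced are (isomorphic to) the $E^j$, one obtains $\{E^{\El}(h)\mid h\in (G^{\mathscr{C}})^0\}=\mathscr{C}$. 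Maximality of each $E^{\El}(g)$, and hence the identification of $\mathscr{C}$ with the set of all maximal proper subgroups, follows because a subgroup strictly containing $E^{\El}(g)$ would contain an infinite-order $h\notin E^{\El}(g)$, forcing $\langle g,h\rangle=G^{\mathscr{C}}$.

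The hardest part will be the uniform bookkeeping of the small-cancellation and suitability data across the infinitely many interleaved steps: one must fix $\lambda,c,\epsilon,\mu,\rho$ sparsely enough that each new system $\mathcal{R}$ simultaneously satisfies $C'(\lambda,c,\epsilon,\mu,\rho)$ over the current group (Lemma~\ref{lemma5}) and leaves untouched the relations already imposed for earlier pairs $(u_k,v_k)$ and for earlier pieces of the other $E^{j'}$, so that neither the maximality dichotomy nor a previously established isomorphism $E(g^{j'})\simeq E^{j'}_{i'}$ is destroyed. A secondary technical point is the torsion: because $E^{\El}$ is defined through infinite-order elements, one must check that the finite-order coset representatives introduced by the amalgamations are correctly absorbed by the Osin step, so that every proper subgroup—torsion included—lies inside a single $E^{\El}(g)$ and no purely-torsion maximal subgroup arises outside $\mathscr{C}$.
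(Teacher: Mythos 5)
Your proposal follows essentially the same route as the paper, whose proof is explicitly a merger of Theorem \ref{rankonefinite} with the monster construction of Section \ref{rank1monstergroup}: an interleaved tower combining amalgamation over maximal elementary subgroups (Theorem \ref{olmik}), Osin's Theorem \ref{osin} to absorb the finite-order coset representatives into the suitable subgroup, and the small-cancellation systems of Section 4.4, with injectivity radii chosen large so that part (c) of Theorem \ref{OOS} makes the limit lacunary hyperbolic, followed by the same dichotomy argument identifying the maximal subgroups. The only deviations are bookkeeping-level (the paper batches, at stage $i$, the $i$-th pieces of all $E^k$ with $k\leq j_i$ together with the single monster pair $(u_{j_i},v_{j_i})$, rather than your diagonal schedule, and chooses sparse parameters per stage rather than one uniform set), and the torsion point you flag at the end is settled in the paper by part (7) of Ol'shanskii's Theorem 2 together with Theorem \ref{osin}, which force every finite-order element of each $G_k$ into some $E^p_i$.
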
 
\begin{proof}
Proof of this theorem is basically a suitable combination of Theorem \color{blue}\ref{rankonefinite}\color{black} and the construction in in Section \color{blue}\ref{rank1monstergroup}\color{black}. Let $G=\la X\ra$ be the given non elementary torsion free $\delta$-hyperbolic group with respect to $X$, where $X=\{x_1,x_2,\cdots,x_n \}$ is a finite generating set. Without loss of generality we assume that $E(x_i)\cap E(x_j)=\{e\}$ for $i\neq j$. Let $X$ be linearly ordered such that $x_i^{-1}<x_j^{-1}<x_i<x_j$ if $i<j$. Let $F'(X)$ denote the set of non empty reduced words on $X$, and $F'(X)=\{w_1,w_2,\cdots \}$ be an enumeration with $w_i<w_j$ for $i<j$ according to the lexicographic order induced from the order on $X$. Note that $w_1=(x_1)^{-1}$ and $w_2=(x_2)^{-1}$. We now order the set $\mathcal S:= F'(X)\times F'(X)$ lexicographically and enumerate them as,
$$\mathcal S=\{(u_1,v_1),(u_2,v_2),\cdots  \}$$
where for $i<j$ we have $(u_i,v_i)<(u_j,v_j)$. \par 
We are going to construct the following chain;\\
\begin{align}
G_0\overset{\beta_0}{\hookrightarrow} H_1\overset{\alpha_1}{\twoheadrightarrow}G'_1\overset{\gamma_1}{\twoheadrightarrow} G_1\overset{\beta_1}{\hookrightarrow} H_2 \overset{\alpha_2}{\twoheadrightarrow}G'_2\overset{\gamma_1}{\twoheadrightarrow} G_2  \cdots
\end{align}
where $H_i,G_i,G'_i$ are hyperbolic for all $i$ and $\gamma_i\circ \alpha_{i-1}\circ \beta_{i-1}$ is surjective for all $i\geq 1$ and takes generating set to generating set. In particular we are going to show that the following chain
\begin{align}
G_0\overset{\gamma_1\circ \alpha_{1}\circ \beta_{0}}{\twoheadrightarrow}G_1\overset{\gamma_2\circ \alpha_{2}\circ \beta_{1}}{\twoheadrightarrow} G_2\overset{\gamma_3\circ \alpha_{3}\circ \beta_{2}}{\twoheadrightarrow}\cdots G_{i-1}\overset{\gamma_i\circ \alpha_{i}\circ \beta_{i-1}}{\twoheadrightarrow}G_i\rightarrow   \cdots
\end{align}
satisfies part c. of Theorem \color{blue}\ref{OOS}\color{black}.\par 
We have $\mathscr{C}=\{E^p\}_{p\in\mathbb{N}}$, a countable family of groups from ${}_{rk-1}\mathscr{E}_{F}$ . Then for every $p\in\mathbb{N}$, $E^j$ can be written as $E^p=\cup_{i=0}^{i=\infty}E^p_i$, where $E^p_i=\langle g^p_i\rangle_{\infty}\cup a^p_1\langle g^p_i\rangle_{\infty}\cup a^p_2\langle g^p_i\rangle_{\infty}\cup\cdots\cup a^p_{n_i}\langle g^p_i\rangle_{\infty}$ with order of $a^p_i$ is finite, $n_i\leq n_{i+1}$ for every $i\geq 1$  and $g^p_i=(g^p_{i+1})^{m^p_{i+1}}$ for some $m^p_{i+1}\in \mathbb{N}$.\par 
Then there exists a smallest index $j_i\geq i$ such that $v_{j_i}\notin E(u_{j_i})$. For $m\in \mathbb N$, define 
\begin{align}\label{amalgumrankonebyfinite} 
H^k_{i+1}:=H^{k-1}_{i+1}\underset{u_k=(g^k_{(k,i+1)})^{m^k_{i+1}}}{*} E^k_i \ ,\ where\  H^0_{i+1}=G_i\ and \ g^k_{(k,i+1)}=g_{i+1} \ for\ k=1,2,\cdots ,j_i.
\end{align}
For $i \geq 0$ let $H_{i+1}$ to be $H^{j_i}_{i+1}$. Note that $H_{i+1}$ is hyperbolic as $H^{k}_{i+1}$ is hyperbolic for all $k$ by \cite[Theorem 3]{MO98}. By construction there is a natural embedding $\beta_i:G_i\hookrightarrow H_{i+1}$. Take $c_i,c_i'\in G$ such that $c_i',c_i\notin E(u_k)$ for all $1 \leq k\leq j_i$ and $c_i',c_i\notin E(v_{j_i}) $. Such $c_i$ and $c_i'$ exists as there are infinitely many elements in a non elementary hyperbolic group which are pairwise non commensurable by \cite[Lemma 3.2]{Ol93}. Now we define the following set 
\begin{align} 
Y_i:=\{g_{(k,i+1)}|1\leq k\leq j_i \}
\end{align}
Let us consider $\mathcal{R}(Y_i,c_i,c_i',\lambda,c,\epsilon,\mu,\rho)$ as defined in the section \color{blue}4.4\color{black} and apply Theorem \color{blue}\ref{osin}\color{black}\ with $G:=H_{i+1}/\mathcal{R}(Y_i,c_i,c_i',\lambda,c,\epsilon,\mu,\rho)$,  $T=\{a^s_{l}|1\leq s\leq j_i, 1\leq l\leq n_{j_i} \}$ and suitable subgroup $H_{i}$.  Let $\tilde{\mathcal{R}}_{i+1}= \mathcal{R}(Y_i,c_i,c_i',\lambda,c,\epsilon,\mu,\rho)\cup \{a^s_{l}w^s_{l}|1\leq s\leq j_i, 1\leq l\leq n_{j_i} \}$ \par

Consider the natural quotient map $\alpha_{i+1}:H_{i+1}\twoheadrightarrow G'_{i+1}$ to the quotient $G'_{i+1}:=\la H_{i+1}|\tilde{\mathcal{R}}_{i+1}\ra$. Also the factor group $G'_{i+1}$ is hyperbolic by \cite{Ol93}. it is easy to see that $\alpha_{i+1}\circ \beta_{i}$ is a surjective map that takes generators to generators.\par  
Consider the following set $$Z_i:=\{x\in X|x\notin E(u_{j_i})  \}.$$ 
Let $G_{i+1}:= G'_{i+1}/\la \la \mathcal R (Z_i,u_{j_i},v_{j_i},\lambda,c,\epsilon,\mu,\rho)\ra \ra$ and let $\gamma_{i+1}:G'_{i+1}\twoheadrightarrow G_{i+1}$ be the quotient map. Hence we get that the group $G_{i+1}$ is hyperbolic by Theorem \color{blue}\ref{ol93lemma}\color{black}\ since $\mathcal R (Z_i,u_{j_i},v_{j_i},\lambda,c,\epsilon,\mu,\rho)$ satisfies $C'(\lambda,c,\epsilon,\mu,\rho)$ small cancellation condition as discussed in section \color{blue}4.4\color{black}\ and the map $\gamma_{i+1}$ takes generating set to generating set. In particular $\eta_{i+1}:=\gamma_{i+1}\circ \alpha_{i+1}\circ \beta_{i}$ is surjective homomorphism which takes the generating set of $G_i$ to the generating set of $G_{i+1}$. Let $G^{\mathscr{C}}:=\underset{\rightarrow}{\lim}G_i$. 
We summarize the above discussion in the following statement.
\begin{lemma}\label{limitmaxrankonebyfinite}
	The above construction satisfies the following properties:
	\begin{enumerate}
		\item $G_i$ is non elementary hyperbolic group for all $i$;
		\item Either $u_i\in E(v_i)$ or the group genarated by $\{u_i,v_i \}$ is equal to all of $G_i$;
		\item For every infinite order element $u_k$, $E(u_k)\simeq E^p_{n_0}$ in $G_{n_0}$ for some $ n_0$ and $p$. 
	\end{enumerate}
\end{lemma}
\begin{proof} Part 1. follows from \cite[Lemma 7.2]{Ol93}. To see part 2. notice that by definition if $j_i>i$ then $v_i\in E(u_i)$ in $G_i$. Otherwise if $j_i=i$ then $v_i\notin E(u_i)$ in $G_i$ and $G_i=gp\{u_i,v_i\}$.
\par
For every infinite order element $u_k$ note that $E(u_k)\simeq E^p_{n_0}$ for some $p$ and $n_0$ in $H_{n_0}$ for some $n_0$ by \color{blue}(\ref{amalgumrankonebyfinite})\color{black}. Then by property $(5)$ of \cite[Theorem 2]{Ol93} and Theorem \ref{osin} $E(u_k)\simeq E^p_{n_0}$ in $G_{n_0}$.   
\end{proof}
\begin{remark}\label{lemma4.41}
	\begin{enumerate}
	 \item By part 3. of Lemma  we get that $E(u_k)\simeq E^p_{n_0}$ in $G_{n_0}$ for some $p$ and $n_0$. Hence by \color{blue}(\ref{amalgumrankonebyfinite})\color{black}\ we have $E(u_k)\simeq E^p_{n_0+1}$ in $H_{n_0+1}$. Then by property $(5)$ of \cite[Theorem 2]{Ol93} and Theorem \ref{osin} $E(u_k)\simeq E^p_{n_0+1}$ in $G_{n_0+1}$. By applying same process we get that $E(u_k)\simeq E^p_{n_0+i}$ in $G_{n_0+i}$ for every $i\geq 0$. Hence $E^{\El}(u_k)=\underset{\rightarrow}{\lim}E_l(u_k)=\underset{\rightarrow}{\lim} E^p_{n_0+i}=E^p$, where $E_l(u_k)$ is the maximal elementary subgroup containing the infinite order element $u_k$. 
     \item Note that by part (7) of \cite[Theorem 2]{Ol93} and Theorem \ref{osin} we get that if $x\in G_k$ has finite order then $x\in E^p_i$ for some $p,i$ in $G_k$. 
    \end{enumerate}
\end{remark}
By above construction it is clear that $\{E^{\El}(g)|\ g\in(G^{\mathscr{C}})^0 \}=\mathscr{C}$. 
Suppose that $P$ be a proper maximal subgroup of the group $G^{\mathscr{C}}$. Let $x\in P$ is of finite order. Then by part 2. of remark \color{blue}\ref{lemma4.41}\color{black}\ $x\in E^p$ for some $p\in\mathbb{N}$. If $y\in P\setminus E^p$ then $yg^p_1$ is of infinite order and not in $E^p$. So by construction we get $G=gp\{E^p,yg^p \}\leq P\leq G$. Hence $P=E^p$. Similarly for $x\in P$ has infinite order one can show $P=E^p$ for some $p$ and hence $\mathscr{C}$ is the class of maximal proper subgroups of $G^{\mathscr{C}}$.   	
\end{proof}
\begin{remark} 
	The group $G^{\mathscr{C}}$ in Theorem \color{blue}\ref{rk1byfinite}\color{black}\ also has following properties:
\begin{enumerate} 
	\item Every finitely generated subgroup of the group $G^{\mathscr{C}}$ is either $G^{\mathscr{C}}$ or finite or elementary.
	\item $G^{\mathscr{C}}$ has no finite presentation.
	\item Every finitely presented subgroup of $G^{\mathscr{C}}$ is hyperbolic (being a finitely presented subgroup of a lacunary hyperbolic group) and hence is either finite or elementary. Therefore every finitely presented subgroup of $G^{\mathscr{C}}$ is amenable.
	\item If one starts with a hyperbolic group without the unique product property then it is possible to make $G^{\mathscr{C}}$ a lacunary hyperbolic group without the unique product property.    
 \end{enumerate}  
\end{remark}

\begin{corollary}\label{rankbyfinitemonster}
	Let $E\in {}_{rk-1}\mathscr{E}_{F}$ be a group and $G$ be a torsion free non elementary hyperbolic group. Then there exists a lacunary hyperbolic quotient $\overline{G}$ of $G$ such that every proper maximal subgroup of $G$ is isomorphic to $E$.
\end{corollary}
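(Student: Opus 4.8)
The plan is to obtain this statement as the special case of Theorem \ref{rk1byfinite} corresponding to a constant family, exactly as the divisible and cyclic examples were extracted from Theorem \ref{monster}. Concretely, given $E\in {}_{rk-1}\mathscr{E}_{F}$, I would set $\mathscr{C}:=\{E^j\}_{j\in\mathbb{N}}$ with $E^j=E$ for every $j\geq 1$. This is a legitimate countable collection of groups drawn from ${}_{rk-1}\mathscr{E}_{F}$, so the hypotheses of Theorem \ref{rk1byfinite} are met: $G$ is torsion free non elementary hyperbolic by assumption, and each member of $\mathscr{C}$ lies in ${}_{rk-1}\mathscr{E}_{F}$.

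Applying Theorem \ref{rk1byfinite} to this $\mathscr{C}$ then produces a non elementary lacunary hyperbolic quotient $\overline{G}:=G^{\mathscr{C}}$ of $G$ together with the two conclusions of that theorem. The first, $\{E^{\El}(h)\mid h\in (G^{\mathscr{C}})^0\}=\mathscr{C}$, tells us that the subgroup $E^{\El}(h)$ attached to any infinite order element is isomorphic to $E$. The second, that $\mathscr{C}$ is precisely the set of all maximal proper subgroups of $G^{\mathscr{C}}$, immediately gives that every proper maximal subgroup of $\overline{G}$ is isomorphic to some member of $\mathscr{C}$, hence to $E$. Non-vacuousness is automatic, since $\overline{G}$ is non elementary and therefore does contain proper maximal subgroups realized by infinite order elements.

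There is essentially no genuine obstacle here beyond checking that a constant (single isomorphism type) family is admissible input to Theorem \ref{rk1byfinite}; all of the real work --- the iterated amalgamation-and-small-cancellation construction, the verification of the $C'(\lambda,c,\epsilon,\mu,\rho)$ conditions, the control of injectivity radii forcing lacunary hyperbolicity, and the identification of maximal subgroups --- is already carried out in the proof of that theorem. The only point worth a sentence of care is that collapsing the family to one isomorphism class does not break the argument: the construction enumerates pairs $(u_i,v_i)$ and attaches a prescribed $E^k$ at each stage, so feeding it the constant assignment $E^k=E$ merely makes every attached elementary-type subgroup a copy of $E$, and the limit group $\overline{G}$ has all of its maximal proper subgroups isomorphic to $E$.
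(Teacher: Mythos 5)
Your proposal is correct and coincides with the paper's intended argument: the paper states Corollary \ref{rankbyfinitemonster} immediately after Theorem \ref{rk1byfinite} without a separate proof, precisely because it is the specialization of that theorem to the constant family $\mathscr{C}=\{E^j\}_{j\in\mathbb{N}}$ with $E^j=E$ (mirroring how the corollaries of Theorem \ref{monster} are obtained by taking $Q^i_m=\mathbb{Z}$ or $\mathscr{F}=\{\mathbb{Q}\}$). Your additional observation that a single isomorphism type is admissible input, and that the conclusion about maximal proper subgroups then reads off directly, is exactly the one-line check the paper leaves implicit.
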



\section{Rips type construction}\label{ripssection}
In this section we explore the construction of small cancellation group from any finitely presented group developed by E. Rips \cite{Rip82}.  

\begin{proposition}[\cite{Rip82}]
	Suppose $Q$ is a finitely presented group. Then there exists groups $G$ and  $K$ , for which we get a short exact sequence;
	$$1\rightarrow K\rightarrow G\rightarrow Q\rightarrow 1$$
	such that;
	\begin{enumerate} 
	\item  $G$ is a hyperbolic group.
	\item $K$ is a $2$ generated group.
\end{enumerate}
\end{proposition}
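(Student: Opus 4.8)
The plan is to produce $G$ as a metric small cancellation group over the free group, obtained from a finite presentation of $Q$ by adjoining exactly two new generators that will generate the kernel. Fix a finite presentation $Q=\la x_1,\dots,x_n\mid R_1,\dots,R_m\ra$ and introduce two fresh letters $a_1,a_2$. I would define $G$ on the alphabet $\{x_1,\dots,x_n,a_1,a_2\}$ by two families of relators: the \emph{normality relators} $x_i^{\varepsilon}a_j x_i^{-\varepsilon}W_{i,j,\varepsilon}^{-1}$ for $1\le i\le n$, $j\in\{1,2\}$, $\varepsilon\in\{\pm1\}$, and the \emph{lifting relators} $R_kU_k^{-1}$ for $1\le k\le m$, where every $W_{i,j,\varepsilon}$ and every $U_k$ is a positive word in $a_1,a_2$ still to be chosen.

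The crucial step is to choose these right-hand sides so that the symmetrized set $\mathcal R$ generated by all the relators satisfies the $C'(\mu)$ condition for a sufficiently small $\mu$ (it suffices to take $\mu\le 1/6$). I would build each right-hand word out of long, pairwise distinct blocks $a_1a_2^{t_1}a_1a_2^{t_2}\cdots a_1a_2^{t_s}$ with strictly increasing exponents drawn from disjoint ranges across the finitely many relators; this is precisely the type of word system recorded in Section 4.4. Any common subword of two distinct relators is then forced to be short relative to the relator lengths, and choosing the exponents large enough drives the piece-to-relator ratio below $\mu$. Since the ambient group is here the free group $F(x_1,\dots,x_n,a_1,a_2)$, which is $0$-hyperbolic, the $C'(\mu)$ condition yields that $G$ is hyperbolic --- by classical Dehn--Greendlinger small cancellation theory \cite{LS77}, equivalently by the small cancellation machinery of Section 4 specialized to $\delta=0$.

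With the presentation fixed, set $K:=\la a_1,a_2\ra\le G$. The normality relators say exactly that $x_i^{\pm1}a_jx_i^{\mp1}\in\la a_1,a_2\ra$ for all $i,j$, so $K$ is normal in $G$ and is $2$-generated by construction. To identify the quotient, note that $G/K$ is presented by the images of the defining relators after imposing $a_1=a_2=1$: each right-hand side $W_{i,j,\varepsilon}$ and $U_k$ becomes trivial, so the normality relators vanish and each lifting relator collapses to $R_k=1$. Hence $G/K\cong\la x_1,\dots,x_n\mid R_1,\dots,R_m\ra=Q$, producing the short exact sequence $1\to K\to G\to Q\to 1$ with $K$ generated by two elements.

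The main obstacle is the simultaneous bookkeeping in the choice of the right-hand words: they must be long and generic enough to force small cancellation (hence hyperbolicity), while the normality relators must genuinely render $\la a_1,a_2\ra$ normal and must not cause $a_1,a_2$ to collapse in $G$. The delicate point is ruling out unintended overlaps among the relators, which is exactly what the strictly increasing $a_2$-exponents and disjoint exponent ranges control, so that every piece stays short. Once this small cancellation estimate is secured, both the hyperbolicity of $G$ and the asserted structure of the kernel follow formally.
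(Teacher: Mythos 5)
Your proposal is correct and is essentially the classical Rips construction that the paper cites to \cite{Rip82} without reproducing a proof: two fresh generators $a_1,a_2$, normality relators $x_i^{\varepsilon}a_jx_i^{-\varepsilon}W_{i,j,\varepsilon}^{-1}$ and lifting relators $R_kU_k^{-1}$ with long positive $\{a_1,a_2\}$-words of staggered block type forcing $C'(\mu)$, $\mu\le 1/6$, over the free group, whence $G$ is hyperbolic and $K=\la a_1,a_2\ra$ is a $2$-generated normal subgroup with $G/K\cong Q$. Your bookkeeping (both signs $\varepsilon=\pm1$ for normality, disjoint increasing exponent ranges to keep pieces short, and collapsing $a_1=a_2=1$ to identify the quotient) matches the standard argument, so there is nothing to correct.
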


Various types of Rips constructions have been studied in order to construct powerful pathological examples in geometric group theory, see \cite{AS14,BO06,OW04,W03}. General theme of Rips construction is to study exotic properties of normal subgroups of hyperbolic groups by allowing to construct groups with certain group theoretic/geometric properties from a countable group.

\begin{theorem}\label{Rips}

	Let $Q$ is a finitely generated group and $H$ be a non elementary hyperbolic group. Then there exists groups $G$ and $N$ , and a short exact sequence;
	\begin{align} 
	1\rightarrow N\rightarrow G\rightarrow Q\rightarrow 1\nonumber 
	\end{align} 
	such that;
	\begin{enumerate} 
	\item $G$ is a torsion-free lacunary hyperbolic group.
	\item $N$ is a $2$ generated non elementary quotient of $H$.
	\item If $H$ is torsion free then so are $G$ and $N$.
\end{enumerate}
\end{theorem}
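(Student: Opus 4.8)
The plan is to realise $G$ as a direct limit $\varinjlim G_m$ of torsion-free hyperbolic groups, obtained from $H$ by repeatedly applying Osin's small-cancellation theorem (Theorem \ref{osin}) and arranging the injectivity radii to grow fast enough that the limit is lacunary hyperbolic by Theorem \ref{OOS}, while a distinguished $2$-generated normal subgroup stays non-elementary and becomes the kernel. First I would reduce to the case that $H$ is $2$-generated: since $H$ is non-elementary, by \cite[Lemma 3.2]{Ol93} I choose two non-commensurable infinite-order elements $x,y\in H$ with $E(x)\cap E(y)=\{1\}$, so that $H_0:=\langle x,y\rangle$ is a suitable subgroup. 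Applying Theorem \ref{osin} to $H$, to $H_0$, and to the set $T$ of the remaining generators of $H$ rewrites each such generator as an element of $\langle x,y\rangle$ in the resulting hyperbolic quotient; that quotient is a $2$-generated quotient of $H$, non-elementary because its image of $H_0$ is still suitable, and torsion-free whenever $H$ is. Replacing $H$ by it, I may assume $H=\langle x,y\rangle$. Writing $Q=\langle a_1,\dots,a_n\mid r_1,r_2,\dots\rangle$, I set $G_0:=H*F(a_1,\dots,a_n)$, a hyperbolic group (torsion-free if $H$ is) in which $H_0=\langle x,y\rangle$ is suitable and $G_0/\langle\langle x,y\rangle\rangle$ is free on $a_1,\dots,a_n$.

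\emph{The iteration.} I order into a single sequence $t_1,t_2,\dots$ the finitely many conjugation elements $a_i^{\pm1}xa_i^{\mp1}$, $a_i^{\pm1}ya_i^{\mp1}$ ($1\le i\le n$), followed by the relators $r_1,r_2,\dots$ of $Q$. Inductively I set $G_m:=G_{m-1}/\langle\langle t_mw_m\rangle\rangle$, where $w_m$ lies in the image of $H_0$ and is the correcting word furnished by Theorem \ref{osin} applied to $G_{m-1}$, to the image of $H_0$, and to the singleton $\{t_m\}$. Then each $G_m$ is hyperbolic, the image of $H_0$ remains suitable (hence non-elementary), and torsion-freeness is preserved. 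By Remark \ref{osinlacunar} I may take $w_m$, and hence the injectivity radius $r_m$ of $G_{m-1}\twoheadrightarrow G_m$, arbitrarily large; I choose them so that $\delta_{m-1}=o(r_m)$ for a hyperbolicity constant $\delta_{m-1}$ of $G_{m-1}$. After the finitely many conjugation steps the relation $t_m=w_m^{-1}\in\langle x,y\rangle$ forces $a_i^{\pm1}\langle x,y\rangle a_i^{\mp1}\subseteq\langle x,y\rangle$, so $\langle x,y\rangle$ is normalised by every $a_i$ and is therefore normal; it remains normal at every later stage, since all subsequent correcting words again lie in $\langle x,y\rangle$.

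\emph{The limit.} Put $G:=\varinjlim G_m$. The epimorphisms carry generating set to generating set and the injectivity radii dominate the hyperbolicity constants, so Theorem \ref{OOS} shows $G$ is lacunary hyperbolic, and Theorem \ref{osin}(3) gives torsion-freeness when $H$ is torsion-free. Let $N:=\langle\langle x,y\rangle\rangle_G=\langle x,y\rangle$. Then $N$ is $2$-generated; it is a quotient of $H$, because only relations were ever added, so $x\mapsto x$, $y\mapsto y$ induces a surjection $H\twoheadrightarrow N$; and $N$ is non-elementary, since the images of $H_0$ are suitable at every stage and the injectivity radii tend to infinity. Reading the imposed relations modulo $N$ kills every conjugation relation and turns every relator relation into $r_k=1$, so $G/N\cong Q$. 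This yields the short exact sequence $1\to N\to G\to Q\to 1$ with all three stated properties.

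\emph{Main obstacle.} The crux is running infinitely many small-cancellation quotients while simultaneously (i) keeping the distinguished subgroup non-elementary, (ii) preserving torsion-freeness, and (iii) pushing the injectivity radii past the hyperbolicity constants; all three are exactly what Theorem \ref{osin} together with Remark \ref{osinlacunar} delivers. The remaining work is the bookkeeping of the two preceding paragraphs: chiefly verifying that $\langle x,y\rangle$ is genuinely normal in the limit (so that $N$ is $2$-generated and equals its own normal closure) and that $G/N$ inherits no relations among the $a_i$ beyond the $r_k$, which holds because each added relator is trivial modulo $\langle x,y\rangle$ precisely when pushed into $Q$.
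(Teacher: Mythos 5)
Your proposal is correct and shares the paper's overall architecture --- reduce to a $2$-generated $H$ via Theorem \ref{osin}, form $G_0=H*F(a_1,\dots,a_n)$, impose the conjugation relations $a_i^{\pm1}xa_i^{\mp1},a_i^{\pm1}ya_i^{\mp1}\in\langle x,y\rangle$ to force normality, push each relator $r_k$ into the image of $\langle x,y\rangle$, and control injectivity radii so that Theorem \ref{OOS}(c) makes the limit lacunary hyperbolic --- but it diverges from the paper in the mechanism of the main iteration. The paper applies Osin's theorem only once, to kill all the conjugation elements $T=\{g_j^{-1}xg_j,\,g_jxg_j^{-1}\}$ in a single step, and then handles the infinitely many relators with the \emph{explicit} small-cancellation system of Section 4.4: it adjoins words $R_i=z_iU^{m_{i,1}}VU^{m_{i,2}}V\cdots VU^{m_{i,j_i}}$ (with $z_i=_{\overline{G_0}}r_i$ and $U,V$ representing non-commensurable $a,b$ in the suitable image $H_0$), invoking Lemma \ref{smallwords}, Lemma \ref{lemma5} and Theorem \ref{ol93lemma} to get hyperbolicity of each $G_i$, injectivity radius $\geq N_i$, and non-elementariness of the designated subgroups $gp\{a,b\}$, $gp\{a,z_i\}$, $gp\{b,z_i\}$. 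You instead funnel \emph{every} relation, conjugations and relators alike, through Osin's Theorem \ref{osin} one at a time, setting $G_m=G_{m-1}/\la\la t_mw_m\ra\ra$ with $w_m$ in the image of $H_0$. This is legitimate: Theorem \ref{osin} with $T=\{r_k\}$ yields exactly $r_k\in\overline{\langle x,y\rangle}$, its conclusion (2) keeps the image of $H_0$ suitable for the next step, and its conclusion (3) gives torsion-freeness more directly than the paper's route, where that preservation rests on standard properties of $C'$ quotients of torsion-free hyperbolic groups. What your route gives up is the explicit parameter control: the paper's concrete words $R_i$ come with quasi-geodesicity constants and freely adjustable lengths, which the paper exploits later (Corollary \ref{uniqueproduct}, where relator lengths must exceed the sizes of the sets $A,B,AB$ witnessing failure of unique product), whereas you rely on the black-box Remark \ref{osinlacunar} to make $\|w_m\|$, and hence the injectivity radius $r_m$, large enough that $\delta_{m-1}=o(r_m)$ --- an inference the paper itself also makes (proof of Theorem \ref{rankonefinite}), so it is consistent with the paper's standards. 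Your bookkeeping for normality of $\langle x,y\rangle$ after the finitely many conjugation steps, the identification $G/N\cong Q$ by reading relations modulo $N$, and the $2$-generation of $N$ are all correct; your justification that $N$ stays non-elementary in the limit (suitability at every stage plus injectivity radii tending to infinity) is stated at the same level of detail as the paper's own.
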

\begin{remark}
	Note that $G$ cannot be hyperbolic if $Q$ has no finite presentation.
\end{remark}
\begin{proof}[\underline{\textbf{proof of Theorem \ \color{blue}\ref{Rips}\color{black}}}] 
	If $H$ is not $2$ generated then consider a finite generating set $H=\la X\ra=\{x_1,x_2,\cdots,x_l\}$ with $l\geq 3$ such that $x_i$ has infinite order for all $l\geq i\geq 1$. Note that one can choose such generating set since for every torsion element $s$ in a non elementary hyperbolic group $H$, there exists an infinite order element $t$ such that $st^m$ has infinite order for some $m\in\mathbb{N}$. Without loss of generality assume that $x_1,x_2$ are non commensurable and $E(x_1)\cap E(x_2)=\{1\}$. Note that the subgroup $gp\{x_1,x_2\}$, generated by $x_1,x_2$, is a suitable subgroup of $H$. Applying Theorem \color{blue}\ref{osin}\color{black}\ with non elementary hyperbolic group $H$, suitable subgroup $gp\{x_1,x_2\}$ and $T=\{x_3,x_4,\cdots,x_l \}$ we get that there exists elements $v_3,v_4,\cdots,v_l\in gp\{x_1,x_2\}$ such that the quotient group $\overline{H}:=H/\la\la x_3v_3,x_4v_4,\cdots,x_lv_l\ra\ra $ is non elementary hyperbolic. Note that the quotient map is surjective on $gp\{x_1,x_2\}$ by choice of the set $T$. Hence if $H$ is not $2$ generated we replace $H$ by the quotient $gp\{x_1,x_2\}$ as described above which is a $2$ generated non elementary hyperbolic quotient of the group $H$. Without loss of generality we now assume that $H$ is $2$ generated i.e, $H=\la X=\{x,y\}\ra, \ |X|=2 $ and $x\notin E(y)$.\par   
	Consider a presentation of $Q$.
	\begin{align} 
	Q=\la g_1,g_2,\cdots,g_n|r_1,r_2,r_3,\cdots\ra\nonumber
	\end{align} 
	We assume that $\| r_i\|_{\mathcal{F}(X)} \geq \| r_{i-1}\|_{\mathcal{F}(X)} \ \ \forall \ i\in \mathbb{N}$. where $X=\{g_1,g_2,\cdots,g_n \}$ and $\mathcal{F}(X)$ is free group generated by $X$. Let
	\begin{align} 
	Q_i:=\langle g_1,g_2,\cdots,g_n|r_1,r_2,\cdots,r_i\rangle\nonumber 
	\end{align} 
		
	First consider $G_0:=H*\mathcal{F}(g_1,g_2,\cdots,g_n)$, the free product of non elementary hyperbolic group $H$ with the free group generated by generators of $Q$. Note that $G_0$ is a hyperbolic group. Since $H$ is non elementary hyperbolic group, $H$ is a suitable subgroup of $G_0$. Let $T=\{{g_j}^{-1}x{g_j},g_jx{g_j}^{-1}\ | \ 1\leq j\leq n, \ x\in X \}$. Now by Theorem \color{blue}\ref{osin}\color{black}\ there are elements $\{w_1,w_2,\cdots \}\subset H$ such that the quotient $\overline{G_0}$ is hyperbolic and image of $H$, say $H_0$ is suitable in $\overline{G_0}$. Hence there exists two elements $a,b\in H_0$ such that $E(a)\cap E(b)=\{1\}$.\par 
	Define following set of words as defined in Section \color{blue}4.4\color{black},
	\begin{align}
	R_1:=z_1U^{m_{1,1}}VU^{m_{1,2}}V\cdots VU^{m_{1,j_1}} 
	\end{align}
	where $U,V$ are geodesic representatives of $a,b$ respectively in the group $G_0$ and $z_1$ is a geodesic representative of $r_1$ in $\overline{G_0}$. Let $\mathcal{R}_1$ be the set of all cyclic shifts of $R_1$. Let $N_1$ be a positive number. For any $\epsilon\geq 0,\mu>0,\rho>0$ with $\lambda,c$ given by Lemma \color{blue}\ref{smallwords}\color{black}\ and $\lambda,\mu,\epsilon,\rho,N_1,c$ satisfies condition for Theorem \color{blue}\ref{ol93lemma}\color{black}\ for the hyperbolic group $\overline{G_0}$. Combining the fact that $\mathcal{R}_1$ satisfies $C'(\lambda,c,\epsilon,\mu,\rho)$ and Theorem \color{blue}\ref{ol93lemma}\color{black}\ with $H_0:=gp\{a,b\},H_1:=gp\{a,z_1\}$ and $H_2:=\{b,z_1 \}$, we get that the factor group $G_1:=\langle H|\mathcal{R}_1\rangle $ is hyperbolic and the injective radius of the factor homomorphism is $\geq N_1$. Moreover we get that image of $H_0,H_1,H_2$ are non elementary in the factor group $G_1$, i.e. $a,b\notin E(z_i),a\notin E(b)$ in $G_1$. \par 
	Define $K_1$ to be the images of $H_0$ in $G_1$. We continue by starting with $G_1$ instead of $G_0$ and add relations of the form 
	\begin{align}
	R_i:=z_iU^{m_{i,1}}VU^{m_{i,2}}V\cdots VU^{m_{i,j_i}} 
	\end{align}
	where $z_i=r_i$ for $i\geq 2$, in the hyperbolic group $G_{i-1}$. 
	Hence by induction we get the required group $G$ as a limit of hyperbolic groups $G_i$ and by choosing large enough $N_i$ in every step one can ensure that $G$ is lacunary hyperbolic. Also we get $N$ as inductive limit of groups $K_i$.  \par 
	Let $Q$ has no finite presentation and $G$ is hyperbolic. Then $G$ has a finite presentation and the kernel is finitely generated which implies that $Q$ has a finite presentation. This contradicts the hypothesis that $Q$ has no finite presentation. Hence $G$ is not hyperbolic when $Q$ has no finite presentation. Note that being a non elementary image of a $2$ generated group, $N$ is $2$ generated.  
\end{proof}

As a corollary of Theorem \color{blue}\ref{Rips}\color{black}\ together with the existence of hyperbolic property $(T)$ group without unique product property, one can get following corollary:

\begin{corollary}[Theorem 3,\cite{AS14}]\label{uniqueproduct}
	Let $Q$ be a finitely generated group. Then there exists a short exact
	sequence
	\begin{align}
	1\rightarrow N\rightarrow G\rightarrow Q\rightarrow 1\nonumber 
	\end{align}

	such that;
	\begin{itemize}
		\item $G$ is a torsion-free group without the unique product property which is a direct limit of Gromov hyperbolic groups. 
		\item $N$ is a subgroup of $G$ with Kazhdan’s Property $(T)$ and without the unique product property.
	\end{itemize}
\end{corollary}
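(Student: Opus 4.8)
The plan is to feed a carefully chosen hyperbolic group into Theorem~\ref{Rips}. First I would invoke the result of \cite{AS14} providing a torsion-free, non-elementary hyperbolic group $H$ that has Kazhdan's property $(T)$ and fails the unique product property; fix finite subsets $A,B\subseteq H$ witnessing this failure, so that every element of $AB$ has at least two factorizations $g=ab$ with $a\in A$ and $b\in B$. Applying Theorem~\ref{Rips} to the given finitely generated group $Q$ and this $H$ yields a short exact sequence $1\to N\to G\to Q\to 1$ with $G$ torsion-free lacunary hyperbolic and $N$ a $2$-generated non-elementary quotient of $H$; as $H$ is torsion-free, part~(3) of that theorem also makes $G$ and $N$ torsion-free.

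From here I would read off the three asserted properties. By part~(c) of Theorem~\ref{OOS}, lacunary hyperbolicity of $G$ means exactly that $G$ is a direct limit of Gromov hyperbolic groups. Since $N$ is a quotient of the property~$(T)$ group $H$ and property~$(T)$ descends to quotients, $N$ has property~$(T)$. For the unique product property I would use that it is inherited by subgroups: if $G$ enjoyed unique products then so would the subgroup $N\le G$, so it is enough to prove that $N$ fails the unique product property, and the failure for $G$ then follows for free from $N\le G$.

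The delicate point---and the one real obstacle---is guaranteeing that the failure of unique product in $H$ is not destroyed upon passing to the quotient $N$. Here I would exploit the freedom, built into the Rips construction and into the definition of lacunary hyperbolicity, to take the injectivity radius of each factor homomorphism arbitrarily large; concretely, I would choose every relator $R_i$ long enough that the natural map $H\to N$ is injective on a ball containing (word representatives of) the finite sets $A$, $B$ and $AB$. On such a ball the map is a product-preserving bijection, so the images $\bar A,\bar B$ have the same cardinalities as $A,B$, and for each $g\in AB$ its two distinct factorizations $a_1b_1,a_2b_2$ map to two still-distinct and still-equal factorizations $\bar a_1\bar b_1=\bar a_2\bar b_2$ of $\overline{g}$. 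Hence every element of $\bar A\bar B$ retains at least two representations, so $N$---and therefore $G$---has no unique product. The only thing to be careful about is this length bookkeeping: the relators must be made long \emph{before} taking the limit, since once elements get identified the witnessing sets could collapse and spuriously acquire a unique product.
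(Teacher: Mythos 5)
Your proposal is correct and takes essentially the same approach as the paper: it feeds the torsion-free property $(T)$ hyperbolic group of \cite{AS14} without unique products into Theorem \ref{Rips}, and uses the freedom in the construction (the choice of $N_1$ and of arbitrarily long $w_i$, cf.\ Remark \ref{osinlacunar}) to keep every factor map injective on a ball containing $A$, $B$ and $AB$, so that the failure of unique products survives in $N$ and hence, by subgroup inheritance, in $G$. Your explicit bookkeeping of the two factorizations and the caution about fixing lengths before passing to the limit only spell out what the paper's shorter proof leaves implicit.
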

\begin{proof}
	In Theorem \color{blue}\ref{Rips}\color{black}, let $H$ to be a hyperbolic property $(T)$ group without unique product property and in the proof one can choose $N_1$ and the sizes of $w_1,w_2,\cdots$ to be greater than the size of the finite sets $A,B$ and $AB$ for which unique product does not hold in the group $H$. Note that one can choose sizes of $w_1,w_2,\cdots$ to be arbitrary large by Remark \color{blue}\ref{osinlacunar}\color{black}\ and also one can choose $N_1$ to be as large as one wants. 
\end{proof}
Note that the group $G$ in Corollary \color{blue}\ref{uniqueproduct}\color{black}\  is a lacunary hyperbolic group which is a special class of direct limits of Gromov hyperbolic groups.


\section{Applications to von Neumann algebras}\label{maximalvonneumann}

If $\mathcal{M}$ is a von Neumann algebra then a von Neumann subalgebra $\mathcal{N}\subset\mathcal{M}$ is called \emph{maximal} if there is no intermediate von Neumann subalgebra $\mathcal{N}\nsubseteq\mathcal{P}\subsetneq\mathcal{M}$. Theorem \color{blue}\ref{monster}\color{black}\ generalizes Theorem \color{blue}3.10\color{black}\ of \cite{CDK19} and hence we enlarge the class of property $(T)$ groups introduced in \cite{CDK19} that give rise to property $(T)$ von Neumann algebras which have maximal von Neumann subalgebras without property $(T)$. For readers' convenience, we provides short details and notations that were used in \cite[Section 3 \& 4]{CDK19}.\par 
First we are going to state one of the main ingredient for our purpose, the Rips construction due to I.  Belegradek and D. Osin.
\begin{theorem}\cite{BO06}\label{beleosinripsconstr}
	Let $H$ be a non-elementary hyperbolic group, $Q$ be a finitely generated group and $S$ a subgroup of $Q$. Suppose $Q$ is finitely presented with respect to $S$. Then there exists a short exact sequence
	$$1\rightarrow N\rightarrow G\overset{\epsilon}{\rightarrow} Q\rightarrow 1,$$
	and an embedding $\iota:Q\rightarrow G$ such that
	\begin{enumerate}
		\item $N$ is isomorphic to a quotient of $H$. 
		\item $G$ is hyperbolic relative to the proper subgroup $\iota(S)$.
		\item $\iota\circ \epsilon=Id$.
		\item If $H$ and $Q$ are torsion free then so is $G$.
		\item The canonical map $\phi :Q\hookrightarrow Out(N)$ is injective and $[Out(N):\phi(Q)]<\infty$.
	\end{enumerate}
\end{theorem}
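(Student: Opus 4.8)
The plan is to reproduce the argument of Belegradek--Osin \cite{BO06}, which is the relatively hyperbolic refinement of the Rips construction already carried out (in the hyperbolic, lacunary form) in the proof of Theorem~\ref{Rips}. The engine is the relatively hyperbolic version of Osin's Theorem~\ref{osin} from \cite{Os'10}: if $\mathcal{G}$ is hyperbolic relative to a collection $\{P_\lambda\}$ and $L\leq\mathcal{G}$ is a suitable subgroup, then for any finite set $T=\{t_1,\dots,t_r\}\subset\mathcal{G}$ one can choose words $w_1,\dots,w_r\in L$ so that the quotient $\mathcal{G}/\langle\langle t_1w_1,\dots,t_rw_r\rangle\rangle$ is again hyperbolic relative to the (isomorphic) images of the $P_\lambda$, the image of $L$ stays suitable, and torsion-freeness is preserved. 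Because $Q$ is finitely presented relative to $S$, I would first fix a finite relative presentation $Q=\langle S, x_1,\dots,x_n \mid R_1,\dots,R_m\rangle$, in which only finitely many generators $x_i$ and finitely many relators $R_j$ (words in $S\cup\{x_1^{\pm1},\dots,x_n^{\pm1}\}$) occur beyond $S$; as in the opening of the proof of Theorem~\ref{Rips}, I would also replace $H$ by a $2$-generated non-elementary hyperbolic quotient $H=\langle a,b\rangle$ with $E(a)\cap E(b)=\{1\}$.

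Next I would fix the starting group $\mathcal{G}_0=H * S * F(x_1,\dots,x_n)$. Since $H * F(x_1,\dots,x_n)$ is hyperbolic, the free product $\mathcal{G}_0$ is hyperbolic relative to the single subgroup $\{S\}$, and $H$ (being non-elementary with $E_{\mathcal{G}_0}(H)=\{1\}$) is a suitable subgroup. There is a tautological surjection $\mathcal{G}_0\twoheadrightarrow Q$ killing $H$ and sending each $S$- and $x_i$-letter to its namesake in $Q$. The idea is to add relations, via the relative small-cancellation theorem, that (i) pull the conjugates $x_i^{\pm1}a\,x_i^{\mp1}$, $x_i^{\pm1}b\,x_i^{\mp1}$ back into $H$ and (ii) force each relator $R_j$ to equal a prescribed word in $H$; concretely one runs the theorem on the finite set
\[
T=\{x_i^{\pm1}a\,x_i^{\mp1},\ x_i^{\pm1}b\,x_i^{\mp1}\ :\ 1\leq i\leq n\}\ \cup\ \{R_1,\dots,R_m\}.
\]
Writing $G$ for the resulting quotient, one obtains $G/\langle\langle H\rangle\rangle\cong Q$, hence a short exact sequence $1\to N\to G\to Q\to1$ with $N=\langle\langle H\rangle\rangle_G$. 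Because none of the imposed relators lies in the parabolic subgroup $S$, the theorem keeps the embedded image of $S$ as the peripheral subgroup; this image is $\iota(S)$ and is a \emph{proper} subgroup, giving conclusion~(2), while preservation of torsion-freeness gives conclusion~(4).

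The heart of the matter --- and the step I expect to be the main obstacle --- is to upgrade $N=\langle\langle H\rangle\rangle$ to an honest \emph{quotient of $H$} (conclusion~(1)), to produce the splitting $\iota$ (conclusion~(3)), and to pin down $\mathrm{Out}(N)$ (conclusion~(5)). The difficulty is precisely that $S$ may be infinite: one cannot impose the infinitely many conjugation relations $s^{-1}a\,s,\ s^{-1}b\,s\in H$ that would be needed to fold the $S$-conjugates of $H$ into one copy, so a priori $\langle\langle H\rangle\rangle$ is a large free product of conjugates of $H$ rather than a quotient of it. The resolution of Belegradek--Osin is to build $N$ and the action of $Q$ on it simultaneously: the defining relators cutting $H$ down to $N$ are chosen $Q$-equivariantly (compatibly with the prescribed action of the generators of $Q$), so that conjugation by every element of $S$ and every $x_i$ descends to a genuine automorphism of $N$ and $\langle\langle H\rangle\rangle$ collapses onto this single quotient. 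This simultaneously supplies the homomorphism $\phi:Q\to\mathrm{Aut}(N)$ needed to split the sequence (giving $\iota$ and $G\cong N\rtimes Q$), and the small-cancellation bookkeeping that guarantees $\phi$ induces an injection $Q\hookrightarrow\mathrm{Out}(N)$ of finite index, which is conclusion~(5). The essential difference from the hyperbolic Rips construction underlying Theorem~\ref{Rips} is that all fillings are performed so as never to make $S$ loxodromic, ensuring that the peripheral structure of $G$ is exactly the proper subgroup $\iota(S)$.
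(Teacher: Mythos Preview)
The paper does not prove this statement: Theorem~\ref{beleosinripsconstr} is quoted verbatim from \cite{BO06} and used as a black box (immediately afterwards the paper specializes to $Q=S$, where the subtleties you raise disappear). So there is no ``paper's own proof'' to compare your sketch against.

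That said, your outline has a genuine gap at exactly the point you flag. Starting from $\mathcal G_0=H* S * F(x_1,\dots,x_n)$ and adding only the finitely many conjugation relations for the $x_i$'s and the relators $R_j$, the subgroup $\langle a,b\rangle$ is normalized by the $x_i$'s but there is no mechanism forcing $s\langle a,b\rangle s^{-1}\subset\langle a,b\rangle$ for $s\in S$; hence $N=\langle\langle H\rangle\rangle_G$ need not equal $\langle a,b\rangle$, and conclusion~(1) is not established. Your proposed resolution --- ``choose the relators $Q$-equivariantly'' --- is not an argument: you have not said what relations are being imposed, why only finitely many suffice, or why the quotient remains relatively hyperbolic. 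As written this paragraph is a hope, not a proof, and the same vagueness leaves (3) and (5) unproved.

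The way \cite{BO06} actually sidesteps your difficulty is to start from $H*Q$ (which is hyperbolic relative to $\{Q\}$, since $H$ is hyperbolic) rather than from $H*S*F(X)$. Because $Q$ is \emph{finitely generated}, one needs only finitely many conjugation relations $y_j^{\pm1}c\,y_j^{\mp1}=w_{j,c}(a,b)$, one for each element of a finite generating set $\{y_1,\dots,y_k\}$ of $Q$ and each $c\in\{a,b\}$; Osin's relative small-cancellation theorem then produces $G$ hyperbolic relative to the isomorphic image $\iota(Q)$, with $\langle a,b\rangle$ automatically normal (it is normalized by $a,b,y_1,\dots,y_k$, which generate $G$). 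This gives (1), (3), (4) cleanly, and (5) follows from further small-cancellation bookkeeping in \cite{BO06}. The hypothesis that $Q$ is finitely presented relative to $S$ is used separately to refine the peripheral structure from $\iota(Q)$ down to $\iota(S)$; it is not used to control $N$. If you rewrite your argument with this starting point, the circularity you encountered disappears.
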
 
In our setting $H$ is torsion free and has property (T) and $Q=S$ and it is torsion free. In this situation Theorem \color{blue}\ref{beleosinripsconstr}\color{black}\ implies that $G$ is admits a semidirect product decomposition  $G= N\rtimes Q$ and it is hyperbolic relative to $\{Q\}$. We now state another key lemma 
\begin{lemma}[Lemma 4.2, \cite{CDK19}] \label{trivrel}
	Let $N$ be an icc group and let $Q$ be a group together with an outer action $Q \ca^{\sigma} N$. Then $\mathcal L(N)' \cap \mathcal L(N \rtimes_{\sigma}Q)= \mathbb{C}$.
\end{lemma}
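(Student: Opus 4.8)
The plan is to run the standard crossed--product Fourier expansion argument. The key structural fact is that, for the semidirect product $N\rtimes_\sigma Q$, one has $\mathcal L(N\rtimes_\sigma Q)=\mathcal L(N)\rtimes_\sigma Q$, so every $x$ in this algebra admits a Fourier decomposition $x=\sum_{q\in Q}x_q u_q$ over the canonical unitaries $\{u_q\}_{q\in Q}$ implementing $\sigma$, where the coefficients are recovered as $x_q=E_{\mathcal L(N)}(x u_q^\ast)$ with $E_{\mathcal L(N)}$ the trace--preserving conditional expectation onto $\mathcal L(N)$. Since $E_{\mathcal L(N)}$ maps bounded operators to bounded operators, each $x_q$ is a genuine element of $\mathcal L(N)$ (not merely an $L^2$ vector), and the series converges in $\|\cdot\|_2$.

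First I would convert the hypothesis $x\in\mathcal L(N)'\cap\mathcal L(N\rtimes_\sigma Q)$ into relations on coefficients. Writing $u_q a u_q^\ast=\sigma_q(a)$ for the automorphism of $\mathcal L(N)$ induced by $\sigma_q$, the identity $ax=xa$ for all $a\in\mathcal L(N)$ becomes, after comparing $q$--components (equivalently, applying $E_{\mathcal L(N)}(\,\cdot\,u_q^\ast)$), the family of intertwining equations $a\,x_q=x_q\,\sigma_q(a)$ for every $q\in Q$ and every $a\in\mathcal L(N)$. I would then treat the diagonal and off--diagonal terms separately. For $q=e$ we have $\sigma_e=\mathrm{id}$, so $x_e$ commutes with all of $\mathcal L(N)$; since $N$ is icc, $\mathcal L(N)$ is a $\mathrm{II}_1$ factor, hence $x_e\in Z(\mathcal L(N))=\mathbb C$. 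For $q\neq e$, taking adjoints converts $a x_q=x_q\sigma_q(a)$ into $x_q^\ast\, b=\sigma_q(b)\,x_q^\ast$ for all $b\in\mathcal L(N)$, which is exactly the hypothesis of the freeness lemma for a factor: if a nonzero $v$ satisfies $vb=\sigma_q(b)v$ for all $b$, then $v^\ast v$ and $vv^\ast$ are central, hence scalar, so $v$ is a scalar multiple of a unitary implementing $\sigma_q$ and $\sigma_q$ is inner on $\mathcal L(N)$. As the action is outer, this forces $x_q^\ast=0$, i.e. $x_q=0$. Combining both cases yields $x=x_e\in\mathbb C$, which is the assertion.

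The main obstacle is the off--diagonal vanishing, and specifically the bookkeeping needed to ensure that outerness of the \emph{group} action is the outerness that the freeness lemma requires, namely outerness of $\sigma_q$ as an automorphism of the von Neumann algebra $\mathcal L(N)$. This is where the icc hypothesis enters a second time. Suppose $\sigma_q$ were implemented by a unitary $u=\sum_m c_m u_m\in\mathcal L(N)$; comparing Fourier coefficients in $u u_n u^\ast=u_{\sigma_q(n)}$ shows that $c$ is constant on each twisted orbit $\{\sigma_q(n)^{-1}k n : n\in N\}$, so $\ell^2$--summability forces some orbit to be finite. A finite orbit of $k_0$ means the stabilizer $\{n:\sigma_q(n)=k_0 n k_0^{-1}\}$ has finite index, i.e. $\sigma_q$ agrees with $\mathrm{Ad}(k_0)$ on a finite--index subgroup; the icc property (the centralizer of a finite--index subgroup is trivial, since any element with a finite conjugacy class is trivial) then upgrades this to $\sigma_q=\mathrm{Ad}(k_0)$ on all of $N$, contradicting outerness. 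I would isolate this equivalence of group--outerness and algebra--outerness as the technical heart of the argument; the remaining steps are the routine Fourier and conditional--expectation formalism.
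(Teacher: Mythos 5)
Your argument is correct. One point of comparison worth making explicit: this paper does not actually prove the lemma --- it is imported verbatim from \cite[Lemma 4.2]{CDK19} --- so there is no in-paper proof to diverge from, and the proof in the cited source is the same standard crossed-product computation you carry out. Your two-layer structure is the right one: (i) the Fourier/conditional-expectation step, giving $a x_q = x_q \sigma_q(a)$ for all $a \in \mathcal{L}(N)$, whence $x_e \in Z(\mathcal{L}(N)) = \mathbb{C}$ by factoriality of $\mathcal{L}(N)$ (icc), and whence a nonzero $x_q$ with $q \neq e$ has $x_q^* x_q,\, x_q x_q^* \in \mathbb{C}$ and so is a scalar multiple of a unitary implementing $\sigma_q$ on $\mathcal{L}(N)$; and (ii) the equivalence, for icc $N$, of outerness of $\sigma_q$ as a group automorphism and as an automorphism of $\mathcal{L}(N)$, which you rightly isolate as the technical heart. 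The only compressed step is the final upgrade in (ii): from ``$\sigma_q$ agrees with $\mathrm{Ad}(k_0)$ on a finite-index subgroup $H$'' to ``$\sigma_q = \mathrm{Ad}(k_0)$ on all of $N$'' one should pass to the normal core $H'$ of $H$ and note that, setting $\beta := \mathrm{Ad}(k_0)^{-1}\circ \sigma_q$ (so $\beta|_{H'} = \mathrm{id}$), applying $\beta$ to $g h g^{-1} \in H'$ for arbitrary $g \in N$, $h \in H'$ shows that $g^{-1}\beta(g)$ centralizes $H'$; your parenthetical fact --- centralizers of finite-index subgroups of an icc group are trivial, since any such element has finite conjugacy class --- then gives $\beta(g) = g$. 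So this is a presentational gap only, not a mathematical one: the fact you quote is exactly what the omitted manipulation needs.
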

\begin{notation}\label{ripsvn} Consider the lacunary hyperbolic groups $Q$ from Theorem \color{blue}\ref{monster}\color{black}\ together with the collection of maximal rank one subgroups $\mathscr{F}:=\{Q^i_m< Q\}_i$. Also let $N\rtimes Q\in \mathcal Rip (Q)$ be the semidirect product obtained via the Rips construction together with the subgroups $N\rtimes Q^i_m<N\rtimes Q$. Throughout this section we will consider the corresponding von Neumann algebras $\mathcal M^i_m :=\mathcal L(N\rtimes Q^i_m)\subset \mathcal L(N\rtimes Q):=\mathcal M$.  
\end{notation}

Assuming Notation \color{blue}\ref{ripsvn}\color{black}, we now show following:
\begin{theorem}\label{maximalvN}  $\mathcal M^i_m$ is a maximal von Neumann algebra of $\mathcal M$ for every $i$. In particular, when $N\rtimes Q\in \mathcal Rip_{\mathcal T}(Q)$  then for every $i$, $\mathcal M^i_m$ is a non-property (T) maximal von Neumann subalgebra of a property (T) von Neumann algebra $\mathcal M$.   
\end{theorem}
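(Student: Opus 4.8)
The plan is to translate the statement about intermediate von Neumann subalgebras into a statement about intermediate subgroups through a Galois-type correspondence, and then feed in the group-theoretic maximality of $Q^i_m$ in $Q$ coming from Theorem \ref{monster}. First I would record the ambient structure. Since $N$ is icc, $\mathcal{L}(N)$ is a $\mathrm{II}_1$ factor, and since the action $Q \curvearrowright N$ is outer, Lemma \ref{trivrel} gives $\mathcal{L}(N)' \cap \mathcal{M} = \mathbb{C}$. In particular $\mathcal{Z}(\mathcal{M}) \subseteq \mathcal{L}(N)' \cap \mathcal{M} = \mathbb{C}$ and, because $\mathcal{M}^i_m \subseteq \mathcal{M}$, also $\mathcal{Z}(\mathcal{M}^i_m) \subseteq \mathcal{L}(N)' \cap \mathcal{M}^i_m \subseteq \mathcal{L}(N)' \cap \mathcal{M} = \mathbb{C}$; hence both $\mathcal{M}$ and $\mathcal{M}^i_m$ are $\mathrm{II}_1$ factors. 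Moreover, identifying $\mathcal{L}(N \rtimes Q)$ with the crossed product $\mathcal{L}(N) \rtimes Q$, the triviality of $\mathcal{L}(N)' \cap \mathcal{M}$ is exactly the statement that $Q$ acts on the factor $\mathcal{L}(N)$ by an outer action.

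For maximality, I would take an arbitrary intermediate algebra $\mathcal{M}^i_m \subseteq \mathcal{P} \subseteq \mathcal{M}$. Since $N \trianglelefteq N \rtimes Q^i_m$, we have $\mathcal{L}(N) \subseteq \mathcal{M}^i_m \subseteq \mathcal{P} \subseteq \mathcal{L}(N) \rtimes Q$. I would then apply the Galois correspondence for crossed products by outer actions (Choda \cite{Ch78}): every von Neumann algebra $\mathcal{P}$ with $\mathcal{L}(N) \subseteq \mathcal{P} \subseteq \mathcal{L}(N) \rtimes Q$ is of the form $\mathcal{P} = \mathcal{L}(N) \rtimes L_0 = \mathcal{L}(N \rtimes L_0)$ for a unique subgroup $L_0 \leq Q$, and this assignment is inclusion-preserving. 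Consequently the inclusion $\mathcal{M}^i_m \subseteq \mathcal{P}$ forces $Q^i_m \leq L_0 \leq Q$. By Theorem \ref{monster} the subgroup $Q^i_m$ is a maximal subgroup of $Q$, so either $L_0 = Q^i_m$ or $L_0 = Q$, and correspondingly $\mathcal{P} = \mathcal{M}^i_m$ or $\mathcal{P} = \mathcal{M}$. This establishes that $\mathcal{M}^i_m$ is maximal in $\mathcal{M}$.

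For the ``in particular'' clause I would argue at the level of property (T) for these two $\mathrm{II}_1$ factors. When $N \rtimes Q \in \mathcal{R}ip_{\mathcal{T}}(Q)$ the group $N \rtimes Q$ has property (T), and being icc (as shown above, since $\mathcal{M}$ is a factor) its group factor $\mathcal{M}$ has property (T) by the Connes--Jones characterization of property (T) for group $\mathrm{II}_1$ factors. For $\mathcal{M}^i_m$, observe that $N \rtimes Q^i_m$ surjects onto $Q^i_m$, an infinite subgroup of $(\mathbb{Q},+)$, hence an infinite amenable group without property (T); since property (T) passes to quotients, $N \rtimes Q^i_m$ cannot have property (T). As $N \rtimes Q^i_m$ is icc, the same Connes--Jones equivalence yields that $\mathcal{M}^i_m$ does not have property (T), completing the argument.

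The main obstacle is the Galois-correspondence step. Its two hypotheses, namely that $\mathcal{L}(N)$ is a factor and that $Q$ acts on it by an outer action, are precisely what is supplied by $N$ being icc and by Lemma \ref{trivrel} (via $\mathcal{L}(N)' \cap \mathcal{M} = \mathbb{C}$), so the correspondence between intermediate subalgebras over $\mathcal{L}(N)$ and subgroups of $Q$ genuinely applies; once it does, the remaining content is the purely group-theoretic maximality of $Q^i_m$ already furnished by Theorem \ref{monster}. The only points needing care are that $\mathcal{M}^i_m$ contains $\mathcal{L}(N)$ (immediate from $N \trianglelefteq N \rtimes Q^i_m$) and that the correspondence is order-preserving, so that the inclusion of subalgebras faithfully reflects the inclusion $Q^i_m \leq L_0$ of subgroups.
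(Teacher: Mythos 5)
Your proposal is correct and takes essentially the same route as the paper's proof: triviality of the relative commutant via Lemma \ref{trivrel}, Choda's Galois correspondence \cite{Ch78} to identify any intermediate algebra with $\mathcal{L}(N)\rtimes K$ for some $Q^i_m\leqslant K\leqslant Q$, the group-theoretic maximality of $Q^i_m$ from Theorem \ref{monster}, and the Connes--Jones theorem \cite{CJ} for both property (T) claims. Your additional verification that $\mathcal{L}(N)$ is a factor and that $Q$ acts outerly merely makes explicit the hypotheses of the correspondence that the paper leaves implicit.
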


\begin{proof} The proof goes along the same line of the proof of \cite[Theorem 4.4]{CDK19} Fix $\mathcal P$ be any intermediate subalgebra $\mathcal M^i_m\subseteq \mathcal P\subseteq \mathcal M$ for some $i$. Since $\mathcal M^i_m \subset \mathcal M$ is spatially isomorphic to the crossed product inclusion $\mathcal L(N)\rtimes Q^i_m \subset \mathcal L(N)\rtimes Q$ we have $\mathcal L(N)\rtimes Q^i_m \subseteq \mathcal P \subseteq \mathcal L(N)\rtimes Q$. By Lemma \ref{trivrel} we have that $(\mathcal L(N)\rtimes Q^i_m)' \cap (\mathcal L(N)\rtimes Q ) \subseteq \mathcal L(N)' \cap ( \mathcal L (N)\rtimes Q)=\mathbb{C}$. In particular, $\mathcal P$ is a factor. Moreover, by the Galois correspondence theorem \cite{Ch78} (see also \cite[Corollary 3.8]{CD19}) there is a subgroup $Q^i_m\leqslant K\leqslant Q$ so that $\mathcal P= \mathcal L(N)\rtimes K$. However since by construction, $Q^i_m$ is a maximal subgroup of $Q$ for every $i$, we must have that $K=Q^i_m$ or $Q$. Thus we get that $\mathcal P = \mathcal M^i_m$ or $\mathcal M$ and the conclusion follows. \par 
	For the remaining part note by \cite{CJ} that $\mathcal M$ has property (T). Also, since $N \rtimes Q^i_m$ surjects onto an infinite abelian group then it does not have property (T). Thus by \cite{CJ} again $\mathcal M^i_m =\mathcal L(N\rtimes Q^i_m)$ does not have property (T) either.
\end{proof}

\section*{Acknowledgements}
The author would like to thank Prof. Alexander Ol'shanskii and Prof. Jesse Peterson for fruitful conversations and their constant support. The author is grateful to Prof. Ionut Chifan and Sayan Das for having stimulating conversation regarding this project.

\noindent
\textsc{Department of Mathematics, Vanderbilt University, 1326 Stevenson Center, Nashville, TN 37240, U.S.A.}\\
\email{krishnendu.khan@vanderbilt.edu}\\

\end{document}